\theoremstyle{definition}
\newtheorem{definition}{Definition}[section]
\newtheorem{example}[definition]{Example}
\newtheorem{remark}[definition]{Remark}
\theoremstyle{plain}
\newtheorem{theorem}[definition]{Theorem}
\newtheorem{lemma}[definition]{Lemma}
\newtheorem{proposition}[definition]{Proposition}
\newtheorem{corollary}[definition]{Corollary}
\def\ds{\displaystyle}
\def\p{\partial}
\DeclareMathOperator{\im}{Im}
\DeclareMathOperator{\Ker}{Ker}
\DeclareMathOperator{\Coker}{Coker}
\def\bbP{\mathbb{P}}
\def\bbC{\mathbb{C}}
\def\calC{\mathcal{C}}
\def\calT{\mathcal{T}}
\def\calS{\mathcal{S}}
\def\scrC{\mathscr{C}}
\def\scrP{\mathscr{P}}
\def\calV{\mathcal{V}}
\def\scrB{\mathscr{B}}
\def\calA{\mathcal{A}}
\def\calL{\mathcal{L}}
\def\calK{\mathcal{K}}
\def\calW{\mathcal{W}}
\theoremstyle{remark}
\numberwithin{equation}{section}
\title{Middle Laplace transform and middle convolution for linear Pfaffian systems with irregular singularities}
\author{
	Shunya Adachi\thanks{Supported by JSPS KAKENHI Grant Number 24K22826.}
}
\date{}
\begin{document}

\maketitle

\begin{abstract}
We introduce a transformation of linear Pfaffian systems, which we call the middle Laplace transform, as a formulation of the Laplace transform from the perspective of Katz theory.
While the definition of the middle Laplace transform is purely algebraic, its categorical interpretation is also provided.
We then show the fundamental properties (invertibility, irreducibility) of the middle Laplace transform.
As an application of the middle Laplace transform, we define the middle convolution for linear Pfaffian systems with irregular singularities.
This gives a generalization of Haraoka's middle convolution, which was defined for linear Pfaffian systems with logarithmic singularities.
The fundamental properties (additivity, irreducibility) of the middle convolution follow from the properties of the middle Laplace transform. 
Some examples related to hypergeometric functions with two variables are also given.
\end{abstract}

\tableofcontents

\section{Introduction}

In this paper, we introduce two transformations--the \emph{middle Laplace transform} and the \emph{middle convolution}--for linear Pfaffian systems with irregular singularities. 
Our motivation comes from the Katz theory for linear differential equations. 
In his book \cite{KatzBook}, N. M. Katz introduced an operation called the middle convolution for local systems on a punctured projective line. 
After that, Dettweiler-Reiter \cite{DR2000, DR2007} established an additive analog of Katz's operation as an operation for Fuchsian systems
	\begin{equation}\label{Eq_FS}
	\frac{du}{dx}=\left(\sum_{i=1}^q \frac{A_i}{x-a_i}\right)u,
	\quad A_i \in \mathrm{Mat}(N,\mathbb{C}).
	\end{equation}
This operation is called the (additive) middle convolution, which gives a recursive method for constructing Fuchsian systems. 
In particular, any irreducible rigid (i.e., free from accessory parameters) Fuchsian system can be constructed from the trivial rank one equation $u'=0$ by iterations of the middle convolution (Katz algorithm, cf. \cite[Theorem A.14]{DR2000}).
The additive middle convolution is realized by the composition of the following three steps:
	\begin{enumerate}[\bf Step 1.]
	\item Extend the given system \eqref{Eq_FS} to Okubo normal form
	\begin{equation}\label{Eq_ODE_Okubo}
	(x-T)\frac{dU}{dx}=(A-I)U,
	\end{equation}
	where $T$ is a constant diagonal matrix, $A$ is a constant matrix and $I$ is the identity matrix.
	\item Apply the Riemann-Liouville transform (with a suitable path of integration $\Delta$) 
	\[
	U(x)\mapsto \int_{\Delta}U(t)\,(x-t)^\lambda\,dt.
	\]
	\item Project onto a suitable quotient space.
	\end{enumerate}
Then, the middle convolution can be regarded as a precise formulation of the Riemann-Liouville transform for Fuchsian systems \eqref{Eq_FS}.
We note that, in Dettweiler-Reiter's description, the above procedures are given in a purely algebraic way, namely, as an operation on the residue matrices $(A_1,A_2,\ldots,A_q)$ in \eqref{Eq_FS}.

The theory of middle convolution led to new developments in the study of global analysis of linear ordinary differential equations in the complex domain. 
For example, by using the middle convolution, we can obtain explicit descriptions of the integral representation of solutions \cite{H-H2012}, monodromy representations \cite{DR2007}, connection coefficients and reducibility conditions \cite{Oshima2012} recursively. 
Furthermore, the middle convolution works effectively for the study of the Deligne-Simpson problem, the classification problem of Fuchsian differential equations \cite{Oshima2013}, and isomonodromic deformations \cite{H-F2007}.

In 2012, Haraoka \cite{Haraoka2012} expected that the Katz theory would be useful for the study of holonomic systems, and extended the additive middle convolution to linear Pfaffian systems with logarithmic singularities along hyperplane arrangements 
	\begin{equation}\label{Eq_Pfaff_reg0}
	du=\Omega u, \quad 
	\Omega=\sum_{H \in \mathcal{A}}A_H \, d\log f_H
	\quad (A_{H}\in\mathrm{Mat}(N,\mathbb{C})),
	\end{equation}
where $\mathcal{A}$ is a hyperplane arrangement in $\mathbb{C}^n$ and $f_{H}$ denotes the defining polynomial of $H\in\mathcal{A}$.
Roughly speaking, he showed that Dettweiler-Reiter's middle convolution operation (i.e., the above three steps) for a chosen variable can be prolonged as an operation for linear Pfaffian systems \eqref{Eq_Pfaff_reg0}.
His middle convolution gives a recursive method for constructing linear Pfaffian systems with logarithmic singularities along hyperplane arrangements. 
Inspired by his result, Oshima \cite{Oshima2017, OshimaPre1} and Matsubara-Heo and Oshima \cite{M-Opre} developed the study of regular holonomic systems including KZ-type equations and hypergeometric systems.
We note that the multiplicative version of Haraoka's middle convolution was also given by Haraoka himself \cite{Haraoka2020} and developed by Hiroe-Negami \cite{H-NPre}.

The project discussed in this paper originated from a simple question: \emph{what happens if we apply the Laplace transform instead of the Riemann-Liouville transform in step 2 of (Haraoka's) middle convolution?}
As an answer, we give a formulation of the Laplace transform for Pfaffian systems, which should be called the \emph{middle Laplace transform}\footnote{The name ``middle Laplace transform" is inspired by Sabbah \cite{Sabbah}.}. 
We remark that, when considering the Laplace transform, the irregular singularities naturally appear. 
In fact, by applying the Laplace transform 
	\[
	U(x)~\mapsto~V(x)=\int_{\Delta}U(t)e^{-xt}\,dt
	\]
(with a suitable path of integration $\Delta$) for the Okubo normal form \eqref{Eq_ODE_Okubo}, we obtain 
	\begin{equation}\label{Eq_Birkhoff0}
	x\frac{d V}{d x}=-\left(A+xT\right)V.
	\end{equation}
The equation of the form \eqref{Eq_Birkhoff0} is called the Birkhoff canonical form and has an irregular singular point at $x=\infty$. 
Therefore, when considering the Laplace transform, it is insufficient to treat only Fuchsian systems or linear Pfaffian systems with logarithmic singularities. 
In this paper, we provide a formulation of the middle Laplace transform as a transformation of linear Pfaffian systems with logarithmic singularities along $\mathcal{A}$ and irregular singularities along the hyperplanes $\{x_i=\infty\}$ ($i=1,\ldots,n$)
	\begin{equation}\label{Eq_Pfaff0}
	du=\Omega u, \quad 
	\Omega=\sum_{i=1}^nS_{x_i}(\bm{x})\,dx_i
	+\sum_{H \in \mathcal{A}}A_H \, d\log f_H,
	\end{equation}
where $S_{x_i}(\bm{x})\in\mathbb{C}[x_1,\ldots,x_n]$ are linear polynomials. 
Then, we show that the middle Laplace transform has good properties: if the system \eqref{Eq_Pfaff0} is irreducible and non-exceptional (in the sense of Definitions \ref{def:odeirred} and \ref{Def_irr}), then the middle Laplace transformed system is also irreducible, and the middle Laplace transform admits an inversion formula.
Like Haraoka's middle convolution, the middle Laplace transform provides a recursive and effective method for constructing linear Pfaffian systems with irregular singularities. 
For example, some confluent hypergeometric systems in several variables can be obtained by using the middle Laplace transform.
We expect that the middle Laplace transform will be a useful tool for studying holonomic systems with irregular singularities.

As an application of the middle Laplace transform, we can define the middle convolution for linear Pfaffian systems of the form \eqref{Eq_Pfaff0}. 
This provides a generalization of Haraoka's middle convolution. 
The additivity and the irreducibility of the generalized middle convolution follow from the properties of the middle Laplace transform described above. 
Oshima \cite{Oshima2020} also defined the middle convolution for linear Pfaffian systems with irregular singularities, using the confluence of singularities.
Our definition differs from Oshima's, and the classes of equations to which the middle convolution can be applied are also slightly different (with some overlap). 
It seems that our middle convolution and Oshima's middle convolution are essentially the same when applied to the same equation, and this will be verified.

On the other hand, assuming $n=1$ in \eqref{Eq_Pfaff0}, our middle convolution corresponds to the one for linear ordinary differential equations with an irregular singular point at $x=\infty$.
There are several studies on the middle convolution for linear ordinary differential equations with irregular singular points \cite{Arinkin2010, Kawakami2010, Takemura2011, Yamakawa2011, Yamakawa2016}.
Our result can be regarded as a generalization of their results, although they dealt with more general linear ODEs than the system \eqref{Eq_Pfaff0} with $n=1$.  
In particular, Yamakawa's method \cite{Yamakawa2011, Yamakawa2016} for generalizing the middle convolution is similar to ours. 
Indeed, he first provided a beautiful description of the Laplace transform for linear ODEs by using the Harnad duality. 
He then applied the Harnad duality to generalize the middle convolution. 
Examining whether the middle Laplace transform for the system \eqref{Eq_Pfaff0} with $n=1$ is equivalent to the Harnad duality is left for future work. 
We note that if this holds true, the Katz algorithm for linear Pfaffian systems \eqref{Eq_Pfaff0} using our middle convolution follows immediately from Yamakawa's result (cf. \cite[\S 7]{Yamakawa2011}).

This paper is organized as follows. 
In Sections \ref{sec:odemL}--\ref{sec:odemc}, we focus on the linear Pfaffian system \eqref{Eq_Pfaff0} with $n=1$ (i.e., ordinary differential equation).
Section \ref{sec:odemL} introduces the definitions of the (inverse) middle Laplace transform in one variable case.
The definition is given in terms of linear algebra. 
In that section, we also state their fundamental properties on invertibility and irreducibility (Theorem \ref{thm:odemain}) without proofs.
In Section \ref{sec:odefunc}, we give a categorical interpretation of the (inverse) middle Laplace transform. 
Namely, we regard linear ordinary differential equations as meromorphic connections, and introduce some categories consisting of meromorphic connections.
We then formulate the middle Laplace transform and related transformations as functors between the categories. 
Section \ref{sec:odeinv} is devoted to the proof of the properties stated in Section \ref{sec:odemL}. 
The categorical interpretation in Section \ref{sec:odefunc} is effectively used there.
In Section \ref{sec:odemc}, we give a generalization of middle convolution for linear differential equations with an irregular singular point via the middle Laplace transform. 
In Sections \ref{sec:PfaffmL}--\ref{sec:extwo}, we treat the several variable case $n\ge 2$. 
In Section \ref{sec:PfaffmL}, we extend the middle Laplace transform for the case of several variables and give the fundamental properties (Theorem \ref{Thm_main}).
We also characterize the class of linear Pfaffian systems for which the middle Laplace transform behaves well.
Section \ref{Sec_Categorical} provides the categorical interpretation of the (inverse) middle Laplace transform in several variables, which is a generalization of the description in Section \ref{sec:odefunc}.
Section \ref{Sec_Inv} is devoted to the proof of the properties stated in Section \ref{sec:PfaffmL}. 
We note that Subsection \ref{Subsec_varphi_mor} is devoted solely to the proof of a technical proposition and is somewhat lengthy; readers who are only interested in our results may skip it. 
In Section \ref{Sec_GMC}, as an application of the above results, we generalize the middle convolution for linear Pfaffian systems with irregular singularities. 
The fundamental properties (additivity and irreducibility) of the middle convolution are also given. 
At the end of this paper, in Section \ref{sec:extwo}, we give some examples related to (confluent) hypergeometric functions with two variables.

We end the introduction by mentioning some future problems. 
First, investigating the transformation of local data (spectral type, local exponents, characteristic polynomials at irregular singular points) induced by the middle Laplace transform or middle convolution is an important problem. 
Next, investigating the transformation of global data (connection coefficients, monodromy and Stokes multipliers) induced by the middle Laplace transform or the middle convolution is also important. 
For this problem, the pioneering work of Balser-Jurkat-Lutz \cite{BJL1981} seems to be an important reference. 
They showed that the Stokes multipliers of the Birkhoff canonical form \eqref{Eq_Birkhoff0} can be calculated by using the connection coefficients of the Okubo normal form \eqref{Eq_ODE_Okubo} through the Laplace transform. 
We expect that their method can be generalized to holonomic systems via the middle Laplace transform.
Shimomura \cite{Shimomura1993, Shimomura1997, Shimomura1998, Shimomura1999} and Majima \cite{Majima2004} derived the Stokes multipliers of some confluent hypergeometric systems in several variables. 
Their results seem to provide us with good insight for generalizing the method of Balser-Jurkat-Lutz.
Furthermore, Haraoka \cite{Haraoka2024} has recently calculated the Stokes multiplier of a certain confluent hypergeometric function in two variables by using Majima's strong asymptotic expansion \cite{Majima1984}.  
As we will see in Section \ref{sec:extwo}, the linear Pfaffian system associated with Haraoka's confluent hypergeometric function can be obtained by iterating the middle Laplace transform for a rank one system. 
Therefore, we expect that Haraoka's result may be understood from our perspective.

\section{Middle Laplace transform in one variable}\label{sec:odemL}
First, we introduce the middle Laplace transform for linear ordinary differential equations
\begin{equation}\label{eq:ode}
\frac{du}{dx}=\left(S+\sum_{i=1}^q \frac{A_{i}}{x-a_i}\right)u,
\end{equation}
where $S, A_i \in \mathrm{Mat}(N,\mathbb{C})$ and $x, a_1,\ldots,a_q\in\mathbb{C}$. 
This equation has regular singular points at $x=a_1,\ldots,a_q$ and an irregular singular point of Poincar\'{e} rank one at $x=\infty$. 
Note that, the equation \eqref{eq:ode} can be expressed as
\begin{align}\label{eq:ode1form}
du=\Omega u, \quad \Omega=S\, dx+\sum_{i=1}^q A_i \,d\log (x-a_i)
\end{align}
and hence \eqref{eq:ode} is a linear Pfaffian system \eqref{Eq_Pfaff0} in one variable.

In the following, we assume that the matrix $S$ is diagonalizable.
For simplicity in the following discussion, we further assume that $S$ is already diagonalized as 
\begin{equation}\label{eq:Sdiag}
S=\begin{pmatrix}
\alpha_1 I_{N_1} & & & \\
& \alpha_2 I_{N_2}& & \\
& & \ddots & \\
& & & \alpha_{\hat{q}}I_{N_{\hat{q}}}
\end{pmatrix}
\end{equation}
by a suitable gauge transformation. 
Here $\alpha_1,\dots,\alpha_{\hat{q}}\in\mathbb{C}$ and $N=N_1+N_2+\cdots+N_{\hat{q}}$ is a partition of $N$.

\subsection{Definition}
\label{subsec:onedef}
The middle Laplace transform consists of the following three steps:
	\begin{enumerate}[\bf Step~1.]
	\item Extend the equation \eqref{eq:ode} to the Birkhoff-Okubo normal form.
	\item Apply the Laplace transform.
	\item Project onto a suitable quotient space.
	\end{enumerate}
In the following, we will explain the above steps in detail and in order.

\medskip
\noindent
\textbf{Step~1.} 
Let $u(x)$ be a solution of the equation \eqref{eq:ode} and set
\begin{align}
U(x)={}^t \left(\frac{u(x)}{x-a_1},\frac{u(x)}{x-a_2},\dots,\frac{u(x)}{x-a_{q}}\right).
\end{align}
Then, we can verify that the vector $U(x)$ satisfies the system which we call the \emph{Birkhoff-Okubo normal form}:
\begin{align}\label{eq:odeBO}
(x-T)\frac{dU}{dx}=\left(A-I+S^{\oplus q}(x-T)\right)U,
\end{align}
where
\begin{equation}\label{eq:odeTAS}
\begin{aligned}
&T=\begin{pmatrix}
a_1 I_N & & & \\
& a_2 I_N & & \\
& & \ddots & \\ 
& & & a_q I_N
\end{pmatrix}, 
\quad 
A=\begin{pmatrix}
A_1 & A_2 & \dots & A_q \\
A_1 & A_2 & \dots & A_q \\
\vdots & \vdots &  & \vdots \\
A_1 & A_2 & \dots & A_q
\end{pmatrix},
\\
&S^{\oplus q}=\bigoplus_{i=1}^{q} S=\begin{pmatrix}
S & & & \\
& S & & \\
& & \ddots & \\
& & & S
\end{pmatrix}.
\end{aligned}
\end{equation}
We note that the Birkhoff-Okubo normal form \eqref{eq:odeBO} is an ordinary differential equation with polynomial coefficients, and thus it is well suited to integral transformations such as the Riemann-Liouville and Laplace transforms.

\medskip
\noindent
\textbf{Step~2.} For the Birkhoff-Okubo normal form \eqref{eq:odeBO}, we consider the Laplace transform 
	\begin{equation}\label{eq:op_FL}
		L:\left\{
	\begin{array}{ccc}
	\dfrac{d }{d x} &\,\mapsto\, & x, \\
	x &\,\mapsto\,& -\dfrac{d }{d x}
	\end{array}\right.
	\end{equation}
which corresponds to the integral transform 
	\begin{align}\label{eq:odeLsol}
	U(x)~\mapsto~V(x)=\int_{\Delta} U(t)e^{-xt}\,dt
	\end{align}
with a suitable path of integration $\Delta$.
Formally applying \eqref{eq:op_FL} to the equation \eqref{eq:odeBO}, we have
\begin{align}
\left(-\frac{d}{dx}-T\right)(xV)=\left(A-I+S^{\oplus q}\left(-\frac{d}{dx}-T\right)\right)V
\end{align}
and hence obtain
\begin{equation}\label{eq:odeL}
\left(x-S^{\oplus q}\right)\frac{dV}{dx}=-\left(A+(x-S^{\oplus q})T\right)V.
\end{equation}
By multiplying $(x-S^{\oplus q})^{-1}$ from the left, we have
\begin{align}\label{eq:odeL2}
\frac{dV}{dx}=\left(-T+\sum_{j=1}^{\hat{q}}\frac{B_{j}}{x-\alpha_j}\right)V,
\end{align}
where 
\begin{equation}\label{eq:odeB}
B_j=-\begin{pmatrix}
	E_{N_j}A_{1} & E_{N_j}A_{2}& \cdots &E_{N_j}A_{q} \\
	E_{N_j}A_{1} & E_{N_j}A_{2}& \cdots &E_{N_j}A_{q} \\
	\vdots & \vdots &  &\vdots\\
	E_{N_j}A_{1} & E_{N_j}A_{2}& \cdots &E_{N_j}A_{q} \\
	\end{pmatrix}
\end{equation} 
and
\begin{equation}\label{eq:Eni}
	E_{N_j}
	=\mathrm{diag}[O_{N_1},\ldots,I_{N_j},	\ldots,O_{N_{\hat{q}}}]
	=
	\begin{pmatrix}
	 O_{N_1}& & & & \\
	  & \ddots & &  \\
	 & &  I_{N_j} & & \\
	&  & &\ddots & \\
	 && & & O_{N_{\hat{q}}}
	\end{pmatrix}
\end{equation}
for $1\le j\le \hat{q}$.

\begin{definition}\label{def:odeLaplace}
We call the operation which sends the equation \eqref{eq:ode} to the equation \eqref{eq:odeL2} the \emph{Laplace transform} and denote it by $\mathcal{L}$.
This operation can also be regarded as sending the $1$-form $\Omega$ in \eqref{eq:ode1form} to the $1$-form
\begin{equation}\label{eq:ode1formL}
\mathcal{L}(\Omega):=-T\,dx+\sum_{j=1}^{\hat{q}}B_{j}\,d\log(x-\alpha_{j}).
\end{equation}
\end{definition}
\begin{remark}\label{rem:odeLaplace}
Analytically, the transform \eqref{eq:op_FL} can be realized by the integral transform 
\eqref{eq:odeLsol} with a path of integration $\Delta$ along which the integral converges. 
Under suitable assumptions on the coefficient matrices of the initial equation \eqref{eq:ode}, which ensure an appropriate asymptotic behavior of $U(t)$, such a path can indeed be chosen; see, for example, \cite{BJL1981} or \cite[Chapter 5]{HaraokaBook} for a detailed analytic treatment. 
In this paper we only use the formal correspondence \eqref{eq:op_FL}, and we do not discuss these analytic aspects further.

\end{remark}

\medskip
\noindent
\textbf{Step 3.} The Laplace transformed equation \eqref{eq:odeL} is not irreducible in general, even if the original equation \eqref{eq:ode} is irreducible (a precise definition of irreducibility will be given later). 
Therefore, we consider projecting \eqref{eq:odeL} onto a suitable quotient space in order to extract the irreducible part. 

Let $\mathcal{K}$ be a subspace of $(\mathbb{C}^{N})^q$ defined by
\begin{equation}\label{eq:odeK}
	\mathcal{K}:=\bigoplus_{i=1}^q\Ker A_{i}
	=
	\left\{
	\begin{pmatrix}
	v_1 \\
	\vdots\\
	v_q
	\end{pmatrix}
	\in(\mathbb{C}^N)^q
	\,\middle\vert\,
	v_i \in \Ker A_{i}
	~
	(1\le i\le q)
	\right\}.
\end{equation}
Then, we have the following.
\begin{lemma}\label{lem:odeK}
The subspace $\mathcal{K}$ is a common invariant subspace of the tuple of matrices $(T,B_1,\ldots,B_{\hat{q}})$.
\end{lemma}
\begin{proof}
Take $v={}^t(v_1,\ldots,v_q)\in \mathcal{K}$. 
Then we have
\begin{align}
Tv=\begin{pmatrix}
a_1 I_N & & & \\
& a_2 I_N & & \\
& & \ddots & \\ 
& & & a_q I_N
\end{pmatrix}
\begin{pmatrix}
	v_1 \\
	v_2 \\
	\vdots\\
	v_q
	\end{pmatrix}
=\begin{pmatrix}
	a_1v_1 \\
	a_2v_2 \\
	\vdots\\
	a_qv_q
	\end{pmatrix}
\end{align}
Since $v_i \in \Ker A_i$, we have $a_i v_i \in \Ker A_i$. 
This means that $Tv \in\mathcal{K}$. 
For $1\le j\le \hat{q}$, we have
\begin{align}
B_j v=-\begin{pmatrix}
	E_{N_j}A_{1} & E_{N_j}A_{2}& \cdots &E_{N_j}A_{q} \\
	E_{N_j}A_{1} & E_{N_j}A_{2}& \cdots & E_{N_j}A_{q} \\
	\vdots & \vdots &  &\vdots\\
	E_{N_j}A_{1} & E_{N_j}A_{2}& \cdots & E_{N_j}A_{q} \\
	\end{pmatrix}
	\begin{pmatrix}
	v_1 \\
	v_2 \\
	\vdots\\
	v_q
	\end{pmatrix}
	=0
\end{align}
since $v_i \in \Ker A_i$.
This means $B_j v \in \mathcal{K}$. 
\end{proof}
Lemma \ref{lem:odeK} implies that the tuple of matrices $(T,B_1,\ldots,B_{\hat{q}})$ induces a linear action on the quotient space $(\mathbb{C}^N)^q/\mathcal{K}$. 
Let $(\bar{T},\bar{B}_1,\ldots,\bar{B}_{\hat{q}})$ denote a tuple of matrices representing the induced action on $(\mathbb{C}^N)^q/\mathcal{K}$. 
Then, we have the equation
\begin{align}\label{eq:odeML}
\frac{dv}{dx}=\left(-\bar{T}+\sum_{j=1}^{\hat{q}}\frac{\bar{B}_{j}}{x-\alpha_{j}}\right)v,
\end{align}
or equivalently, in Pfaffian form,
\begin{equation}\label{eq:ode1formML}
dv=\mathcal{ML}(\Omega)v, 
\quad 
\mathcal{ML}(\Omega):=-\bar{T}\,dx+\sum_{j=1}^{\hat{q}}\bar{B}_{j}\,d\log(x-\alpha_{j}).
\end{equation}
\begin{definition}\label{def:odeML}
We call the operation which sends the equation \eqref{eq:ode} to the equation \eqref{eq:odeML} the \emph{middle Laplace transform} and denote it by $\mathcal{ML}$.
\end{definition}
We note that the middle Laplace transform may change the rank of differential equations in general.
Under the assumptions of Theorem \ref{thm:odemain} below, the middle Laplace transformed equation \eqref{eq:odeML} is also irreducible.

\begin{remark}
\begin{enumerate}[(i)]
\item The tuple $(\bar{T},\bar{B}_1,\ldots,\bar{B}_{\hat{q}})$ is obtained as follows. 
Let $(u_1,\ldots,u_m)$ be a basis of the subspace $\mathcal{K}$. 
By adding vectors $(v_{m+1},\ldots,v_{qN})$, we construct a basis of $\mathbb{C}^{qN}$.
We set $P=(u_1,\ldots,u_m,v_{m+1},\ldots,v_{qN})\in\mathrm{GL}(qN,\mathbb{C})$.
Then we have 
\begin{align}
TP=P\left(\begin{array}{c|c} * & * \\ \hline \\[-10pt] O\, & \,\bar{T}\end{array}\right),
\quad 
B_jP=P\left(\begin{array}{c|c} * & * \\ \hline \\[-10pt] O\, & \,\bar{B}_j\end{array}\right) \quad (1\le j\le \hat{q})
\end{align} 
where the matrices on the right-hand side are divided into blocks of size $(m,qN-m)\times(m,qN-m)$.
This corresponds to the decomposition $\mathbb{C}^{qN}=\mathcal{K}\oplus(\mathbb{C}^{qN}/\mathcal{K})$.

\item Let $P$ be as in the above. 
For a solution $V(x)$ for \eqref{eq:odeL2}, we define the $(qN-m)$-vector $\bar{v}(x)$ by
\begin{align}
P^{-1}V=\left(\begin{array}{c}
*\\
\hline 
\bar{v}
\end{array}\right).
\end{align}
Then, we see that $\bar{v}(x)$ gives a solution to the equation \eqref{eq:odeML}. 
Here, setting $Q=(O_{qN-m},I_{qN-m})P^{-1}$, we obtain
\begin{equation}
\bar{v}=QV.
\end{equation}
From this and Remark \ref{rem:odeLaplace}, it follows that the middle Laplace transformed equation \eqref{eq:odeML} admits the following Laplace integral representation of solutions:
\begin{equation}
\bar{v}(x)=QV(x)=Q\int_{\Delta}U(t)e^{-xt}\,dt.
\end{equation}
\end{enumerate}
\end{remark}

\subsection{Inverse (middle) Laplace transform}
\label{subsec:invLone}
We introduce the \emph{inverse middle Laplace transform} of \eqref{eq:ode} by the same three-step procedure as in the middle Laplace transform, except that \textbf{Step~2} is replaced by the inverse Laplace transform
	\begin{equation}\label{eq:op_invFL}
		L^{-1}:\left\{
	\begin{array}{ccc}
	\dfrac{d }{d x} &\,\mapsto\, & -x, \\
	x &\,\mapsto\,& \dfrac{d }{d x}
	\end{array}\right.
	\end{equation}
which corresponds to the integral transform 
	\begin{align}
	U(x)~\mapsto~W(x)=\int_{\Delta} U(t)e^{xt}\,dt
	\end{align}
with a suitable path of integration $\Delta$. 
That is, for the Birkhoff-Okubo extension \eqref{eq:odeBO} of the original equation \eqref{eq:ode}, we formally apply the transform \eqref{eq:op_invFL}.
Then, we have
\begin{equation}
\left(\frac{d}{dx}-T\right)(-xW)=\left(A-I+S^{\oplus q}\left(\frac{d}{dx}-T\right)\right)W
\end{equation}
and hence obtain
\begin{align}
\left(x+S^{\oplus q}\right)\frac{dW}{dx}=\left(-A+(x+S^{\oplus q})T\right)W.
\end{align}
By multiplying $\left(x+S^{\oplus q}\right)^{-1}$ from the left, we have
\begin{align}\label{eq:odeinvL}
\frac{dW}{dx}=\left(T+\sum_{j=1}^{\hat{q}}\frac{B_{j}}{x+\alpha_j}\right)W,
\end{align}
where $B_j$ $(1\le j\le \hat{q})$ are given by \eqref{eq:odeB}.
\begin{definition}\label{def:odeinvLaplace}
We call the operation which sends the equation \eqref{eq:ode} to the equation \eqref{eq:odeinvL} the \emph{inverse Laplace transform} and denote it by $\mathcal{L}^{-1}$.
This operation can also be regarded as sending the $1$-form $\Omega$ in \eqref{eq:ode1form} to the $1$-form
\begin{equation}\label{eq:ode1forminvL}
\mathcal{L}^{-1}(\Omega):=T\,dx+\sum_{j=1}^{\hat{q}}B_{j}\,d\log(x+\alpha_{j}).
\end{equation}
\end{definition}
Then, thanks to Lemma \ref{lem:odeK}, we can project the equation \eqref{eq:odeinvL} onto the quotient space $(\mathbb{C}^N)^q/\mathcal{K}$ and obtain the equation 
\begin{align}\label{eq:odeinvML}
\frac{dw}{dx}=\left(\bar{T}+\sum_{j=1}^{\hat{q}}\frac{\bar{B}_{j}}{x+\alpha_{j}}\right)w,
\end{align}
or equivalently, in Pfaffian form,
\begin{equation}\label{eq:ode1forminvML}
dw=\mathcal{ML}^{-1}(\Omega)w,\quad 
\mathcal{ML}^{-1}(\Omega):=\bar{T}\,dx+\sum_{j=1}^{\hat{q}}\bar{B}_{j}\,d\log(x+\alpha_{j}),
\end{equation}
as in \textbf{Step~3} of the middle Laplace transform.
\begin{definition}\label{def:odeinvML}
We call the operation which sends the equation \eqref{eq:ode} to the equation \eqref{eq:odeinvML} the \emph{inverse middle Laplace transform} and denote it by $\mathcal{ML}^{-1}$.
\end{definition}


While the Laplace transform \eqref{eq:op_FL} and the inverse Laplace transform \eqref{eq:op_invFL} defined for Birkhoff-Okubo normal forms are (formally) inverse to each other, it is not clear whether the middle Laplace transform $\mathcal{ML}$ and the inverse middle Laplace transform $\mathcal{ML}^{-1}$ defined for general ordinary differential equations \eqref{eq:ode}, are truly inverse, since \textbf{Step~1} (extension) and \textbf{Step~3} (projection) are involved. 
In the next subsection, under some suitable assumption for the initial equation \eqref{eq:ode}, we present the inversion formula for the middle Laplace transform and the inverse middle Laplace transform.


\begin{remark}\label{rem:nondiag}
Although we have assumed at the beginning of this section that the matrix $S$ in the initial equation \eqref{eq:ode} is already diagonalized as in \eqref{eq:Sdiag}, the procedures of the (inverse) middle Laplace transform --namely, extension, (inverse) Laplace transform and projection-- remain valid as long as 
$S$ is diagonalizable. 
In other words, even if $S$ is not diagonalized, the same arguments apply provided that 
$S$ is similar to a diagonal matrix of the form
\begin{equation}
S\sim\begin{pmatrix}
\alpha_1 I_{N_1} & & & \\
& \alpha_2 I_{N_2}& & \\
& & \ddots & \\
& & & \alpha_{\hat{q}}I_{N_{\hat{q}}}
\end{pmatrix}
\end{equation}
for some $\alpha_1,\alpha_2,\ldots,\alpha_{\hat{q}}\in\bbC$. 
Therefore, we can define the (inverse) Laplace transform and the (inverse) middle Laplace transform for equations \eqref{eq:ode} with general diagonalizable $S$.  
We note that, in the general case, the description of $B_j$ in \eqref{eq:odeL2} and \eqref{eq:odeinvL} differs from \eqref{eq:odeB}.
\end{remark}

\subsection{Fundamental properties of the middle Laplace transform}
	We summarize some fundamental properties of the (inverse) middle Laplace transform without proofs.
	The proofs will be given in Section \ref{sec:odeinv}. 
	\begin{definition}\label{def:odeirred}
	Let $N\ge1$. 
	For a linear ordinary differential equation \eqref{eq:ode}, or equivalently its corresponding $1$-form $\Omega$ in \eqref{eq:ode1form}, we define the following notions.
	\begin{itemize}
	\item We say that it is \emph{irreducible} if there is no $\langle S, A_1,\ldots,A_q\rangle$-invariant subspace except $\mathbb{C}^N$ and $\{0\}$.
	\item We say that it is \emph{exceptional} if $N=1$ and $A_i=0$ for all $i=1,\ldots,q$. 
	Otherwise, we say it is \emph{non-exceptional}. 
	\end{itemize}
	\end{definition}
	We note that the exceptional case is precisely the scalar equation of the form $u'=\alpha u$ ($\alpha\in\bbC$). 
	As explained in the previous section, the (inverse) middle Laplace transform behaves well for irreducible and non-exceptional equations.
	\begin{theorem}
	\label{thm:odemain}
	Suppose that the ordinary differential equation \eqref{eq:ode} is irreducible and non-exceptional.
	Then, the following hold.
	\begin{enumerate}[(i)]
	\item $\mathcal{ML}^{-1}\circ\mathcal{ML}=\mathcal{ML}\circ\mathcal{ML}^{-1}=\mathrm{id.}$
	\item The middle Laplace transformed system \eqref{eq:odeML} and the inverse middle Laplace transformed system \eqref{eq:odeinvML} are also irreducible and non-exceptional.
	\end{enumerate}
	Here, the above symbol $``="$ is understood up to constant gauge equivalence; that is, the equations obtained by $\mathcal{ML}^{-1} \circ \mathcal{ML}$ and $\mathcal{ML} \circ \mathcal{ML}^{-1}$ coincide with the original one modulo a gauge transformation by some constant matrix $P \in \mathrm{GL}(N, \mathbb{C})$.
	\end{theorem}
	%

\section{Categorical interpretation (one variable case)}
\label{sec:odefunc}
In order to prove Theorem \ref{thm:odemain}, we begin by regarding the ordinary differential equation \eqref{eq:ode} as a meromorphic connection on $\bbP^1=\bbP^1(\bbC)$.
We then formulate the (inverse) middle Laplace transform as a functor between suitable categories of meromorphic connections.

\subsection{Category of meromorphic connections}
Let $\mathcal{O}_{\bbP^1}$ be the sheaf of holomorphic functions on $\mathbb{P}^1$, and let $\mathcal{M}_{\bbP^1}^1$ be the sheaf of meromorphic $1$-forms on $\bbP^1$.
We set $\scrC_{\bbP^1}$ to be the category of meromorphic connections on $\mathbb{P}^1$. 
That is, 
		\begin{equation}\label{eq:odemerom}
	\mathrm{Ob}(\mathscr{C}_{\bbP^1}) = \left\{ (\mathcal{O}_{\mathbb{P}^1}\otimes \mathcal{V},\nabla)~ \middle|~
	 \begin{array}{l}
\text{$\mathcal{V}$ is a finite dimensional $\mathbb{C}$-vector space and} \\
\text{$\nabla:\mathcal{O}_{\bbP^1}\otimes \mathcal{V}\to\mathcal{M}_{\bbP^1}^1 \otimes_{\mathcal{O}_{\bbP^1}} (\mathcal{O}_{\bbP^1}\otimes \mathcal{V})$}
\\
\text{\,is a $\mathbb{C}$-linear map satisfying } \\
\nabla(hv)=dh\otimes_{\mathcal{O}_{\bbP^1}}  v+ h \nabla(v) ~ (h\in\mathcal{O}_{\bbP^1},\, v \in \mathcal{O}_{\bbP^1}\otimes \mathcal{V})
 	\end{array}
 \right\},
	\end{equation}
where we write $\otimes_{\mathbb{C}}=\otimes$ for short.
A $\bbC$-linear map $f:\mathcal{V}\to\mathcal{V}'$ is called a morphism of (meromorphic) connections from $(\mathcal{O}_{\bbP^1}\otimes\mathcal{V},\nabla)$ to $(\mathcal{O}_{\bbP^1}\otimes\mathcal{V}',\nabla')$ if the diagram 
		\[
		\begin{tikzcd}
		\mathcal{O}_{\bbP^1}\otimes \mathcal{V} \arrow[r,"\nabla",""]
		\arrow[d,"\varphi" ']
		&\mathcal{M}_{\bbP^1}^1\otimes_{\mathcal{O}_{\bbP^1}} (\mathcal{O}_{\bbP^1}\otimes \mathcal{V})\arrow[d,""," \mathrm{id.}\otimes_{\mathcal{O}_{\bbP^1}}\varphi"]\\
		\mathcal{O}_{\bbP^1}\otimes \mathcal{V}'  \arrow[r,"\nabla'",""]
		&\mathcal{M}_{\bbP^1}^1\otimes_{\mathcal{O}_{\bbP^1}} (\mathcal{O}_{\bbP^1}\otimes \mathcal{V}')
	\end{tikzcd}
	\quad 
	\]
is commutative.
Here $\varphi:\mathcal{O}_{{\bbP^1}}\otimes\mathcal{V}\to\mathcal{O}_{{\bbP^1}}\otimes\mathcal{V}'$ is a morphism of $\mathcal{O}_{{\bbP^1}}$-modules obtained through the isomorphism 
	\begin{align*}
	f\in \mathrm{Hom}_{\mathbb{C}}(\mathcal{V},\mathcal{V}') 
	&\cong \mathrm{Hom}_{\mathbb{C}}(\mathbb{C}^N,\mathbb{C}^{N'}) \quad (\mathcal{V}\cong\mathbb{C}^N,~\mathcal{V}'\cong\mathbb{C}^{N'}) \\
	&\cong \mathrm{Hom}_{\mathcal{O}_{{\bbP^1}}({\bbP^1})}(\mathcal{O}_{{\bbP^1}}({\bbP^1})^{N},\mathcal{O}_{{\bbP^1}}({\bbP^1})^{N'}) \\
	&\cong \mathrm{Hom}_{\mathcal{O}_{{\bbP^1}}}(\mathcal{O}_{{\bbP^1}}^N,\mathcal{O}_{{\bbP^1}}^{N'}) \\
	&\cong\mathrm{Hom}_{\mathcal{O}_{{\bbP^1}}}(\mathcal{O}_{{\bbP^1}}\otimes\mathcal{V},\mathcal{O}_{{\bbP^1}}\otimes \mathcal{V}')\ni\varphi.
	\end{align*}
In the above, we used the fact $\mathcal{O}_{\bbP^1}(\bbP^1)=\mathbb{C}$ which follows from the compactness of $\bbP^1$.

A linear map $\nabla$ of $(\mathcal{O}_{\bbP^1}\otimes\mathcal{V},\nabla)$ can be expressed as $\nabla=d-\Omega$, where $\Omega$ is an $\mathrm{End}_{\mathbb{C}}(\mathcal{V})$-valued meromorphic $1$-form. 
Then, we identify $\mathscr{C}_{{\bbP^1}}$ with the category of pairs $(\mathcal{V},\Omega)$ consisting of a finite dimensional $\mathbb{C}$-vector space $\mathcal{V}$ and an $\mathrm{End}_{\mathbb{C}}(\mathcal{V})$-valued meromorphic $1$-form $\Omega$. 
The morphisms $(\mathcal{V},\Omega)\to(\mathcal{V}',\Omega')$ in $\mathscr{C}_{{\bbP^1}}$ are linear maps $f:\mathcal{V}\to\mathcal{V}'$ satisfying $\Omega'f=f\Omega$.
\footnote{This treatment is inspired by Yamakawa \cite[\S 2.2]{Yamakawa2016}.} 

We shall explain the correspondence of meromorphic connections and differential equations.
For a connection $(\mathcal{V}, \Omega)\in\mathscr{C}_{{\bbP^1}}$, we fix an isomorphism (a basis) $\mathcal{V}\cong \mathbb{C}^N$. 
Then the $1$-form $\Omega$ can be expressed as $\Omega=A(x)\,dx$ by using some $A(x)\in\mathrm{Mat}(N,\mathbb{C}(x))$. 
Therefore, the connection $(\mathcal{V},\Omega)$ can be identified with the $\mathrm{GL}(N,\mathbb{C})$-conjugacy class of the ordinary differential equation
\begin{equation}\label{eq:odeA}
\frac{du}{dx}=A(x)u.
\end{equation}
We call the matrix $A(x)$ the coefficient matrix of $\Omega$.
For another connection $(\mathcal{V}',\Omega')\in\mathscr{C}_{\bbP^1}$, we fix an isomorphism $\mathcal{V}'\cong\mathbb{C}^{N'}$ and let $B(x)$ be the coefficient matrices of $\Omega'$. 
Then, the linear map $f:\mathcal{V}\to\mathcal{V}'$ is to be a morphism of connections $f:(\mathcal{V},\Omega)\to(\mathcal{V}',\Omega')$ if and only if the matrix representation $F\in\mathrm{Mat}(N'\times N,\mathbb{C})$ of $f$ with respect to the above two basis satisfies
	\[
	FA(x)=B(x)F.
	\]
In particular, if the morphism $f:(\mathcal{V},\Omega)\to(\mathcal{V}',\Omega')$ is an isomorphism of vector spaces, then the corresponding matrix representation $F$ is invertible. 
This means that the equation \eqref{eq:odeA} can be transformed into 
\begin{equation}
\frac{dv}{dx}=B(x)v
\end{equation} by the gauge transformation $v=Fu$.

\subsection{Subcategories}
We now introduce some subcategories of $\scrC_{\bbP^1}$ consisting of connections corresponding to differential equations of the form \eqref{eq:ode}.
Let 
\begin{align}\label{eq:odeTS}
\mathcal{T}:=\{a_1,\ldots,a_q\},
\quad 
\mathcal{S}:=\{\alpha_1,\ldots,\alpha_{\hat{q}}\}
\end{align}
be finite subsets of $\mathbb{C}$.

\subsubsection{Category of linear ordinary differential equations}
We define the full subcategory $\scrP(\calT,\calS)$ of $\scrC_{\bbP^1}$ by
\begin{align}\label{eq:odecatP}
	\mathrm{Ob}(\scrP(\calT,\calS)):=\left\{(\mathcal{V},\Omega)\in\mathrm{Ob}(\scrC_{\bbP^1}) \,\middle \vert
	\text{ $\Omega$ satisfies the condition (O)}
		\right\}.
\end{align}
Here, condition (O) is the following: the $1$-form $\Omega$ is expressed as
\begin{equation}\label{eq:odeconnOmegaEnd}
\Omega=S\, dx+\sum_{i=1}^q A_i \, d\log (x-a_i), \quad S, A_i \in \mathrm{End}(\calV),
\end{equation}
where $S$ is diagonalizable and its set of eigenvalues is \emph{contained} in $\calS$. 
This means that there exists an isomorphism (i.e., a choice of basis) $\mathcal{V}\cong \mathbb{C}^N$ for some $N\in\mathbb{Z}_{\ge0}$ and a decomposition
\begin{align}
N=N_1+N_2+\cdots+N_{\hat{q}} \quad (N_j\ge0,~ j=1,2,\ldots,\hat{q})
\end{align}
such that the $1$-form can be written as 
\begin{equation}\label{eq:odeconnOmega}
\Omega=S\, dx+\sum_{i=1}^q A_i \, d\log (x-a_i), \quad S, A_i \in \mathrm{Mat}(N,\bbC),
\end{equation}
where $S$ is of the form \eqref{eq:Sdiag}, that is, 
\begin{equation}
S=\begin{pmatrix}
\alpha_1 I_{N_1} & & & \\
& \alpha_2 I_{N_2}& & \\
& & \ddots & \\
& & & \alpha_{\hat{q}}I_{N_{\hat{q}}}
\end{pmatrix}
=
\sum_{j=1}^{\hat{q}}\alpha_jE_{N_j}.
\end{equation}
If $N_j = 0$, we omit the corresponding diagonal block $\alpha_j I_{N_j}$ in the block diagonal matrix, and $E_{N_j}$ is understood to be the zero matrix $O_N$. 
Note that the discussion in Section~\ref{sec:odemL} remains valid even when some $N_j$ vanish.

The connection matrix $A(x)$ of $\Omega$ can be written as
\begin{equation}\label{eq:odeconnOmega2}
A(x)=S+\sum_{i=1}^q \frac{A_i}{x-a_i}
\end{equation}
and hence, the category $\scrP(\calT,\calS)$ can be regarded as the category of ordinary differential equations of the form \eqref{eq:ode} (or Pfaffian systems of the form \eqref{eq:ode1form}) with diagonalizable $S$.
We note that the category $\scrP(\calT,\calS)$ is abelian.
\begin{remark}
Let $(\calV,\Omega),(\calV',\Omega')\in\scrP(\calT,\calS)$ be two objects.
We fix basis identifying $\calV\cong\bbC^N$ and $\calV'\cong\bbC^{N'}$ such that the $1$-form $\Omega$ is written as \eqref{eq:odeconnOmega} and
\begin{equation}\label{eq:odeconnOmegad}
\Omega'=S'+\sum_{i=1}^q A_i'\,d\log(x-a_i), \quad S', A_i' \in \mathrm{Mat}(N',\mathbb{C}).
\end{equation}
Then, the matrix representation $F\in\mathrm{Mat}(N'\times N,\bbC)$ of a morphism $f:(\calV,\Omega)\to(\calV',\Omega')$ with respect to the fixed basis satisfies 
\begin{align}\label{eq:odeCmor}
FS=S'F, \quad
FA_i=A_i'F \quad (1\le i\le q).
\end{align}
Hence, we see that the two objects $(\calV,\Omega)$ and $(\calV',\Omega')$ are isomorphic if and only if $N=N'$ and there exists a matrix $P\in \mathrm{GL}(N,\bbC)$ such that
\begin{align}
S=P^{-1}S'P, \quad 
A_i=P^{-1}A_i'P \quad (1\le i\le q).
\end{align}
\end{remark}

\subsubsection{Category of ordinary linear differential equations of Birkhoff-Okubo normal form}
In general, ordinary differential equations of the following form are called the \emph{Birkhoff-Okubo normal form}:
\begin{equation}\label{eq:odeBOgeneral}
(x-T)\frac{du}{dx}=\left(A+S(x-T)\right)u
\end{equation}
where $S$ and $T$ are diagonal constant matrices, and $A$ is a constant matrix. 
If the set of eigenvalues of $T$ is contained in $\calT$, then, by multiplying $(x-T)^{-1}$ from the left, the equation \eqref{eq:odeBOgeneral} becomes
\begin{equation}
\frac{du}{dx}=\left(S+(x-T)^{-1}A\right)u=\left(S+\sum_{i=1}^{q}\frac{A_i}{x-a_i}\right)u,
\end{equation} 
where $A_i$ $(1\le i\le q)$ are some constant matrices. 
With this in mind, we define the subcategory $\scrB=\scrB(\calT,\calS)$ of $\scrP(\calT,\calS)$, which consists of connections corresponding to ordinary differential equations of Birkhoff-Okubo normal form, as follows.
The set of objects $\mathrm{Ob}(\scrB(\calT,\calS))$ is defined by
\begin{align}
	\mathrm{Ob}(\mathscr{B}(\calT,\calS))=
	\left\{(\mathcal{V},\Omega)\in\mathrm{Ob}(\mathscr{P}(\calT,\calS))
	\,
	\middle \vert
	\,
	\begin{array}{l}
	\text{There exists an isomorphism }\mathcal{V}\cong \mathbb{C}^N 
	\\
	\text{(i.e., a choice of basis) for some $N\in\mathbb{Z}_{\ge0}$}
	\\ \text{such that $\Omega$ satisfies the condition (B)}
	\end{array}
	\right\}.
\end{align}
Here, condition (B) is the following: there exists a decomposition of $N$ 
\begin{align}
N=N_1+N_2+\cdots+N_{q} \quad (N_i\ge0,~ i=1,2,\ldots,q)
\end{align}
and a constant matrix $A\in\mathrm{Mat}(N,\mathbb{C})$ such that the $1$-form $\Omega$ can be written as 
\begin{align}\label{eq:odecatBOomega}
\Omega=\left(S+(x-T)^{-1}A\right)dx,
\end{align}
where $S$ is diagonal and 
\begin{align}\label{eq:odeBOT}
T=\begin{pmatrix}
		a_{1} I_{N_1} & & \\
		& a_2 I_{N_2} & \\
		& & \ddots & \\
		& & &  a_{q}I_{N_{q}}
		\end{pmatrix}.
	\end{align}
If $N_i = 0$, we omit the corresponding diagonal block $a_i I_{N_i}$ in the block diagonal matrix. 
For $(\calV,\Omega)\in \mathrm{Ob}(\scrP(\calT,\calS))$ with $\calV=\{0\}$ (corresponding to the case $N=0$), condition (B) is understood to be satisfied, with all matrices appearing therein interpreted as empty matrices.

We now define morphisms in $\scrB(\calT,\calS)$. 
For $(\calV,\Omega),(\calV',\Omega') \in \scrB(\calT,\calS)$, we set
\begin{equation}
\begin{aligned}
	\mathrm{Hom}_{\scrB(\calT,\calS)}&((\mathcal{V},\Omega),(\mathcal{V}',\Omega'))\\
	&:=\{f \in \mathrm{Hom}_{\scrP(\calT,\calS)}((\mathcal{V},\Omega),(\mathcal{V}',\Omega'))
	\mid
	f \text{ satisfies condition (Bmor)}
	\}.
\end{aligned}
\end{equation}
Here, condition (Bmor) is the following: for basis $\calV\cong \bbC^N$ and $\calV'\cong\bbC^{N'}$, we write the $1$-forms $\Omega$ and $\Omega'$ as \eqref{eq:odecatBOomega} and 
\begin{align}
\Omega'=\left(S'+(x-T')^{-1}A'\right)dx, \quad S',T',A'\in \mathrm{Mat}(N',\bbC)
\end{align}
respectively. 
Then, the matrix representation $F\in\mathrm{Mat}(N'\times N,\bbC)$ of $f$ satisfies 
	\begin{equation}\label{eq:catBmor}
	\begin{aligned}
	&FT=T'F,&
	&FS=S'F,&
	&FA=A'F.&
	\end{aligned}
	\end{equation}
If $N=0$ or $N'=0$, condition (Bmor) is understood to be  automatically satisfied, with the usual convention on empty matrices.
\subsection{Functors}
Retain the notation from the previous subsection.
We now introduce some constructions corresponding to the three steps of the (inverse) middle Laplace transform: extension, (inverse) Laplace transform, and projection.
The (inverse) middle Laplace transform functor is then obtained by applying these constructions in this order.

\subsubsection{Birkhoff-Okubo extension functor}
First, we define the functor 
\begin{align}
BO:\scrP(\calT,\calS)\to\scrB(\calT,\calS),
\end{align}
which corresponds to the extension of the differential equation \eqref{eq:ode} into the Birkhoff-Okubo normal form \eqref{eq:odeBO} as follows.

For $(\calV,\Omega)\in\scrP(\calT,\calS)$, we fix a basis $\calV\cong\bbC^{N}$ so that the $1$-form $\Omega$ is written as \eqref{eq:odeconnOmega} with $S$ diagonalized as in \eqref{eq:Sdiag}. 
Then, we define the $\mathrm{End}_{\bbC}(\calV^q)$-valued $1$-form $BO(\Omega)$ by the matrix representation 
\begin{align}\label{eq:odeBOomega}
BO(\Omega):=\left(S^{\oplus q}+(x-T)^{-1}(A-I)\right)dx,
\end{align}
where $S^{\oplus q}, T,A \in \mathrm{Mat}(qN,\bbC)$ are given by \eqref{eq:odeTAS}.
Then, we define
\begin{equation}
BO(\calV,\Omega):=(\calV^q,BO(\Omega)).
\end{equation}
By definition, it holds that $BO(\calV,\Omega)\in \mathrm{Ob}(\scrB(\calT,\calS))$. 

\medskip

We then define the morphism part of the functor $BO$.
For a morphism $f \in \mathrm{Hom}_{\scrP(\calT,\calS)}((\calV,\Omega),(\calV',\Omega'))$, we define the linear map $BO(f):\calV^q \to (\calV')^q$ by
\begin{equation}\label{eq:odeBOfunctorf}
BO(f):=f^{\oplus q}=\begin{pmatrix}
f  \\
 \vdots \\
 f
\end{pmatrix}.
\end{equation}
Then we have the following.
\begin{lemma}\label{lem:BOmor}
$BO(f)$ defines a morphism from $BO(\calV,\Omega)$ to $BO(\calV',\Omega')$.
\end{lemma}
\begin{proof}
For $(\calV,\Omega), (\calV',\Omega')\in\scrP(\calT,\calS)$, we fix basis $\calV\cong\bbC^{N}$ and $\calV'\cong\bbC^{N'}$ such that the $1$-forms $\Omega$ and $\Omega'$ are written as \eqref{eq:odeconnOmega} and \eqref{eq:odeconnOmegad}, respectively.
Then, the matrix representation $F\in \mathrm{Mat}(N'\times N,\bbC)$ of the morphism $f$ with respect to the above basis satisfies \eqref{eq:odeCmor}. 

Here, the $1$-forms $BO(\Omega)$ and $BO(\Omega')$ are given by \eqref{eq:odeBOomega} and 
\begin{equation}
BO(\Omega')=\left((S')^{\oplus q}+(x-T)^{-1}(A'-I)\right)dx,
\end{equation}
where $(S')^{\oplus q}, A' \in \mathrm{Mat}(qN',\bbC)$ are given by
\begin{equation}\label{eq:odeTASfunc}
\begin{aligned}
A'=\begin{pmatrix}
A_1' & A_2' & \dots & A_q' \\
A_1' & A_2' & \dots & A_q' \\
\vdots & \vdots &  & \vdots \\
A_1' & A_2' & \dots & A_q' \\
\end{pmatrix},
\quad
(S')^{\oplus q}=\bigoplus_{i=1}^{q} S'=\begin{pmatrix}
S' & & & \\
& S' & & \\
& & \ddots & \\
& & & S'
\end{pmatrix}.
\end{aligned}
\end{equation}
The matrix representation of $BO(f)$ with respect to the fixed basis is given by $F^{\oplus q}$.
Then, we can verify that \eqref{eq:catBmor} holds, which shows that $BO(f)$ defines a morphism from $BO(\calV,\Omega)$ to $BO(\calV',\Omega')$.
\end{proof}
\begin{definition}\label{def:BOextfunctor}
We refer to the functor $BO:\scrP(\calT,\calS)\to\scrB(\calT,\calS)$ as the \emph{Birkhoff-Okubo extension functor}, or simply the \emph{BO-extension functor}.
\end{definition}

\subsubsection{Laplace transform functor}

We proceed to define the Laplace transform functor, which we denote by $\calL$, corresponding to the Laplace transform in Definition \ref{def:odeLaplace}.
Set
\begin{align}
-\calT:=\{-a_1,\ldots,-a_q\}, 
\quad 
-\calS:=\{-\alpha_1,\ldots,-\alpha_{\hat{q}}\}.
\end{align}
We first introduce the operation $L$, which corresponds to the Laplace transform for Birkhoff-Okubo normal forms. 
On objects, it sends a connection in $\scrB(\calT,\calS)$ to a connection in $\scrB(\calS,-\calT)$ as follows.
For $(\calV,\Omega)\in\scrB(\calT,\calS)$, we fix a basis $\calV\cong\bbC^{N}$ so that the $1$-form $\Omega$ is written as \eqref{eq:odecatBOomega} with \eqref{eq:odeBOT}. 
Then, we define the $\mathrm{End}_{\bbC}(\calV)$-valued $1$-form $L(\Omega)$ by the matrix representation 
\begin{equation}\label{eq:odeLomega}
L(\Omega):=\left(-T-(x-S)^{-1}(A+I)\right)dx.
\end{equation}
Note that the corresponding differential equation of this connection is 
\begin{align}
\frac{dv}{dx}=\left(-T-(x-S)^{-1}(A+I)\right)v 
~
\Leftrightarrow
~
(x-S)\frac{dv}{dx}=-\left(A+I+(x-S)T\right)v,
\end{align}
which is nothing but the differential equation obtained by applying the Laplace transform \eqref{eq:op_FL} for the Birkhoff-Okubo normal form \eqref{eq:odeBOgeneral}.
Then, we define
\begin{equation}
L(\calV,\Omega):=(\calV,L(\Omega)) \in \mathrm{Ob}(\scrB(\calS,-\calT)).
\end{equation}
For $f\in\mathrm{Hom}_{\scrB(\calT,\calS)}((\calV,\Omega),(\calV',\Omega'))$, we define the linear map $L(f):\calV\to\calV'$ by
\begin{align}
L(f):=f.
\end{align}
Then, it is easily verified that the map $L(f)$ defines a morphism from $L(\calV,\Omega)$ to $L(\calV',\Omega')$ in $\scrB(\calS,-\calT)$. 

\medskip

We can define the operation $L^{-1}$ in the same way. 
That is, for $(\calV,\Omega)\in \scrB(\calT,\calS)$, we fix a basis $\calV\cong\bbC^{N}$ so that the $1$-form $\Omega$ is written as \eqref{eq:odecatBOomega} with \eqref{eq:odeBOT}. 
Then, we define the $\mathrm{End}_{\bbC}(\calV)$-valued $1$-form $L^{-1}(\Omega)$ by the matrix representation 
\begin{equation}
L^{-1}(\Omega):=\left(T-(x+S)^{-1}(A+I)\right)dx.
\end{equation}
Note that the corresponding differential equation of this connection is 
\begin{align}
\frac{dv}{dx}=\left(T-(x+S)^{-1}(A+I)\right)v 
~
\Leftrightarrow
~
(x+S)\frac{dv}{dx}=\left(-(A+I)+(x+S)T\right)v,
\end{align}
which is nothing but the differential equation obtained by applying the inverse Laplace transform \eqref{eq:op_invFL} for the Birkhoff-Okubo normal form \eqref{eq:odeBOgeneral}.
Then, we define
\begin{equation}
L^{-1}(\calV,\Omega):=(\calV,L^{-1}(\Omega)) \in \mathrm{Ob}(\scrB(-\calS,\calT)).
\end{equation}
For $f\in\mathrm{Hom}_{\scrB(\calT,\calS)}((\calV,\Omega),(\calV',\Omega'))$, we define the linear map $L^{-1}(f):\calV\to\calV'$ by
\begin{align}
L^{-1}(f):=f.
\end{align}
Then, it is easily verified that the map $L^{-1}(f)$ defines a morphism from $L^{-1}(\calV,\Omega)$ to $L^{-1}(\calV',\Omega')$ in $\scrB(-\calS,\calT)$. 

We are now ready to define the functors $\calL$ and $\calL^{-1}$. 
\begin{definition}
Let $I: \scrB\to\scrP$ denote the inclusion functor. 
We define the \emph{Laplace transform functor} $\calL:\scrP(\calT,\calS)\to\scrP(\calS,-\calT)$ by
\begin{align}
\calL:=I\circ L\circ BO.
\end{align}
For $(\calV,\Omega)\in \scrP(\calT,\calS)$, we write
\begin{equation}\label{eq:odecalLnotation}
\calL(\calV,\Omega)=(\calV^q,\calL(\Omega)).
\end{equation}
Similarly, we define the \emph{inverse Laplace transform functor} $\calL^{-1}:\scrP(\calT,\calS)\to\scrP(-\calS,\calT)$ by
\begin{align}
\calL^{-1}:=I\circ L^{-1}\circ BO.
\end{align}
For $(\calV,\Omega)\in \scrP(\calT,\calS)$, we write
\begin{equation}\label{eq:odecalinvLnotation}
\calL^{-1}(\calV,\Omega)=(\calV^q,\calL^{-1}(\Omega)).
\end{equation}
\end{definition}
\begin{remark}\label{rem:odefuncL}
For $(\calV,\Omega)\in \scrP(\calT,\calS)$, we fix a basis $\calV\cong\bbC^{N}$ such that the $1$-form $\Omega$ is written as \eqref{eq:odeconnOmega} with $S$ diagonalized as in \eqref{eq:Sdiag}. 
Then, the matrix representation of $1$-forms $\calL(\Omega)$ of \eqref{eq:odecalLnotation} and $\calL^{-1}(\Omega)$ of \eqref{eq:odecalinvLnotation} with respect to the basis $\calV^q\cong(\bbC^{N})^q$ are given by \eqref{eq:ode1formL} and \eqref{eq:ode1forminvL}, respectively.
\end{remark}

\begin{proposition}\label{prop:odefuncLexact}
The functors $\mathcal{L}$ and $\mathcal{L}^{-1}$ are exact.
\end{proposition}
\begin{proof}
Since the proof for $\mathcal{L}^{-1}$ is identical, it suffices to prove exactness for $\mathcal{L}$. 
Recall that $\scrP(\calT,\calS)$ and $\scrP(\calS,-\calT)$ are abelian categories (with kernels and cokernels computed on the underlying vector spaces, with the induced connection).
More concretely, for a morphism $f:(\calV,\Omega)\to (\calV',\Omega')$ in $\scrP(\calT,\calS)$, i.e.,\
$\Omega' f = f\Omega$, the subspace $\Ker f\subset \calV$ is $\Omega$-invariant,
hence inherits a connection $\Omega_{\Ker f}:=\Omega|_{\Ker f}$. 
Then the kernel of $f$ is given by $(\Ker f, \Omega_{\Ker f})$.
The quotient space
$\Coker f=\calV'/\im f$ also inherits a connection $\Omega'_{\Coker f}$
because $\im f$ is $\Omega'$-invariant.
Then the cokernel of $f$ is given by $(\Coker f,\Omega'_{\Coker f})$.  
The defining conditions of $\scrP(\calT,\calS)$ are preserved under passing to
$\Omega$-invariant subspaces and to quotients, so $(\Ker f,\Omega_{\Ker f})$ and
$(\Coker f,\Omega'_{\Coker f})$ are again objects of $\scrP(\calT,\calS)$.

Since $\calL$ is an additive functor between abelian categories, by \cite[Proposition 8.3.18]{KS2006} and its dual statement, it is enough to show that $\calL$ preserves kernels and cokernels. 
Since $\calL(f)=f^{\oplus q}$ by the definitions of $BO(f)$ and $L(f)$,
we see that
\[
\Ker \calL(f)=\Ker(f^{\oplus q})=(\Ker f)^{q}=\calL(\Ker f)
\] as vector spaces. 
We next compare the induced $1$-forms.
Let $i:\Ker f\hookrightarrow \calV$ be the inclusion.
Since $i$ defines a morphism $i:(\Ker f,\Omega_{\Ker f})\to (\calV,\Omega)$ in $\scrP(\calT,\calS)$ and $\calL$ is a functor, we have $\calL(i):(\calL(\Ker f),\calL(\Omega_{\Ker f}))\to(\calL(\calV),\calL(\Omega))$ is a morphism in $\scrP(\calS,-\calT)$. 
Hence, $\calL(\Omega) \calL(i)=\calL(i) \calL(\Omega_{\Ker f})$ holds. 
Since $\calL(i)=i^{\oplus q}$, we have
\[
\calL(\Omega) \circ i^{\oplus q}=i^{\oplus q}\circ \calL(\Omega_{\Ker f}).
\]
Combining this with $\Ker \calL(f)=(\Ker f)^{q}=(\im i)^{q}=\im(i^{\oplus q})$, we see that $\Ker \calL(f)$ is $\calL(\Omega)$-invariant and the induced $1$-form $\calL(\Omega)_{\Ker\calL(f)}=\calL(\Omega)|_{\Ker \calL(f)}$
coincides with $\calL(\Omega_{\Ker f})$.
Therefore $(\Ker\calL(f),\calL(\Omega)_{\Ker\calL(f)})$ is isomorphic to $(\calL(\Ker f),\calL(\Omega_{\Ker f}))$ in
$\scrP(\calS,-\calT)$.

Similarly, 
\[
\Coker \calL(f)= \Coker(f^{\oplus q})\cong (\Coker f)^{q}=\calL(\Coker f)
\]
as vector spaces.
Let $p:\calV'\twoheadrightarrow \Coker f$ be the projection.
Since $p$ defines a morphism $p:(\calV',\Omega')\to (\Coker f,\Omega'_{\Coker f})$ in $\scrP(\calT,\calS)$ and $\calL$ is a functor, we have $\calL(p):(\calL(\calV'),\calL(\Omega'))\to(\calL(\Coker f),\calL(\Omega'_{\Coker f}))$ is a morphism in $\scrP(\calS,-\calT)$. 
Hence, $\calL(\Omega'_{\Coker f}) \calL(p)=\calL(p) \calL(\Omega')$ holds.
Since $\calL(p)=p^{\oplus q}$, we have
\[\calL(\Omega'_{\Coker f}) \circ p^{\oplus q}=p^{\oplus q}\circ \calL(\Omega').\]
Combining this with $\im\calL(f)=\im(f^{\oplus q})=(\im f)^{q}=(\Ker p)^{q}=\Ker (p^{\oplus q})$, we see that $\im \calL(f)$ is $\calL(\Omega')$-invariant and hence $\calL(\Omega')$ descends to $\Coker\calL(f)$. 
Moreover, the induced $1$-form coincides with $\calL(\Omega'_{\Coker f})$, so $(\Coker\calL(f),\calL(\Omega')_{\Coker \calL(f)})$ is isomorphic to $(\calL(\Coker f),\calL(\Omega'_{\Coker f}))$ in $\scrP(\calS,-\calT)$.
This proves that $\calL$ is exact. 
\end{proof}

\subsubsection{Middle Laplace transform functor}
We consider defining the functor from $\scrP(\calT,\calS)$ to $\scrP(\calS,-\calT)$, which corresponds to the middle Laplace transform in Definition \ref{def:odeML}. 
For $(\calV,\Omega)\in\scrP(\calT,\calS)$, with the $1$-form \eqref{eq:odeconnOmegaEnd}, we define the linear subspace
\begin{equation}\label{eq:odeKv}
\calK_{\calV}:=\bigoplus_{i=1}^q\Ker A_i=
\left\{
	\begin{pmatrix}
	v_1 \\
	\vdots\\
	v_q
	\end{pmatrix}
	\in \mathcal{V}^q
	~\middle\vert~
	v_i \in \Ker A_{i}
	~
	(1\le i\le q)
	\right\}.
\end{equation}
We fix a basis $\calV\cong\bbC^{N}$ such that the $1$-form $\Omega$ is written as \eqref{eq:odeconnOmega} with $S$ diagonalized as in \eqref{eq:Sdiag}. 
Then, the subspace \eqref{eq:odeKv} is nothing but the subspace $\calK$ defined by \eqref{eq:odeK}. 
Moreover, by Remark~\ref{rem:odefuncL}, the matrix representation of $1$-form $\calL(\Omega)$ in $\calL(\calV,\Omega)$ with respect to the basis $\calV^q\cong(\bbC^{N})^q$ is given by \eqref{eq:ode1formL}. 
Therefore, thanks to Lemma \ref{lem:odeK}, we can consider a subconnection and a quotient connection of $(\calV,\calL(\Omega))$. 
This ensures the well-definedness of the following connections:
\begin{align}
\calK^{+}(\calV,\Omega)&:=(\calK_{\calV},\calL(\Omega)\vert_{\calK_{\calV}}) \in \scrP(\calS,-\calT), \\
\mathcal{ML}(\calV,\Omega)&:=\left(\calV^q/\calK_{\calV},\mathcal{ML}(\Omega)\right) \in \scrP(\calS,-\calT),
\end{align}
where $\calL(\Omega)\vert_{\calK}$ denotes the $\mathrm{End}(\calK)$-valued $1$-form obtained by restriction, and $\mathcal{ML}(\Omega)$ the induced $\mathrm{End}(\calV^q/\calK)$-valued $1$-form. 
By our choice of basis on $\calV$, the $1$-form $\mathcal{ML}(\Omega)$ of $\mathcal{ML}(\calV,\Omega)$ is indeed given by \eqref{eq:ode1formML}. 
Similarly, we can define the connections
\begin{align}
\calK^{-}(\calV,\Omega)&:=(\calK_{\calV},\calL^{-1}(\Omega)\vert_{\calK_{\calV}})\in \scrP(-\calS,\calT), \\
\mathcal{ML}^{-1}(\calV,\Omega)&:=\left(\calV^q/\calK_{\calV},\mathcal{ML}^{-1}(\Omega)\right) \in \scrP(-\calS,\calT).
\end{align}
Again, by fixing the basis of $\calV$, the $1$-form $\mathcal{ML}^{-1}(\Omega)$ of $\mathcal{ML}^{-1}(\calV,\Omega)$ is given by \eqref{eq:ode1forminvML}. 
\begin{remark}
$\calK^{+}$ and $\mathcal{ML}$ (resp. $\calK^{-}$ and $\mathcal{ML}^{-1}$) are functors from $\scrP(\calT,\calS)$ to $\scrP(\calS,-\calT)$ (resp. $\scrP(-\calS,\calT)$).
\end{remark}
\begin{definition}\label{def:odeMLfunctor}
We refer to the functors $\mathcal{ML}: \scrP(\calT,\calS)\to\scrP(\calS,-\calT)$ and $\mathcal{ML}^{-1}:\scrP(\calT,\calS)\to\scrP(-\calS,\calT)$ as the \emph{middle Laplace transform functor} and the \emph{inverse middle Laplace transform functor}, respectively.
\end{definition}

Since the inclusion $\iota:\calK\hookrightarrow \calV^q$ induces a morphism in $\scrP(\calS,-\calT)$ and $\scrP(-\calS,\calT)$, we obtain the following exact sequence in $\scrP(\calS,-\calT)$:
	\begin{equation}\label{eq:odeexactML}
		\begin{tikzcd}
		0\arrow[r]&\mathcal{K}^{+}(\mathcal{V},\Omega)\arrow[r,"\iota^+",hookrightarrow]
		&\mathcal{L}(\mathcal{V},\Omega) \arrow[r]
		&\Coker\iota^{+}
		\arrow[r]&0
		\end{tikzcd}
	\end{equation}
where $\iota^{+}$ denotes the morphism in $\scrP(\calS,-\calT)$ induced by $\iota$. 
Then, by definition, we have
\begin{equation}
\mathcal{ML}(\calV,\Omega)=\Coker \iota^{+}=\calL(\calV,\Omega)/\calK^{+}(\calV,\Omega).
\end{equation}
Similarly, we obtain the following exact sequence in $\scrP(-\calS,\calT)$:
	\begin{equation}\label{eq:odeexactinvML}
		\begin{tikzcd}
		0\arrow[r]&\mathcal{K}^{-}(\mathcal{V},\Omega)\arrow[r,"\iota^-",hookrightarrow]
		&\mathcal{L}^{-1}(\mathcal{V},\Omega) \arrow[r]
		&\Coker\iota^{-}
		\arrow[r]&0
		\end{tikzcd}
	\end{equation}
where $\iota^{-}$ denotes the morphism in $\scrP(-\calS,\calT)$ induced by $\iota$. 
Hence we have
\begin{equation}
\mathcal{ML}^{-1}(\calV,\Omega)=\Coker \iota^{-}=\calL^{-1}(\calV,\Omega)/\calK^{-}(\calV,\Omega).
\end{equation}

\section{Inversion formula and irreducibility (one variable case)}
\label{sec:odeinv}
Retain the notation from the previous section. In this section, we prove Theorem \ref{thm:odemain} by formulating it in terms of (meromorphic) connections.
First, we introduce the notions of irreducibility and exceptionality for connections.
\begin{definition}\label{def:odeconnirred}
Let $(\calV,\Omega)\in\scrP(\calT,\calS)$ be a connection with $\calV\neq \{0\}$, whose $1$-form $\Omega$ is given by \eqref{eq:odeconnOmegaEnd}.
\begin{itemize}
	\item We say that $(\calV,\Omega)$ is \emph{irreducible} if there is no $\langle S, A_1,\ldots,A_q\rangle$-invariant subspace of $\calV$ except $\calV$ and $\{0\}$.
	\item We say that $(\calV,\Omega)$ is \emph{exceptional} if $\dim\calV=1$ and $A_i=0$ for all $i=1,\ldots,q$. 
	Otherwise, we say it is \emph{non-exceptional}.
\end{itemize}
After fixing an isomorphism $\calV\simeq \bbC^N$, the above terminology agrees with the corresponding terminology for the 1-form \eqref{eq:odeconnOmega} in the sense of Definition \ref{def:odeirred}.
\end{definition}

	\begin{remark}\label{rem:odeDR}
	If a connection $(\mathcal{V},\Omega)\in\scrP(\calT,\calS)$ with the $1$-form \eqref{eq:odeconnOmegaEnd} is irreducible and non-exceptional, then it holds that 
	\begin{align}
	&\bigcap_{i=1}^{q}\Ker A_{i} \cap \Ker (S+c)=\{0\}	\quad (\forall c\in\mathbb{C})\tag{$\star$},\label{eq:odeDRstar1}\\
	&\sum_{i=1}^q \im A_{i}+\im (S+c)=\mathcal{V} \quad (\forall c\in\mathbb{C}).	\label{eq:odeDRstar2}\tag{$\star\star$}
	\end{align}
	These conditions are additive versions of the conditions $(*)$ and $(**)$ in Dettweiler-Reiter \cite[\S 3]{DR2000}.
	\end{remark}
Theorem \ref{thm:odemain} can be rephrased as follows. 
\begin{theorem}\label{thm:odeconn}
Suppose that $(\mathcal{V},\Omega)\in\scrP(\calT,\calS)$ is irreducible and non-exceptional.
Then the following hold.
	\begin{enumerate}[(i)]
	\item $\mathcal{ML}^{-1}\circ\mathcal{ML}(\mathcal{V},\Omega)\sim(\mathcal{V},\Omega)$ and 
	$\mathcal{ML}\circ\mathcal{ML}^{-1}(\mathcal{V},\Omega)\sim(\mathcal{V},\Omega)$. 

	\item Both $\mathcal{ML}(\mathcal{V},\Omega)$ and $\mathcal{ML}^{-1}(\mathcal{V},\Omega)$ are also irreducible and non-exceptional. 
		\end{enumerate}
Here, the symbol $\sim$ denotes an isomorphism of connections.
\end{theorem}

\subsection{Inversion formula}
\label{subsec:invformulaode}
This subsection is devoted to the proof of the inversion formula (Theorem \ref{thm:odeconn}~(i)).
We will prove only the statement 
\[
\mathcal{ML}^{-1}\circ\mathcal{ML}(\mathcal{V},\Omega)\sim(\mathcal{V},\Omega),
\] 
since the other one can be shown in the same way.

Suppose that $(\calV,\Omega)\in\scrP(\calT,\calS)$ is irreducible and non-exceptional, and that the $1$-form $\Omega$ is expressed as \eqref{eq:odeconnOmegaEnd}. 
Let $\calS'$ be the set of eigenvalues of $S$. 
Then we have $(\calV,\Omega)\in\scrP(\calT,\calS')$.
Since $\calS'\subseteq\calS$, the category $\scrP(\calT,\calS')$ is a full subcategory of $\scrP(\calT,\calS)$. 
Therefore, for the proof of the inversion formula, it suffices to prove the assertion after replacing $\calS$ by $\calS'$.
After this replacement, using the same notation $\calS=\{\alpha_1,\ldots,\alpha_{\hat q}\}$, we may assume in \eqref{eq:odeconnOmega} that $N_j\ge1$ for all $j=1,2,\ldots,\hat{q}$.

Our goal is to show the existence of an isomorphism as connections
\begin{equation}
\mathcal{ML}^{-1}(\mathcal{ML}(\calV,\Omega))\to (\calV,\Omega).
\end{equation}
To this end, we will proceed in the following two steps. 
\begin{enumerate}[\bf Step 1.]
\item  Show that $(\calV,\Omega)$ is isomorphic to the quotient connection
\[
(\calV,\Omega) \sim \calL^{-1}(\calL(\calV,\Omega))/\calK'(\calV,\Omega),
\]
where $\calK'(\calV,\Omega)$ is a subconnection of $\calL^{-1}(\calL(\calV,\Omega))$ to be introduced later.

\item Show that 
\begin{equation}\label{eq:odeinvStep2}
\calL^{-1}(\calL(\calV,\Omega))/\calK'(\calV,\Omega)\sim \mathcal{ML}^{-1}(\mathcal{ML}(\calV,\Omega)).
\end{equation}
\end{enumerate}

We begin with \textbf{Step~1}, that is, we describe the connection $(\calV,\Omega)$ as a quotient of $\calL^{-1}(\calL(\calV,\Omega))$. 
Fix an isomorphism (a basis) $\calV\cong\bbC^N$ such that the $1$-form $\Omega$ is expressed as \eqref{eq:odeconnOmega} with $S$ diagonalized as in \eqref{eq:Sdiag}. 
That is, the coefficient matrix $A(x)$ of $\Omega$ is given by
\begin{equation}\label{eq:coeffinvform}
A(x)=S+\sum_{i=1}^q \frac{A_i}{x-a_i}, 
\quad 
S, A_i \in\mathrm{Mat}(N,\bbC),
\end{equation}
where 
\begin{equation}\label{eq:Sdiaginvform}
S=\begin{pmatrix}
\alpha_1 I_{N_1} & & & \\
& \alpha_2 I_{N_2}& & \\
& & \ddots & \\
& & & \alpha_{\hat{q}}I_{N_{\hat{q}}}
\end{pmatrix}.
\end{equation}
By applying the Laplace transform functor and the inverse Laplace transform functor to $(\calV,\Omega)$ in succession, we obtain the connection
	\begin{equation}\label{eq:odeLL}
	\mathcal{L}^{-1}(\mathcal{L}(\mathcal{V},\Omega))=((\mathcal{V}^q)^{\hat{q}},\mathcal{L}^{-1}(\mathcal{L}(\Omega))).
	\end{equation}
For this connection, we now fix a basis $(\calV^q)^{\hat{q}} \cong (\mathbb{C}^{Nq})^{\hat{q}}$ so that the matrix representation of the associated $1$-form 
\begin{equation}\label{eq:ode1-formLinvL}
\calL^{-1}(\calL(\Omega))=G_0\,dx+\sum_{i=1}^{q}G_{i}d\log(x-a_i)
\end{equation}
is obtained by successively applying the Laplace transform and then the inverse Laplace transform (see Definitions~\ref{def:odeLaplace} and~\ref{def:odeinvLaplace}) to the original $1$-form $\Omega$. 
A direct calculation yields the explicit form of the matrices $G_0, G_i\in\mathrm{Mat}(Nq\hat{q},\bbC)$ as follows:
	\begin{align}
	&G_0=
	\begin{pmatrix}
		\alpha_{1}I_{qN} & & &  \\
		& \alpha_{2}I_{qN}   & &  \\
& & \ddots & \\
& & & \alpha_{\hat{q}}I_{qN}
	\end{pmatrix},
	\label{eq:odeG0} 
\\
	&G_{i}
	=\begin{pmatrix}
		G_1^i & G_2^i& \cdots & G_{\hat{q}}^i \\
		G_1^i & G_2^i& \cdots & G_{\hat{q}}^i \\
		\vdots & \vdots &  & \vdots \\
		G_1^i & G_2^i& \cdots & G_{\hat{q}}^i
	\end{pmatrix} 
	,\quad 
	G_{j}^{i}=
	\begin{pmatrix}
	O_N & O_N & \cdots   &O_N \\
	\vdots & \cdots & \cdots &  \vdots \\
	E_{N_j}A_{1} & E_{N_j}A_{2} & \cdots & E_{N_j}A_{q}  \\
	\vdots & \cdots & \cdots &  \vdots \\
	O_N & O_N & \cdots  &O_N \\	
	\end{pmatrix}(\,i.
	\label{eq:odeGi} 
	\end{align}

We then define a linear map $\varphi:(\mathcal{V}^q)^{\hat{q}}\to\mathcal{V}$ as follows:
for $\bm{v}={}^t(v_1,\ldots,v_{\hat{q}})\in(\mathcal{V}^q)^{\hat{q}}$, where each $v_j={}^t(v_1^j,\ldots,v_q^j)\in\mathcal{V}^q$, we set
	\begin{equation}\label{eq:odevarphi}
	\varphi(\bm{v}):=\sum_{j=1}^{\hat{q}}E_{N_j} \sum_{k=1}^q A_{k}v_k^j.
	\end{equation}
Then, this linear map \eqref{eq:odevarphi} defines a morphism of connections.
\begin{proposition}
\label{prop:odevarphimor}
	The linear map \eqref{eq:odevarphi} defines a morphism of connections 
	\begin{equation}\label{eq:odevarphiasmor}
	\varphi:\mathcal{L}^{-1}(\mathcal{L}(\mathcal{V},\Omega))=((\calV^q)^{\hat{q}},\calL^{-1}(\calL(\Omega)))
	\to(\mathcal{V},\Omega).
	\end{equation}
\end{proposition}
\begin{proof}
It is sufficient to show that
\begin{align}
 &\varphi\circ G_0 =S \circ \varphi, \label{eq:odemorproof0}\\
 &\varphi\circ G_i=A_i \circ \varphi \quad (1\le i\le q). \label{eq:odemorproofi}
\end{align}
We prove the first statement \eqref{eq:odemorproof0}. 
Let $\bm{v}={}^t(v_1,\ldots,v_{\hat{q}})\in(\mathcal{V}^q)^{\hat{q}}$, where each $v_i={}^t(v_1^i,\ldots,v_q^i)\in\mathcal{V}^q$. 
Then, we have
\begin{align}
G_0\bm{v}=	\begin{pmatrix}
		\alpha_{1}I_{qN} & & &  \\
		& \alpha_{2}I_{qN}   & &  \\
& & \ddots & \\
& & & \alpha_{\hat{q}}I_{qN}
	\end{pmatrix}
	\begin{pmatrix}
	v_1 \\
	v_2 \\
	\vdots \\
	v_{\hat{q}}
	\end{pmatrix} 
	=
		\begin{pmatrix}
	\alpha_{1} v_1 \\
	\alpha_{2} v_2 \\
	\vdots \\
	\alpha_{\hat{q}} v_{\hat{q}}
	\end{pmatrix}
\end{align}
and hence
\begin{align}
(\varphi\circ G_0)(\bm{v})=\sum_{j=1}^{\hat{q}}E_{N_j} \sum_{k=1}^q  A_{k}(\alpha_j v_k^j)
=\sum_{j=1}^{\hat{q}}\alpha_j E_{N_j} \sum_{k=1}^q  A_{k}v_k^j.
\end{align}
On the other hand, since $S=\alpha_1 E_{N_1}+\alpha_2 E_{N_2}+\cdots+\alpha_{\hat{q}}E_{N_{\hat{q}}}$ and $E_{N_j}E_{N_k}=\delta_{jk}E_{N_j}$ (where $\delta_{jk}$ denotes the Kronecker delta), we have
\begin{align}
(S\circ \varphi)(\bm{v})&=S\sum_{j=1}^{\hat{q}}E_{N_j} \sum_{k=1}^q A_{k}v_k^j \\
&=(\alpha_1 E_{N_1}+\alpha_2 E_{N_2}+\cdots+\alpha_{\hat{q}}E_{N_{\hat{q}}})\sum_{j=1}^{\hat{q}}E_{N_j} \sum_{k=1}^q A_{k}v_k^j \\
&= \sum_{j=1}^{\hat{q}}\alpha_j E_{N_j}\sum_{k=1}^q  A_{k}v_k^j,
\end{align}
which shows \eqref{eq:odemorproof0}.

We proceed to show the statement \eqref{eq:odemorproofi}.
Take $\bm{v}={}^t(v_1,\ldots,v_{\hat{q}})\in(\mathcal{V}^q)^{\hat{q}}$, where each $v_j={}^t(v_1^j,\ldots,v_q^j)\in\mathcal{V}^q$. 
Then, we have 
\begin{align}
G_{i}\bm{v}=\begin{pmatrix}
		G_1^i & G_2^i& \cdots & G_{\hat{q}}^i \\
		G_1^i & G_2^i& \cdots & G_{\hat{q}}^i \\
		\vdots & \vdots &  & \vdots \\
		G_1^i & G_2^i& \cdots & G_{\hat{q}}^i
	\end{pmatrix} 
	\begin{pmatrix}
	v_1 \\
	v_2 \\
	\vdots \\
	v_{\hat{q}}
	\end{pmatrix} 
	=
	\begin{pmatrix}
	w_1 \\
	w_2 \\
	\vdots\\
	w_{\hat{q}}
	\end{pmatrix}, 
\end{align}
where
\begin{align}
w_j&=\sum_{k=1}^{\hat{q}}G_k^i v_k=\sum_{k=1}^{\hat{q}}\begin{pmatrix}
	O_N & O_N & \cdots   &O_N \\
	\vdots & \cdots & \cdots &  \vdots \\
	E_{N_k}A_{1} & E_{N_k}A_{2} & \cdots & E_{N_k}A_{q}  \\
	\vdots & \cdots & \cdots &  \vdots \\
	O_N & O_N & \cdots  &O_N \\	
	\end{pmatrix}(i
	\begin{pmatrix}
	v_1^k \\
	v_2^k \\
	\vdots \\
	v_q^k
	\end{pmatrix} \\
&=\sum_{k=1}^{\hat{q}}
	\begin{pmatrix}
	0_N \\
	\vdots \\
	\ds E_{N_k}\sum_{l=1}^q A_lv_l^k \\
	\vdots \\
	0_N
	\end{pmatrix}(i 
=
	\begin{pmatrix}
	0_N \\
	\vdots \\
	\ds\sum_{k=1}^{\hat{q}}E_{N_k}\sum_{l=1}^q A_lv_l^k \\
	\vdots \\
	0_N
	\end{pmatrix}(i 
=	\begin{pmatrix}
	0_N \\
	\vdots \\
	\varphi(\bm{v})\\
	\vdots \\
	0_N
	\end{pmatrix}(i
	=:\begin{pmatrix}
	w_1^j \\
	\vdots \\
	w_i^j \\
	\vdots \\
	w_q^j
	\end{pmatrix}.
\end{align}
Although $w_1 = w_2 = \cdots = w_{\hat{q}}$ and $w_m^j=0$ ($m\neq i$), we retain the subscripts for the sake of clarity in subsequent calculations.
Then, we obtain
\begin{align}
(\varphi\circ G_i)(\bm{v})&=\sum_{j=1}^{\hat{q}}E_{N_j} \sum_{m=1}^q  A_{m}w_{m}^j 
=\sum_{j=1}^{\hat{q}}E_{N_j}  A_{i}w_{i}^j 
 \\
&=\sum_{j=1}^{\hat{q}}E_{N_j}  A_{i}\varphi(\bm{v}) =(E_{N_1}+E_{N_2}+\cdots+E_{N_{\hat{q}}})A_{i}\varphi(\bm{v}) \\
&=A_i \varphi(\bm{v}),
\end{align}
which shows \eqref{eq:odemorproofi}.
\end{proof}

Here, we shall show the properties of \eqref{eq:odevarphi}.
\begin{lemma}\label{lem:odevarphi}
The linear map \eqref{eq:odevarphi} satisfies 
	\[
	\Ker \varphi=\mathcal{K}', 
	\quad 
	\im \varphi=\mathcal{V}.
	\]
Here, 
	\begin{equation}\label{eq:odeK'}
	\mathcal{K}':=
	\left\{
	\begin{pmatrix} 
	v_1 \\
	\vdots \\
	v_{\hat{q}}
	\end{pmatrix}
	\in (\mathcal{V}^q)^{\hat{q}}
	~\middle \vert\,
	\begin{array}{l} 
	v_j={}^t(v_1^j,\ldots,v_q^j)\in\mathcal{V}^q
	\text{ satisfies}
	\\\ds
	E_{N_j}\sum_{k=1}^q A_{k}v_k^j=0
	\quad
	(j=1,\ldots,\hat{q})
	\end{array}
	\right\}.
	\end{equation}
\end{lemma}
\begin{proof}
First, we show that $\Ker \varphi=\mathcal{K}'$. 
Since the inclusion $\Ker \varphi\supset\mathcal{K}'$ is obvious, it suffices to prove $\Ker \varphi\subset\mathcal{K}'$.
Take $\bm{v}={}^t(v_1,\ldots,v_{\hat{q}}) \in \Ker\varphi\subset(\mathcal{V}^q)^{\hat{q}}$, and write each $v_j={}^t(v_1^j,\ldots,v_q^j)$.
Then we have 
	\begin{align*}
	\varphi(\bm{v})&=\sum_{j=1}^{\hat{q}}E_{N_j} \sum_{k=1}^q A_{k}v_k^j \\
	&=\begin{pmatrix}
	I_{N_1} & & & \\
	& O_{N_2} & & \\
	& & \ddots & \\
	& & & O_{N_{\hat{q}}}
	\end{pmatrix}
	\sum_{k=1}^q A_{k}v_k^1 
	+\cdots
	+\begin{pmatrix}
	O_{N_1} & & & \\
	& O_{N_2} & & \\
	& & \ddots & \\
	& & & I_{N_{\hat{q}}}
	\end{pmatrix}
	\sum_{k=1}^q A_{k}v_k^{\hat{q}} \\
	&
	=\begin{pmatrix}
	0_{N_1} \\
	\vdots \\
	0_{N_{\hat{q}}}
	\end{pmatrix}.
	\end{align*}
Hence, for each $j=1,\ldots,\hat{q}$, we have
	\[
	E_{N_j}\sum_{k=1}^q A_{k}v_{k}^{j}=0,
	\]
which shows $\bm{v}\in\mathcal{K}'$.
This proves $\Ker \varphi = \mathcal{K}'$.

Next, we show the statement $\im\varphi=\mathcal{V}$.
Take $w \in \mathcal{V}$ and express
	\[
	w=E_{N_1}w+E_{N_2}w+\cdots+E_{N_{\hat{q}}}w.
	\]
Since $(\calV,\Omega)$ is irreducible and non-exceptional, by \eqref{eq:odeDRstar2}, for each $j=1,\ldots,\hat{q}$, we have
	\[
	\sum_{i=1}^q \im A_{i}+\im \left(S-\alpha_j\right)=\mathcal{V}.
	\]
Thus, for each $j=1,\ldots,\hat{q}$, there exist $v_0^j, v_1^j,\ldots,v_q^j\in \calV$ such that 
	\[
	E_{N_j}w=\sum_{i=1}^q A_{i}v_{i}^{j}+\left(S-\alpha_{j}\right)v_0^j.
	\]
By multiplying $E_{N_j}$, we have
	\[
	E_{N_j}w=E_{N_j}\sum_{i=1}^q A_{i}v_{i}^{j}
	\]
since $E_{N_j} (S - \alpha_{j}) = O$.
Now, define $\bm{v} = {}^t(v_1, \ldots, v_{\hat{q}}) \in (\mathcal{V}^q)^{\hat{q}}$ with $v_j = {}^t(v_1^j, \ldots, v_q^j) \in \mathcal{V}^q$. 
Then, we have
\[
\varphi(\bm{v}) = \sum_{j=1}^{\hat{q}} E_{N_j} \sum_{i=1}^q A_{i} v_i^j = \sum_{j=1}^{\hat{q}} E_{N_j} w = w.
\]
This shows that $\varphi$ is surjective, as desired.
\end{proof}
As a corollary of Proposition \ref{prop:odevarphimor} and Lemma \ref{lem:odevarphi}, we have the following.
\begin{corollary}
The connection
\[
\mathcal{K}'(\mathcal{V},\Omega):=(\mathcal{K}',\mathcal{L}^{-1}(\mathcal{L}(\Omega))\vert_{\mathcal{K}'}) \in \scrP(\calT,\calS)
\]
is a subconnection of $\mathcal{L}^{-1}(\mathcal{L}(\mathcal{V},\Omega))$. 
Moreover, the morphism \eqref{eq:odevarphiasmor} induces the isomorphism of connections
	\begin{equation}\label{eq:odemorphi}
	\bar{\varphi}: \mathcal{L}^{-1}(\mathcal{L}(\mathcal{V},\Omega))/\mathcal{K}'(\mathcal{V},\Omega)\stackrel{\sim}{\longrightarrow}(\mathcal{V},\Omega).
	\end{equation}
\end{corollary}
We note that the quotient connection $\mathcal{L}^{-1}(\mathcal{L}(\mathcal{V},\Omega))/\mathcal{K}'(\mathcal{V},\Omega)$ is described as
\begin{equation}
\mathcal{L}^{-1}(\mathcal{L}(\mathcal{V},\Omega))/\mathcal{K}'(\mathcal{V},\Omega)=\left((\mathcal{V}^q)^{\hat{q}}/\mathcal{K}',
	\mathcal{L}^{-1}(\mathcal{L}(\Omega))\Big\vert_{(\mathcal{V}^q)^{\hat{q}}/\mathcal{K}'}\right).
\end{equation}

Let us proceed to \textbf{Step~2}, that is, we shall prove \eqref{eq:odeinvStep2}.
To do that, we prepare the following lemma concerning the expression of $\calK'$, which plays an important role later in the proof.

\begin{lemma}\label{lem:odeK'}
The vector space $\mathcal{K}'$ defined in \eqref{eq:odeK'} can be expressed as
	\begin{equation}\label{Eq_K'2ode}
	\mathcal{K}'
	=\left\{
	\begin{pmatrix} 
	v_1 \\
	\vdots \\
	v_{\hat{q}}
	\end{pmatrix}
	\in (\mathcal{V}^q)^{\hat{q}}
	~\middle \vert\,
	B_{j}v_j \in \mathcal{K}_{\calV}
	~ (1\le j \le\hat{q})
	\right\}
	\end{equation}
where $B_{j}$ and $\calK_{\calV}$ are given by \eqref{eq:odeB} and \eqref{eq:odeKv}, respectively.
\end{lemma} 
\begin{proof}
We abbreviate the right-hand side of \eqref{Eq_K'2ode} to $\mathcal{K}_0$ for short. 
We take $\bm{v}={}^t(v_1,\ldots,v_{\hat{q}})\in \mathcal{K}'$.
Then, we have $B_{j}v_j =0$ $(j=1,\ldots,\hat{q})$ by the definition \eqref{eq:odeK'} of $\mathcal{K}'$, which shows $\bm{v}\in \mathcal{K}_0$ and hence
	$
	\mathcal{K}'
	\subset
	\mathcal{K}_0.
	$

Conversely, let $\bm{v}={}^t(v_1,\ldots,v_{\hat{q}})\in \mathcal{K}_0$ where each
	$
	v_j={}^t(v_1^j,\ldots,v_q^j)\in\mathcal{V}^q
	$.
Then we have
	\[
		B_{j}v_j=
	-
	\begin{pmatrix} \ds
	E_{N_j}A_{1} & E_{N_j}A_{2}& \cdots &E_{N_j}A_{q} \\ 
	E_{N_j}A_{1} & E_{N_j}A_{2}& \cdots & E_{N_j}A_{q} \\
	\vdots & \vdots &  &\vdots\\
	E_{N_j}A_{1}& E_{N_j}A_{2}&\cdots &E_{N_j}A_{q} 
	\end{pmatrix}
	\begin{pmatrix}
	v_1^j \\ 
	v_2^j \\
	\vdots \\
	v_q^j
	\end{pmatrix}
	=
	-
	\begin{pmatrix}
	E_{N_j}\sum_{k=1}^q A_{k}v_k^j \\
	E_{N_j}\sum_{k=1}^q A_{k}v_k^j \\
	\vdots \\
	E_{N_j}\sum_{k=1}^q A_{k}v_k^j
	\end{pmatrix}
	\]
	for each $j=1,\ldots, \hat{q}$. 
	Since we assumed $B_{j}v_j \in \mathcal{K}_{\calV}$, we have
	\[
	E_{N_j}\sum_{k=1}^q A_{k}v_k^j
	\in
	\bigcap_{i=1}^q \Ker A_{i}.
	\]
Besides, it holds that
	\[
	\left(S-\alpha_{j}\right)E_{N_j}\sum_{k=1}^q A_{k}v_k^j
	=0.
	\]
Therefore we have
	\[
	E_{N_j}\sum_{k=1}^q A_{k}v_k^j\in \bigcap_{i=1}^q\Ker A_{i} \cap \Ker\left(S-\alpha_j\right) 
	=\{0\}
	\]
for each $j=1,\ldots,\hat{q}$.
Here we used the property \eqref{eq:odeDRstar1}. 
As a consequence, we obtain that each $v_j\in\calV^q$ satisfies
	\[
	E_{N_j}\sum_{k=1}^q A_{k}v_k^j=0
	\quad (j=1,\ldots,\hat{q}).
	\]
It shows the desired implication
	$
	\mathcal{K}_0\subset\mathcal{K}'
	$.
\end{proof}
We now consider showing the isomorphism \eqref{eq:odeinvStep2}.
To do this, we first construct a morphism from $\calL^{-1}(\calL(\mathcal{V},\Omega))$ to $\calL^{-1}(\mathcal{ML}(\mathcal{V},\Omega))$.
Let $f_{*}:\calL(\mathcal{V},\Omega)\to\mathcal{ML}(\mathcal{V},\Omega)$ be a quotient morphism. 
It is nothing but the quotient map from $\mathcal{V}^q$ to $\mathcal{V}^q/\mathcal{K}$ as a linear map. 
Then, by definition, we see that the morphism
$f:=\calL^{-1}(f_{*})$ from $\calL^{-1}(\calL(\mathcal{V},\Omega))$ to $\calL^{-1}(\mathcal{ML}(\mathcal{V},\Omega))$ is given by the linear map
	\begin{equation}\label{Eq_mor_f_ode}
	\begin{array}{ccccc}
	f:&(\mathcal{V}^q)^{\hat{q}}& \to & (\mathcal{V}^q/\mathcal{K}_{\calV})^{\hat{q}}& \\
	& \rotatebox{90}{$\in$}  & & \rotatebox{90}{$\in$} & \\
	& \begin{pmatrix}v_1 \\
	\vdots \\
	v_{\hat{q}}
	\end{pmatrix} & \mapsto & 
	\begin{pmatrix}\bar{v}_1 \\
	\vdots \\
	\bar{v}_{\hat{q}}
	\end{pmatrix}, &\quad \bar{v}_j=v_j+\mathcal{K}_{\calV}.
	\end{array}
	\end{equation}
We then construct a morphism 
\begin{equation}
f':\calL^{-1}(\calL (\mathcal{V},\Omega))/\mathcal{K}'(\mathcal{V},\Omega)\to\mathcal{ML}^{-1}(\mathcal{ML}(\mathcal{V},\Omega))
\end{equation}
 as follows. 
Let $\pi:\calL^{-1}(\mathcal{ML}(\mathcal{V},\Omega))\to\mathcal{ML}^{-1}(\mathcal{ML}(\mathcal{V},\Omega))$ be the quotient morphism.
As a linear map, the quotient morphism $\pi$ is nothing but
\begin{equation}
\pi:(\mathcal{V}^q/\mathcal{K}_{\calV})^{\hat{q}} \to (\mathcal{V}^q/\mathcal{K}_{\calV})^{\hat{q}}\left/\bigoplus_{j=1}^{\hat{q}}\Ker \bar{B}_{j}, \right.
\end{equation}
where $\bar{B}_{j}$ are the coefficient matrices of $\mathcal{ML}(\Omega)$ in \eqref{eq:ode1formML}.
Therefore, the composition $\pi\circ f:\calL^{-1}(\calL (\mathcal{V},\Omega))\to\mathcal{ML}^{-1}(\mathcal{ML}(\mathcal{V},\Omega))$ is given by the linear map
	\[
	\begin{array}{ccccc}
	\pi\circ f:&(\mathcal{V}^q)^{\hat{q}}& \to & \ds(\mathcal{V}^q/\mathcal{K}_{\calV})^{\hat{q}}\left/\bigoplus_{j=1}^{\hat{q}}\Ker \bar{B}_{j}\right.& \\
	& \rotatebox{90}{$\in$}  & & \rotatebox{90}{$\in$} & \\
	& \begin{pmatrix}
	v_1 \\ 
	\vdots \\
	v_{\hat{q}}
	\end{pmatrix} & \mapsto & 
	\begin{pmatrix}
	\bar{v}_1+\Ker \bar{B}_{1} \\
	\vdots \\
	\bar{v}_{\hat{q}}+\Ker \bar{B}_{\hat{q}}
	\end{pmatrix}, & 
	\bar{v}_j=v_j+\mathcal{K}_{\calV}.
	\end{array}
	\]
For $\bm{v},\bm{w} \in (\mathcal{V}^q)^{\hat{q}}$ satisfying $\bm{v}-\bm{w}\in\mathcal{K}'$, it can be easily verified that
	\[
	(\pi\circ f)(\bm{v})=(\pi\circ f)(\bm{w})
	\]
by using Lemma \ref{lem:odeK'}.
Hence, the linear map $\pi\circ f$ induces the linear map
	\begin{equation}\label{Eq_f'}
	\begin{array}{ccccc}
	f':&(\mathcal{V}^q)^{\hat{q}}/\mathcal{K}'& ~~~~~\to &~~~~~\ds (\mathcal{V}^q/\mathcal{K}_{\calV})^{\hat{q}}\left/\bigoplus_{j=1}^{\hat{q}}\Ker \bar{B}_{j} \right.& \\
	& \rotatebox{90}{$\in$}  & & \rotatebox{90}{$\in$} & \\
	& \begin{pmatrix}
	v_1 \\ 
	\vdots \\
	v_{\hat{q}}
	\end{pmatrix}+\mathcal{K}' &~~~~ \mapsto & 
	\begin{pmatrix}
	\bar{v}_1+\Ker \bar{B}_{1} \\
	\vdots \\
	\bar{v}_{\hat{q}}+\Ker \bar{B}_{\hat{q}}
	\end{pmatrix}, & 
	\bar{v}_j=v_j+\mathcal{K}_{\calV}.
	\end{array}
	\end{equation}
	%
We can check that this linear map defines a morphism of connections. 
Then, we obtain the morphism 
\begin{equation}\label{eq:odemorf'}
f':\calL^{-1}(\calL (\mathcal{V},\Omega))/\mathcal{K}'(\mathcal{V},\Omega)\to\mathcal{ML}^{-1}(\mathcal{ML}(\mathcal{V},\Omega)).
\end{equation}
Then, the diagram
	\begin{equation}\label{Eq_com_MLMode}
		\begin{tikzcd}
		\calL^{-1}(\calL (\mathcal{V},\Omega)) \arrow[r,"\pi'"]
		\arrow[d,"f "']
		&\calL^{-1}(\calL (\mathcal{V},\Omega))/\mathcal{K}'(\mathcal{V},\Omega)\arrow[d,"f'"]\\
		 \calL^{-1}(\mathcal{ML}(\mathcal{V},\Omega))  \arrow[r,"\pi"]
		&\mathcal{ML}^{-1}(\mathcal{ML}(\mathcal{V},\Omega))
	\end{tikzcd}
	\end{equation}
is commutative.
Here $\pi'$ denotes the quotient morphism.
Finally, we shall show that the morphism \eqref{eq:odemorf'} indeed gives an isomorphism \eqref{eq:odeinvStep2} of connections, which follows from the following proposition.
\begin{proposition}\label{Prop_bijc}
The linear map $f'$ defined by \eqref{eq:odemorf'} is bijective.
\end{proposition}
\begin{proof}
The surjectivity follows from the surjectivity of $f$ and the commutative diagram \eqref{Eq_com_MLMode}.
We shall show the injectivity. 
Suppose that $\bm{v}={}^t(v_1,\ldots,v_{\hat{q}})\in(\mathcal{V}^q)^{\hat{q}}$ satisfies $f'(\bm{v}+\mathcal{K}')=0$. 
Then we have $\bar{B}_{j}\bar{v}_j=0$ in $\mathcal{V}^q/\mathcal{K}_{\calV}$ ($1\le j\le \hat{q}$). 
This means that $B_{j}v_j\in\mathcal{K}_{\calV}$ ($1\le j\le \hat{q}$).
Then, by the expression \eqref{Eq_K'2ode}, we have $\bm{v}\in \mathcal{K}'$.
\end{proof}

As a consequence, we have an isomorphism of connections
	\begin{equation}
	\bar{\varphi}\circ (f')^{-1}:\mathcal{ML}^{-1}(\mathcal{ML}(\mathcal{V},\Omega))\stackrel{\sim}{\longrightarrow}(\mathcal{V},\Omega),
	\end{equation}
which shows $\mathcal{ML}^{-1}\circ\mathcal{ML}(\mathcal{V},\Omega)\sim(\mathcal{V},\Omega)$, as desired.

\subsection{Irreducibility}
This subsection is devoted to proving Theorem \ref{thm:odeconn} (ii).
Let us prepare a lemma.
\begin{lemma}[cf. Lemma 2.8 in Dettweiler-Reiter \cite{DR2000}]\label{Lem_inclusion}
The functor $\mathcal{ML}:\scrP(\calT,\calS)\to\scrP(\calS,-\calT)$ satisfies the following.
	\begin{enumerate}[(i)]
	\item It preserves the inclusion.
	Namely, if $(\mathcal{W},\Omega\vert_{\mathcal{W}}) \in \scrP(\calT,\calS)$ is a subconnection of $(\mathcal{V},\Omega) \in \scrP(\calT,\calS)$, then
	\begin{equation}\label{Eq_lem_incode}
	\mathcal{ML}(\mathcal{W},\Omega\vert_{\mathcal{W}})
	\subset
	\mathcal{ML}(\mathcal{V},\Omega).
	\end{equation}
	\item It preserves the direct sum.
	Namely, if $(\mathcal{V},\Omega)$ can be written $(\mathcal{V},\Omega)=(\mathcal{W}_1,\Omega_1)\oplus(\mathcal{W}_2,\Omega_2)$ by $(\mathcal{W}_1,\Omega_1), (\mathcal{W}_2,\Omega_2) \in \scrP(\calT,\calS)$, then
	\[
	\mathcal{ML}(\mathcal{V},\Omega)=
	\mathcal{ML}(\mathcal{W}_1,\Omega_1)\oplus\mathcal{ML}(\mathcal{W}_2,\Omega_2).
	\]
	\end{enumerate}
The functor $\mathcal{ML}^{-1}:\scrP(\calT,\calS)\to\scrP(-\calS,\calT)$ also satisfies the above two statements.
\end{lemma}
\begin{proof}
We prove the assertion (i) for $\mathcal{ML}$. 
Since the functor $\mathcal{L}$ is exact (see Proposition \ref{prop:odefuncLexact}), it preserves the inclusion. 
Hence, we have $\calL(\mathcal{W},\Omega\vert_{\mathcal{W}})\subset\calL(\mathcal{V},\Omega)$ as connection. 
Note that the morphism $\iota:=\calL(\iota_{*})$ coming from the inclusion $\iota_{*}:\mathcal{W}\hookrightarrow\mathcal{V}$ gives the inclusion from $\calL(\mathcal{W},\Omega\vert_{\mathcal{W}})\subset\calL(\mathcal{V},\Omega)$. 
Here, it is nothing but the inclusion map $\iota:\mathcal{W}^q\hookrightarrow\mathcal{V}^q$ as a linear map.
Since $\mathcal{K}_{\mathcal{W}}=\mathcal{W}^q\cap \mathcal{K}_{\mathcal{V}}$, the linear map $\iota$ induces the inclusion map
	\[
	\bar{\iota}: \mathcal{W}^q/\mathcal{K}_\mathcal{W}\hookrightarrow\mathcal{V}^q/\mathcal{K}_{\mathcal{V}}.
	\]
This can be regarded as a morphism from $\mathcal{ML}(\mathcal{W},\Omega\vert_{\mathcal{W}})$ to $\mathcal{ML}(\mathcal{V},\Omega)$ and gives the desired inclusion \eqref{Eq_lem_incode}.
We can show the assertion for $\mathcal{ML}^{-1}$ in the same way. 

The assertion (ii) follows from (i).  
\end{proof}
Now we show Theorem \ref{thm:odeconn} (ii).
\begin{proof}[Proof of Theorem \ref{thm:odeconn} (ii) \text{(cf. the proof of \cite[Corollary 3.6]{DR2000})}] 
Suppose that $(\calV,\Omega)\in\scrP(\calT,\calS)$ is irreducible and non-exceptional. 
Fix an isomorphism $\calV\cong\bbC^N$ such that the $1$-form $\Omega$ is expressed as \eqref{eq:odeconnOmega} with $S$ diagonalized as in \eqref{eq:Sdiag}. 
Then the $1$-form $\mathcal{ML}(\Omega)$ of $\mathcal{ML}(\calV,\Omega)=(\calV^q/\calK_{\calV},\mathcal{ML}(\Omega))$ is expressed as
	\begin{equation}
	\mathcal{ML}(\Omega)=-\bar{T} dx+\sum_{j=1}^{\hat q}\bar{B}_j \,d\log(x-\alpha_j).
	\end{equation}
Here $\bar{T},\bar{B}_j$ are the endomorphisms of $\calV^q/\calK_{\calV}$ induced by $T$ and $B_j$ in \eqref{eq:odeTAS} and \eqref{eq:odeB}, respectively. 

By Theorem \ref{thm:odeconn} (i), the connection $\mathcal{ML}(\calV,\Omega)$ is nonzero.
Let $(\mathcal{W},\mathcal{ML}(\Omega)\vert_{\mathcal{W}})$ be a minimal nonzero subconnection of $\mathcal{ML}(\mathcal{V},\Omega)$. 
By the minimality, $(\mathcal{W},\mathcal{ML}(\Omega)\vert_{\mathcal{W}})$ is irreducible.

We first show that $(\mathcal{W},\mathcal{ML}(\Omega)\vert_{\mathcal{W}})$ is non-exceptional. 
Suppose, to the contrary, that it is exceptional. 
Then $\dim \calW=1$. 
Since $\bar{T}$ is induced by $T=\mathrm{diag}[a_1I_N,\ldots,a_qI_N]$, the scalar by which $\bar{T}$ acts on $\calW$ is one of $a_1,\ldots,a_q$. 
Hence there exists a unique index $m$ such that $\bar{T}|_{\calW}=a_m\,\mathrm{id}_{\calW}$. 
Moreover, $\bar{B}_{j}\vert_{\calW}=0$ for all $j=1,\ldots,\hat q$.
Take a nonzero element $\bar{w}\in\calW$ and let $w={}^t(w_1,w_2,\ldots,w_q)\in\calV^{q}$ be a representative of $\bar{w}$. 
The condition $\bar{B}_{j}\bar{w}=0$ means $B_jw\in\calK_{\calV}$ ($j=1,\ldots,\hat{q}$). 
Then, by the same argument as in the proof of Lemma \ref{lem:odeK'}, using \eqref{eq:odeDRstar1} for $(\calV,\Omega)$, we obtain
	\[
	E_{N_j}\sum_{k=1}^q A_{k}w_k=0
	\quad (j=1,\ldots,\hat{q})
	\]
and hence
	\begin{equation}\label{eq:lemirred}
	\sum_{k=1}^q A_{k}w_k=0. 
	\end{equation}
Next we consider the condition $(\bar{T}-a_m)\bar{w}=0$, which is equivalent to $(T-a_m)w\in\calK_{\calV}$. 
Thus \begin{equation}\label{eq:lemirred2}
	(a_i-a_m)A_iw_i=0 \quad (i=1,\ldots,q). 
	\end{equation}
Since $a_1,\ldots,a_q$ are distinct, we have $A_iw_i=0$ for all $i\neq m$.
Together with \eqref{eq:lemirred}, this gives $A_mw_m=0$. 
Hence $A_iw_i=0$ for all $i=1,\ldots,q$.
Therefore we obtain $w\in \calK_{\calV}$, and so $\bar{w}=0$, a contradiction. 
Thus $(\calW,\mathcal{ML}(\Omega)\vert_{\calW})$ is non-exceptional.

Applying the inverse middle Laplace transform $\mathcal{ML}^{-1}$ to $(\mathcal{W},\mathcal{ML}(\Omega)\vert_{\mathcal{W}})$, we have
	\[
	\mathcal{ML}^{-1}(\mathcal{W},\mathcal{ML}(\Omega)\vert_{\mathcal{W}})\subset (\mathcal{V},\Omega)
	\]
by Lemma \ref{Lem_inclusion} (i) and Theorem \ref{thm:odeconn} (i).
Moreover, $(\mathcal{W},\mathcal{ML}(\Omega)\vert_{\calW})$ is irreducible and non-exceptional, so Theorem \ref{thm:odeconn} (i) shows that $\mathcal{ML}^{-1}(\mathcal{W},\mathcal{ML}(\Omega)\vert_{\mathcal{W}})$ is nonzero. 
Since $(\calV,\Omega)$ is irreducible, we obtain 
	\[
	\mathcal{ML}^{-1}(\mathcal{W},\mathcal{ML}(\Omega)\vert_{\mathcal{W}})=(\mathcal{V},\Omega).
	\]
Applying the middle Laplace transform $\mathcal{ML}$ again and using Theorem \ref{thm:odeconn} (i), we get
	\[
	(\mathcal{W},\mathcal{ML}(\Omega)\vert_{\mathcal{W}})=\mathcal{ML}(\mathcal{V},\Omega).
	\]
This shows that $\mathcal{ML}(\mathcal{V},\Omega)$ is irreducible and non-exceptional. 
The assertion for $\mathcal{ML}^{-1}(\calV,\Omega)$ is proved in the same way.
\end{proof}

\section{Generalized middle convolution in one variable}
\label{sec:odemc}
By employing the middle Laplace transform, we generalize the middle convolution to linear ordinary differential equations with an irregular singular point. 
We consider the same setting as in Section~\ref{sec:odemL}, namely, the linear ordinary differential equation \eqref{eq:ode}, or equivalently, the Pfaffian system \eqref{eq:ode1form},
where $S$ is diagonalizable and satisfies 
\begin{equation}\label{eq:Sdiag2}
S\sim\begin{pmatrix}
\alpha_1 I_{N_1} & & & \\
& \alpha_2 I_{N_2}& & \\
& & \ddots & \\
& & & \alpha_{\hat{q}}I_{N_{\hat{q}}}
\end{pmatrix}.
\end{equation}
Here, $\alpha_1, \dots, \alpha_{\hat{q}} \in \mathbb{C}$, and $N = N_1 + N_2 + \cdots + N_{\hat{q}}$ is a partition of $N$.

\begin{definition}\label{def:odeaddition}
For the differential equation \eqref{eq:ode} and $\lambda=(\lambda_1,\lambda_2,\ldots,\lambda_{q})\in\bbC^{q}$, we refer to the operation that sends \eqref{eq:ode} to the equation 
\begin{align}
\frac{dv}{dx}=\left(S+\sum_{i=1}^q \frac{A_{i}+\lambda_i}{x-a_i}\right)v
\end{align}
as the \emph{addition} with parameter $\lambda$, and denote it by $add_{\lambda}$.
\end{definition}
By definition, it is clear that this operation is invertible and keeps the irreducibility. 
We then define the middle convolution as follows. 
\begin{definition}\label{def:odemc}
For the differential equation \eqref{eq:ode} and $\beta=(\beta_1,\beta_2,\ldots,\beta_{\hat{q}})\in\bbC^{\hat{
q}}$, we define the \emph{middle convolution} $mc_{\beta}$ with parameter $\beta$ by the following composition:
\begin{equation}\label{eq:odemc}
mc_{\beta}:=\mathcal{ML}^{-1}\circ add_{-\beta}\circ \mathcal{ML}.
\end{equation}
\end{definition}
We note that the addition $add_{\lambda}$ and the middle convolution $mc_{\beta}$ induce the endofunctors on $\scrP(\calT,\calS)$ with \eqref{eq:odeTS}, and we call these functors the \emph{addition functor} and the \emph{middle convolution functor}, respectively.
By abuse of notation, we also write $add_{\lambda}$ and $mc_{\beta}$ for these functors.

\medskip

As a corollary of Theorem \ref{thm:odeconn}, we obtain some fundamental properties of the middle convolution functor.
\begin{theorem}\label{thm:odemc}
	Suppose that $(\calV,\Omega)\in\scrP(\calT,\calS)$ is irreducible and non-exceptional. 
	Then 
	$mc_{0}(\calV,\Omega)\sim(\calV,\Omega)$.
	Furthermore, for any $\beta\in\bbC^{\hat{q}}$ such that $add_{-\beta}\circ\mathcal{ML}(\calV,\Omega)$ is non-exceptional, $mc_{\beta}(\calV,\Omega)$ is also irreducible and non-exceptional.
	In this case, 
	\begin{equation}\label{eq:mcpropode}
	mc_{\gamma}\circ mc_{\beta}(\calV,\Omega)\sim mc_{\beta+\gamma}(\calV,\Omega)
	\end{equation}
	holds for any $\gamma\in\bbC^{\hat{q}}$.
Analogous statements hold for the irreducible and non-exceptional equation \eqref{eq:ode}, under the corresponding non-exceptionality condition on $add_{-\beta}\circ \mathcal{ML}$.
\end{theorem}
\begin{proof}
The statement $mc_{0}(\calV,\Omega)\sim(\calV,\Omega)$ is a direct consequence of Theorem \ref{thm:odeconn} (i). 
By Theorem \ref{thm:odeconn} (ii), $\mathcal{ML}(\calV,\Omega)$ is irreducible. 
Since the addition functor preserves irreducibility, it follows that $add_{-\beta}\circ\mathcal{ML}(\calV,\Omega)$ is irreducible. 
Besides, since we assumed that $add_{-\beta}\circ\mathcal{ML}(\calV,\Omega)$ is non-exceptional, we can apply Theorem \ref{thm:odeconn} (ii) to $add_{-\beta}\circ\mathcal{ML}(\calV,\Omega)$. 
Then we see that $mc_{\beta}(\calV,\Omega)$ is irreducible and non-exceptional.
Finally, applying Theorem \ref{thm:odeconn} (i) to $add_{-\beta}\circ\mathcal{ML}(\calV,\Omega)$, we obtain
\begin{align}
	mc_{\gamma}\circ mc_{\beta}(\calV,\Omega)&=\mathcal{ML}^{-1}\circ add_{-\gamma}\circ (\mathcal{ML}\circ\mathcal{ML}^{-1})\circ add_{-\beta}\circ\mathcal{ML}(\calV,\Omega) \\
	&\sim \mathcal{ML}^{-1}\circ add_{-\gamma}\circ add_{-\beta}\circ\mathcal{ML}(\calV,\Omega) \\
	&=\mathcal{ML}^{-1}\circ add_{-(\beta+\gamma)}\circ\mathcal{ML}(\calV,\Omega)=mc_{\beta+\gamma}(\calV,\Omega).
\end{align}
This proves the assertion. 
The corresponding statement for the equation \eqref{eq:ode} follows from the connection version.
\end{proof}
Moreover, the middle convolution in Definition~\ref{def:odemc} gives a generalization of the middle convolution introduced by Dettweiler and Reiter for Fuchsian systems. 
To see this, consider the Fuchsian system
\begin{equation}\label{eq:Fuchsian}
\frac{du}{dx}=\left(\sum_{i=1}^{q}\frac{A_i}{x-a_i}\right)u, \quad A_i \in \mathrm{Mat}(N,\bbC).
\end{equation}
This corresponds to the special case of equation \eqref{eq:ode} with $\hat{q}=1$, $N_1=N$ and $\alpha_1=0$ (i.e., $S=O$). 
Here, we briefly recall the original middle convolution with $\beta\in\bbC$ introduced by Dettweiler-Reiter \cite{DR2000}. 
For the equation \eqref{eq:Fuchsian}, we set 
\begin{align}
A=\begin{pmatrix}
	A_{1} & A_{2}& \cdots &A_{q} \\
	A_{1} & A_{2}& \cdots &A_{q} \\
	\vdots & \vdots &  &\vdots\\
	A_{1} & A_{2}& \cdots &A_{q} 
	\end{pmatrix},
\quad 
C_i=\begin{pmatrix}
	O_N&\cdots&\cdots&\cdots& O_N\\
	&\cdots&\cdots&\cdots& \\
	A_1 & \cdots  & A_i+\beta& \cdots & A_q \\
	&\cdots&\cdots&\cdots& \\
	O_N&\cdots&\cdots&\cdots& O_N \\
	\end{pmatrix}(i
	\quad (1\le i\le q).
\end{align} 
Then the subspaces $\calK=\bigoplus_{i=1}^q \Ker A_i$ and 
\begin{equation}
\calK_{\infty}:=\Ker(C_1+C_2+\cdots+C_q)
=\Ker(A+\beta)
\end{equation}
are common invariant subspaces of $C_1,C_2,\ldots,C_q$. 
We note that the subspace $\calK$ is the same as the case of the middle Laplace transform.
Let $\bar{C}_i$ be the matrix which represents the linear actions of $C_i$ on the quotient space $\bbC^{qN}/(\calK+\calK_\infty)$.
Then, the operation which sends the equation \eqref{eq:Fuchsian} to 
\begin{equation}\label{eq:odeDRmcFuchsian}
\frac{dy}{dx}=\left(\sum_{i=1}^{q}\frac{\bar{C}_i}{x-a_i}\right)y
\end{equation}
is called the middle convolution with $\beta$, which was originally introduced by Dettweiler-Reiter.

We now show the composition $\mathcal{ML}^{-1}\circ add_{-\beta}\circ \mathcal{ML}$ for the Fuchsian system \eqref{eq:Fuchsian} realizes Dettweiler-Reiter's original middle convolution \eqref{eq:odeDRmcFuchsian}.
By applying the Laplace transformation in Definition~\ref{def:odeLaplace} for the Fuchsian system \eqref{eq:Fuchsian}, we obtain 
\begin{equation}
\frac{dV}{dx}=\left(-T+\frac{B_1}{x}\right)V,
\end{equation}
where 
\begin{equation}
T=\begin{pmatrix}
a_1 I_N & & & \\
& a_2 I_N & & \\
& & \ddots & \\ 
& & & a_q I_N
\end{pmatrix}, 
\quad 
B_1=-A=-\begin{pmatrix}
	A_{1} & A_{2}& \cdots &A_{q} \\
	A_{1} & A_{2}& \cdots &A_{q} \\
	\vdots & \vdots &  &\vdots\\
	A_{1} & A_{2}& \cdots &A_{q} 
	\end{pmatrix}.
\end{equation}
By projecting this equation to the quotient space $\bbC^{qN}/\calK$ with $\calK=\bigoplus_{i=1}^q \Ker A_i$, we have
\begin{equation}
\frac{dv}{dx}=\left(-\bar{T}+\frac{\bar{B}_1}{x}\right)v, 
\quad  \bar{T},\bar{B}_1 \in\mathrm{Mat}(N',\bbC),
\end{equation}
where $N'=qN-\dim\calK$ and $\bar{T},\bar{B}_1$ are matrices representing the induced action of $T,B_1$ on the quotient space $\bbC^{qN}/\calK$.
Without loss of generality, we may fix a basis of the vector space $\bbC^{N'}\cong\bbC^{qN}/\calK$ such that the matrix $\bar{T}$ is of the form
\begin{equation}
\bar{T}=\begin{pmatrix}
a_1 I_{k_1} & & & \\
 & a_2 I_{k_2} & & \\
 & & \ddots & \\
 & & & a_q I_{k_q}
\end{pmatrix},
\quad 
k_i=N-\dim \Ker A_i~(1\le i \le q).
\end{equation}
Here we note that, letting $\iota:\bbC^{qN}\to \bbC^{N'}$ be the quotient map, we have
\begin{align}
\Ker\bar{B}_1=\{\iota(v)\in\bbC^{N'} \mid v\in \Ker B_1\}.
\end{align}
We then apply the addition $add_{-\beta}$ with $-\beta$. 
Then we have
\begin{equation}
\frac{dv_{\rm add}}{dx}=\left(-\bar{T}+\frac{\bar{B}_1-\beta}{x}\right)v_{\rm add}.
\end{equation}
By applying the inverse Laplace transform in Definition \ref{def:odeinvLaplace}, we have
\begin{equation}
\frac{dW}{dx}=\left(\sum_{i=1}^{q}\frac{G_i}{x-a_i}\right)W
\end{equation}
where $G_i=-E_{k_i}(\bar{B}_1-\beta)$ for $1\le i\le q$. 
By considering the projection to the quotient space $\bbC^{N'}/\Ker(\bar{B}_1-\beta)$, we obtain
\begin{equation}\label{eq:odecompMLadd}
\frac{dw}{dx}=\left(\sum_{i=1}^{q}\frac{\bar{G}_i}{x-a_i}\right)w,
\end{equation}
where $\bar{G}_i$ denotes the matrix which represents the induced action of $G_i$ on $\bbC^{N'}/\Ker(\bar{B}_1-\beta)$. 
We shall show that the equation \eqref{eq:odecompMLadd} is constant gauge equivalent to the equation \eqref{eq:odeDRmcFuchsian}.
Noting that $\calK_{\infty}=\Ker(A+\beta)=\Ker(B_1-\beta)$, we have 
\begin{equation}\label{eq:odemcFuchscong}
\begin{aligned}
\bbC^{N'}/\Ker(\bar{B}_1-\beta)&\cong(\bbC^{qN}/\calK)/\Ker(\bar{B}_1-\beta) \\
&\cong (\bbC^{qN}/\calK)/\left\{\left(\calK+\calK_{\infty}\right)/\calK\right\} \\
&\cong \bbC^{qN}/(\calK+\calK_{\infty}).
\end{aligned}
\end{equation}
By chasing the isomorphism \eqref{eq:odemcFuchscong}, we see that the action on $\bbC^{N'}/\Ker(\bar{B}_1-\beta)$ induced by $G_i$ is the same as the action on $\bbC^{qN}/(\calK+\calK_{\infty})$ induced by the matrix
\begin{align}
\begin{pmatrix}
	O_N &  &    & & \\
	 & \ddots &  &&  \\
	 & & I_N & &  \\
	 & &&  \ddots& \\
	&  & & &O_N\\	
	\end{pmatrix}
	&
	\begin{pmatrix}
	A_1+\beta &A_2 &\cdots & A_q \\
	A_1 & A_2+\beta & \cdots &A_q \\
	\vdots & & \ddots& \vdots \\
	A_1 & A_2 & \cdots &A_q+\beta
	\end{pmatrix} \\
	&
	=\begin{pmatrix}
	O_N&\cdots&\cdots&\cdots& O_N\\
	&\cdots&\cdots&\cdots& \\
	A_1 & \cdots  & A_i+\beta& \cdots & A_q \\
	&\cdots&\cdots&\cdots& \\
	O_N&\cdots&\cdots&\cdots& O_N \\
	\end{pmatrix}(i
	=C_i.
\end{align}
This means that the tuple of matrices $(\bar{G}_1,\bar{G}_2,\ldots,\bar{G}_{q})$ is constant gauge equivalent to $(\bar{C}_1,\bar{C}_2,\ldots,\bar{C}_{q})$. 
This shows that the equation \eqref{eq:odeDRmcFuchsian} is constant gauge equivalent to the equation \eqref{eq:odecompMLadd}. 
\begin{remark}
As mentioned in the introduction, several researchers have proposed generalizations of the middle convolution for linear ordinary differential equations with irregular singular points \cite{Arinkin2010, Kawakami2010, Takemura2011, Yamakawa2011, Yamakawa2016}. 
Although our generalized middle convolution is similar to Yamakawa's in that both make use of the Laplace transform (which he interprets it as Harnad duality), our definition explicitly proceeds via the Birkhoff-Okubo normal form.
\end{remark}

\section{Middle Laplace transform in several variables}
\label{sec:PfaffmL}

In what follows, we consider generalizing the middle Laplace transform for linear Pfaffian systems of several variables. 
Namely, we prolong the middle Laplace transform, introduced for linear ordinary differential equations in the previous section, as an operation for linear Pfaffian systems. 
Let us recall the setting explained in the introduction.
Let $n$ be a natural number and $\mathcal{A}$ be a hyperplane arrangement in $\mathbb{C}^n$.
Fix a coordinate $(x_1,\ldots,x_n)$ of $\mathbb{C}^n$.
For $H \in \mathcal{A}$, we denote $f_H$ by the defining linear polynomial of $H$.
We then consider linear Pfaffian systems with logarithmic singularities along $\mathcal{A}$ and irregular singularities along the hyperplanes $\{x_i=\infty\}$ ($i=1,\ldots,n$)
	\begin{equation}\label{Eq_Pfaff}
	du=\Omega u, \quad 
	\Omega=\sum_{i=1}^nS_{x_i}(\bm{x})\,dx_i
	+\sum_{H \in \mathcal{A}}A_H \, d\log f_H,
	\end{equation}
where $\bm{x}=(x_1,\ldots,x_n)$,
	\begin{equation}\label{Eq_Si}
	S_{x_i}(\bm{x})=A_{x_i}+\sum_{\substack{j=1 \\ j\neq i}}^n A_{x_ix_j} x_j 
	\end{equation}
and $A_{x_i}, A_{x_i x_j}, A_{H}$ are constant $N\times N$ matrices.
Throughout the paper, we assume
	\begin{equation}\label{Eq_xixj}
	A_{x_ix_j}=A_{x_jx_i}~\left(\Leftrightarrow~\frac{\p S_{x_i}}{\p x_j}=\frac{\p S_{x_j}}{\p x_i}\right)\quad (i\neq j).
	\end{equation}
Then, the completely integrable condition of the system \eqref{Eq_Pfaff} is given by
	\begin{equation}\label{Eq_integrable}
	\Omega\wedge\Omega=O.
	\end{equation}
We also assume \eqref{Eq_integrable} throughout this paper.
This can be expressed as the commutative conditions for coefficient matrices $A_{x_i}$, $A_{x_i x_j}$ and $A_{H}$, which will be explained later.

	Following Haraoka \cite{Haraoka2012}, we prepare some notation.
For each $i$ ($1\le i\le n$), we set
 	\begin{equation}\label{Eq_A_xi}
 	\mathcal{A}_{x_i}=\{H \in \mathcal{A}\mid(f_{H})_{x_i} \neq 0\}.
 	\end{equation}
Then the system \eqref{Eq_Pfaff} is written as
	\begin{equation}\label{Eq_Pfaff2}
	\frac{\p u}{\p x_i}=
	\Omega_{x_i}u
	=
	\left(S_{x_i}(\bm{x})+
	 \sum_{H\in\mathcal{A}_{x_i}} A_H \frac{(f_H)_{x_i}}{f_H} \right) u \quad (1\le i\le n).
	\end{equation}
Here we note that $S_{x_i}(\bm{x})$ does not depend on $x_i$. 
We refer to the equation \eqref{Eq_Pfaff2} as the $x_i$-equation of \eqref{Eq_Pfaff}.

Take a pair of distinct variables $(x_i,x_j)$ and write $(x,y)=(x_i,x_j)$.
Then, we set
	\begin{equation}\label{Eq_notation}
	x'=(x_1,\ldots,\check{x}_i,\ldots,x_n),
	\quad 
	x''=(x_1,\ldots,\check{x}_i,\ldots,\check{x}_j,\ldots,x_n),
	\end{equation}
where $\check{x}_i$ stands for the omission of $x_i$.
Namely, $(x,x')=(x,y,x'')$.

For $H\in\mathcal{A}_x$, we define the linear polynomial $a_H \in\mathbb{C}[x']$ by
	\begin{equation}\label{Eq_aH}
	f_H=(f_H)_x(x-a_H).
	\end{equation}
Then the $x$-equation of the system \eqref{Eq_Pfaff} can be written as
	\[
	\frac{\p u}{\p x}=\left(S_{x}(\bm{x})+
	\sum_{H\in\mathcal{A}_x} \frac{A_H}{x-a_{H}}\right)u.
	\]
Similarly, for $H\in\mathcal{A}_y$, we define the linear polynomial $b_H\in\mathbb{C}[x,x'']$ by
	\[
	f_H=(f_H)_y(y-b_H).
	\]
For $H \in \mathcal{A}_x$, we define the subset
	\[
	\mathcal{C}_{H,y}
	=\{H' \in \mathcal{A}_x\setminus\{H\}\mid
	(a_{H}-a_{H'})_y \neq 0\}.
	\]
For each $H' \in \mathcal{C}_{H,y}$, we define the linear polynomial $c_{HH'}\in\mathbb{C}[x'']$ by
	\[
	a_{H}-a_{H'}=(a_{H}-a_{H'})_y(y-c_{HH'}).
	\]
Clearly, $c_{HH'}=c_{H'H}$ holds. 
\begin{example}
Set $n=3$ and consider the hyperplane arrangement $\calA=\{H_1,H_2,H_3,H_4\}$ consisting of 
\[
H_1=\{x_1=0\}, \quad
H_2=\{x_1=x_2\}, \quad 
H_3=\{2x_1+x_2+3x_3=0\}, \quad 
H_4=\{x_3=1\}.
\]
Then we have
\[
f_{H_1}=x_1,\quad f_{H_2}=x_1-x_2, \quad f_{H_3}=2x_1+x_2+3x_3, \quad f_{H_4}=x_3-1
\]
and 
\[
\calA_{x_1}=\{H_1,\, H_2,\,H_3\}, \quad 
\calA_{x_2}=\{H_2,\,H_3\}, \quad 
\calA_{x_3}=\{H_3,\,H_4\}.
\]
If we choose $(x,y)=(x_1,x_2)$, then we have
\begin{equation}
a_{H_1}=0,\quad a_{H_2}=x_2, \quad a_{H_3}=-\frac{x_2}{2}-\frac{3}{2}x_3, \quad 
b_{H_2}=x_1, \quad b_{H_3}=-2x_1-3x_3
\end{equation}
and 
\begin{align}
&\calC_{H_1,y}=\{H_2,H_3\}, \quad \calC_{H_2,y}=\{H_1,H_3\}, \quad \calC_{H_3,y}=\{H_1,H_2\}, \\
&c_{H_1H_2}=0, \quad c_{H_2H_3}=-x_3, \quad c_{H_1H_3}=-3x_3.
\end{align}
\end{example}

\begin{proposition}[cf. Haraoka \cite{Haraoka2012}, Theorem 2.1]\label{prop:cic}
	The system \eqref{Eq_Pfaff} satisfies the completely integrable condition \eqref{Eq_integrable} if and only if, for any pair  $(i,j)$ ($1\le i,j \le n)$ of distinct indices, by setting $(x_i,x_j)=(x,y)$, the following conditions hold:
\begin{align}
&[A_{x},A_{xy}]=[A_{xy},A_{xz}]=O \quad (z\in x''), \label{eq:cond1p}
\\
&[A_{x},A_{y}]+\left[A_{xy},\sum_{H\in\mathcal{A}_{x}^{c}\cap\mathcal{A}_{y}}A_{H}-\sum_{H\in\mathcal{A}_{x}}A_{H}\right]=O, \label{eq:cond3p}
\\
&[A_{x},A_{yz}]+[A_{xz},A_{y}]=O \quad (z\in x''), \label{eq:cond4p} 
\\
&[A_{xz},A_{yz}]=O \quad (z\in x''), \label{eq:cond5p} 
\\
&[A_{xz_1},A_{yz_2}]+[A_{xz_2},A_{yz_1}]=O \quad (z_1,z_2 \in x''),
\end{align}
$\bullet$ For $H\in \mathcal{A}_{x}\cap\mathcal{A}_{y}$, 
\begin{align}
&[A_{H},A_{xy}]=O,\label{eq:cond8p} 
\\
& [A_{H},A_{y}+(a_{H})_yA_{x}]=O, \label{eq:cond9p} 
\\
&[A_{H},(a_{H})_yA_{xz}+A_{yz}]=O\quad (z\in x''), \label{eq:cond10p} 
\\
&\left[A_{H},\sum_{\substack{H''\in\mathcal{C}_{H,y} \\ c_{HH''}=b_{H'}}}A_{H''}+A_{H'}\right]=O \quad (H'\in \mathcal{A}_{x}^{c}\cap\mathcal{A}_{y}), \label{eq:cond11p} 
\\
&\left[A_{H},\sum_{\substack{H''\in\mathcal{C}_{H,y} \\ c_{HH''}=c_{HH'}}}A_{H''}\right]=O \quad\left(\begin{array}{c}H'\in \mathcal{C}_{H,y} \text{ such that}\\ c_{HH'}\neq b_{K} \text{ for any }K\in\mathcal{A}_{x}^{c}\cap\mathcal{A}_{y}\end{array}\right). \label{eq:cond12p}
\end{align}
$\bullet$ For $H\in\mathcal{A}_{x}\cap\mathcal{A}_{y}^{c}$, 
\begin{align}
&[A_{H},A_{y}+a_{H}(0)A_{xy}]=O, \label{eq:cond13p} 
\\
&[A_{H},(a_{H})_zA_{xy}+A_{yz}]=O \quad (z\in x''), \label{eq:cond14p} 
\\
&\left[A_{H},\sum_{\substack{H''\in\mathcal{A}_{x}\cap\mathcal{A}_{y} \\ c_{HH''}=b_{H'}}}A_{H''}+A_{H'}\right]=O \quad (H' \in \mathcal{A}_{x}^{c}\cap\mathcal{A}_{y}),  \label{eq:cond15p} 
\\
&\left[A_{H},\sum_{\substack{H''\in \mathcal{A}_{x}\cap\mathcal{A}_{y} \\ c_{HH'}=c_{HH''}}}A_{H''}\right]=O \quad\left(\begin{array}{c}H'\in \mathcal{A}_{x}\cap\mathcal{A}_{y} \text{ such that} \\ c_{HH'}\neq b_{K} \text{ for any }K\in\mathcal{A}_{x}^{c}\cap\mathcal{A}_{y}
\end{array}\right). \label{eq:cond16p}
\end{align}
Here $a_{H}(0)$ and $b_{H}(0)$ denote the constant terms of $a_{H}$ and $b_{H}$, respectively.
Since $A_{xy}=A_{yx}$, the above conditions are symmetric in $x$ and $y$.
In other words, the conditions obtained by the interchange $(x,a_{H})\leftrightarrow (y,b_{H})$ also hold. 
\end{proposition}
This proposition can be shown by a direct computation of \eqref{Eq_integrable}.
We remark that the conditions \eqref{eq:cond11p}, \eqref{eq:cond12p}, \eqref{eq:cond15p} and \eqref{eq:cond16p} were given by Haraoka \cite[Theorem 2.1]{Haraoka2012}.

\subsection{Definition}\label{subsec:deftwo}

	In this subsection, we generalize the middle Laplace transform for the linear Pfaffian systems \eqref{Eq_Pfaff}
	satisfying the following conditions: 
	there exists a variable $x_\ell=x$ such that
	\begin{align}
	&A_{x}\text{ and }A_{xz} \text{ are diagonalizable} \quad (\forall z \in x' ),\label{Eq_ass_diag}
	\\
	&\exists x_j \in x' ~\text{such that}~\mathcal{A}_{x} \cap \mathcal{A}_{x_j} \neq \emptyset
	~\Rightarrow~
	A_{x z}=O \quad (\forall z \in x').\label{Eq_ass_inf}
	\end{align}
Here, we used the notation \eqref{Eq_notation}, namely, we set $x'=\{x_1,\ldots,x_n\}\setminus\{x\}$.
By the commutativity conditions \eqref{eq:cond1p}, the matrices $ \{ A_x, A_{xz} \mid z \in x' \} $ are pairwise commutative.  
Hence, by the diagonalizability condition \eqref{Eq_ass_diag}, these matrices can be simultaneously diagonalized by a suitable constant gauge transformation.
Therefore, without loss of generality, we may assume that all the matrices $\{A_x, A_{xz} \mid z \in x' \}$ are diagonal.
Thus $S_{x}(\bm{x})=A_{x}+ \sum_{z\in x'}A_{xz}z$ is diagonal and its eigenvalues are linear polynomials in $x'$. 
After relabeling the distinct eigenvalues of $S_{x}(\bm{x})$, write them as $a_{\hat{H}_1},\ldots,a_{\hat{H}_{\hat{p}}}, a_{\hat{H}_{\hat{p}+1}}(x'),\ldots,a_{\hat{H}_{\hat{q}}}(x')$, where $a_{\hat{H}_1},\ldots,a_{\hat{H}_{\hat{p}}}\in\bbC$ and $a_{\hat{H}_{\hat{p}+1}}(x'),\ldots,a_{\hat{H}_{\hat{q}}}(x')$ are nonconstant linear polynomials in $x'$. 
Thus we may write $S_{x}(\bm{x})$ in the form
	\begin{equation}\label{Eq_A_yA}
	S_{x}(\bm{x})=
	\begin{pmatrix}
		a_{\hat{H}_1}I_{N_1} & & & & &  \\
		& \ddots& &  & &  \\
		& & a_{\hat{H}_{\hat{p}}} I_{N_{\hat{p}}} & &  &  \\
		& & & a_{\hat{H}_{\hat{p}+1}}(x') I_{N_{\hat{p}+1}} &  &  \\
		& & & & \ddots &  \\
		& & & & &  a_{\hat{H}_{\hat{q}}}(x') I_{N_{\hat{q}}}
	\end{pmatrix},
	\end{equation}
	where $N_1+N_2+\cdots+N_{\hat{q}}=N$. 
We note that
	\begin{equation}\label{Eq_A_yA_2}
	S_{x}(\bm{x})=A_{x}+\sum_{z\in x'}A_{xz}z
	=\sum_{i=1}^{\hat{q}}a_{\hat{H}_i}E_{N_i}.
	\end{equation}
For each $i=1,2,\ldots,\hat{q}$, we define the hyperplane $\hat{H}_{i}$ by the defining polynomial $f_{\hat{H}_{i}}:=x-a_{\hat{H}_i}$. 
Consequently, we have
\vspace{-0.8\baselineskip}
	\begin{equation}
	\{\bm{x}\in\bbC^{n}\mid \det(x-S_{x}(\bm{x}))=0\}=\bigcup_{i=1}^{\hat{q}}\hat{H}_{i}.
	\end{equation}
Next we set
	\begin{equation}\label{Eq_A}
	\mathcal{A}=\{H_1,\ldots,H_p,
	\ldots,H_{q},
	\ldots,H_r\}
	\end{equation}
with
	\begin{align*}
	\mathcal{A}_x=\{H_1,\ldots,H_q\}, 
	\quad 
	\mathcal{A}_x\setminus \bigcup_{x_j \in x'}\mathcal{A}_{x_j}
	=\{H_1,\ldots,H_p\}.
	\end{align*}
Namely, the defining polynomials $f_{H_1},\ldots,f_{H_{p}}$ depend only on $x$, $f_{H_{p+1}},\ldots,f_{H_{q}}$ depend on $x$ and the other variables, and $f_{H_{q+1}},\ldots,f_{H_r}$ do not depend on $x$.

As in the case of ordinary differential equations, it consists of the following three steps.
	\begin{enumerate}[\bf Step~1.]
	\item Extend the system \eqref{Eq_Pfaff} to the Birkhoff-Okubo normal form.
	\item Apply the Laplace transform.
	\item Project onto a suitable quotient space.
	\end{enumerate}
We shall explain the above steps in detail and in order.

\medskip
\noindent
\textbf{Step~1.} 
Let us consider extending the $x$-equation 
	\begin{equation}\label{Eq_u_x}
	\frac{\p u}{\p x}=\left(S_{x}(\bm{x})+
	\sum_{i=1}^q \frac{A_{H_i}}{x-a_{H_i}}\right)u
	\end{equation}
to the system of Birkhoff-Okubo normal form. 
Set
	\begin{equation}\label{Eq_U}
	U(\bm{x})={}^t\left(U_1,\ldots,U_q\right)
	={}^t \left(\frac{u(\bm{x})}{x-a_{H_1}},\frac{u(\bm{x})}{x-a_{H_2}},\ldots,\frac{u(\bm{x})}{x-a_{H_q}}\right).
	\end{equation}
Then, the vector $U(\bm{x})$ satisfies	
	\begin{equation}\label{Eq_U_xeq_BO}
	(x-T_{x}(\bm{x}))\frac{\p U}{\p x}
	=\left(A-I+S_{x}(\bm{x})^{\oplus q}(x-T_{x}(\bm{x}))\right)U,
	\end{equation}
where 	\begin{align}
		A&=\begin{pmatrix}
		A_{H_1} & A_{H_2} &\cdots & A_{H_q} \\
		A_{H_1} & A_{H_2} &\cdots & A_{H_q} \\
		\vdots & \vdots &  &\vdots\\
		A_{H_1}& A_{H_2}&\cdots &A_{H_q} 
		\end{pmatrix},
\\
	\label{Eq_Stil}
	S_x(\bm{x})^{\oplus q}&=
	\begin{pmatrix}
		S_x(\bm{x}) & & & \\
		& S_x(\bm{x})& & \\
		 & & \ddots & \\
		 & & & S_x(\bm{x})
	\end{pmatrix}
	=A^{\oplus q}_x+\sum_{z \in x'}A_{xz}^{\oplus q} z,
\\
	\label{Eq_Ttil}
	T_{x}(\bm{x})&=\begin{pmatrix}
	a_{H_1}I_{N} & & & & &  \\
	& \ddots& &  & &  \\
	& & a_{H_p} I_{N} & &  &  \\
	& & & a_{H_{p+1}}(x') I_{N} &  &  \\
	& & & & \ddots &  \\
	& & & & &  a_{H_q}(x') I_{N}
	\end{pmatrix}.
	\end{align}
This equation is the same as \eqref{eq:odeBO}.
By multiplying $(x-\widetilde{T}_{x}(\bm{x}))^{-1}$ from the left, 
we obtain
	\begin{equation}\label{Eq_U_x_sch}
	\frac{\p U}{\p x}=
	\left(S_x(\bm{x})^{\oplus q}
	+\sum_{i=1}^q \frac{\widetilde{A}_{H_i}}{x-a_{H_i}} \right)U,
	\end{equation}
where
	\begin{equation}\label{Eq_G_H_i}
	\begin{aligned}
	\widetilde{A}_{H_i}&= \sum_{j=1}^q E_{ij}^{(q)}\otimes (A_{H_j}-\delta_{ij}I_N) \\
	&= \begin{pmatrix}
	O & \cdots & \cdots  & \cdots &O \\
	\vdots & \cdots & \cdots & \cdots &  \vdots \\
	A_{H_1} & \cdots & A_{H_i}-I_N & \cdots & A_{H_q}  \\
	\vdots & \cdots & \cdots & \cdots &  \vdots \\
	O & \cdots & \cdots  & \cdots &O \\	
	\end{pmatrix}(\,i
	\end{aligned}
	\end{equation}
for $1\le i\le q$. 
Here $E_{ij}^{(q)}$ denotes the $q\times q$ matrix with the only non-zero entry $1$ at $(i,j)$-th position.

Next, we proceed to derive $y$-equations ($y\in x'$) for $U(\bm{x})$.
For the pair $(x,y)$, we use the notation \eqref{Eq_notation}. 
Namely, we write $(x_1,\ldots,x_n)=(x,y')=(x,y,x'')$.
Then, the $y$-equation of \eqref{Eq_Pfaff} is given by
	\begin{equation}\label{Eq_u_y}
	\frac{\p u}{\p y}=\left(S_y(\bm{x})+
	\sum_{\substack{p+1\le i \le r \\ H_i \in \mathcal{A}_y}}
	\frac{A_{H_i}}{y-b_{H_i}(x,x'')}\right)u.
	\end{equation}
First, we consider $\p U_i/\p y$ for $i=1,\ldots,p$.
Since the polynomials $a_{H_i}$ ($1\le i\le p$) is independent on $y$, we have
	\begin{align*}
	\frac{\p U_i}{\p y}&=\frac{\p }{\p y}\left(\frac{u(\bm{x})}{x-a_{H_i}}\right)
	=\frac{1}{x-a_{H_i}}\frac{\p u}{\p y} \\
	&=\left(S_y(\bm{x})+
	\sum_{\substack{p+1\le j \le r \\ H_j \in \mathcal{A}_y}}
	\frac{A_{H_j}}{y-b_{H_j}(x,x'')}\right)\frac{u(\bm{x})}{x-a_{H_i}}.
	\end{align*}
We focus on the latter term.
The polynomials $b_{H_j}$ ($q+1\le j \le r$) are independent on $x$.
On the other hand, for $j=p+1,\ldots,q$, we have
	\begin{align*}
\frac{1}{y-b_{H_j}}\frac{1}{x-a_{H_i}}
	&=\frac{(f_{H_j})_y}{f_{H_j}}\frac{1}{x-a_{H_i}} \\
	&=\frac{(f_{H_j})_y}{(f_{H_j})_x}\frac{1}{x-a_{H_j}}\frac{1}{x-a_{H_i}} \\
	&=\frac{(f_{H_j})_y}{(f_{H_j})_x}\frac{1}{a_{H_j}-a_{H_i}}\left(\frac{1}{x-a_{H_j}}-\frac{1}{x-a_{H_i}}\right) \\
	&=\frac{(f_{H_j})_y}{(f_{H_j})_x}\frac{1}{(a_{H_j}-a_{H_i})_y}\frac{1}{y-c_{H_iH_j}}\left(\frac{1}{x-a_{H_j}}-\frac{1}{x-a_{H_i}}\right) \\
	&=\frac{1}{y-c_{H_i H_j}}\left(\frac{1}{x-a_{H_i}}-\frac{1}{x-a_{H_j}}\right).
	\end{align*}
Therefore, we obtain
	\begin{equation}\label{Eq_Ui_y_1}
	\frac{\p U_i}{\p y}
	=\left(S_y(\bm{x})
	+
	\sum_{\substack{q+1\le j \le r \\ H_j \in \mathcal{A}_y}}
	\frac{A_{H_j}}{y-b_{H_j}(x'')}\right)U_i
	+\sum_{\substack{p+1\le j\le q \\ H_{j} \in\mathcal{A}_y} }
	\frac{A_{H_j}}{y-c_{H_i H_j}}(U_i-U_j).
	\end{equation}

Next, we consider $\p U_i/\p y$ for $i=p+1,\ldots,q$. 
For $i$ such that $a_{H_i}$ is independent on $y$, i.e., $H_i \in \mathcal{A}_x\cap\mathcal{A}_y^c$, we can calculate $\p U_i/\p y$ in the same way as above and obtain \eqref{Eq_Ui_y_1}.
On the other hand, for $i$ such that $a_{H_i}$ depends on $y$, i.e., $H_i \in \mathcal{A}_x\cap\mathcal{A}_y$, we have
	\[
	\frac{\p U_i}{\p y}=\frac{\p}{\p y}\left(\frac{u(\bm{x})}{x-a_{H_i}}\right)
	=\frac{1}{x-a_{H_i}}\frac{\p u}{\p y}
	-\frac{(x-a_{H_i})_y}{(x-a_{H_i})^2}u(\bm{x}).
	\]
By using the $x$-equation for $U_i$
	\[
	\frac{\p U_i}{\p x}=\frac{\p}{\p x}{\left(\frac{u(\bm{x})}{x-a_{H_i}}\right)
	=\frac{1}{x-a_{H_i}}\frac{\p u}{\p x}-\frac{1}{(x-a_{H_i})^2}}u(\bm{x}),
	\]	
and $(f_{H_i})_y=(f_{H_i})_x(x-a_{H_i})_y$, we obtain
	\begin{equation} \label{Eq_Ui_y2}
	\begin{aligned}
	\frac{\p U_i}{\p y}&=\frac{1}{x-a_{H_i}}\frac{\p u}{\p y}-\frac{(f_{H_i})_y}{(f_{H_i})_x}\left(\frac{1}{x-a_{H_i}}\frac{\p u}{\p x}-\frac{\p U_i}{\p x}\right)
\\
	&=\frac{1}{x-a_{H_i}}\left(\frac{\p u}{\p y}-\frac{(f_{H_i})_y}{(f_{H_i})_x}\frac{\p u}{\p x}\right)+
	\frac{(f_{H_i})_y}{(f_{H_i})_x}\frac{\p U_i}{\p x}.
	\end{aligned}
	\end{equation}
By using the $x$-equation \eqref{Eq_u_x} and the $y$-equation \eqref{Eq_u_y} for $u(\bm{x})$, we obtain
	\begin{align*}
	&\frac{\p u}{\p y}-\frac{(f_{H_i})_y}{(f_{H_i})_x}\frac{\p u}{\p x} \\
	&=\left[
	S_y(\bm{x})+
	\sum_{\substack{p+1\le j \le r \\ H_j \in \mathcal{A}_y}}
	\frac{A_{H_j}}{y-b_{H_j}(x,x'')}
	-\frac{(f_{H_i})_y}{(f_{H_i})_x}
	\left(
	A_{x}+
	\sum_{1\le j\le q} \frac{A_{H_j}}{x-a_{H_j}}
	\right)
	\right]u(\bm{x}) \\
	&=\left[
	S_y(\bm{x})+
	\sum_{\substack{p+1\le j \le r \\ H_j \in \mathcal{A}_y,~ j\neq i}}
	\frac{A_{H_j}}{y-b_{H_j}(x,x'')}
	-\frac{(f_{H_i})_y}{(f_{H_i})_x}
	\left(
	A_{x}+
	\sum_{\substack{1\le j\le q \\ j \neq i}} \frac{A_{H_j}}{x-a_{H_j}}
	\right)
	\right]u(\bm{x})\\
	&\quad 
	+\left[\frac{A_{H_i}}{y-b_{H_i}(x,x'')}-\frac{(f_{H_i})_y}{(f_{H_i})_x}\frac{A_{H_i}}{x-a_{H_i}}\right]u(\bm{x}).
	\end{align*}
Since the latter term vanishes, we have
	\[
	\frac{\p u}{\p y}-\frac{(f_{H_i})_y}{(f_{H_i})_x}\frac{\p u}{\p x}
	=
	\left[
	S_y(\bm{x})+
	\sum_{\substack{p+1\le j \le r \\ H_j \in \mathcal{A}_y,~ j\neq i}}
	\frac{A_{H_j}}{y-b_{H_j}}
	-\frac{(f_{H_i})_y}{(f_{H_i})_x}
	\left(
	A_{x}+
	\sum_{\substack{1\le j\le q \\ j \neq i}} \frac{A_{H_j}}{x-a_{H_j}}
	\right)
	\right]u(\bm{x}).
	\]
Besides, by using the $x$-equation for $U_i$ which is obtained from \eqref{Eq_U_x_sch}, we have
	\begin{equation}\label{Eq_BO_xPoly}
	\begin{aligned}
	\frac{(f_{H_i})_y}{(f_{H_i})_x}\frac{\p U_i}{\p x}
	&=\frac{(f_{H_i})_y}{(f_{H_i})_x} \left[A_{x}U_i+\frac{1}{x-a_{H_i}}\sum_{1\le j\le q}(A_{H_j}-\delta_{ij}I_N)U_j\right] \\
	& =\frac{(f_{H_i})_y}{(f_{H_i})_x} A_{x}U_i+\frac{1}{y-b_{H_i}}\sum_{1\le j\le q}(A_{H_j}-\delta_{ij}I_N)U_j.
	\end{aligned}
	\end{equation}
Then, the equation \eqref{Eq_Ui_y2} turns into 
	\begin{equation}\label{Eq_Ui_y3}
	\begin{aligned}
	\frac{\p U_i}{\p y}& =\frac{1}{x-a_{H_i}}
	\left[
	S_y(\bm{x})+
	\sum_{\substack{p+1\le j \le r \\ H_j \in \mathcal{A}_y,~ j\neq i}}
	\frac{A_{H_j}}{y-b_{H_j}}
	-\frac{(f_{H_i})_y}{(f_{H_i})_x}
	\left(
	A_{x}+
	\sum_{\substack{1\le j\le q \\ j \neq i}} \frac{A_{H_j}}{x-a_{H_j}}
	\right)
	\right]u(\bm{x}) \\
	&\qquad + \frac{(f_{H_i})_y}{(f_{H_i})_x} A_{x}U_i+\frac{1}{y-b_{H_i}}\sum_{1\le j\le q}(A_{H_j}-\delta_{ij}I_N)U_j  \\
	&=\left(S_y(\bm{x})+
	\sum_{\substack{q+1\le j \le r \\ H_j \in \mathcal{A}_y}}
	\frac{A_{H_j}}{y-b_{H_j}}\right)U_i
	\\
	&\qquad +\frac{1}{x-a_{H_i}}\left(
	\sum_{\substack{p+1 \le j \le q \\ H_j \in \mathcal{A}_y,~ j\neq i}}\frac{A_{H_j}}{y-b_{H_j}}
	-\frac{(f_{H_i})_y}{(f_{H_i})_x}\sum_{\substack{1\le j\le q \\ j \neq i}} \frac{A_{H_j}}{x-a_{H_j}}
	\right)u(\bm{x})
	\\
	&\qquad +\frac{1}{y-b_{H_i}}\sum_{1\le j\le q}(A_{H_j}-\delta_{ij}I_N)U_j.
	\end{aligned}
	\end{equation}
We focus on the second term of the right-hand side.
Since
	\begin{align} 
	&\frac{1}{x-a_{H_i}}\frac{1}{y-b_{H_j}}
	=\frac{(f_{H_j})_y}{(f_{H_j})_x}\frac{1}{a_{H_i}-a_{H_j}}\left(\frac{1}{x-a_{H_i}}-\frac{1}{x-a_{H_j}}\right) \quad (p+1 \le j \le q,~ j\neq i), \\
	&\frac{1}{x-a_{H_i}}\frac{1}{x-a_{H_j}}=
	\begin{cases} 
	-\dfrac{(f_{H_i})_x}{(f_{H_i})_y}\dfrac{1}{y-c_{H_i H_j}}\left(\dfrac{1}{x-a_{H_i}}-\dfrac{1}{x-a_{H_j}}\right) &(1\le j \le p), \\[11pt]
	\dfrac{1}{a_{H_i}-a_{H_j}}\left(\dfrac{1}{x-a_{H_i}}-\dfrac{1}{x-a_{H_j}}\right) & (p+1 \le j \le q,~j \neq i),
	\end{cases} 
	\end{align}
we have 
	\begin{align*}
	\frac{1}{x-a_{H_i}}&\left(\sum_{\substack{p+1\le j \le q \\ H_j \in \mathcal{A}_y,~j \neq i}}
	\frac{A_{H_j}}{y-b_{H_j}}-\frac{(f_{H_i})_y}{(f_{H_i})_x} \sum_{\substack{1\le j \le q \\ j \neq i}}\frac{A_{H_j}}{x-a_{H_j}}\right)\\
	&=\frac{1}{x-a_{H_i}}\sum_{\substack{p+1\le j \le q \\ H_j \in \mathcal{A}_y,~j \neq i}}\frac{A_{H_j}}{y-b_{H_j}}-\frac{(f_{H_i})_y}{(f_{H_i})_x}\frac{1}{x-a_{H_i}}\left(\sum_{1 \le j \le p}\frac{A_{H_j}}{x-a_{H_j}}+\sum_{\substack{p+1 \le j \le q \\ j \neq i}} \frac{A_{H_j}}{x-a_{H_j}}\right) \\
	&=\sum_{1\le j \le p}\frac{A_{H_j}}{y-c_{H_i H_j}}\left(\frac{1}{x-a_{H_i}}-\frac{1}{x-a_{H_j}}\right)\\
	&\quad +\sum_{\substack{p+1 \le j \le q \\H_j \in \mathcal{A}_y,~ j \neq i}}A_{H_j}\left(\frac{(f_{H_j})_y}{(f_{H_j})_x}-\frac{(f_{H_i})_y}{(f_{H_i})_x}\right)\frac{1}{a_{H_i}-a_{H_j}}\left(\frac{1}{x-a_{H_i}}-\frac{1}{x-a_{H_j}}\right). 
	\end{align*}
Here, since $H_i,H_j \in \mathcal{A}_x\cap \mathcal{A}_y$, we have
	\[
	\frac{(f_{H_j})_y}{(f_{H_j})_x}-\frac{(f_{H_i})_y}{(f_{H_i})_x}=
	(x-a_{H_j})_y-(x-a_{H_i})_y=(a_{H_i}-a_{H_j})_y.
	\]
Then, by definition, it holds that 	
	\[
	(a_{H_i}-a_{H_j})_y=
	\begin{cases}
	\ds\frac{a_{H_i}-a_{H_j}}{y-c_{H_iH_j}} & (H_j \in \mathcal{C}_{H_i,y}), \\
	0 & (H_j \notin \mathcal{C}_{H_i,y}).
	\end{cases}
	\]
Therefore, we have
	\begin{equation}\label{Eq_second}
	\begin{aligned}
	\frac{1}{x-a_{H_i}}&\left(\sum_{\substack{p+1\le j \le q \\ H_j \in \mathcal{A}_y,~j \neq i}}
	\frac{A_{H_j}}{y-b_{H_j}}-\frac{(f_{H_i})_y}{(f_{H_i})_x} \sum_{\substack{1\le j \le q \\ j \neq i}}\frac{A_{H_j}}{x-a_{H_j}}\right)\\
	&=\sum_{1\le j \le p}\frac{A_{H_j}}{y-c_{H_i H_j}}\left(\frac{1}{x-a_{H_i}}-\frac{1}{x-a_{H_j}}\right)\\
	&\qquad +\sum_{\substack{p+1 \le j \le q \\H_j \in \mathcal{C}_{H_i,y}
	}}
	\frac{A_{H_j}}{y-c_{H_iH_j}}\left(\frac{1}{x-a_{H_i}}-\frac{1}{x-a_{H_j}}\right)
	\\
	&=\left(\sum_{1\le j \le p}+\sum_{\substack{p+1 \le j \le q \\H_j \in \mathcal{C}_{H_i,y}}}\right)
	\frac{A_{H_j}}{y-c_{H_i H_j}}\left(\frac{1}{x-a_{H_i}}-\frac{1}{x-a_{H_j}}\right).
	\end{aligned}
	\end{equation}
Eventually, we see that the equation \eqref{Eq_Ui_y3} can be expressed as 
	\begin{align*}
	\frac{\p U_i}{\p y}&=\left(S_y(\bm{x})+
	\sum_{\substack{q+1\le j \le r \\ H_j \in \mathcal{A}_y}}
	\frac{A_{H_j}}{y-b_{H_j}}\right)U_i 
	+\sum_{\substack{1 \le j \le q \\H_j \in\mathcal{C}_{H_i,y}}}
	\frac{A_{H_j}}{y-c_{H_i H_j}}\left(U_i-U_j\right)
	\\
	&\qquad
	+\frac{1}{y-b_{H_i}}\sum_{1\le j\le q}(A_{H_j}-\delta_{ij}I_N)U_j.
	\end{align*}
Summarizing the above results, we obtain the following.

\begin{proposition}
\label{Prop_Okubo}
Let $u(\bm{x})$ be a solution of the system \eqref{Eq_Pfaff} satisfying \eqref{Eq_ass_inf}.
Then, for each $y \in x'$, the vector 
$U(\bm{x})={}^t(U_1,U_2,\ldots,U_q)$
defined by \eqref{Eq_U} satisfies the following:
	\begin{itemize}
	\item For $U_i$ such that $H_i \in \mathcal{A}_x\cap\mathcal{A}_y^c$
	(namely, $1\le i \le p$ or $p+1\le i \le q$ with $H_i \notin \mathcal{A}_y$),
	\begin{equation}\label{Eq_Prop_BO1}
	\frac{\p U_i}{\p y}
	=\left(S_y(\bm{x})
	+
	\sum_{\substack{q+1\le j \le r \\ H_j \in \mathcal{A}_y}}
	\frac{A_{H_j}}{y-b_{H_j}}\right)U_i
	+\sum_{\substack{p+1\le j\le q \\ H_{j} \in\mathcal{A}_y} }
	\frac{A_{H_j}}{y-c_{H_i H_j}}(U_i-U_j).
	\end{equation}
	
	\item For $U_i$ such that $H_i \in \mathcal{A}_x\cap\mathcal{A}_y$
	(namely, $p+1\le i\le q$ with $H_i \in \mathcal{A}_y$),
	\begin{equation}\label{Eq_Prop_BO2}
	\begin{aligned}
	\frac{\p U_i}{\p y}&=\left(S_y(\bm{x})+
	\sum_{\substack{q+1\le j \le r \\ H_j \in \mathcal{A}_y}}
	\frac{A_{H_j}}{y-b_{H_j}}\right)U_i 
	+\sum_{\substack{1 \le j \le q \\H_j \in\mathcal{C}_{H_i,y}}}
	\frac{A_{H_j}}{y-c_{H_i H_j}}\left(U_i-U_j\right)
	\\
	&\quad
	+\frac{1}{y-b_{H_i}}\sum_{1\le j\le q}(A_{H_j}-\delta_{ij}I_N)U_j.
	\end{aligned}
	\end{equation}
	\end{itemize}
\end{proposition}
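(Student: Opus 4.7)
The plan is to compute $\partial U_i/\partial y$ directly from the definition $U_i = u/(x - a_{H_i})$, substitute the Pfaffian equations \eqref{Eq_u_x} and \eqref{Eq_u_y} for $u$, and then reorganize the resulting rational expressions in $x$ and $y$ by partial fractions so that only the terms $S_y(\bm{x})U_i$, $A_{H_j}/(y-b_{H_j})\cdot U_i$, $(U_i - U_j)/(y-c_{H_iH_j})$, and $(A_{H_j}-\delta_{ij}I_N)U_j/(y-b_{H_i})$ survive. The natural dichotomy is whether $a_{H_i}$ depends on $y$, i.e.\ whether $H_i \in \mathcal{A}_y$, because only in that case does the quotient rule produce an awkward double-pole term in $(x - a_{H_i})^{-2}u$ that has to be absorbed.

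In the easy case $H_i \in \mathcal{A}_x \cap \mathcal{A}_y^c$, I would write $\partial U_i/\partial y = (x-a_{H_i})^{-1}\partial u/\partial y$ and substitute \eqref{Eq_u_y}. The sum over $q+1 \le j \le r$ passes through directly because $b_{H_j}$ is independent of $x$, giving $A_{H_j}/(y-b_{H_j})\cdot U_i$. For $p+1 \le j \le q$ with $H_j \in \mathcal{A}_y$, I would apply the identity
\[
\frac{1}{(x-a_{H_i})(y-b_{H_j})} = \frac{1}{y-c_{H_iH_j}}\Bigl(\frac{1}{x-a_{H_i}} - \frac{1}{x-a_{H_j}}\Bigr),
\]
which follows from $f_{H_j} = (f_{H_j})_x(x-a_{H_j}) = (f_{H_j})_y(y-b_{H_j})$ together with the definition of $c_{H_iH_j}$, to produce the $(U_i - U_j)/(y - c_{H_iH_j})$ contribution. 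This immediately yields \eqref{Eq_Prop_BO1}.

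The harder case is $H_i \in \mathcal{A}_x \cap \mathcal{A}_y$, where the quotient rule gives $\partial U_i/\partial y = (x-a_{H_i})^{-1}\partial u/\partial y - (f_{H_i})_y/(f_{H_i})_x \cdot (x-a_{H_i})^{-2}u$. The key device is to trade the double pole using the $x$-equation, rewriting $(x-a_{H_i})^{-2}u = (x-a_{H_i})^{-1}\partial u/\partial x - \partial U_i/\partial x$, so that
\[
\frac{\partial U_i}{\partial y} = \frac{1}{x - a_{H_i}}\Bigl(\frac{\partial u}{\partial y} - \frac{(f_{H_i})_y}{(f_{H_i})_x}\frac{\partial u}{\partial x}\Bigr) + \frac{(f_{H_i})_y}{(f_{H_i})_x}\frac{\partial U_i}{\partial x}.
\]
The bracketed expression is the derivative of $u$ along a direction tangent to $H_i$, so when I substitute \eqref{Eq_u_x} and \eqref{Eq_u_y} the contributions of $A_{H_i}$ cancel (using $(f_{H_i})_y/((f_{H_i})_x(x-a_{H_i})) = 1/(y-b_{H_i})$). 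After substituting the Birkhoff-Okubo $x$-equation \eqref{Eq_U_x_sch} for the last term, everything reduces to rational coefficients in $x$ and $y$ which I recombine by partial fractions. I expect the main bookkeeping obstacle to be recombining the terms of type $(f_{H_i})_y/(f_{H_i})_x \cdot 1/((x-a_{H_i})(x-a_{H_j}))$ with the $1/((x-a_{H_i})(y-b_{H_j}))$ terms; the saving observation is that
\[
\frac{(f_{H_j})_y}{(f_{H_j})_x} - \frac{(f_{H_i})_y}{(f_{H_i})_x} = (a_{H_i} - a_{H_j})_y,
\]
which vanishes precisely when $H_j \notin \mathcal{C}_{H_i,y}$, and otherwise produces the factor $(y - c_{H_iH_j})^{-1}$ needed to match \eqref{Eq_Prop_BO2}. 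This collapses the double sum into the single sum over $\mathcal{C}_{H_i,y}$ appearing in the statement.
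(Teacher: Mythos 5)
Your proposal is correct and follows essentially the same route as the paper: the same case split on whether $a_{H_i}$ depends on $y$, the same trick of trading the double pole $(x-a_{H_i})^{-2}u$ via the $x$-equation to reach $\frac{\p U_i}{\p y}=\frac{1}{x-a_{H_i}}\bigl(\frac{\p u}{\p y}-\frac{(f_{H_i})_y}{(f_{H_i})_x}\frac{\p u}{\p x}\bigr)+\frac{(f_{H_i})_y}{(f_{H_i})_x}\frac{\p U_i}{\p x}$, the same cancellation of the $A_{H_i}$ terms, and the same partial-fraction recombination keyed to $(a_{H_i}-a_{H_j})_y$ vanishing exactly when $H_j\notin\mathcal{C}_{H_i,y}$. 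No gaps.
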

\begin{remark}
Proposition \ref{Prop_Okubo} itself holds even if the assumption \eqref{Eq_ass_diag} is not satisfied. 
\end{remark}

Combining \eqref{Eq_Prop_BO1} and \eqref{Eq_Prop_BO2}, we see that $U(\bm{x})$ satisfies the system of the form
	\begin{equation}\label{Eq_U_y}
	\frac{\p U}{\p y}=
	\left(
	S_y(\bm{x})^{\oplus q}
	+
	\sum_{\substack{p+1 \le i \le r \\ H_i \in \mathcal{A}_y}} \frac{\widetilde{A}_{H_i}}{y-b_{H_i}}
	+
	\sum_{
		\substack{1\le i<j\le q \\ H_j \in  \mathcal{C}_{H_i,y} \\
		c_{H_i H_j} \neq b_{H} \text{ for any $H \in \mathcal{A}_y$}}
		}
		\frac{\widetilde{A}_{H_{ij}}}{y-c_{H_i H_j}}
	\right)U,
	\end{equation}
where 
	\begin{equation}\label{Eq_GyGi}
	\begin{aligned}
	S_y(\bm{x})^{\oplus q}=\begin{pmatrix}
	S_y(\bm{x}) & & &  \\
	& S_y(\bm{x}) & & \\
	& & \ddots &  \\
	& & &  S_y(\bm{x})
	\end{pmatrix}
	=A_y^{\oplus q}+\sum_{z\in(x,x'')}A_{yz}^{\oplus q}z
	\end{aligned}
	\end{equation}
and $\widetilde{A}_{H_i}~(p+1\le i \le q,\,H_i\in\mathcal{A}_y)$ are given in \eqref{Eq_U_x_sch}. 
Then, we see that the systems for $U(\bm{x})$ given by \eqref{Eq_U_x_sch} and \eqref{Eq_U_y} for all $y\in x'$ forms the Pfaffian system of the form
	\begin{equation}\label{Eq_Pfaff_U}
	dU=BO^{x}(\Omega)U, \quad 
	BO^{x}(\Omega):=
	\sum_{i=1}^n S_{x_i}(\bm{x})^{\oplus q}\,dx_i
	+\sum_{H \in \mathcal{\widetilde{A}}}\widetilde{A}_H \, d\log f_H.
	\end{equation}
Here
\begin{equation}\label{def:tilA}
\widetilde{\mathcal{A}}:=\mathcal{A}\cup\{H_{ij}\mid\text{$1\le i<j\le q$ satisfies $H_j \in \mathcal{C}_{H_i,y}$ for some $y \in x'$}\}
\end{equation}
and $H_{ij}$ is the hyperplane defined by the polynomial $f_{H_{ij}}=y-c_{H_iH_j}$. 
We note that, by considering the extension, the singular locus increases in general.

\medskip
\noindent
\textbf{Step 2.} 
We consider the Laplace transform in $x$-direction
	\begin{equation}\label{Eq_op_FL}
		L^x:\left\{
	\begin{array}{ccc}
	\dfrac{\partial }{\partial x} &\,\mapsto\, & x, \\
	x &\,\mapsto\,& -\dfrac{\partial }{\partial x}
	\end{array}\right.
	\end{equation}	
for the Pfaffian system \eqref{Eq_Pfaff_U}, which corresponds to the integral transform 
	\[
	U(\bm{x})~\mapsto~V(\bm{x})=\int_{\Delta} U(t,x')e^{-xt}\,dt
	\]
with a suitable path of integration $\Delta$.
We set $V:=L^{x}(U)$, namely,
\begin{equation}\label{Eq_V_def}
	V={}^t(V_1,V_2,\ldots,V_q), \quad V_i:=L^x(U_i).
	\end{equation}
Then, as in Subsection \ref{subsec:onedef}, the $x$-equation \eqref{Eq_U_xeq_BO} of the system \eqref{Eq_Pfaff_U} is transformed into
	\begin{equation}\label{Eq_xeq_Lap_BO}
	\left(
	x-S_{x}(\bm{x})^{\oplus q}\right)\frac{\partial V}{\partial x}
	=
	-\left(A+\left(x-S_{x}(\bm{x})^{\oplus q}\right)T_{x}(\bm{x})
	\right)V
	\end{equation}
We consider expressing the equation \eqref{Eq_xeq_Lap_BO} as Schlesinger form.
Set 	\begin{equation}\label{eq:Bdef}
	\mathcal{\mathcal{B}}:=\{\hat{H}_1,\ldots,\hat{H}_{\hat{q}}\}\cup(\widetilde{\mathcal{A}}\setminus\mathcal{A}_x).
	\end{equation}
We note that 
	\begin{equation}
		\label{Eq_Bx}
	\mathcal{B}_x=\{\hat{H}_1,\ldots,\hat{H}_{\hat{q}}\}.
	\end{equation}
Then, by multiplying $\left(x-S_x(\bm{x})^{\oplus q})\right)^{-1}$ from the left, the equation \eqref{Eq_xeq_Lap_BO} is transformed as 
	\begin{equation}\label{Eq_xeq_Lap_Sch2}
	\frac{\p V}{\p x}=
	\left(-T_{x}(\bm{x})+\sum_{\hat{H}\in\mathcal{B}_x}\frac{B_{\hat{H}}}{x-a_{\hat{H}}}\right)V
	=
	\left(-T_{x}(\bm{x})+\sum_{j=1}^{\hat{q}}\frac{B_{\hat{H}_j}}{x-a_{\hat{H}_j}}\right)V,
	\end{equation}
where
	\begin{equation}\label{Eq_hat_G}
	B_{\hat{H}_j}=-\begin{pmatrix}
	E_{N_j}A_{H_1} & E_{N_j}A_{H_2}& \cdots &E_{N_j}A_{H_q} \\
	E_{N_j}A_{H_1} & E_{N_j}A_{H_2}& \cdots & E_{N_j}A_{H_q} \\
	\vdots & \vdots &  &\vdots\\
	E_{N_j}A_{H_1}& E_{N_j}A_{H_2}&\cdots &E_{N_j}A_{H_q} 
	\end{pmatrix}.
	\end{equation}
This equation is the same as \eqref{eq:odeL2}.
We proceed to derive the differential equations of $V(\bm{x})$ in the other directions. 
Let us take any $y\in x'$ and shall derive the equation for $\p V_i/\p y$.
First, we consider the case $H_i \in \mathcal{A}_x\cap\mathcal{A}_y^c$.
By applying \eqref{Eq_op_FL} for the equation \eqref{Eq_Prop_BO1}, we have
	\[
	\frac{\p V_i}{\p y}
	=\left(A_{y}+\sum_{z\in x''} A_{yz}z
	+
	\sum_{\substack{q+1\le j \le r \\ H_j \in \mathcal{A}_y}}
	\frac{A_{H_j}}{y-b_{H_j}}\right)V_i-A_{yx}\frac{\p V_i}{\p x}
	+\sum_{\substack{p+1\le j\le q \\ H_{j} \in\mathcal{A}_y} }
	\frac{A_{H_j}}{y-c_{H_i H_j}}(V_i-V_j).
	\]
Here we note that, the polynomials $b_{H_j}$ and $c_{H_iH_j}$ appearing in \eqref{Eq_Prop_BO1} do not depend on $x$.
Since it holds that 
	\[
	\frac{\p V_i}{\p x}=-a_{H_i}V_i-\sum_{k=1}^{\hat{q}}\frac{E_{N_k}}{x-a_{\hat{H}_k}}\sum_{j=1}^qA_{H_j
} V_j
	\]
from the equation \eqref{Eq_xeq_Lap_Sch2}, we obtain
	\begin{align*}
	\frac{\p V_i}{\p y}=&
		\left(A_{y}+
		\sum_{z\in x''}A_{yz} z+a_{H_i}A_{yx}
	+
	\sum_{\substack{q+1\le j \le r \\ H_j \in \mathcal{A}_y}}
	\frac{A_{H_j}}{y-b_{H_j}}
		\right)V_i
	+\sum_{\substack{p+1\le j\le q \\ H_{j} \in\mathcal{A}_y} }
	\frac{A_{H_j}}{y-c_{H_i H_j}}(V_i-V_j)
	 \\
	&\quad +\sum_{k=1}^{\hat{q}}\frac{A_{yx}E_{N_k}}{x-a_{\hat{H}_k}}\sum_{j=1}^q A_{H_j}V_j.
	\end{align*}
Let us focus on the last term.
From \eqref{Eq_A_yA_2}, we have
	\begin{align*}
	A_{yx}E_{N_k}=
	\begin{cases} O &(1\le k \le \hat{p}), \\
	(a_{\hat{H}_k})_y E_{N_k} & 
	(\hat{p}+1\le k \le \hat{q}).
	\end{cases}
	\end{align*}
Moreover, for $\hat{p}+1\le k\leq\hat{q}$ with $(a_{\hat{H}_k})_y\neq 0$, i.e., $\hat{H}_k\in\mathcal{B}_y$, it holds that
	\[
	\frac{A_{yx}E_{N_k}}{x-a_{\hat{H}_k}}=
	\frac{(a_{\hat{H}_k})_yE_{N_k}}{x-a_{\hat{H}_k}}
	=\frac{(f_{\hat{H}_k})_x(a_{\hat{H}_k})_yE_{N_k}}{(f_{\hat{H}_k})_y(y-b_{\hat{H}_k})}
	=-\frac{E_{N_k}}{y-b_{\hat{H}_k}}.
	\]
Therefore, for $H_i \in \mathcal{A}_x\cap\mathcal{A}_y^c$ we have
	\begin{equation}\label{Eq_Vi_1}
	\begin{aligned}
	\frac{\p V_i}{\p y}=&
		\left(A_{y}+
		\sum_{z\in x''}A_{yz} z+a_{H_i}A_{yx}
	+
	\sum_{\substack{q+1\le j \le r \\ H_j \in \mathcal{A}_y}}
	\frac{A_{H_j}}{y-b_{H_j}}
		\right)V_i
	+\sum_{\substack{p+1\le j\le q \\ H_{j} \in\mathcal{A}_y} }
	\frac{A_{H_j}}{y-c_{H_i H_j}}(V_i-V_j)
	 \\
	&\quad -\sum_{\substack{\hat{p}+1\le k\le \hat{q} \\ \hat{H}_k \in \mathcal{B}_y}}
	\frac{1}{y-b_{\hat{H}_k}}\sum_{j=1}^q E_{N_k}A_{H_j}V_j.
	\end{aligned}
	\end{equation}
	Thanks to the assumption \eqref{Eq_ass_inf}, 
	if $A_{xy}\neq O$, then $\mathcal{A}_x\cap\mathcal{A}_{x_j}=\emptyset$ holds for any $x_j \in x'$. 
	This says that $a_{H_i}\in\mathbb{C}$.

Next, we shall derive the equation for $\p V_i/\p y$ with $H_i \in \mathcal{A}_x\cap\mathcal{A}_y$.
In this case, we can assume $A_{xz}=O$ for any $z\in x'$ thanks to the assumption \eqref{Eq_ass_inf}.
	By using \eqref{Eq_BO_xPoly}, the equation \eqref{Eq_Prop_BO2}  for $\p U_i/\p y$ can be expressed as
	\begin{equation}\label{Eq_BO_y_Ui_2}
	\begin{aligned}
	\frac{\p U_i}{\p y}&=\left(A_{y}+
	\sum_{z\in x''} A_{yz}z+
	\sum_{\substack{q+1\le j \le r \\ H_j \in \mathcal{A}_y}}
	\frac{A_{H_j}}{y-b_{H_j}}\right)U_i 
	\\
	&\quad 
	+\sum_{\substack{1 \le j \le q \\H_j \in\mathcal{C}_{H_i,y}}}
	\frac{A_{H_j}}{y-c_{H_i H_j}}\left(U_i-U_j\right)
	+\frac{(f_{H_i})_y}{(f_{H_i})_x}\left(\frac{\p U_i}{\p x}- A_{x}U_i\right).
	\end{aligned}	
	\end{equation}
Since $H_i \in \mathcal{A}_x\cap\mathcal{A}_y$, it holds that
	\[
	\frac{(f_{H_i})_y}{(f_{H_i})_x}
	=-(a_{H_i})_y.
	\]
Hence we have
	\begin{equation}\label{Eq_BO_y_Ui_3}
	\begin{aligned}
	\frac{\p U_i}{\p y}&=\left(A_{y}+
	\sum_{z\in x''} A_{yz}z+
	\sum_{\substack{q+1\le j \le r \\ H_j \in \mathcal{A}_y}}
	\frac{A_{H_j}}{y-b_{H_j}}\right)U_i 
	\\
	&\quad 
	+\sum_{\substack{1 \le j \le q \\H_j \in\mathcal{C}_{H_i,y}}}
	\frac{A_{H_j}}{y-c_{H_i H_j}}\left(U_i-U_j\right)
	-(a_{H_i})_y\left(\frac{\p U_i}{\p x}- A_{x}U_i\right).
	\end{aligned}	
	\end{equation}
By applying the Laplace transform \eqref{Eq_op_FL} for this equation, we have 
	\[
	\begin{aligned}
	\frac{\p V_i}{\p y}&=\left(A_{y}+
	\sum_{z\in x''} A_{yz}z+
	\sum_{\substack{q+1\le j \le r \\ H_j \in \mathcal{A}_y}}
	\frac{A_{H_j}}{y-b_{H_j}}\right)V_i 
	\\
	&\quad 
	+\sum_{\substack{1 \le j \le q \\H_j \in\mathcal{C}_{H_i,y}}}
	\frac{A_{H_j}}{y-c_{H_i H_j}}\left(V_i-V_j\right)
	-(a_{H_i})_y\left(xV_i- A_{x}V_i\right).
	\end{aligned}	
	\]
Combining this and the equation \eqref{Eq_Vi_1}, we obtain the following.

\begin{proposition}
\label{Prop_Laplace}
Suppose that the system \eqref{Eq_Pfaff} satisfies \eqref{Eq_ass_diag} and \eqref{Eq_ass_inf}.
By the transformation \eqref{Eq_op_FL}, the $y$-equation ($y\in x'$) of \eqref{Eq_Pfaff_U} is transformed as follows:
	\begin{itemize}
	\item For $V_i$ such that $H_i \in \mathcal{A}_x\cap\mathcal{A}_y^c$
	(namely, $1\le i \le p$ or $p+1\le i \le q$ with $H_i \notin \mathcal{A}_y$),
	\begin{equation}\label{Eq_Prop_Vi_1}
	\begin{aligned}
	\frac{\p V_i}{\p y}=&
		\left(A_{y}+
		\sum_{z\in x''}A_{yz} z+a_{H_i}A_{yx}
	+
	\sum_{\substack{q+1\le j \le r \\ H_j \in \mathcal{A}_y}}
	\frac{A_{H_j}}{y-b_{H_j}}
		\right)V_i
	\\&\quad +\sum_{\substack{p+1\le j\le q \\ H_{j} \in\mathcal{A}_y} }
	\frac{A_{H_j}}{y-c_{H_i H_j}}(V_i-V_j)
	-\sum_{\substack{\hat{p}+1\le k\le \hat{q} \\ \hat{H}_k \in \mathcal{B}_y}}
	\frac{1}{y-b_{\hat{H}_k}}\sum_{j=1}^q E_{N_k}A_{H_j}V_j.
	\end{aligned}
	\end{equation}

	\item For $V_i$ such that $H_i \in \mathcal{A}_x\cap\mathcal{A}_y$
	(namely, $p+1\le i\le q$ with $H_i \in \mathcal{A}_y$),
	\begin{equation}\label{Eq_Prop_Vi_2}
	\begin{aligned}
	\frac{\p V_i}{\p y}
	&=	\left(A_{y}+
	(a_{H_i})_y A_{x}-(a_{H_i})_y x+\sum_{z\in x''} A_{yz}z+
	\sum_{\substack{q+1\le j \le r \\ H_j \in \mathcal{A}_y}}
	\frac{A_{H_j}}{y-b_{H_j}}\right)V_i 
	\\ 
	&\quad +\sum_{\substack{1 \le j \le q \\H_j \in\mathcal{C}_{H_i,y}}}
	\frac{A_{H_j}}{y-c_{H_i H_j}}\left(V_i-V_j\right).
	\end{aligned}
	\end{equation}
	\end{itemize}
\end{proposition}
Combining \eqref{Eq_xeq_Lap_Sch2}, \eqref{Eq_Prop_Vi_1} and \eqref{Eq_Prop_Vi_2} for all $y \in x'$, we obtain the Pfaffian system 
	\begin{equation}\label{Eq_Pfaff_V}
	dV=\mathcal{L}^x(\Omega)V, \quad 
	\mathcal{L}^x(\Omega) :=
	-\sum_{i=1}^n T_{x_i}(\bm{x})\,dx_i
	+\sum_{H \in \mathcal{B}}B_H \, d\log f_H
	\end{equation}
where $-T_{x_i}(\bm{x})$ is of the form
	\[
	-T_{x_i}(\bm{x})=B_{x_i}+\sum_{\substack{j=1 \\ j\neq i}}^nB_{x_i x_j}x_j,
	\quad 
	(B_{x_i}, B_{x_ix_j}\in\mathrm{Mat}(qN,\mathbb{C})).
	\]
The actual expressions of the $y$-equations of Pfaffian system \eqref{Eq_Pfaff_V} are given by \eqref{Eq_V_yeq} and \eqref{Eq_Pfaff_V_y2}.
We remark that the system \eqref{Eq_Pfaff_V} satisfies the assumption \eqref{Eq_ass_inf}. 
Namely, it holds that 
	\[
	\exists x_j \in x' ~\text{such that}~
	\mathcal{B}_x\cap\mathcal{B}_{x_j}\neq \emptyset
	~
	\Rightarrow
	~
	B_{xz}=O
	\quad
	(\forall z\in x').
	\]
Moreover, if $\mathcal{A}_x\cap\mathcal{A}_{x_j}\neq \emptyset$ for some $x_j \in x'$ in the initial system \eqref{Eq_Pfaff}, then the transformed system \eqref{Eq_Pfaff_V} satisfies $\mathcal{B}_x\cap\mathcal{B}_{z}=\emptyset$ for all $z\in x'$.
On the other hand, if $A_{xx_j}\neq O$ for some $x_j\in x'$ in the initial system \eqref{Eq_Pfaff}, then the transformed system \eqref{Eq_Pfaff_V} satisfies $B_{xz}=O$ for all $z\in x'$.

\begin{remark}\label{rem:integrableV}
If the Pfaffian system \eqref{Eq_Pfaff} is completely integrable, then the Pfaffian system \eqref{Eq_Pfaff_V} is also completely integrable.
\end{remark}

\begin{definition}\label{Def_L}
We call the operation which sends the Pfaffian system \eqref{Eq_Pfaff}  satisfying \eqref{Eq_ass_diag} and \eqref{Eq_ass_inf} to the Pfaffian system \eqref{Eq_Pfaff_V} the \emph{Laplace transform in $x_\ell$-direction} and denote it by $\mathcal{L}^{x_\ell}$.
\end{definition}

\medskip
\noindent
\textbf{Step 3.} 
Let $\mathcal{K}^x$ be a subspace of $(\mathbb{C}^{N})^q$ defined by
\begin{equation}\label{Eq_K}
	\mathcal{K}^x:=\bigoplus_{i=1}^q\Ker A_{H_i}
	=
	\left\{
	\begin{pmatrix}
	v_1 \\
	\vdots\\
	v_q
	\end{pmatrix}
	\in(\mathbb{C}^N)^q
	~\middle\vert~
	v_i \in \Ker A_{H_i}
	~
	(1\le i\le q)
	\right\},
\end{equation}
which is the same as \eqref{eq:odeK}. 
Then, thanks to Lemma \ref{lem:odeK}, we see that this subspace is invariant for the action of the $x$-equation \eqref{Eq_xeq_Lap_Sch2}. 
Namely, it holds that
	\begin{equation}\label{Eq_inv_K}
(\mathcal{L}^x(\Omega))_x \mathcal{K}^x\subset \mathcal{K}^x.
	\end{equation}
For $y$-equations ($y \in x'$), an analog of \cite[Proposition 2.2]{Haraoka2012} holds.
\begin{proposition}
\label{Prop_Inv_subsp}
For any $y\in x'$, the subspace $\mathcal{K}^x$ is invariant for the $y$-equation of the system \eqref{Eq_Pfaff_V}.
Namely,
	\[
	(\mathcal{L}^x(\Omega))_y\mathcal{K}^x \subset \mathcal{K}^x
	\]
holds for any value of $\bm{x}$.
\end{proposition}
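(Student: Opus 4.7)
The plan is to verify, for each $i = 1,\ldots,q$ and each $V = {}^{t}(V_1,\ldots,V_q) \in \mathcal{K}^x$, that the $i$-th block component of $(\mathcal{L}^x(\Omega))_y V$ belongs to $\ker A_{H_i}$. By Proposition \ref{Prop_Laplace}, this $i$-th component is given by \eqref{Eq_Prop_Vi_1} when $H_i \in \mathcal{A}_x \cap \mathcal{A}_y^c$ (Case A) and by \eqref{Eq_Prop_Vi_2} when $H_i \in \mathcal{A}_x \cap \mathcal{A}_y$ (Case B). I would apply $A_{H_i}$ from the left and show that the result vanishes, using a combination of the hypothesis $V_j \in \ker A_{H_j}$ (for each $j$) and the integrability conditions from Section \ref{Sec_Setting}.

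A number of contributions die immediately. The Laplace-specific cross term in Case A,
\[
-\sum_{\substack{\hat{p}+1 \le k \le \hat{q} \\ \hat{H}_k \in \mathcal{B}_y}} \frac{1}{y - b_{\hat{H}_k}} \sum_{j=1}^{q} E_{N_k} A_{H_j} V_j,
\]
is killed summand-by-summand because each contains $A_{H_j} V_j = 0$. Likewise, the $V_j$-parts of every $\frac{A_{H_j}}{y - c_{H_i H_j}}(V_i - V_j)$ summand satisfy $A_{H_i}A_{H_j} V_j = A_{H_i}(A_{H_j} V_j) = 0$, and in Case B the Laplace-specific scalar coefficient $-(a_{H_i})_y\,x$ of $V_i$ is annihilated by $A_{H_i} V_i = 0$. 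For the polynomial-in-$\bm{x}$ coefficient multiplying $V_i$, the integrability conditions \eqref{Eq_int_cond7}--\eqref{Eq_int_cond12} suffice: in Case A one uses $[A_{H_i}, A_{yz}] = 0$ together with $[A_{H_i}, A_y + a_{H_i} A_{xy}] = 0$, treating the sub-cases $A_{xy} = O$ and $A_{xy} \neq O$ via \eqref{Eq_int_cond8} or \eqref{Eq_int_cond12}; in Case B the assumption \eqref{Eq_ass_inf} forces $A_{xy} = O$, so \eqref{Eq_int_cond7} and \eqref{Eq_int_cond9} give the required commutation with $A_y + (a_{H_i})_y A_x$ and with each $A_{yz}$.

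What remains is to group the rational residues in $y$ pole-by-pole and invoke \eqref{Eq_int_cond1}--\eqref{Eq_int_cond4}. At a pole $y = b_{H_j}$ with $H_j \in \mathcal{A}_x^c \cap \mathcal{A}_y$, the coefficient of $V_i$ in the residue picks up not only $A_{H_j}$ but also every $A_{H_{j''}}$ with $H_{j''}$ in $\mathcal{A}_x \cap \mathcal{A}_y$ (respectively, $\mathcal{C}_{H_i,y}$) satisfying $c_{H_i H_{j''}} = b_{H_j}$; this combined residue commutes with $A_{H_i}$ by \eqref{Eq_int_cond3} in Case A and by \eqref{Eq_int_cond1} in Case B. At a pole $y = c_{H_i H_{j'}}$ not of the form $b_{H_j}$, the residue on $V_i$ is $\sum_{c_{H_i H_{j''}} = c_{H_i H_{j'}}} A_{H_{j''}}$, which commutes with $A_{H_i}$ by \eqref{Eq_int_cond4} in Case A and by \eqref{Eq_int_cond2} in Case B. The main obstacle is the organizational one of matching each pole to the correct integrability identity and ensuring that all $H_{j''}$ sharing a pole with $H_j$ are swept into the same residue; once the grouping is carried out, conditions \eqref{Eq_int_cond1}--\eqref{Eq_int_cond4} have been designed to deliver exactly the required commutator cancellations, and the proposition follows.
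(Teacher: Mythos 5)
Your proposal is correct and follows essentially the same route as the paper's proof: annihilate every term containing $A_{H_j}V_j$ (including the Laplace-specific cross term), commute $A_{H_i}$ past the polynomial coefficient of $V_i$ using \eqref{Eq_int_cond7}--\eqref{Eq_int_cond12}, and handle the logarithmic residues pole-by-pole via \eqref{Eq_int_cond1}--\eqref{Eq_int_cond4}. The only difference is that the paper delegates the residue-grouping step to the proof of \cite[Proposition 2.2]{Haraoka2012}, whereas you spell that grouping out directly.
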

\begin{proof}
Take any $v={}^t(v_1,\ldots,v_q)\in\mathcal{K}^x$, and set 
	\[
	w=(\mathcal{L}^x(\Omega))_yv={}^t(w_1,\ldots,w_q).
	\]
First, we shall show $A_{H_i}w_i=0$ for $H_i \in \mathcal{A}_x\cap\mathcal{A}_y^c$ (namely, $1\le i\le p$ or $p+1\le i\le q$ with $H_i \notin\mathcal{A}_y$). 
In this case, from \eqref{Eq_Prop_Vi_1} and $v_j \in \Ker A_{H_j}$, we have
	\begin{align*}
	w_i=&
		\left(A_{y}+
		\sum_{z\in x''}A_{yz} z+A_{yx}a_{H_i}
	+
	\sum_{\substack{q+1\le j \le r \\ H_j \in \mathcal{A}_y}}
	\frac{A_{H_j}}{y-b_{H_j}}
		\right)v_i
	+\sum_{\substack{p+1\le j\le q \\ H_{j} \in\mathcal{A}_y} }
	\frac{A_{H_j}}{y-c_{H_i H_j}}(v_i-v_j)
	 \\
	&\quad -\sum_{\substack{\hat{p}+1\le k\le \hat{q} \\ \hat{H} \in \mathcal{B}_y}}
	\frac{1}{y-b_{\hat{H}_k}}\sum_{j=1}^q E_{N_k}A_{H_j}v_j
	\\
	&=
	\left(A_{y}+
		\sum_{z\in x''}A_{yz} z+A_{yx}a_{H_i}
	+
	\sum_{\substack{q+1\le j \le r \\ H_j \in \mathcal{A}_y}}
	\frac{A_{H_j}}{y-b_{H_j}}
	+\sum_{\substack{p+1\le j\le q \\ H_{j} \in\mathcal{A}_y} }
	\frac{A_{H_j}}{y-c_{H_i H_j}}\right)v_i.
	\end{align*}
Then, we have
	\begin{align*}
	A_{H_i}w_i =
		A_{H_i}\left(A_{y}+
		\sum_{z\in x''}A_{yz} z+A_{yx}a_{H_i}
	+
	\sum_{\substack{q+1\le j \le r \\ H_j \in \mathcal{A}_y}}
	\frac{A_{H_j}}{y-b_{H_j}}
	+\sum_{\substack{p+1\le j\le q \\ H_{j} \in\mathcal{A}_y} }
	\frac{A_{H_j}}{y-c_{H_i H_j}}\right)v_i.
	\end{align*}
Here, it holds that $[A_{H_i},A_y+a_{H_i}A_{xy}]=O$ thanks to \eqref{eq:cond13p} and the condition \eqref{Eq_ass_inf}. 
It also holds that $[A_{H_i},A_{yz}]=O~(z\in x')$ thanks to \eqref{eq:cond14p} (note that, $(a_{H})_{z}A_{xy}=O$ holds under the condition \eqref{Eq_ass_inf}).
The commutativity on the remaining part is already shown by Haraoka (cf. the proof of \cite[Proposition 2.2]{Haraoka2012}). 
Therefore, we have $A_{H_i}w_i=0$ for $H_i \in \mathcal{A}_x\cap\mathcal{A}_y^c$.

Next, we show $A_{H_i}w=0$ with $H_i \in \mathcal{A}_x\cap\mathcal{A}_y$ (namely, $p+1\le i\le q$ with $H_i \in\mathcal{A}_y$). 
As in before, in this case we can assume $A_{xz}=O$ ($z\in x'$), especially $A_{xy}=O$. 
Then, by using \eqref{Eq_Prop_Vi_2}, we have
	\begin{align*}
	w_i
	&=	\left(A_{y}+
	(a_{H_i})_y A_{x}-(a_{H_i})_y x+
	\sum_{z\in x''} A_{yz}z+
	\sum_{\substack{q+1\le j \le r \\ H_j \in \mathcal{A}_y}}
	\frac{A_{H_j}}{y-b_{H_j}}\right)v_i 
	\\ 
	&\quad +\sum_{\substack{1 \le j \le q \\H_j \in \mathcal{C}_{H_i,y}}}
	\frac{A_{H_j}}{y-c_{H_i H_j}}\left(v_i-v_j\right) \\
	&=	\left(A_{y}+
	(a_{H_i})_y A_{x}-(a_{H_i})_y x+
	\sum_{z\in x''} A_{yz}z+
	\sum_{\substack{q+1\le j \le r \\ H_j \in \mathcal{A}_y}}
	\frac{A_{H_j}}{y-b_{H_j}}+\sum_{\substack{1 \le j \le q \\H_j \in \mathcal{C}_{H_i,y}}}
	\frac{A_{H_j}}{y-c_{H_i H_j}}\right)v_i.
	\end{align*}
Then we obtain
	\[
	A_{H_i}w_i=A_{H_i}	\left(A_{y}+
	(a_{H_i})_y A_{x}-(a_{H_i})_y x+
	\sum_{z\in x''} A_{yz}z+
	\sum_{\substack{q+1\le j \le r \\ H_j \in \mathcal{A}_y}}
	\frac{A_{H_j}}{y-b_{H_j}}+\sum_{\substack{1 \le j \le q \\H_j \in \mathcal{C}_{H_i,y}}}
	\frac{A_{H_j}}{y-c_{H_i H_j}}\right)v_i.
	\]
Here, it hold that $[A_{H_i},A_{y}+(a_{H_i})_yA_{x}]=[A_{H_i},A_{yz}]=O$ thanks to \eqref{eq:cond9p} and \eqref{eq:cond10p} (note that, in this case, $A_{xz}=O$ $(z\in x')$ thanks to the assumption \eqref{Eq_ass_inf}).
The commutativity on the remaining part is already shown by Haraoka (cf. the proof of \cite[Proposition 2.2]{Haraoka2012}). 
Therefore, we have $A_{H_i}w_i=0$ for $H_i \in \mathcal{A}_x\cap\mathcal{A}_y$.
As a result, we obtain $w\in\mathcal{K}^x$.
\end{proof}

Proposition \ref{Prop_Inv_subsp} leads that, the Pfaffian system \eqref{Eq_Pfaff_V} induces the Pfaffian system 
	\begin{equation}\label{Eq_Pfaff_v}
	dv=\mathcal{ML}^x(\Omega)v,
	\quad 
	\mathcal{ML}^x(\Omega)=-\sum_{i=1}^n\bar{T}_{x_i}(\bm{x})\,dx_i+\sum_{H \in \mathcal{B}}\bar{B}_H \, d\log f_{H}.
	\end{equation}
on the quotient space $\mathbb{C}^{qN}/\mathcal{K}^x$.
Here $\mathcal{ML}^x(\Omega)$ is the matrix $1$-form that represents the action of $\mathcal{L}^x(\Omega)$ on $\mathbb{C}^{qN}/\mathcal{K}^x$.

\begin{definition}\label{Def_ML}
We call the operation which sends the Pfaffian system \eqref{Eq_Pfaff}  satisfying \eqref{Eq_ass_diag} and \eqref{Eq_ass_inf} to the Pfaffian system \eqref{Eq_Pfaff_v} the \emph{middle Laplace transform in $x_\ell$-direction} and denote it by $\mathcal{ML}^{x_\ell}$.
\end{definition}
We note that, the middle Laplace transform $\mathcal{ML}^{x_\ell}$ changes the rank of Pfaffian systems in general.
Some examples will be given in Section \ref{sec:extwo}.

\subsection{Inverse (middle) Laplace transform}
\label{Subsec_IFL}
In the same way as the previous subsection, we can define the \emph{inverse Laplace transform} and the \emph{inverse middle Laplace transform}.
Let us consider the Pfaffian system \eqref{Eq_Pfaff}
satisfying \eqref{Eq_ass_diag} and \eqref{Eq_ass_inf}.
We take $x_\ell=x$ satisfying \eqref{Eq_ass_diag} and \eqref{Eq_ass_inf}, and suppose \eqref{Eq_A_yA}.
For this system, we consider the Birkhoff-Okubo extension \eqref{Eq_Pfaff_U} and its inverse Laplace transform in $x$-direction
	\begin{equation}\label{Eq_op_invFL}
		L^{-x}:\left\{
	\begin{array}{ccc}
	\dfrac{\partial }{\partial x} &\,\mapsto\, & -x, \\
	x &\,\mapsto\,& \dfrac{\partial }{\partial x},
	\end{array}\right. 
	\end{equation}	
which corresponds to the integral transform
	\[
	U(\bm{x})~\mapsto~W(\bm{x})=\int_{\Delta}U(t,x')e^{xt}\,dt
	\]
with a suitable path of integration $\Delta$.
In the Pfaffian system \eqref{Eq_Pfaff_U}, we set $W:=L^{-x}(U)$. 
Namely,
	\begin{equation}\label{Eq_W_def}
	W={}^t(W_1,W_2,\ldots,W_q), \quad W_i:=L^{-x}(U_i).
	\end{equation}
Then, as in Subsection \ref{subsec:invLone}, the $x$-equation \eqref{Eq_U_xeq_BO} of the system \eqref{Eq_Pfaff_U} is transformed into 
		\begin{equation}\label{Eq_xeq_InvLap_BO}
	\left(x+S_{x}(\bm{x})^{\oplus q}\right)\frac{\p W}{\partial x}
	=
	\left(-A+\left(x+S_{x}(\bm{x})^{\oplus q}\right)T_{x}(\bm{x})\right)W.
	\end{equation}
This equation can be transformed into
	\begin{equation}
	\label{Eq_xeq_InvLap_BO_Sch}
	\frac{\p W}{\p x}=
	\left(T_{x}(\bm{x})+\sum_{j=1}^{\hat{q}} 
	\frac{B_{\hat{H}_j}}{x+a_{\hat{H}_j}}\right)W
	\end{equation}
where $B_{\hat{H}_j}$ is given by \eqref{Eq_hat_G}.
This equation is the same as \eqref{eq:odeinvL}.
On the $y$-equation, in the same way as Proposition \ref{Prop_Laplace}, we can show the following.

\begin{proposition}
\label{Prop_invLaplace}
Suppose that the system \eqref{Eq_Pfaff} satisfies \eqref{Eq_ass_diag} and \eqref{Eq_ass_inf}.
By the transformation \eqref{Eq_op_invFL}, the $y$-equation ($y\in x'$) of \eqref{Eq_Pfaff_U} is transformed as follows:
	\begin{itemize}
	\item For $W_i$ such that $H_i \in \mathcal{A}_x\cap\mathcal{A}_y^c$
	(namely, $1\le i \le p$ or $p+1\le i \le q$ with $H_i \notin \mathcal{A}_y$),
	\begin{equation}\label{Eq_Prop_Wi_1}
	\begin{aligned}
	\frac{\p W_i}{\p y}=&
		\left(A_{y}+
		\sum_{z\in x''}A_{yz} z+a_{H_i}A_{yx}
	+
	\sum_{\substack{q+1\le j \le r \\ H_j \in \mathcal{A}_y}}
	\frac{A_{H_j}}{y-b_{H_j}}
		\right)W_i \\
	&	
	+\sum_{\substack{p+1\le j\le q \\ H_{j} \in\mathcal{A}_y} }
	\frac{A_{H_j}}{y-c_{H_i H_j}}(W_i-W_j)
	 -\sum_{\substack{\hat{p}+1\le k\le \hat{q} \\ \hat{H}_k \in \mathcal{B}_y}}
	\frac{(a_{\hat{H}_k})_y}{x+a_{\hat{H}_k}}\sum_{j=1}^q E_{N_k}A_{H_j}W_j.
	\end{aligned}
	\end{equation}

	\item For $W_i$ such that $H_i \in \mathcal{A}_x\cap\mathcal{A}_y$
	(namely, $p+1\le i\le q$ with $H_i \in \mathcal{A}_y$),
	\begin{equation}\label{Eq_Prop_Wi_2}
	\begin{aligned}
	\frac{\p W_i}{\p y}
	&=	\left(A_{y}+
	(a_{H_i})_y A_{x}+(a_{H_i})_y x+\sum_{z\in x''} A_{yz}z+
	\sum_{\substack{q+1\le j \le r \\ H_j \in \mathcal{A}_y}}
	\frac{A_{H_j}}{y-b_{H_j}}\right)W_i 
	\\ 
	&\quad +\sum_{\substack{1 \le j \le q \\H_j \in\mathcal{C}_{H_i,y}}}
	\frac{A_{H_j}}{y-c_{H_i H_j}}\left(W_i-W_j\right).
	\end{aligned}
	\end{equation}
	\end{itemize}
\end{proposition}
Combining \eqref{Eq_xeq_InvLap_BO_Sch}, \eqref{Eq_Prop_Wi_1} and \eqref{Eq_Prop_Wi_2} for all $y \in x'$, we obtain the Pfaffian system
	\begin{equation}\label{Eq_Pfaff_W}
	dW=\mathcal{L}^{-x}(\Omega) W, \quad 
	\mathcal{L}^{-x}(\Omega):=
	\sum_{i=1}^n T_{x_i}(\bm{x})\,dx_i
	+\sum_{H \in \mathcal{B}^-}B_H \, d\log f_H,
	\end{equation}
where  
	\[
	\mathcal{\mathcal{B}}^-:=\{\sigma(\hat{H}_1),\ldots,\sigma(\hat{H}_{\hat{q}})\}\cup(\widetilde{\mathcal{A}}\setminus\mathcal{A}_x). 
	\]
and $\sigma(H)$ is the hyperplane defined by the polynomial $f_{\sigma(H)}:=(f_{H})_{x}(x+a_{H})$. 
We note that $T_{x_i}(\bm{x})$ is of the form
	\[
	T_{x_i}(\bm{x})=B_{x_i}+\sum_{\substack{j=1 \\ j\neq i}}^nB_{x_i x_j}x_j,
	\quad 
	(B_{x_i}, B_{x_i x_j}\in\mathrm{Mat}(qN,\mathbb{C})).
	\]
and the system \eqref{Eq_Pfaff_W} satisfies the assumption \eqref{Eq_ass_inf}. 
Namely, it holds that 
	\[
	\exists x_j \in x' ~\text{such that}~
	\mathcal{B}_x^-\cap\mathcal{B}_{x_j}^-\neq \emptyset
	~
	\Rightarrow
	~
	B_{xz}=O
	\quad
	(\forall z\in x').
	\]
Moreover, if $\mathcal{A}_x\cap\mathcal{A}_{x_j}\neq \emptyset$ for some $x_j \in x'$ in the initial system \eqref{Eq_Pfaff}, then the transformed system \eqref{Eq_Pfaff_W} satisfies $\mathcal{B}_x^-\cap\mathcal{B}_{z}^-=\emptyset$ for all $z \in x'$.
On the other hand, if $A_{xx_j}\neq O$ for some $x_j\in x'$ in the initial system \eqref{Eq_Pfaff}, then the transformed system \eqref{Eq_Pfaff_W} satisfies $B_{xz}=O$ for all $z\in x'$.

\begin{remark}\label{rem:integrableW}
If the Pfaffian system \eqref{Eq_Pfaff} is completely integrable, then the Pfaffian system \eqref{Eq_Pfaff_W} is also completely integrable.
\end{remark}

\begin{definition}\label{Def_invL}
 We call the operation which sends the Pfaffian system \eqref{Eq_Pfaff}  satisfying \eqref{Eq_ass_diag} and \eqref{Eq_ass_inf} to the Pfaffian system \eqref{Eq_Pfaff_W} the \emph{inverse Laplace transform in $x_\ell$-direction} and denote it by $\mathcal{L}^{-x_\ell}$.
\end{definition}
The subspace $\mathcal{K}^x$ is invariant for the system \eqref{Eq_Pfaff_W}, which can be shown in the same way with the previous subsection.
Therefore, the Pfaffian system \eqref{Eq_Pfaff_W} induces the Pfaffian system 
	\begin{equation}\label{Eq_Pfaff_w}
	dw=\mathcal{ML}^{-x}(\Omega)w,
	\quad 
	\mathcal{ML}^{-x}(\Omega)=\sum_{i=1}^n\bar{T}_{x_i}(\bm{x})\,dx_i+\sum_{H \in \mathcal{B}^-}\bar{B}_H d\log f_{H}.
	\end{equation}
on the quotient space $\mathbb{C}^{qN}/\mathcal{K}^x$.
Here $\mathcal{ML}^{-x}(\Omega)$ is the matrix $1$-form that represents the action of $\mathcal{L}^{-x}(\Omega)$ on $\mathbb{C}^{qN}/\mathcal{K}^x$.

\begin{definition}\label{Def_invML}
We call the operation which sends the Pfaffian system \eqref{Eq_Pfaff}  satisfying \eqref{Eq_ass_diag} and \eqref{Eq_ass_inf}  to the Pfaffian system \eqref{Eq_Pfaff_w} the \emph{inverse middle Laplace transform in $x_\ell$-direction} and denote it by $\mathcal{ML}^{-x_\ell}$.
\end{definition}
%

\subsection{Fundamental properties of the middle Laplace transform}

	We summarize some fundamental properties of the (inverse) middle Laplace transform without proofs.
	All the proofs will be given in Section \ref{Sec_Inv}.

	\begin{definition}\label{Def_irr}
	Let $N\ge1$. 
	For a Pfaffian system \eqref{Eq_Pfaff}, or equivalently its corresponding $1$-form $\Omega$, we use the following terminology. 
	\begin{itemize}
	\item We say that it is \emph{irreducible in $x_\ell$-direction} if the $x_\ell$-equation of $du=\Omega u$ is irreducible in the sense of Definition \ref{def:odeirred}. 
	Namely,  there is no $\langle A_{x_\ell},\{A_{x_\ell x_j}\}_{j\neq \ell},\{A_{H}\}_{H\in\mathcal{A}_{x_\ell}}\rangle$-invariant subspace  except $\mathbb{C}^N$ and $\{0\}$. 
	\item
	We say that it is \emph{exceptional in $x_{\ell}$-direction} if the $x_\ell$-equation of $du=\Omega u$ is exceptional in the sense of Definition \ref{def:odeirred}. Namely, $N=1$ and $A_H=0$ for all $H\in\calA_{x_\ell}$.
	Otherwise, we say it is \emph{non-exceptional in $x_{\ell}$-direction}.
	\end{itemize}
	\end{definition}
	\begin{theorem}
	\label{Thm_main}
	If a Pfaffian system \eqref{Eq_Pfaff} satisfying \eqref{Eq_ass_diag} and \eqref{Eq_ass_inf} is irreducible and non-exceptional in $x_\ell$-direction, then the following hold.
	\begin{enumerate}[(i)]
	\item $\mathcal{ML}^{-x_\ell}\circ\mathcal{ML}^{x_\ell}(\Omega)\sim \Omega$ and $\mathcal{ML}^{x_\ell}\circ\mathcal{ML}^{-x_\ell}(\Omega)\sim \Omega$.
	\item The middle Laplace transformed system \eqref{Eq_Pfaff_v} and the inverse middle Laplace transformed system \eqref{Eq_Pfaff_w} are also irreducible and non-exceptional in $x_\ell$-direction.
	\end{enumerate}
Here, the equivalence relation $\sim$ on $1$-forms is defined by 
	\begin{equation}\label{Eq_equiv}
	\Omega \sim \Omega' 
	~\overset{\mathrm{def}}{\iff}~
	\exists P \in \mathrm{GL}(N,\mathbb{C}) \text{ such that }
	\Omega'=P^{-1}\Omega P.
	\end{equation}
	\end{theorem}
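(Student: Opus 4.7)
The plan is to leverage the categorical interpretation in Section \ref{Sec_Categorical} and split the argument into two essentially independent ingredients: (i) invertibility of the Laplace transform $\mathcal{L}^{\pm x_\ell}$ at the level of meromorphic connections, and (ii) compatibility of the projection onto $\mathbb{C}^{qN}/\mathcal{K}^{x_\ell}$ with the Birkhoff-Okubo extension built into $\mathcal{L}^{\pm x_\ell}$. Once (i) and (ii) are in place, the irreducibility statement will follow from a standard pullback argument.

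For (i), the key observation is algebraic: the Laplace substitution $\partial_x\leftrightarrow x$, $x\leftrightarrow -\partial_x$ on the Weyl algebra is involutive up to sign, so at the level of $\mathcal{D}$-modules (equivalently meromorphic connections) the operations $\mathcal{L}^{x_\ell}$ and $\mathcal{L}^{-x_\ell}$ should be mutually inverse. Direct verification is possible from the explicit formulas \eqref{Eq_xeq_Lap_Sch2}, \eqref{Eq_xeq_InvLap_BO_Sch} and the $y$-equations of Propositions \ref{Prop_Laplace} and \ref{Prop_invLaplace}, the main bookkeeping issue being that the singular locus is reorganised under $\mathcal{L}^{x_\ell}$ (the hyperplanes $\mathcal{A}_{x_\ell}$ are replaced by $\mathcal{B}_{x_\ell}=\{\hat{H}_1,\ldots,\hat{H}_{\hat{q}}\}$ derived from the eigenvalues of $S_{x_\ell}(\bm{x})$) and likewise for $\mathcal{L}^{-x_\ell}$.

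For (ii), the extension $u\mapsto U={}^t(u/(x-a_{H_1}),\ldots,u/(x-a_{H_q}))$ satisfies $(x-a_{H_i})U_i=u$, so the original vector $u$ is recovered from any non-kernel component and $\mathcal{K}^{x_\ell}=\bigoplus_i\ker A_{H_i}$ captures precisely the ambiguity of this recovery. The irreducibility assumption in $x_\ell$-direction ensures that no proper invariant subspace of $\mathbb{C}^N$ is destroyed in passing to the quotient, so applying extension, Laplace, projection, extension, inverse Laplace and projection in sequence should return a Pfaffian system naturally isomorphic to the original $\Omega$; the intertwiner $P\in\mathrm{GL}(N,\mathbb{C})$ of \eqref{Eq_equiv} is then read off from this natural isomorphism. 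Preservation of irreducibility is then immediate from functoriality: if $\mathcal{ML}^{x_\ell}(\Omega)$ admitted a proper subspace invariant under $\langle B_{x_\ell},\{B_{x_\ell x_j}\}_j,\{B_H\}_{H\in\mathcal{B}_{x_\ell}}\rangle$, applying $\mathcal{ML}^{-x_\ell}$ would produce a proper invariant subspace of $\mathcal{ML}^{-x_\ell}\circ\mathcal{ML}^{x_\ell}(\Omega)\sim\Omega$, a contradiction; the same argument applied to $\mathcal{ML}^{-x_\ell}(\Omega)$ completes the proof.

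The main obstacle will be the detailed verification of step (ii). The rank of the system changes after extension, the singular loci are rearranged as described, and the codomain categories differ between $\mathcal{ML}^{x_\ell}$ and $\mathcal{ML}^{-x_\ell}$, so matching them for the purpose of composition requires the full categorical framework of Section \ref{Sec_Categorical}. Constructing the explicit intertwiner between $\Omega$ and $\mathcal{ML}^{-x_\ell}\circ\mathcal{ML}^{x_\ell}(\Omega)$ and verifying that it commutes with every coefficient matrix is presumably what occupies the technical Subsection \ref{Subsec_varphi_mor} that the author suggests casual readers may skip.
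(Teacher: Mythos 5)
Your overall architecture (categorical framework, then a Dettweiler--Reiter-style pullback argument for irreducibility) matches the paper's, and your irreducibility argument is essentially the paper's Lemma \ref{Lem_inclusion} plus the proof of Theorem \ref{Thm_conn}(ii) — modulo the refinement that one should take a \emph{minimal} nonzero subconnection $\mathcal{W}$ of $\mathcal{ML}^{x_\ell}(\Omega)$, since $\mathcal{ML}^{-x_\ell}$ applied to a proper nonzero subconnection could a priori give $0$ or all of $(\mathcal{V},\Omega)$ rather than a "proper invariant subspace"; minimality lets one apply the inversion formula to the irreducible $\mathcal{W}$ itself and conclude $\mathcal{W}=\mathcal{ML}^{x_\ell}(\mathcal{V},\Omega)$. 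However, your treatment of the inversion formula has a genuine gap. Your step (i) asserts that $\mathcal{L}^{x_\ell}$ and $\mathcal{L}^{-x_\ell}$ "should be mutually inverse" because the Weyl-algebra substitution is involutive. This is false for the operators as defined in the paper: each of $\mathcal{L}^{\pm x_\ell}$ is $L^{\pm x_\ell}\circ BO^{x_\ell}$ and therefore multiplies the rank (by $q$ and then by $\hat q$), so $\mathcal{L}^{-x_\ell}\circ\mathcal{L}^{x_\ell}(\mathcal{V},\Omega)$ lives on $(\mathcal{V}^q)^{\hat q}$, not on $\mathcal{V}$. The entire content of the inversion formula is that this much larger connection has a canonical quotient isomorphic to $(\mathcal{V},\Omega)$, \emph{and} that this quotient agrees with the one obtained by interleaving the two projections (i.e.\ with $\mathcal{ML}^{-x_\ell}\circ\mathcal{ML}^{x_\ell}$). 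Neither statement follows from involutivity of the Laplace substitution.

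Concretely, what is missing from your step (ii) is any candidate for the intertwiner: the paper constructs the explicit linear map $\varphi(\bm{v})=\sum_{i=1}^{\hat q}E_{N_i}\sum_{j=1}^{q}A_{H_j}v_j^i$ from $(\mathcal{V}^q)^{\hat q}$ to $\mathcal{V}$, proves it is a morphism of connections (the long computation of Subsection \ref{Subsec_varphi_mor}, which rests on the integrability conditions), and uses irreducibility twice in a quantitative way through the conditions \eqref{Eq_DR_star1} and \eqref{Eq_DR_star2}: $(\star\star)$ gives surjectivity of $\varphi$, and $(\star)$ gives the identification (Lemma \ref{Lem_K'}) of $\ker\varphi$ with the subspace $\{\bm{v}\mid \widetilde{B}_{\hat H_i}v_i\in\mathcal{K}^{x}_{\mathcal{V}}\}$, which is exactly what is needed to match the single quotient $(\mathcal{V}^q)^{\hat q}/\ker\varphi$ with the iterated quotients defining $\mathcal{ML}^{-x_\ell}\circ\mathcal{ML}^{x_\ell}$ (Proposition \ref{Prop_bijc}). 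Your phrase "the irreducibility assumption ensures that no proper invariant subspace is destroyed, so ... the intertwiner is read off from this natural isomorphism" assumes the existence of the natural isomorphism that is to be proven; without producing $\varphi$ (or an equivalent) and verifying both the morphism property and the kernel/image computations, the proof of the first bullet of the theorem is not established.
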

We remark that, in terms of Pfaffian systems, statement (i) can be written as 
	\[
	\mathcal{ML}^{-x_\ell}\circ\mathcal{ML}^{x_\ell}=\mathcal{ML}^{x_\ell}\circ\mathcal{ML}^{-x_\ell}=\mathrm{id.}
	\]
	Here, the above symbol $``="$ is understood up to constant gauge equivalence; that is, the equations obtained by $\mathcal{ML}^{-x_\ell}\circ\mathcal{ML}^{x_\ell}$ and $\mathcal{ML}^{x_\ell}\circ\mathcal{ML}^{-x_\ell}$ coincide with the original one modulo a gauge transformation by some constant matrix $P \in \mathrm{GL}(N, \mathbb{C})$.
	We also remark that statement (ii) is essentially the same as Theorem \ref{thm:odemain} (ii).

\subsection{Formulation of the middle Laplace transform as a map}\label{Subsec_mapping}

Since the (inverse) middle Laplace transformed systems \eqref{Eq_Pfaff_v} and \eqref{Eq_Pfaff_w} depend on the choice of the basis of $\mathbb{C}^{qN}/\mathcal{K}$, the (inverse) middle Laplace transform operation cannot be regarded as a map. 
However, they induce the map of quotient spaces consisting of isomorphic classes of Pfaffian systems as follows.
Fix any two hyperplane arrangements $\mathcal{S}_{x_\ell}$ and $\mathcal{T}_{x_\ell}$ consisting of hyperplanes whose defining polynomial depends on $x_\ell$.
Then, we define the set $\mathfrak{P}_{\mathcal{T}_{x_\ell},\mathcal{S}_{x_\ell}}$ of the $1$-forms of the form of \eqref{Eq_Pfaff} by
	\begin{equation}
	\begin{aligned}
\mathfrak{P}_{\mathcal{T}_{x_\ell},\mathcal{S}_{x_\ell}}:=&\left\{
	\Omega=\sum_{i=1}^nS_{x_i}(\bm{x})dx_i
	+\sum_{H \in \mathcal{A}}A_H \, d\log f_H
	 ~\middle\vert~
	\begin{array}{l}
	A_{x_i}, A_{x_ix_j}, A_H \in \mathrm{Mat}(N,\mathbb{C}),\,N\in\mathbb{Z}_{\ge0}, \\
	\text{$\Omega$ satisfies \eqref{Eq_xixj}, \eqref{Eq_integrable}, \eqref{Eq_ass_diag}, \eqref{Eq_ass_inf},} \\
	\mathcal{A}_{x_\ell}=\mathcal{T}_{x_\ell}$ \text{~and~} $\mathcal{B}_{x_\ell}\subset\mathcal{S}_{x_\ell}
	\end{array}
	\right\}.
\end{aligned}
	\end{equation}
	Here the arrangement $\mathcal{B}_{x_\ell}$ is defined by \eqref{Eq_Bx} when $x=x_{\ell}$.
We note that the size of the coefficient matrices of $1$-forms in $\mathfrak{P}_{\mathcal{T}_{x_\ell},\mathcal{S}_{x_\ell}}$ is not fixed. 
We often identify a $1$-form $\Omega\in \mathfrak{P}_{\mathcal{T}_{x_\ell},\mathcal{S}_{x_\ell}}$ with the Pfaffian system $du=\Omega u$. 
We define the quotient space $\mathfrak{M}_{\mathcal{T}_{x_\ell},\mathcal{S}_{x_\ell}}$ of $1$-forms by
	\[
	\begin{aligned}
	\mathfrak{M}_{\mathcal{T}_{x_\ell},\mathcal{S}_{x_\ell}}:=\mathfrak{P}_{\mathcal{T}_{x_\ell},\mathcal{S}_{x_\ell}}/\sim.
	\end{aligned}
	\]
Here the equivalence relation $\sim$ is defined by \eqref{Eq_equiv}.
Then, the middle Laplace transform $\mathcal{ML}^{x_\ell}$ and the inverse middle Laplace transform $\mathcal{ML}^{-x_\ell}$ can be regarded as maps
	\begin{equation}\label{Eq_ML_as_map}
	\mathcal{ML}^{x_\ell}: \mathfrak{M}_{\mathcal{T}_{x_\ell},\mathcal{S}_{x_\ell}}
	\to\mathfrak{M}_{\mathcal{S}_{x_\ell},-\mathcal{T}_{x_\ell}}, 
	\quad 
	\mathcal{ML}^{-x_\ell}: \mathfrak{M}_{\mathcal{T}_{x_\ell},\mathcal{S}_{x_\ell}}
	\to\mathfrak{M}_{-\mathcal{S}_{x_\ell},\mathcal{T}_{x_\ell}}.
	\end{equation}
Here $-\mathcal{T}_{x_\ell}$ denotes the hyperplane arrangement $\{\sigma(H)\mid H\in\mathcal{T}_{x_\ell}\}$. 
For example, if 
	\[
	\mathcal{T}_{x_1}=\{\{x_1=0\}, \{x_1-1=0\}, \{x_1-x_2=0\}\},
	\]
then
	\[
	-\mathcal{T}_{x_1}=\{\{x_1=0\}, \{x_1+1=0\},\{x_1+x_2=0\}\}.
	\]
In this setting, Theorem \ref{Thm_main} can be rephrased concisely. 
Let $\mathfrak{P}_{\mathcal{T}_{x_\ell},\mathcal{S}_{x_\ell}}^{\rm \rm irr,ne}$ be the subset of $\mathfrak{P}_{\mathcal{T}_{x_\ell},\mathcal{S}_{x_\ell}}$ consisting of $1$-forms which are irreducible and non-exceptional in $x_\ell$-direction, and set
	\[
	\mathfrak{M}_{\mathcal{T}_{x_\ell},\mathcal{S}_{x_\ell}}^{\rm irr,ne}:=\mathfrak{P}_{\mathcal{T}_{x_\ell},\mathcal{S}_{x_\ell}}^{\rm irr,ne}/\sim.
	\]
Then, Theorem \ref{Thm_main} states that the maps 
	\[
	\mathcal{ML}^{x_\ell}: \mathfrak{M}_{\mathcal{T}_{x_\ell},\mathcal{S}_{x_\ell}}^{\rm irr,ne}
	\to\mathfrak{M}_{\mathcal{S}_{x_\ell},-\mathcal{T}_{x_\ell}}^{\rm irr,ne}, 
	\quad 
	\mathcal{ML}^{-x_\ell}: \mathfrak{M}_{\mathcal{T}_{x_\ell},\mathcal{S}_{x_\ell}}^{\rm irr,ne}
	\to\mathfrak{M}_{-\mathcal{S}_{x_\ell},\mathcal{T}_{x_\ell}}^{\rm irr,ne}
	\]
are both bijective.

\section{Categorical interpretation (several variables case)}
\label{Sec_Categorical}

We give a categorical interpretation of the (middle) Laplace transform with several variables, which is useful to prove Theorem \ref{Thm_main}.

\subsection{Category of meromorphic connections}

We set $X=(\mathbb{P}^1)^n$. 
Let $\mathcal{O}_{X}$ be the sheaf of holomorphic functions on $X$. 
We set $\mathcal{M}_X^1$ be the sheaf of meromorphic $1$-forms on $X$. 
Let $\mathscr{C}_{X}$ be the category of meromorphic connections on $X$. 
That is, 
		\begin{equation}\label{Eq_merom}
	\mathrm{Ob}(\mathscr{C}_X) = \left\{ (\mathcal{O}_X\otimes \mathcal{V},\nabla)~ \middle|~
	 \begin{array}{l}
\text{$\mathcal{V}$ is a finite dimensional $\mathbb{C}$-vector space and} \\
\text{$\nabla:\mathcal{O}_X\otimes \mathcal{V}\to\mathcal{M}_X^1 \otimes_{\mathcal{O}_{X}} (\mathcal{O}_X\otimes \mathcal{V})$}
\\
\text{is a $\mathbb{C}$-linear map satisfying } \\
\nabla(hv)=dh\otimes_{\mathcal{O}_{X}}   v+ h \nabla(v) ~ (h\in\mathcal{O}_{X},\,v \in \mathcal{O}_{X}\otimes \mathcal{V})
 	\end{array}
 \right\},
	\end{equation}
where we write $\otimes_{\mathbb{C}}=\otimes$ for short.
A linear map $f:\mathcal{V}\to\mathcal{V}'$ is called a morphism of (meromorphic) connections from $(\mathcal{O}_X\otimes\mathcal{V},\nabla)$ to $(\mathcal{O}_X\otimes\mathcal{V}',\nabla')$ if the diagram 
		\[
		\begin{tikzcd}
		\mathcal{O}_X\otimes \mathcal{V} \arrow[r,"\nabla",""]
		\arrow[d,"\varphi" ']
		&\mathcal{M}_X^1\otimes_{\mathcal{O}_{X}} (\mathcal{O}_X\otimes \mathcal{V})\arrow[d,""," \mathrm{id.}\otimes_{\mathcal{O}_{X}}\varphi"]\\
		\mathcal{O}_X\otimes \mathcal{V}'  \arrow[r,"\nabla'",""]
		&\mathcal{M}_X^1\otimes_{\mathcal{O}_{X}} (\mathcal{O}_X\otimes \mathcal{V}')
	\end{tikzcd}
	\quad 
	\]
is commutative.
Here $\varphi:\mathcal{O}_{X}\otimes\mathcal{V}\to\mathcal{O}_{X}\otimes\mathcal{V}'$ is a morphism of $\mathcal{O}_{X}$-modules obtained through the isomorphism 
	\begin{align*}
	f\in \mathrm{Hom}_{\mathbb{C}}(\mathcal{V},\mathcal{V}') 
	&\cong \mathrm{Hom}_{\mathbb{C}}(\mathbb{C}^N,\mathbb{C}^{N'}) \quad (\mathcal{V}\cong\mathbb{C}^N,~\mathcal{V}'\cong\mathbb{C}^{N'}) \\
	&\cong \mathrm{Hom}_{\mathcal{O}_{X}(X)}(\mathcal{O}_{X}(X)^{N},\mathcal{O}_{X}(X)^{N'}) \\
	&\cong \mathrm{Hom}_{\mathcal{O}_{X}}(\mathcal{O}_{X}^N,\mathcal{O}_{X}^{N'}) \\
	&\cong\mathrm{Hom}_{\mathcal{O}_{X}}(\mathcal{O}_{X}\otimes\mathcal{V},\mathcal{O}_{X}\otimes \mathcal{V}')\ni\varphi.
	\end{align*}
In the above, we used the fact $\mathcal{O}_{X}(X)=\mathbb{C}$ which follows from the compactness of $X=(\mathbb{P}^1)^n$.

A linear map $\nabla$ of $(\mathcal{O}_X\otimes\mathcal{V},\nabla)$ can be expressed as $\nabla=d-\Omega$, where $\Omega$ is an $\mathrm{End}_{\mathbb{C}}(\mathcal{V})$-valued meromorphic $1$-form. 
Then, we identify $\mathscr{C}_{X}$ with the category of pairs $(\mathcal{V},\Omega)$ consisting of a finite dimensional $\mathbb{C}$-vector space $\mathcal{V}$ and an $\mathrm{End}_{\mathbb{C}}(\mathcal{V})$-valued meromorphic $1$-form $\Omega$. 
The morphisms $(\mathcal{V},\Omega)\to(\mathcal{V}',\Omega')$ in $\mathscr{C}_{X}$ are linear maps $f:\mathcal{V}\to\mathcal{V}'$ satisfying $\Omega'f=f\Omega$.

We shall also explain the correspondence of meromorphic connections and differential equations.
For a connection $(\mathcal{V}, \Omega)\in\mathscr{C}_{X}$, we fix an isomorphism (a basis) $\mathcal{V}\cong \mathbb{C}^N$. 
Then the $1$-form $\Omega$ can be expressed as
	\[
	\Omega=\sum_{i=1}^n A_{x_i}(\bm{x})\,dx_i, 
	\]
where $A_{x_i}(\bm{x})\in\mathrm{Mat}(N,\mathbb{C}(\bm{x}))$. 
Therefore, the connection $(\mathcal{V},\Omega)$ can be identified with the $\mathrm{GL}(N,\mathbb{C})$-conjugacy class of the Pfaffian system $du=\Omega u$.
We call this matrix $A_{x_i}(\bm{x})$ the coefficient matrix of $\Omega$ in $x_i$-direction. 
For another connection $(\mathcal{V}',\Omega')\in\mathscr{C}_{X}$, we fix an isomorphism $\mathcal{V}'\cong\mathbb{C}^{N'}$ and let $\{B_{x_i}(\bm{x})\}_{i=1}^n$ be the coefficient matrices of $\Omega'$. 
Then, the linear map $f:\mathcal{V}\to\mathcal{V}'$ is to be a morphism of connections $f:(\mathcal{V},\Omega)\to(\mathcal{V}',\Omega')$, if and only if the matrix representation $F\in\mathrm{Mat}(N'\times N,\mathbb{C})$ of $f$ with respect to the above two basis satisfies
	\[
	FA_{x_i}(\bm{x})=B_{x_i}(\bm{x})F \qquad (i=1,2,\ldots,n).
	\]
In particular, if the morphism $f:(\mathcal{V},\Omega)\to(\mathcal{V}',\Omega')$ is an isomorphism of vector spaces, then the corresponding matrix representation $F$ is invertible. 
This means that the Pfaffian system $du=\Omega u$ can be transformed into $dv=\Omega' v$ by the gauge transformation $v=Fu$.

\subsection{Subcategories of $\mathscr{C}_X$}

Here we introduce some subcategories of $\mathscr{C}_X$. 
Firstly, let $\mathscr{C}_{X}^{\text{Int}}$ be the full subcategory of $\mathscr{C}_{X}$ consisting of flat connections. That is, 
\begin{equation}
\mathrm{Ob}(\mathscr{C}_{X}^{\text{Int}}):=\{(\mathcal{O}_X\otimes \mathcal{V},\nabla)\,\vert\, \nabla\circ\nabla=0\}=\{(\calV,\Omega)\,\vert\,d\Omega-\Omega\wedge\Omega=0\}.
\end{equation}
In the following, as in the previous section, take any $\ell$ $(1\le \ell\le n)$ and set $x=x_{\ell}$. 

\subsubsection{Category of Pfaffian systems with logarithmic singularities along hyperplane arrangements}

 Let $\mathcal{S}_{x}$ and $\mathcal{T}_{x}$ be two hyperplane arrangements consisting of hyperplanes whose defining polynomial depends on $x$.
 We set 
	\begin{equation}\label{Eq_conn_hs}
	\mathcal{T}_{x}=\{H_1,\ldots, H_p,\ldots,H_q\},
	\quad 
	\mathcal{S}_{x}=\{\hat{H}_1,\ldots,\hat{H}_{\hat{p}},\ldots,\hat{H}_{\hat{q}}\},
	\end{equation}
where $f_{H_j} (j=1,\ldots,p)$ and $f_{\hat{H}_j} (j=1,\ldots,\hat{p})$ depend only on $x$, and  $f_{H_j} (j=p+1,\ldots,q)$ and $f_{\hat{H}_j} (j=\hat{p}+1,\ldots,\hat{q})$ depend on $x$ and other variables. 
Then, we define the full subcategory $\mathscr{P}_{x}=\mathscr{P}_{x}(\mathcal{T}_{x},\mathcal{S}_{x})$ as
	\[
	\mathrm{Ob}(\mathscr{P}_{x}(\mathcal{T}_{x},\mathcal{S}_{x})):=\{(\mathcal{V},\Omega)\in\mathrm{Ob}(\mathscr{C}_X) \mid
	\text{$\Omega$ satisfies the condition (P)}\}.
	\]
Here the condition (P) is given as follows: there exists a hyperplane arrangement $\mathcal{A}$ satisfying $\mathcal{A}_x=\mathcal{T}_x$ such that the $1$-form $\Omega$ can be expressed as
	\begin{equation}\label{Eq_conn_Omega}
	\Omega=\sum_{i=1}^nA_{x_i}(\bm{x})\,dx_i=\sum_{i=1}^nS_{x_i}(\bm{x})\,dx_i+\sum_{H\in\mathcal{A}}A_Hd\log f_H,
	\end{equation}
where $\mathcal{A}_x=\mathcal{T}_x$, $A_H\in \mathrm{End}_{\mathbb{C}}(\mathcal{V})$ and $S_{x_i}(\bm{x})$ are expressed as
	\begin{equation}\label{Eq_S}
	S_{x_i}(\bm{x})=A_{x_i}+\sum_{\substack{j=1 \\ j \neq i}}^nA_{x_ix_j} x_j,
	\quad 
	A_{x_i},A_{x_ix_j}\in\mathrm{End}_{\mathbb{C}}(\mathcal{V})
	\end{equation}
and satisfy 
\begin{enumerate}[(i)]
	 \item $\ds A_{x_ix_j}=A_{x_jx_i}~\left(\Leftrightarrow~\frac{\p S_{x_i}}{\p x_j}=\frac{\p S_{x_j}}{\p x_i}\right)$ \quad $(i\neq j)$, 

	\item $A_{x}$ and $A_{xz}\,(z \in x')$ are diagonalizable,

	\item If there exists $x_j \in x'$ such that $\mathcal{A}_{x} \cap \mathcal{A}_{x_j} \neq \emptyset$, 
	then 
	$A_{x z}=O$ for all $z\in x'$,

	\item $\ds\{\bm{x}\in\mathbb{C}^n\mid\det(x-S_{x}(\bm{x}))=0\}\subset\bigcup_{i=1}^{\hat{q}}\hat{H}_{i}$.
	
\end{enumerate}
The conditions (i), (ii) and (iii) correspond to \eqref{Eq_xixj},  \eqref{Eq_ass_diag} and \eqref{Eq_ass_inf}, respectively. 
Then, the category $\mathscr{P}_{x}(\mathcal{T}_{x},\mathcal{S}_{x})$ can be regarded as the category of Pfaffian systems of the form \eqref{Eq_Pfaff} with \eqref{Eq_xixj},  \eqref{Eq_ass_diag} and \eqref{Eq_ass_inf}.
Let $\mathscr{P}_{x}^{\text{Int}}$ be the full subcategory of $\mathscr{P}_{x}$ consisting of flat connections. 
Both $\mathscr{P}_{x}(\mathcal{T}_{x},\mathcal{S}_{x})$ and $\mathscr{P}_{x}^{\text{Int}}(\mathcal{T}_{x},\mathcal{S}_{x})$ are abelian categories.
\begin{remark}\label{Rem_equiv_conn}
Two objects $(\mathcal{V},\Omega),(\mathcal{V}',\Omega')\in\mathscr{P}_{x_\ell}(\mathcal{T}_{x_\ell},\mathcal{S}_{x_\ell})$ are isomorphic if and only if $\mathcal{V}\cong\mathcal{V}'$ as $\mathbb{C}$-vector space and $\Omega\sim\Omega'$ (in the sense of \eqref{Eq_equiv}) holds with respect to any basis of $\mathcal{V}\cong\mathbb{C}^N$ and $\mathcal{V}'\cong \mathbb{C}^{N}$.
In this case, we write $(\mathcal{V},\Omega)\sim(\mathcal{V}',\Omega')$ by abuse of notation.
\end{remark}

\subsubsection{Category of Pfaffian systems of Birkhoff-Okubo normal form}

We define the subcategory $\mathscr{B}_{x}=\mathscr{B}_{x}(\mathcal{T}_{x},\mathcal{S}_{x})$ of $\mathscr{P}_{x}$. 
First, we define $\mathrm{Ob}(\mathscr{B}_{x})$ as
	\[
	\mathrm{Ob}(\mathscr{B}_{x})=
	\left\{(\mathcal{V},\Omega)\in\mathrm{Ob}(\mathscr{P}_{x})
	\,
	\middle \vert
	\,
	\begin{array}{l}
	\text{There exists an isomorphism }\mathcal{V}\cong \mathbb{C}^N 
	\\
	\text{(i.e., a choice of basis) for some $N\in\mathbb{Z}_{\ge0}$}
	\\ \text{such that $\Omega$ satisfies the condition (B)}
	\end{array}
	\right\}.
	\]
Here the condition (B) is given as follows: there exists a decomposition of $N$
	\[
	N=N_1+\cdots+N_{q} \quad (N_j\ge0,~ j=1,2,\ldots,q)
	\]
and a constant matrix $A\in\mathrm{Mat}(N,\mathbb{C})$ such that the coefficient matrix $A_{x}(\bm{x})$ of $\Omega$ can be expressed as
	\begin{equation}\label{Eq_Cat_Bcoef}
	A_{x}(\bm{x})=S_{x}(\bm{x})+(x-T_{x}(\bm{x}))^{-1}A,
	\end{equation}
where $S_{x}(\bm{x})$ is a diagonal matrix and 
\begin{align}\label{eq:catT}
	T_{x}(\bm{x})&=\begin{pmatrix}
		a_{H_1} I_{N_1} & & & & & \\
		& \ddots & & & & \\
		& & a_{H_p}I_{N_{p}} & & &\\
		& & & a_{H_{p+1}}(x')I_{N_{p+1}} & & \\
		& & & & \ddots & \\
		& & & & & a_{H_{q}}(x')I_{N_{q}}
		\end{pmatrix}.
	\end{align}
If $N_i = 0$, we omit the corresponding diagonal block $a_{H_{i}}(x') I_{N_i}$ in the block diagonal matrix.
For ($\calV,\Omega)\in\mathrm{Ob}(\scrP_{x})$ with $\mathcal V=\{0\}$ (corresponding to the case $N=0$), condition (B) is understood to be satisfied, with all matrices appearing therein interpreted as empty matrices.

For $(\mathcal{V},\Omega),(\mathcal{V}',\Omega')\in\mathrm{Ob}(\mathscr{B}_{x})$, we define
	\[
	\mathrm{Hom}_{\mathscr{B}_{x}}((\mathcal{V},\Omega),(\mathcal{V}',\Omega')):=\{f \in \mathrm{Hom}_{\mathscr{P}_{x}}((\mathcal{V},\Omega),(\mathcal{V}',\Omega'))
	\mid
	f \text{ satisfies (Bmor)}
	\},
	\]
where the condition (Bmor) is the following: 
for the basis $\mathcal{V}\cong\mathbb{C}^N$ and $\mathcal{V}'\cong\mathbb{C}^{N'}$ satisfying the condition (B), we express the coefficient matrices of $\Omega$ and $\Omega'$ in $x$-direction as \eqref{Eq_Cat_Bcoef} and 
	\[
	A'_{x}(\bm{x})=S'_{x}(\bm{x})+(x-T'_{x}(\bm{x}))^{-1}A',
	\]
respectively. 
Then, the matrix representation $F\in\mathrm{Mat}(N'\times N,\mathbb{C})$ of $f$ satisfies
	\begin{equation}\label{Eq_cat_B_mor}
	\begin{aligned}
	&FT_{x}(\bm{x})=T'_{x}(\bm{x})F, \\
	&FS_{x}(\bm{x})=S'_{x}(\bm{x})F, \\
	&FA=A'F.
	\end{aligned}
	\end{equation}
If $N=0$ or $N'=0$, condition (Bmor) is understood to be  automatically satisfied, with the usual convention on empty matrices.

\subsection{Functors}
\label{Subsec_Func}

Here we introduce three constructions corresponding to the three steps of the middle Laplace transform: extension to the Birkhoff-Okubo normal form, Laplace transform, and projection onto the quotient space.

\subsubsection{Birkhoff-Okubo extension functor}
\label{Subsubsec_BOf}

We define the functor $BO^{x}:\mathscr{P}_{x}^{\text{Int}}(\mathcal{T}_{x},\mathcal{S}_{x}) \to \mathscr{B}_{x}(\mathcal{T}_{x},\mathcal{S}_{x})$.
This corresponds to the extension of a given Pfaffian system to the Birkhoff-Okubo normal form.

For $(\mathcal{V},\Omega)\in\mathscr{P}_{x}^{\text{Int}}(\mathcal{T}_{x},\mathcal{S}_{x})$, we fix an isomorphism $\mathcal{V}\cong\mathbb{C}^N$ such that the coefficient matrix of $\Omega$ in $x$-direction
	\begin{equation}\label{Eq_conn_Axi}
	A_{x}(\bm{x})=S_{x}(\bm{x})+\sum_{i=1}^q\frac{A_{H_i}}{x-a_{H_i}}
	\end{equation}
satisfies 	
	\begin{equation}\label{Eq_cat_S}
S_{x}(\bm{x})=\begin{pmatrix}
		a_{\hat{H}_1} I_{\hat{N}_1} & & & & & \\
		& \ddots & & & & \\
		& & a_{\hat{H}_{\hat{p}}} I_{\hat{N}_{\hat{p}}} & & &\\
		& & & a_{\hat{H}_{\hat{p}+1}}(x') I_{\hat{N}_{\hat{p}+1}} & & \\
		& & & & \ddots & \\
		& & & & & a_{\hat{H}_{\hat{q}}}(x') I_{\hat{N}_{\hat{q}}}
		\end{pmatrix}.
	\end{equation}
Here $N=\hat{N}_1+\cdots+\hat{N}_{\hat{q}}$ $(\hat{N}_j\ge0)$ is a decomposition of $N$. 
If $\hat{N}_j = 0$, we omit the corresponding diagonal block $a_{\hat{H}_{j}}(x') I_{\hat{N}_j}$ in the block diagonal matrix.
Then, we define the $\mathrm{End}_{\mathbb{C}}(\mathcal{V}^q)$-valued $1$-form $BO^x(\Omega)$ by the matrix representation
	\[
	BO^x(\Omega):=\sum_{i=1}^n\widetilde{A}_{x_i}(\bm{x})\,dx_i,
	\]
by \eqref{Eq_Pfaff_U}. 
Namely, $\widetilde{A}_{x}(\bm{x})$ is defined by the right-hand side of \eqref{Eq_U_x_sch}, that is,
	\begin{equation}
	\widetilde{A}_{x}(\bm{x})=
	S_{x}(\bm{x})^{\oplus q}
	+
	(x-T_{x}(\bm{x}))^{-1}
	\widetilde{A}
	\end{equation}
where $S_{x}(\bm{x})^{\oplus q}$ and $T_{x}(\bm{x})$ are given by \eqref{Eq_Stil} and \eqref{Eq_Ttil} respectively and 
	\[
	\widetilde{A}=A-I_{qN}.
	\]
The other coefficient matrices $\widetilde{A}_{x_i}(x)\,(i\neq \ell)$ are defined by the coefficients of the right-hand side of \eqref{Eq_U_y}. 
Then we define 
	\[
	BO^{x}(\mathcal{V},\Omega):=(\mathcal{V}^q,BO^x(\Omega)).
	\]
By definition, it holds that $BO^{x}(\mathcal{V},\Omega)\in\mathrm{Ob}(\mathscr{B}_{x})$.

For $f\in\mathrm{Hom}_{\mathscr{P}_{x}^{\text{Int}}}((\mathcal{V},\Omega),(\mathcal{V}',\Omega'))$, we define the linear map $BO^{x}(f):\mathcal{V}^q\to(\mathcal{V}')^q$ by
	\begin{equation}\label{Eq_BO_mor}
	BO^{x}(f):=f^{\oplus q}=
	\begin{pmatrix}
	f  \\
	 \vdots \\
	 f
	\end{pmatrix}.
	\end{equation}
Then it is easy to check that the map $BO^{x}(f)$ is a morphism from $BO^{x}(\mathcal{V},\Omega)$ to $BO^{x}(\mathcal{V}',\Omega')$ in $\mathscr{B}_{x}(\mathcal{T}_{x},\mathcal{S}_{x})$ (cf. Lemma \ref{lem:BOmor}).

	\begin{definition}
	We call the functor $BO^{x_\ell}:\mathscr{P}_{x_\ell}^{\text{Int}}(\mathcal{T}_{x_\ell},\mathcal{S}_{x_\ell}) \to \mathscr{B}_{x_\ell}(\mathcal{T}_{x_\ell},\mathcal{S}_{x_\ell})$ the \emph{Birkhoff-Okubo extension functor in $x_\ell$-direction}.
	\end{definition}

\subsubsection{Laplace transform functor}
\label{Subsec_LTF}

We first introduce the operation $L^{x}$, which corresponds to the Laplace transform for Birkhoff-Okubo normal forms.
On objects, it sends a connection in $\mathscr{B}_{x}(\mathcal{T}_{x},\mathcal{S}_{x})$ to a connection in $\mathscr{B}_{x}(\mathcal{S}_{x},-\mathcal{T}_{x})$. 
Here we recall that $-\mathcal{T}_{x}=\{\sigma(H)\mid H\in\mathcal{T}_{x}\}$ and $\sigma(H)$ is the hyperplane defined by the polynomial $f_{\sigma(H)}:=(f_{H})_{x}(x+a_{H})$. 

To describe the operation $L^{x}$, we consider the Laplace transform of the linear Pfaffian system whose $x$-equation is in Birkhoff-Okubo normal form. 
For $(\mathcal{V},\Omega)\in\mathscr{B}_{x}(\mathcal{T}_{x},\mathcal{S}_{x})$, we fix an isomorphism (a basis) $\mathcal{V}\cong\mathbb{C}^{N}$ satisfying the condition (B); that is, we assume the coefficient matrix $A_{x}(\bm{x})$ of $\Omega$ in $x$-direction is given in the form \eqref{Eq_Cat_Bcoef}. 
Then, let us consider the Pfaffian system
	\begin{equation}\label{Eq_app_Pfaff}
	du=\Omega u
	\end{equation}
and the Laplace transform in $x$-direction.
We shall prepare some notation. 
From the definition, the coefficient matrix $A_{x}(\bm{x})$ can be expressed as
	\begin{equation}\label{Eq_App_Cat_Bcoef}
	A_{x}(\bm{x})
	=S_{x}(\bm{x})+(x-T_{x}(\bm{x}))^{-1}A
	=S_{x}(\bm{x})+\sum_{i=1}^q\frac{A_{H_i}}{x-a_{H_i}},
	\end{equation}
where $A$ is a constant matrix, $T_{x}(\bm{x})$ is given by \eqref{eq:catT}, and $S_{x}(\bm{x})$ is a diagonal matrix.
We divide the matrices $S_{x}(\bm{x})$ and $A$ as
	\[
	S_{x}(\bm{x})=\begin{pmatrix}
		S^x_1(\bm{x}) & & & & & \\
		& \ddots & & & & \\
		& & S^x_{p}(\bm{x}) & & &\\
		& & & S^x_{p+1}(\bm{x}) & & \\
		& & & & \ddots & \\
		& & & & & S^x_{q}(\bm{x})
		\end{pmatrix},
	\quad 
	A=\begin{pmatrix}
	A_{11}& A_{12}& \cdots & A_{1q}\\
	A_{21}& A_{22}& \cdots & A_{2q}\\
	\vdots & \vdots & &\vdots \\
	A_{q1} & A_{q2} & \cdots & A_{qq}
	\end{pmatrix}.
	\]
where $S_j^x(\bm{x})$ is an $N_j \times N_j$ diagonal matrix ($j=1\ldots,q$) and $A_{ij}$ is an $N_i\times N_j$ matrix ($1\le i,j \le q$).
Then, the matrix $A_{H_i}$ ($1\le i\le q$) can be expressed as
	\[
	A_{H_i}=
	\begin{pmatrix}
	 & O & \\
	A_{i1}& \cdots & A_{iq}\\
	 & O &   
	\end{pmatrix}
	(i.
	\]
We take any $y\in x'=\{x_1,\ldots,x_n\}\setminus\{x\}$ and consider the $y$-equation 
	\begin{equation}\label{eq:funcLyeq}
	\frac{\p u}{\p y}=A_{y}(\bm{x})=\left(S_{y}(\bm{x})+\sum_{H\in \mathcal{A}_y}\frac{A_{H}}{y-b_{H}}\right)u
	\end{equation}
of \eqref{Eq_app_Pfaff}.
We also divide the matrices $S_{y}(\bm{x})$ and $\{A_{H}\}_{H\in\mathcal{A}_y}$ as
	\[
	S_{y}(\bm{x})=\begin{pmatrix}
		S^y_{11}(\bm{x}) &S^y_{12}(\bm{x}) & \cdots & S^y_{1q}(\bm{x}) \\
		S^y_{21}(\bm{x}) &S^y_{22}(\bm{x}) & \cdots & S^y_{2q}(\bm{x}) \\
		\vdots &\vdots&  & \vdots \\
		S^y_{q1}(\bm{x}) &S^y_{q2}(\bm{x}) & \cdots & S^y_{qq}(\bm{x}) \\
		\end{pmatrix},
	\quad 
	A_{H}=\begin{pmatrix}
	A^H_{11}& A^H_{12}& \cdots & A^H_{1q}\\
	A^H_{21}& A^H_{22}& \cdots & A^H_{2q}\\
	\vdots & \vdots & &\vdots \\
	A^H_{q1} & A^H_{q2} & \cdots & A^H_{qq}
	\end{pmatrix}
	\]
where $S^{y}_{ij}(\bm{x})$ and $A_{ij}^{H}$ are $N_i\times N_j$ matrices ($1\le i,j \le q$). 
Lastly, we also divide the vector $u$ as 
	\[
	u={}^t(u_1,\dots,u_q),
	\] 
where the size of $u_i$ is $N_i$ ($1\le i\le q$).
Then, we consider applying the Laplace transform in $x$-direction \eqref{Eq_op_FL} for the Pfaffian system \eqref{Eq_app_Pfaff}.
Set
	\begin{equation}\label{Eq_app_divide_v}
	v={}^t(v_1,\ldots,v_q),\quad 
	v_i=L^{x}(u_i).
	\end{equation}
Then, the $x$-equation of the Pfaffian system is transformed into 
	\[
	\frac{\p v}{\p x}=\widehat{A}_{x}(\bm{x})v,
	\]
where $\widehat{A}_{x}(\bm{x})$ is given by
	\begin{equation}\label{Eq_Lfunc_coef_x}
	\widehat{A}_{x}(\bm{x}):=-T_{x}(\bm{x})-(x-S_{x}(\bm{x}))^{-1}(A+I).
	\end{equation}
On the transformation of $y$-equation \eqref{eq:funcLyeq} ($y\in x'$), in a similar way to Proposition \ref{Prop_Laplace}, we can show the following. 
	\begin{proposition}
	By the Laplace transform in $x$-direction \eqref{Eq_op_FL}, the $y$-equation \eqref{eq:funcLyeq} ($y\in x'$) of \eqref{Eq_app_Pfaff} is transformed into
	\begin{equation}\label{Eq_app_prop}
	\begin{aligned}
	\frac{\p v_i}{\p y}=&\frac{\p}{\p y}\left[(x-S^x_i(\bm{x}))(x-a_{H_i})\right]v_i+\sum_{j=1}^q S^y_{ij}v_j+\frac{\p S^x_i}{\p y}(x-S^x_i(\bm{x}))^{-1}\sum_{k=1}^qA'_{ik}v_k
	\\ &+\sum_{H\in\mathcal{A}_x^c\cap\mathcal{A}_y}\frac{1}{y-b_H}\sum_{j=1}^qA^H_{ij}v_j \quad (i=1,\ldots,q).
	\end{aligned}
	\end{equation}
Here we set 
	\[
	A+I=\begin{pmatrix}
	A_{11}'& A_{12}'& \cdots & A_{1q}'\\
	A_{21}'& A_{22}'& \cdots & A_{2q}'\\
	\vdots & \vdots & &\vdots \\
	A_{q1}' & A_{q2}' & \cdots & A_{qq}'
	\end{pmatrix}.
	\]
	\end{proposition}
Summarizing the equations \eqref{Eq_app_prop}, we have the $y$-equation 
		\begin{equation}\label{Eq_app_yeq}
	\frac{\p v}{\p y}=\widehat{A}_{y}(\bm{x})v
	\end{equation}
of the Laplace transform of \eqref{Eq_app_Pfaff} in $x$-direction.
\begin{remark}
The inverse Laplace transform is omitted, as it can be considered in the same way as the Laplace transform.
\end{remark}

We are now ready to define the operation $L^{x}$. 
For $(\mathcal{V},\Omega)\in\mathscr{B}_{x}(\mathcal{T}_{x},\mathcal{S}_{x})$, we fix an isomorphism $\mathcal{V}\cong\mathbb{C}^{N}$ satisfying the condition (B); that is, we assume the coefficient matrix $A_{x}(\bm{x})$ of $\Omega$ in $x$-direction is given in the form \eqref{Eq_Cat_Bcoef}. 
For this connection, we set the $\mathrm{End}_{\mathbb{C}}(\mathcal{V})$-valued $1$-form $L^x(\Omega)$ by the matrix representation
	\begin{equation}\label{Eq_Lfunc_coef}
	L^x(\Omega)=\sum_{i=1}^n\widehat{A}_{x_i}(\bm{x})\,dx_i,
	\end{equation}
where $\widehat{A}_{x_\ell}(\bm{x})=\widehat{A}_x(\bm{x})$ is given by \eqref{Eq_Lfunc_coef_x} and the other coefficient matrices $\widehat{A}_{x_i}(\bm{x})\,(i\neq \ell)$ are defined by the coefficient matrices of the right-hand side of \eqref{Eq_app_yeq}. 
Then, we define
	\[
	L^{x}(\mathcal{V},\Omega):=(\mathcal{V},L^x(\Omega)).
	\]
Note that $(\mathcal{V},L^x(\Omega))$ is an object of $\mathscr{B}_{x}(\mathcal{S}_{x},-\mathcal{T}_{x})$.

For $f\in\mathrm{Hom}_{\mathscr{B}_{x}(\mathcal{T}_{x},\mathcal{S}_{x})}((\mathcal{V},\Omega),(\mathcal{V}',\Omega'))$, we define the linear map $L^{x}(f):\mathcal{V}\to\mathcal{V}'$ by
	\[
	L^{x}(f):=f.
	\]
Then it is easy to check that the map $L^{x}(f)$ is a morphism from $L^{x}(\mathcal{V},\Omega)$ to $L^{x}(\mathcal{V}',\Omega')$ in $\mathscr{B}_{x}(\mathcal{S}_{x},-\mathcal{T}_{x})$.

We define the operation $L^{-x}$ in the same way.
That is, 
	\[
	L^{-x}(\mathcal{V},\Omega):=(\mathcal{V},L^{-x}(\Omega)), 
	\quad 
	L^{-x}(f):=f,
	\]
where $L^{-x}(\Omega)$ is the $\mathrm{End}_{\mathbb{C}}(\mathcal{V})$-valued $1$-form defined by the matrix representation corresponding to the inverse Laplace transform of $du=\Omega u$ in $x$-direction.

\smallskip

Now we define the functors corresponding to the (inverse) Laplace transform for linear Pfaffian systems in Definitions \ref{Def_L} and \ref{Def_invL}.
Let $I:\mathscr{B}_{x}\to\mathscr{P}_{x}$ denote the inclusion functor. 
Then, from Remarks \ref{rem:integrableV} and \ref{rem:integrableW}, we can verify that the connections 
	$I\circ L^{x}\circ BO^{x}(\calV,\Omega)$ and $I\circ L^{-x}\circ BO^{x}(\calV,\Omega)$ are both flat for any $(\calV,\Omega)\in\scrP^{\text{Int}}_{x}(\calT_{x},\calS_{x})$.
Therefore, the compositions $I\circ L^{x}\circ BO^{x}$ and $I\circ L^{-x}\circ BO^{x}$ define the functors
\[
\mathscr{P}^{\text{Int}}_{x}(\mathcal{T}_{x},\mathcal{S}_{x})\to\mathscr{P}^{\text{Int}}_{x}(\mathcal{S}_{x},-\mathcal{T}_{x}),
\quad 
\mathscr{P}^{\text{Int}}_{x}(\mathcal{T}_{x},\mathcal{S}_{x})\to\mathscr{P}^{\text{Int}}_{x}(-\mathcal{S}_{x},\mathcal{T}_{x}),
\]
respectively. 
Then, we define the (inverse) Laplace transform functor as follows.

	\begin{definition}
We define the \emph{Laplace transform functor in $x$-direction} $\mathcal{L}^{x}: \mathscr{P}_{x}^{\text{Int}}(\mathcal{T}_{x},\mathcal{S}_{x})\to \mathscr{P}_{x}^{\text{Int}}(\mathcal{S}_{x},-\mathcal{T}_{x})$ by
	\[
	\mathcal{L}^{x}:=I\circ L^{x}\circ BO^{x}.
	\]
For $(\mathcal{V},\Omega)\in\mathscr{P}_{x}^{\text{Int}}(\mathcal{T}_{x},\mathcal{S}_{x})$, we write
	\[
	\mathcal{L}^{x}(\mathcal{V},\Omega)=(\mathcal{V}^q,\mathcal{L}^{x}(\Omega)).
	\]
Similarly, we define the \emph{inverse Laplace transform functor in $x$-direction}
$\mathcal{L}^{-x}: \mathscr{P}_{x}^{\text{Int}}(\mathcal{T}_{x},\mathcal{S}_{x})\to \mathscr{P}_{x}^{\text{Int}}(-\mathcal{S}_{x},\mathcal{T}_{x})$ by
	\[
	\mathcal{L}^{-x}:=I\circ L^{-x}\circ BO^{x}.
	\]
For $(\mathcal{V},\Omega)\in\mathscr{P}_{x}^{\text{Int}}(\mathcal{T}_{x},\mathcal{S}_{x})$, we write
	\[
	\mathcal{L}^{-x}(\mathcal{V},\Omega)=(\mathcal{V}^q,\mathcal{L}^{-x}(\Omega)).
	\]
	\end{definition}

	\begin{remark}\label{Rem_L_Exact}
\begin{enumerate}[(i)]
	\item The $1$-forms $\mathcal{L}^{x}(\Omega)$ and $\mathcal{L}^{-x}(\Omega)$ are given by \eqref{Eq_Pfaff_V} and \eqref{Eq_Pfaff_W}, respectively. 
	\item The functors $\mathcal{L}^{\pm x}$ are both exact.
This can be verified in the same way as in Proposition \ref{prop:odefuncLexact} (the one variable case).
\end{enumerate}
\end{remark}

\subsubsection{Middle Laplace transform functor}

For $(\mathcal{V},\Omega)\in\mathscr{P}_{x}^{\text{Int}}(\mathcal{T}_{x},\mathcal{S}_{x})$ with \eqref{Eq_conn_Omega}, we set the subspace $\mathcal{K}_{\mathcal{V}}^{x}$ of $\mathcal{V}^q$ as
	\begin{equation}\label{Eq:Ktwo}
	\mathcal{K}_{\mathcal{V}}^{x}:=\bigoplus_{i=1}^{q}\Ker A_{H_i},
	\end{equation}
which corresponds to \eqref{Eq_K}. 
The subspace $\mathcal{K}_{\mathcal{V}}^x$ is invariant under the action of the $1$-form $\mathcal{L}^x(\Omega)$, which can be checked similarly in the previous section. 
Hence, we can take the restriction and the quotient:
	\[
	\mathcal{K}^x(\mathcal{V},\Omega):=(\mathcal{K}_{\mathcal{V}}^x,\mathcal{L}^x(\Omega)\vert_{\mathcal{K}_{\mathcal{V}}^{x}}), 
	\quad 
	\mathcal{ML}^x(\mathcal{V},\Omega):=(\mathcal{V}^q/\mathcal{K}_{\mathcal{V}}^x,\mathcal{L}^x(\Omega)\vert_{\mathcal{V}^q/\mathcal{K}_{\mathcal{V}}^{x}}). 
	\]
Since $\mathcal{K}_{\mathcal{V}}^x$ is also invariant under the action of the $1$-form $\mathcal{L}^{-x}(\Omega)$, we can take the restriction and the quotient:
	\[
	\mathcal{K}^{-x}(\mathcal{V},\Omega):=(\mathcal{K}_{\mathcal{V}}^x,\mathcal{L}^{-x}(\Omega)\vert_{\mathcal{K}_{\mathcal{V}}^{x}}), 
	\quad 
	\mathcal{ML}^{-x}(\mathcal{V},\Omega):=(\mathcal{V}^q/\mathcal{K}_{\mathcal{V}}^x,\mathcal{L}^{-x}(\Omega)\vert_{\mathcal{V}^q/\mathcal{K}_{\mathcal{V}}^{x}}). 
	\]
We note that $\mathcal{K}^{x}$ and $\mathcal{ML}^{x}$ (resp. $\mathcal{K}^{-x}$ and $\mathcal{ML}^{-x}$) are the functors from $\mathscr{P}_{x}^{\text{Int}}(\mathcal{T}_{x},\mathcal{S}_x)$ to $\mathscr{P}_x^{\text{Int}}(\mathcal{S}_x,-\mathcal{T}_x)$ (resp. $\mathscr{P}_x^{\text{Int}}(-\mathcal{S}_x,\mathcal{T}_x)$).
	\begin{definition}
	We call the functors $\mathcal{ML}^{x_\ell}$ and $\mathcal{ML}^{-x_\ell}$ the \emph{middle Laplace transform functor in $x_\ell$-direction} and the \emph{inverse middle Laplace transform functor in $x_\ell$-direction}, respectively.
	\end{definition}
		\begin{remark}\label{Rem_L_Exact_2}
The $1$-forms $\mathcal{L}^{x_{\ell}}(\Omega)\vert_{\mathcal{V}^q/\mathcal{K}_{\mathcal{V}}^{x}}$ and $\mathcal{L}^{-x_{\ell}}(\Omega)\vert_{\mathcal{V}^q/\mathcal{K}_{\mathcal{V}}^{x}}$ are given by $\mathcal{ML}^{x_\ell}(\Omega)$ and $\mathcal{ML}^{-x_\ell}(\Omega)$ in \eqref{Eq_Pfaff_v} and \eqref{Eq_Pfaff_w}, respectively. 
Hence, we write $\mathcal{L}^{x_{\ell}}(\Omega)\vert_{\mathcal{V}^q/\mathcal{K}_{\mathcal{V}}^{x}}=\mathcal{ML}^{x_\ell}(\Omega)$ and $\mathcal{L}^{-x_{\ell}}(\Omega)\vert_{\mathcal{V}^q/\mathcal{K}_{\mathcal{V}}^{x}}=\mathcal{ML}^{-x_\ell}(\Omega)$ by abuse of notation.
\end{remark}
	\begin{remark}
	$\mathcal{ML}^{x_\ell}$ can be interpreted as follows.
	Since the inclusion map $\iota:\mathcal{K}_{\mathcal{V}}^{x_\ell}\hookrightarrow \mathcal{V}^q$ induces a morphism in $\mathscr{P}_{x_\ell}^{\text{\rm Int}}(\mathcal{S}_{x_\ell},-\mathcal{T}_{x_\ell})$, then we obtain an exact sequence 
	\begin{equation}\label{Eq_exact_ML}
		\begin{tikzcd}
		0\arrow[r]&\mathcal{K}^{x_\ell}(\mathcal{V},\Omega)\arrow[r,"\iota",hookrightarrow]
		&\mathcal{L}^{x_\ell}(\mathcal{V},\Omega) \arrow[r]
		&\Coker \iota\arrow[r]&0
		\end{tikzcd}
	\end{equation}
	in $\mathscr{P}_{x_\ell}^\text{\rm Int}(\mathcal{S}_{x_\ell},-\mathcal{T}_{x_\ell})$. 
	Then, we have $\mathcal{ML}^{x_\ell}(\mathcal{V},\Omega)=\Coker \iota$.
	Similarly, we obtain an exact sequence
	\begin{equation}\label{Eq_exact_invML}
		\begin{tikzcd}
		0\arrow[r]&\mathcal{K}^{-x_\ell}(\mathcal{V},\Omega)\arrow[r,"\iota",hookrightarrow]
		&\mathcal{L}^{-x_\ell}(\mathcal{V},\Omega) \arrow[r]
		&\Coker \iota\arrow[r]&0
		\end{tikzcd}
	\end{equation}
	in $\mathscr{P}_{x_\ell}^{\text{\rm Int}}(-\mathcal{S}_{x_\ell},\mathcal{T}_{x_\ell})$. 
	Then we have $\mathcal{ML}^{-x_\ell}(\mathcal{V},\Omega)=\Coker \iota$.
	\end{remark}

\section{Inversion formula and irreducibility (several variables case)}
\label{Sec_Inv}
Retain the notation of the previous section.
In this section, we show Theorem \ref{Thm_main}.
First, we introduce the notions of irreducibility and exceptionality for connections.
	\begin{definition}\label{Def_conn_irr}
	Let $(\mathcal{V},\Omega)\in\mathscr{P}_{x_\ell}^{\text{\rm Int}}(\mathcal{T}_{x_\ell},\mathcal{S}_{x_\ell})$ be a connection with $\calV\neq\{0\}$, whose $1$-form $\Omega$ is of the form \eqref{Eq_conn_Omega}.
	\begin{itemize}
	\item We say that $(\mathcal{V},\Omega)$ is \emph{irreducible in $x_\ell$-direction} if there is no $\langle A_{x_\ell},\{A_{x_\ell x_j}\}_{j\neq \ell},\{A_{H}\}_{H\in\mathcal{A}_{x_\ell}}\rangle$-invariant subspace of $\calV$ except $\mathcal{V}$ and $\{0\}$. 
	\item We say that $(\mathcal{V},\Omega)$ is \emph{exceptional in $x_{\ell}$-direction} if $\dim \calV=1$ and $A_{H}=0$ for all $H\in\calA_{x_{\ell}}$. 
	Otherwise, we say it is non-exceptional in \(x_\ell\)-direction.
	\end{itemize}
	\end{definition}
Clearly, a connection $(\mathcal{V},\Omega)\in\mathscr{P}_{x_\ell}^{\text{\rm Int}}(\mathcal{T}_{x_\ell},\mathcal{S}_{x_\ell})$ is irreducible and non-exceptional in $x_\ell$-direction if and only if the corresponding Pfaffian system $du=\Omega u$ is irreducible and non-exceptional in $x_\ell$-direction (Definition \ref{Def_irr}).
	\begin{remark}
	As in Remark \ref{rem:odeDR}, if a connection $(\mathcal{V},\Omega)\in\mathscr{P}_{x_\ell}^{\text{\rm Int}}(\mathcal{T}_{x_\ell},\mathcal{S}_{x_\ell})$ with the $1$-form $\Omega$ of the form \eqref{Eq_conn_Omega} is irreducible and non-exceptional in $x_\ell$-direction, then it holds that 
	\begin{align}
	&\bigcap_{H\in\mathcal{A}_{x_\ell}}\Ker A_{H} \cap \Ker (A_{x_\ell}+c_{x_\ell})\cap\bigcap_{j\neq \ell} \Ker (A_{x_\ell x_j}+c_{x_\ell x_j})=\{0\}	\quad (\forall c_{x_\ell}, c_{x_\ell x_j}\in\mathbb{C})\tag{$\star_\ell$},\label{Eq_DR_star1val}\\
	&\sum_{H\in\mathcal{A}_{x_\ell}} \im A_{H}+\im (A_{x_\ell}+c_{x_\ell})+\sum_{j\neq \ell}\im (A_{x_\ell x_j}+c_{x_\ell x_j})=\mathcal{V} \quad (\forall c_{x_\ell}, c_{x_\ell x_j}\in\mathbb{C}).	\label{Eq_DR_star2val}\tag{$\star\star_\ell$}
	\end{align}
	\end{remark}

Then, Theorem \ref{Thm_main} can be rephrased as follows. 
\begin{theorem}\label{Thm_conn}
Suppose that $(\mathcal{V},\Omega)\in\mathscr{P}_{x_\ell}^{\text{\rm Int}}(\mathcal{T}_{x_\ell},\mathcal{S}_{x_\ell})$ is irreducible and non-exceptional in $x_\ell$-direction. 
Then the following hold.
	\begin{enumerate}[(i)]
	\item $\mathcal{ML}^{-x_\ell}\circ\mathcal{ML}^{x_\ell}(\mathcal{V},\Omega)\sim(\mathcal{V},\Omega)$ and 
	$\mathcal{ML}^{x_\ell}\circ\mathcal{ML}^{-x_\ell}(\mathcal{V},\Omega)\sim(\mathcal{V},\Omega)$. 

	\item Both $\mathcal{ML}^{x_\ell}(\mathcal{V},\Omega)$ and $\mathcal{ML}^{-x_\ell}(\mathcal{V},\Omega)$ are irreducible and non-exceptional in $x_\ell$-direction.
		\end{enumerate}
	Here the symbol $\sim$ means isomorphic as connections (see Remark \ref{Rem_equiv_conn}).
\end{theorem}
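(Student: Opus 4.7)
The plan is to work entirely within the categorical framework of Section \ref{Sec_Categorical}, leveraging the exactness of the Laplace functors $\mathcal{L}^{\pm x_\ell}$ recorded in Remark \ref{Rem_L_Exact} and the Dettweiler--Reiter-type conditions \eqref{Eq_DR_star1} and \eqref{Eq_DR_star2} that irreducibility in $x_\ell$-direction forces on $(\mathcal{V},\Omega)$. A convenient preliminary observation is that on the Birkhoff--Okubo category $\mathscr{B}_{x_\ell}$ the functors $L^{x_\ell}$ and $L^{-x_\ell}$ are mutually inverse on objects; this is tautological from the defining substitutions \eqref{Eq_op_FL} and \eqref{Eq_op_invFL}, whose composition recovers the original matrix $1$-form. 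Consequently the full content of part (i) is a comparison of the two successive Birkhoff--Okubo extensions occurring inside $\mathcal{ML}^{-x_\ell}\circ\mathcal{ML}^{x_\ell}$: the first enlarges $\mathcal{V}$ to $\mathcal{V}^q$ using the poles $\mathcal{T}_{x_\ell}$, while the second enlarges $\mathcal{V}^q/\mathcal{K}^{x_\ell}_{\mathcal{V}}$ to $(\mathcal{V}^q/\mathcal{K}^{x_\ell}_{\mathcal{V}})^{\hat q}$ using the poles $\mathcal{S}_{x_\ell}$ of $\mathcal{ML}^{x_\ell}(\Omega)$.

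For (i), I would construct an explicit candidate intertwining morphism $\varphi:(\mathcal{V},\Omega)\to\mathcal{ML}^{-x_\ell}\circ\mathcal{ML}^{x_\ell}(\mathcal{V},\Omega)$ modelled on the block structure of \eqref{Eq_U}, then show that $\ker\varphi\subset\mathcal{V}$ is a subspace invariant under the family $\langle A_{x_\ell},\{A_{x_\ell x_j}\}_j,\{A_H\}_{H\in\mathcal{A}_{x_\ell}}\rangle$ and hence trivial by irreducibility, while surjectivity after the kernel quotient follows from a dimension count controlled by \eqref{Eq_DR_star2}. A symmetric computation handles $\mathcal{ML}^{x_\ell}\circ\mathcal{ML}^{-x_\ell}(\mathcal{V},\Omega)\sim(\mathcal{V},\Omega)$.

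For (ii), let $\bar{\mathcal{W}}\subset\mathcal{V}^q/\mathcal{K}^{x_\ell}_{\mathcal{V}}$ be a nontrivial invariant subspace of $\mathcal{ML}^{x_\ell}(\Omega)$, and lift it to $\mathcal{W}\subset\mathcal{V}^q$ containing $\mathcal{K}^{x_\ell}_{\mathcal{V}}$ and stable under every coefficient matrix of $\mathcal{L}^{x_\ell}(\Omega)$. The block structure of \eqref{Eq_Pfaff_V} --- each $\widetilde{B}_{\hat H_i}$ has its nonzero block rows identical, and $-\widetilde{T}_{x_\ell}(\bm{x})$ is block-scalar with distinct blocks --- together with the invariance conditions forces $\mathcal{W}$ to decompose as $\mathcal{W}_0^q+\mathcal{K}^{x_\ell}_{\mathcal{V}}$ for some subspace $\mathcal{W}_0\subset\mathcal{V}$ invariant under $\langle A_{x_\ell},\{A_{x_\ell x_j}\}_j,\{A_H\}_{H\in\mathcal{A}_{x_\ell}}\rangle$; irreducibility of $(\mathcal{V},\Omega)$ then yields $\mathcal{W}_0\in\{\{0\},\mathcal{V}\}$, so $\bar{\mathcal{W}}$ is trivial in the quotient. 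The argument for $\mathcal{ML}^{-x_\ell}(\mathcal{V},\Omega)$ is identical.

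The main obstacle will be the construction of $\varphi$ in (i): checking that it intertwines not only the $x_\ell$-component but every $x_j$-component ($j\neq\ell$) of the connection requires a painstaking case analysis using the integrability relations \eqref{Eq_int_cond1}--\eqref{Eq_int_cond12} together with \eqref{Eq_DR_star1} and \eqref{Eq_DR_star2}; this is presumably the technical content of Subsection \ref{Subsec_varphi_mor} that the author flags as lengthy but safely skippable for readers only interested in the main results.
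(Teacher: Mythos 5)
For part (i) your plan is essentially the paper's: the paper constructs exactly such an explicit intertwiner $\varphi$ (equation \eqref{Eq_varphi}, going from $\mathcal{L}^{-x}(\mathcal{L}^{x}(\mathcal{V},\Omega))$ \emph{onto} $(\mathcal{V},\Omega)$ rather than in your direction), uses \eqref{Eq_DR_star2} for surjectivity and \eqref{Eq_DR_star1} to identify the kernel $\mathcal{K}'$ with the composite of the two quotients defining $\mathcal{ML}^{-x}\circ\mathcal{ML}^{x}$ (Lemmas \ref{Lem_varphi} and \ref{Lem_K'}, Proposition \ref{Prop_bijc}), and defers the verification that $\varphi$ intertwines the $x_j$-components ($j\neq\ell$) to the long Subsection \ref{Subsec_varphi_mor}, exactly as you anticipate. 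Your reversed orientation would make injectivity follow slickly from irreducibility (the kernel of a morphism is a subconnection), but you would still need the explicit kernel/dimension computations via \eqref{Eq_DR_star1}--\eqref{Eq_DR_star2} to get surjectivity, so nothing is saved.

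Part (ii) is where you diverge from the paper, and your argument has a genuine gap. After splitting the lift $\mathcal{W}\subset\mathcal{V}^q$ into blocks $\mathcal{W}=\bigoplus_i\mathcal{W}_i$ using the block-scalar matrices $\widetilde{B}_x,\widetilde{B}_{xz}$, invariance under the $\widetilde{B}_{\hat H_k}$ of \eqref{Eq_hat_G} only gives you $E_{N_k}A_{H_i}\mathcal{W}_i\subset\mathcal{W}_0:=\bigcap_j\mathcal{W}_j$ for all $i,k$. From this you get $A_{H_i}\mathcal{W}_0\subset\mathcal{W}_0$, but you do \emph{not} get $E_{N_k}\mathcal{W}_0\subset\mathcal{W}_0$, hence not $A_{x}\mathcal{W}_0\subset\mathcal{W}_0$ or $A_{xz}\mathcal{W}_0\subset\mathcal{W}_0$; so the candidate $\mathcal{W}_0$ is not visibly invariant under the full family $\langle A_{x},\{A_{xz}\},\{A_H\}_{H\in\mathcal{A}_x}\rangle$, and irreducibility of $(\mathcal{V},\Omega)$ cannot be invoked to kill it. (Replacing $\mathcal{W}_0$ by $\sum_iA_{H_i}\mathcal{W}_i$ runs into the same problem.) A secondary issue: to prove irreducibility in $x_\ell$-direction you may only assume $\bar{\mathcal{W}}$ is invariant under the $x_\ell$-related matrices of $\mathcal{ML}^{x_\ell}(\Omega)$, not ``every coefficient matrix.'' The paper avoids all of this by deducing (ii) formally from (i): it first proves (Lemma \ref{Lem_inclusion}) that $\mathcal{ML}^{\pm x_\ell}$ preserve inclusions of subconnections — an easy consequence of the exactness of $\mathcal{L}^{\pm x_\ell}$ noted in Remark \ref{Rem_L_Exact} and the identity $\mathcal{K}^{x}_{\mathcal{W}}=\mathcal{W}^q\cap\mathcal{K}^{x}_{\mathcal{V}}$ — and then runs the standard Dettweiler--Reiter argument: a minimal nonzero subconnection of $\mathcal{ML}^{x}(\mathcal{V},\Omega)$ maps under $\mathcal{ML}^{-x}$ to a nonzero subconnection of $(\mathcal{V},\Omega)$, which must be all of it, and applying $\mathcal{ML}^{x}$ again recovers the whole transformed connection. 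You should adopt that route (or supply the missing invariance of $\mathcal{W}_0$ under $A_x$ and $A_{xz}$, which I do not see how to do from the block structure alone).
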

We note that the above statement (ii) is essentially the same as the statement of Theorem \ref{thm:odeconn} (ii). 
Therefore, in the following, we shall only show the inversion formula (Theorem \ref{Thm_conn} (i)).

\subsection{A proof of inversion formula}\label{Subsec_inv_form}
This subsection is devoted to proving the inversion formula (Theorem \ref{Thm_conn} (i)).
We only show the statement $\mathcal{ML}^{-x_\ell}\circ\mathcal{ML}^{x_\ell}(\mathcal{V},\Omega)\sim(\mathcal{V},\Omega)$. 
Take any $\ell$ ($1\le \ell\le n)$ and write $x_\ell=x$. 
We set $\mathcal{T}_{x}$ and $\mathcal{S}_{x}$ as in \eqref{Eq_conn_hs}. 
Suppose that $(\mathcal{V},\Omega)\in \mathscr{P}_x^{\text{Int}}(\mathcal{T}_x,\mathcal{S}_x)$ consisting of an $N$-dimensional vector space $\mathcal{V}$ and a $1$-form of the form \eqref{Eq_conn_Omega} is irreducible and non-exceptional in $x$-direction. 
Let $\calS_{x}'$ be the subarrangement of $\mathcal{S}_{x}$ such that 
	\[
	\{\bm{x}\in\mathbb{C}^n\mid\det(x-S_{x}(\bm{x}))=0\}=\bigcup_{\hat{H}\in \calS'_{x}}\hat{H}.
	\] 
Then we have $(\calV,\Omega)\in\scrP_x^{\text{Int}}(\calT_{x},\calS_{x}')$.
Since $\calS_{x}'\subseteq\calS_{x}$, the category $\scrP_x^{\text{Int}}(\calT_{x},\calS_{x}')$ is a full subcategory of $\scrP_x^{\text{Int}}(\calT_{x},\calS_{x})$. 
Therefore, for the proof of the inversion formula, it suffices to prove the assertion after replacing $\calS_{x}$ by $\calS_{x}'$.
After this replacement, using the same notation $\calS_{x}=\{\hat{H}_{1},\ldots,\hat{H}_{\hat{q}}\}$, we may assume in \eqref{Eq_cat_S} that $\hat{N}_j\ge1$ for all $j=1,2,\ldots,\hat{q}$.

Our goal is to show the existence of the isomorphism $\mathcal{ML}^{-x}(\mathcal{ML}^{x}(\mathcal{V},\Omega))\to(\mathcal{V},\Omega)$ as connections. 
The strategy of the proof is essentially the same as in the one variable case. Nevertheless, for completeness, we provide a brief explanation.

First, we shall describe $(\mathcal{V},\Omega)$ as a quotient connection of $\mathcal{L}^{-x}(\mathcal{L}^{x}(\mathcal{V},\Omega))$. 
In the following, we fix an isomorphism (a basis) $\mathcal{V}\cong\mathbb{C}^N$ as in Section \ref{Subsubsec_BOf}, i.e., the coefficient matrix $A_{x}(\bm{x})\in\mathrm{Mat}(N,\mathbb{C}(\bm{x}))$ of $\Omega$ in $x$-direction is given by \eqref{Eq_conn_Axi} with \eqref{Eq_cat_S}.
Then, by definition of
	\begin{equation}\label{Eq_LL}
	\mathcal{L}^{-x}(\mathcal{L}^{x}(\mathcal{V},\Omega))=((\mathcal{V}^q)^{\hat{q}},\mathcal{L}^{-x}(\mathcal{L}^{x}(\Omega))),
	\end{equation}
we can fix an isomorphism (a basis) $(\mathcal{V}^q)^{\hat{q}}\cong(\mathbb{C}^{Nq})^{\hat{q}}$ such that the coefficient matrices of $\mathcal{L}^{-x}(\mathcal{L}^{x}(\Omega))$ are obtained by iterating the procedures explained in Sections \ref{subsec:deftwo} and \ref{Subsec_IFL} for $\Omega$ (The actual form of the coefficient matrices will be given later). 

Then, we define a linear map $\varphi:(\mathcal{V}^q)^{\hat{q}}\to\mathcal{V}$ as follows:
for $\bm{v}={}^t(v_1,\ldots,v_{\hat{q}})\in(\mathcal{V}^q)^{\hat{q}}$ with $v_i={}^t(v_1^i,\ldots,v_q^i)\in\mathcal{V}^q$, we define
	\begin{equation}\label{Eq_varphi}
	\varphi(\bm{v}):=\sum_{i=1}^{\hat{q}}E_{N_i} \sum_{j=1}^q A_{H_j}v_j^i.
	\end{equation}
This map is an analog of the linear map \eqref{eq:odevarphi} introduced in the one variable case.
Then, a natural generalization of Proposition \ref{prop:odevarphimor} holds.
\begin{proposition}
\label{Prop_varphi_mor}
	The linear map \eqref{Eq_varphi} defines a morphism of connections 
	\begin{equation}\label{eq:varphiasmor}
	\varphi: \mathcal{L}^{-x}(\mathcal{L}^{x}(\mathcal{V},\Omega))\to(\mathcal{V},\Omega).
	\end{equation}
\end{proposition}
The proof of this proposition is a bit lengthy, so we postpone it to Section \ref{Subsec_varphi_mor}. 
By using \eqref{Eq_DR_star2val}, in the same way as Lemma \ref{lem:odevarphi}, we can show the following.
\begin{lemma}\label{Lem_varphi}
The linear map \eqref{Eq_varphi} satisfies 
	\[
	\Ker \varphi=\mathcal{K}', 
	\quad 
	\im \varphi=\mathcal{V}.
	\]
Here 
	\begin{equation}\label{Eq_K'}
	\mathcal{K}':=
	\left\{
	\begin{pmatrix} 
	v_1 \\
	\vdots \\
	v_{\hat{q}}
	\end{pmatrix}
	\in (\mathcal{V}^q)^{\hat{q}}
	~\middle \vert~
	\begin{array}{l} 
	v_i={}^t(v_1^i,\ldots,v_q^i)\in\mathcal{V}^q
	\text{ satisfies}
	\\\ds
	E_{N_i}\sum_{j=1}^q A_{H_j}v_j^i=0
	\quad
	(i=1,\ldots,\hat{q})
	\end{array}
	\right\}.
	\end{equation}
\end{lemma}
As a consequence of this lemma and Proposition \ref{Prop_varphi_mor}, we have the following. 
\begin{corollary}
The connection $\mathcal{K}'(\mathcal{V},\Omega):=(\mathcal{K}',\mathcal{L}^{-x}(\mathcal{L}^{x}(\Omega))\vert_{\mathcal{K}'})$ is a subconnection of $\mathcal{L}^{-x}(\mathcal{L}^{x}(\mathcal{V},\Omega))$, 
and the morphism \eqref{eq:varphiasmor} induces the isomorphism
	\begin{equation}\label{Eq_mor_phi}
	\bar{\varphi}: \mathcal{L}^{-x}(\mathcal{L}^{x}(\mathcal{V},\Omega))/\mathcal{K}'(\mathcal{V},\Omega)\stackrel{\sim}{\longrightarrow}(\mathcal{V},\Omega).
	\end{equation}
\end{corollary}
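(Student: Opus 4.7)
The plan is to read off this corollary as a direct consequence of Proposition \ref{Prop_varphi_mor} and Lemma \ref{Lem_varphi}, together with the first isomorphism theorem in the abelian category $\mathscr{P}_{x_\ell}(\mathcal{T}_{x_\ell},\mathcal{S}_{x_\ell})$. No new computation is needed; everything reduces to abstract nonsense once those two results are in hand.

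First I would verify that $\mathcal{K}'$ underlies a subconnection. By Proposition \ref{Prop_varphi_mor}, the linear map $\varphi$ is a morphism in $\mathscr{P}_x$ from $\mathcal{L}^{-x}(\mathcal{L}^{x}(\mathcal{V},\Omega))$ to $(\mathcal{V},\Omega)$, which means (unfolding the definition of a morphism in the identification of $\mathscr{C}_X$ with pairs $(\mathcal{V},\Omega)$) that $\varphi$ intertwines all the coefficient matrices of $\mathcal{L}^{-x}(\mathcal{L}^{x}(\Omega))$ and $\Omega$ in every direction $x_i$. Consequently $\ker\varphi$ is stable under the action of every coefficient matrix of $\mathcal{L}^{-x}(\mathcal{L}^{x}(\Omega))$. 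By Lemma \ref{Lem_varphi} this kernel is exactly $\mathcal{K}'$, so the restriction $\mathcal{L}^{-x}(\mathcal{L}^{x}(\Omega))|_{\mathcal{K}'}$ is well defined and the pair $\mathcal{K}'(\mathcal{V},\Omega)=(\mathcal{K}',\mathcal{L}^{-x}(\mathcal{L}^{x}(\Omega))|_{\mathcal{K}'})$ is a genuine subconnection of $\mathcal{L}^{-x}(\mathcal{L}^{x}(\mathcal{V},\Omega))$.

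For the isomorphism, I would invoke the universal property of the cokernel in the abelian category $\mathscr{P}_x$. Because $\ker\varphi=\mathcal{K}'$, the morphism $\varphi$ factors uniquely through the quotient as $\bar{\varphi}: \mathcal{L}^{-x}(\mathcal{L}^{x}(\mathcal{V},\Omega))/\mathcal{K}'(\mathcal{V},\Omega)\to (\mathcal{V},\Omega)$, and $\bar{\varphi}$ is automatically a morphism of connections. Injectivity of $\bar{\varphi}$ follows from $\ker\bar{\varphi}=\ker\varphi/\mathcal{K}'=0$, while surjectivity is the statement $\im\varphi=\mathcal{V}$ from Lemma \ref{Lem_varphi}. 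Since a bijective morphism of connections is an isomorphism (equivalently, the underlying matrix representation is then a square invertible matrix intertwining the coefficient matrices on both sides, i.e.\ a gauge transformation in the sense of \eqref{Eq_equiv}), we obtain the required isomorphism $\bar{\varphi}$.

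There is essentially no obstacle here: the genuine work was absorbed into Proposition \ref{Prop_varphi_mor}, whose deferred proof handles the intertwining identities direction by direction. The only mild subtlety to double-check when writing up is that the isomorphism in $\mathscr{P}_x$ indeed corresponds to the equivalence $\sim$ of $1$-forms in the sense of Remark \ref{Rem_equiv_conn}, which is immediate once one picks compatible bases of $\mathcal{V}$ and of the quotient $(\mathcal{V}^q)^{\hat{q}}/\mathcal{K}'$.
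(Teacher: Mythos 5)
Your argument is correct and is exactly the route the paper takes: the corollary is stated there as an immediate consequence of Proposition \ref{Prop_varphi_mor} (so that $\mathcal{K}'=\ker\varphi$ is invariant under all coefficient matrices, giving the subconnection) and Lemma \ref{Lem_varphi} (giving injectivity and surjectivity of the induced map $\bar{\varphi}$ on the quotient). Nothing is missing.
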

Here we note that
\if0
\[
		\begin{tikzcd}
		0\arrow[r]&\mathcal{K}'(\mathcal{V},\Omega)\arrow[r,hookrightarrow]
		&\mathcal{L}^{-x}(\mathcal{L}^{x}(\mathcal{V},\Omega)) \arrow[r]
		&\mathcal{L}^{-x}(\mathcal{L}^{x}(\mathcal{V},\Omega))/\mathcal{K}'(\mathcal{V},\Omega)\arrow[r]&0
		\end{tikzcd}
\]
is exact
\fi
	\[
	\mathcal{L}^{-x}(\mathcal{L}^{x}(\mathcal{V},\Omega))/\mathcal{K}'(\mathcal{V},\Omega)=
	\left((\mathcal{V}^q)^{\hat{q}}/\mathcal{K}',
	\mathcal{L}^{-x}(\mathcal{L}^{x}(\Omega))\Big\vert_{(\mathcal{V}^q)^{\hat{q}}/\mathcal{K}'}\right).
	\]

To show $\mathcal{L}^{-x}(\mathcal{L}^{x}(\mathcal{V},\Omega))/\mathcal{K}'(\mathcal{V},\Omega)\sim\mathcal{ML}^{-x}(\mathcal{ML}^{x}(\mathcal{V},\Omega))$, we prepare a lemma concerning the expression of $\mathcal{K}'$.
This can be shown in the same way as in Lemma \ref{lem:odeK'}, by using \eqref{Eq_DR_star1val}.
\begin{lemma}\label{Lem_K'}
The vector space $\mathcal{K}'$ in \eqref{Eq_K'} can be expressed as
	\begin{equation}\label{Eq_K'2}
	\mathcal{K}'
	=\left\{
	\begin{pmatrix} 
	v_1 \\
	\vdots \\
	v_{\hat{q}}
	\end{pmatrix}
	\in (\mathcal{V}^q)^{\hat{q}}
	~\middle \vert~
	B_{\hat{H}_i}v_i \in \mathcal{K}_{\mathcal{V}}^{x}	
	\quad (i=1,\ldots,\hat{q})
	\right\}
	\end{equation}
where $B_{\hat{H}_i}$ and $\mathcal{K}_{\mathcal{V}}^{x}$ are given by \eqref{Eq_hat_G} and \eqref{Eq:Ktwo}, respectively.
\end{lemma}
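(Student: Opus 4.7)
The plan is to prove the two inclusions separately. The inclusion $\mathcal{K}' \subset \{v \mid \widetilde{B}_{\hat{H}_i} v_i \in \mathcal{K}_{\mathcal{V}}^x\}$ should be immediate: reading off the block structure of $\widetilde{B}_{\hat{H}_i}$ in \eqref{Eq_hat_G}, for $v_i = {}^t(v_1^i,\ldots,v_q^i)$ one has
\[
\widetilde{B}_{\hat{H}_i} v_i = {}^t(-w_i,\, -w_i,\,\ldots,\, -w_i), \qquad w_i := E_{N_i}\sum_{j=1}^q A_{H_j} v_j^i,
\]
so if $v \in \mathcal{K}'$ then $w_i = 0$ for every $i$ and $\widetilde{B}_{\hat{H}_i} v_i = 0 \in \mathcal{K}_{\mathcal{V}}^x$.

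For the reverse inclusion I would assume $\widetilde{B}_{\hat{H}_i} v_i \in \mathcal{K}_{\mathcal{V}}^x = \bigoplus_{k=1}^q \ker A_{H_k}$ and argue that $w_i$ must vanish. Comparing blocks, the hypothesis is equivalent to $A_{H_k} w_i = 0$ for all $k = 1, \ldots, q$. The key point is that $w_i$ lies in $E_{N_i}\mathcal{V}$. Writing the linear polynomial $a_{\hat{H}_i}(x') = c_i + \sum_{z \in x'} c_i^z z$ and using that $S_x(\bm{x}) = A_x + \sum_{z \in x'} A_{xz} z$ is in the diagonal form \eqref{Eq_cat_S}, the operators $A_x$ and $A_{xz}$ restrict to $E_{N_i}\mathcal{V}$ as the scalars $c_i$ and $c_i^z$, respectively. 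Thus $(A_x - c_i)w_i = 0$ and $(A_{xz} - c_i^z)w_i = 0$ for every $z \in x'$.

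Combining these with $A_{H_k} w_i = 0$ places $w_i$ inside
\[
\bigcap_{H \in \mathcal{A}_x} \ker A_H \;\cap\; \ker(A_x - c_i) \;\cap\; \bigcap_{z \in x'} \ker(A_{xz} - c_i^z),
\]
which is $\{0\}$ by the irreducibility condition \eqref{Eq_DR_star1} applied with $c_x = -c_i$ and $c_{xz} = -c_i^z$. Hence $w_i = 0$ for every $i$, i.e.\ $v \in \mathcal{K}'$. The only nontrivial step is locating where the irreducibility hypothesis enters; once one identifies that $w_i \in E_{N_i}\mathcal{V}$ forces extra scalar-kernel conditions making $(\star)$ applicable, the rest is direct block-matrix bookkeeping.
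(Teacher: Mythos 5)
Your proposal is correct and follows essentially the same route as the paper: the forward inclusion is the same triviality, and for the reverse inclusion you identify exactly the paper's key point, namely that $w_i=E_{N_i}\sum_j A_{H_j}v_j^i$ lies in the $E_{N_i}$-block where $A_x$ and $A_{xz}$ act as the scalars $a_{\hat{H}_i}(\bm{0})$ and $(a_{\hat{H}_i})_z$, so that the irreducibility condition \eqref{Eq_DR_star1} forces $w_i=0$. No gaps.
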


Let us show $\mathcal{L}^{-x}(\mathcal{L}^{x}(\mathcal{V},\Omega))/\mathcal{K}'(\mathcal{V},\Omega)\sim\mathcal{ML}^{-x}(\mathcal{ML}^{x}(\mathcal{V},\Omega))$.
To do this, we first construct a morphism from $\mathcal{L}^{-x}(\mathcal{L}^x(\mathcal{V},\Omega))$ to $\mathcal{L}^{-x}(\mathcal{ML}^{x}(\mathcal{V},\Omega))$.
Let $f_{*}:\mathcal{L}^x(\mathcal{V},\Omega)\to\mathcal{ML}^{x}(\mathcal{V},\Omega)$ be a quotient morphism. 
It is nothing but the quotient map from $\mathcal{V}^q$ to $\mathcal{V}^q/\mathcal{K}_{\mathcal{V}}^{x}$ as a linear map. 
Then, by definition, we see that the morphism
$f:=\mathcal{L}^{-x}(f_{*})$ from $\mathcal{L}^{-x}(\mathcal{L}^x(\mathcal{V},\Omega))$ to $\mathcal{L}^{-x}(\mathcal{ML}^{x}(\mathcal{V},\Omega))$ is given by the linear map
	\begin{equation}\label{Eq_mor_f}
	\begin{array}{ccccc}
	f:&(\mathcal{V}^q)^{\hat{q}}& \to & (\mathcal{V}^q/\mathcal{K}_{\mathcal{V}}^{x})^{\hat{q}}& \\
	& \rotatebox{90}{$\in$}  & & \rotatebox{90}{$\in$} & \\
	& \begin{pmatrix}v_1 \\
	\vdots \\
	v_{\hat{q}}
	\end{pmatrix} & \mapsto & 
	\begin{pmatrix}\bar{v}_1 \\
	\vdots \\
	\bar{v}_{\hat{q}}
	\end{pmatrix}, &\bar{v}_i=v_i+\mathcal{K}^x_{\mathcal{V}}.
	\end{array}
	\end{equation}
We shall construct a morphism $f'$ from $\mathcal{L}^{-x}(\mathcal{L}^{x}(\mathcal{V},\Omega))/\mathcal{K}'(\mathcal{V},\Omega)$ to $\mathcal{ML}^{-x}(\mathcal{ML}^{x}(\mathcal{V},\Omega))$. 
Let $\pi:\mathcal{L}^{-x}(\mathcal{ML}^{x}(\mathcal{V},\Omega))\to\mathcal{ML}^{-x}(\mathcal{ML}^{x}(\mathcal{V},\Omega))$ be the quotient morphism.
As a linear map, the quotient morphism $\pi$ is nothing but 
\[
\pi:(\mathcal{V}^q/\mathcal{K}_{\mathcal{V}}^{x})^{\hat{q}} \to (\mathcal{V}^q/\mathcal{K}_{\mathcal{V}}^{x})^{\hat{q}}\left/\bigoplus_{i=1}^{\hat{q}}\Ker \bar{B}_{\hat{H}_i}, \right.
\]
where $\bar{B}_{\hat{H}_i}$ are the coefficient matrices of $\mathcal{ML}^{x}(\Omega)$ in \eqref{Eq_Pfaff_v}.
Therefore, the composition $\pi\circ f:\mathcal{L}^{-x}(\mathcal{L}^{x}(\mathcal{V},\Omega))\to\mathcal{ML}^{-x}(\mathcal{ML}^{x}(\mathcal{V},\Omega))$ is given by the linear map
	\[
	\begin{array}{ccccc}
	\pi\circ f:&(\mathcal{V}^q)^{\hat{q}}& \to & \ds(\mathcal{V}^q/\mathcal{K}_{\mathcal{V}}^{x})^{\hat{q}}\left/\bigoplus_{i=1}^{\hat{q}}\Ker \bar{B}_{\hat{H}_i} \right.& \\
	& \rotatebox{90}{$\in$}  & & \rotatebox{90}{$\in$} & \\
	& \begin{pmatrix}
	v_1 \\ 
	\vdots \\
	v_{\hat{q}}
	\end{pmatrix} & \mapsto & 
	\begin{pmatrix}
	\bar{v}_1+\Ker \bar{B}_{\hat{H}_1} \\
	\vdots \\
	\bar{v}_{\hat{q}}+\Ker \bar{B}_{\hat{H}_{\hat{q}}}
	\end{pmatrix}, & 
	\bar{v}_i=v_i+\mathcal{K}_{\mathcal{V}}^{x}.
	\end{array}
	\]
For $\bm{v},\bm{w} \in (\mathcal{V}^q)^{\hat{q}}$ satisfying $\bm{v}-\bm{w}\in\mathcal{K}'$, it can be easily checked
	\[
	(\pi\circ f)(\bm{v})=(\pi\circ f)(\bm{w})
	\]
by using Lemma \ref{Lem_K'}.
Hence, the linear map $\pi\circ f$ induces the linear map
	\begin{equation}\label{Eq_f'}
	\begin{array}{ccccc}
	f':&(\mathcal{V}^q)^{\hat{q}}/\mathcal{K}'& \to &\ds (\mathcal{V}^q/\mathcal{K}_{\mathcal{V}}^{x})^{\hat{q}}\left/\bigoplus_{i=1}^{\hat{q}}\Ker \bar{B}_{\hat{H}_i} \right.& \\
	& \rotatebox{90}{$\in$}  & & \rotatebox{90}{$\in$} & \\
	& \begin{pmatrix}
	v_1 \\ 
	\vdots \\
	v_{\hat{q}}
	\end{pmatrix}+\mathcal{K}' & \mapsto & 
	\begin{pmatrix}
	\bar{v}_1+\Ker \bar{B}_{\hat{H}_1} \\
	\vdots \\
	\bar{v}_{\hat{q}}+\Ker \bar{B}_{\hat{H}_{\hat{q}}}
	\end{pmatrix}, & 
	\bar{v}_i=v_i+\mathcal{K}_{\mathcal{V}}^{x}.
	\end{array}
	\end{equation}
	%
We can easily check that the linear map $f'$ becomes a morphism from $\mathcal{L}^{-x}(\mathcal{L}^{x}(\mathcal{V},\Omega))/\mathcal{K}'(\mathcal{V},\Omega)$ to $\mathcal{ML}^{-x}(\mathcal{ML}^{x}(\mathcal{V},\Omega))$ and the diagram
	\begin{equation}\label{Eq_com_MLM}
		\begin{tikzcd}
		\mathcal{L}^{-x}(\mathcal{L}^{x}(\mathcal{V},\Omega)) \arrow[r,"\pi'"]
		\arrow[d,"f "']
		&\mathcal{L}^{-x}(\mathcal{L}^{x}(\mathcal{V},\Omega))/\mathcal{K}'(\mathcal{V},\Omega)\arrow[d,"f'"]\\
		 \mathcal{L}^{-x}(\mathcal{ML}^x(\mathcal{V},\Omega))  \arrow[r,"\pi"]
		&\mathcal{ML}^{-x}(\mathcal{ML}^{x}(\mathcal{V},\Omega))
	\end{tikzcd}
	\end{equation}
is commutative.
Here $\pi'$ denotes the quotient morphism.
Then, in the same way as Proposition \ref{Prop_bijc}, we have the following.
\begin{proposition}\label{Prop:bijctwo}
The linear map $f'$ in \eqref{Eq_f'} is bijective.
\end{proposition}
As a consequence, we have an isomorphism of connections
	\begin{equation}
	\bar{\varphi}\circ (f')^{-1}:\mathcal{ML}^{-x}(\mathcal{ML}^{x}(\mathcal{V},\Omega))\stackrel{\sim}{\longrightarrow}(\mathcal{V},\Omega),
	\end{equation}
which shows $\mathcal{ML}^{-x}\circ\mathcal{ML}^{x}(\mathcal{V},\Omega)\sim(\mathcal{V},\Omega)$, as desired.

\subsection{Proof of Proposition \ref{Prop_varphi_mor}}\label{Subsec_varphi_mor}

This subsection is devoted to the proof of Proposition \ref{Prop_varphi_mor}. 
For the connection $\mathcal{L}^{-x}(\mathcal{L}^{x}(\mathcal{V},\Omega))$, let
	\begin{equation}\label{Eq_LinvL_mat}
	\mathcal{L}^{-x}(\mathcal{L}^{x}(\Omega))=\sum_{i=1}^n G_{x_i}(\bm{x})\,dx_i
	\end{equation}
be the matrix representation of $\mathcal{L}^{-x}(\mathcal{L}^{x}(\Omega))$ with respect to the fixed basis $(\mathcal{V}^q)^{\hat{q}}\cong(\mathbb{C}^{Nq})^{\hat{q}}$ at the beginning of Section \ref{Subsec_inv_form}.
Then, our goal is to show
	\begin{equation}\label{Eq_comm_inv}
	\varphi\circ G_{x_i}(\bm{x})=A_{x_i}(\bm{x})\circ\varphi 
	\quad (i=1,2,\ldots,n).
	\end{equation}
By iterating the Laplace transform $\mathcal{L}^{x}$ and the inverse Laplace transform $\mathcal{L}^{-x}$ for the $1$-form $\Omega$, we have
	\begin{equation}\label{Eq_Ue_x}
	G_{x}(\bm{x})=G_x+
	\sum_{z\in x'}G_{xz}z
	+\sum_{i=1}^q\frac{G_{H_i}}{x-a_{H_i}},
	\end{equation}
where 
	\begin{align}
	&G_x=
	\begin{pmatrix}
		a_{\hat{H}_1}I_{qN} & & & & &  \\
		& \ddots& &  & &  \\
		& & a_{\hat{H}_{\hat{p}}} I_{qN} & &  &  \\
		& & & a_{\hat{H}_{\hat{p}+1}}(0) I_{qN} &  &  \\
		& & & & \ddots &  \\
		& & & & &  a_{\hat{H}_{\hat{q}}}(0) I_{qN}
	\end{pmatrix},
	\label{Eq_X_x} 
	\\
	&G_{xz}=
	\begin{pmatrix}
		O_{qN} & & & & &  \\
		& \ddots& &  & &  \\
		& & O_{qN} & &  &  \\
		& & & (a_{\hat{H}_{\hat{p}+1}})_z I_{qN} &  &  \\
		& & & & \ddots &  \\
		& & & & &  (a_{\hat{H}_{\hat{q}}})_z I_{qN}
	\end{pmatrix},
	\label{Eq_X_xz}
	\\
	&G_{H_i}
	=\begin{pmatrix}
		G_1^i & G_2^i& \cdots & G_{\hat{q}}^i \\
		G_1^i & G_2^i& \cdots & G_{\hat{q}}^i \\
		& & \vdots & \\
		G_1^i & G_2^i& \cdots & G_{\hat{q}}^i
	\end{pmatrix} 
	,\quad 
	G_{j}^{i}=
	\begin{pmatrix}
	O_N & O_N & \cdots   &O_N \\
	\vdots & \cdots & \cdots &  \vdots \\
	E_{N_j}A_{H_1} & E_{N_j}A_{H_2} & \cdots & E_{N_j}A_{H_q}  \\
	\vdots & \cdots & \cdots &  \vdots \\
	O_N & O_N & \cdots  &O_N \\	
	\end{pmatrix}(\,i.
	\label{Eq_XH_i}
	\end{align}
Then, by a similar calculation with the proof of Proposition \ref{prop:odevarphimor}, we have the following.
\begin{proposition}
\label{Prop_x_compati}
The linear map \eqref{Eq_varphi} satisfies
	\begin{enumerate}[(i)]
	\item $\varphi\circ G_x=A_x\circ\varphi$,

	\item $\varphi\circ G_{xz}=A_{xz}\circ \varphi \quad (z\in x')$,
	
	\item $\varphi\circ G_{H_i}=A_{H_i}\circ\varphi \quad (1\le i\le q)$.
	\end{enumerate}
\end{proposition}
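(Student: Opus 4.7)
The plan is to verify each of the three identities (i)--(iii) by a direct block-matrix computation, using three structural facts: the orthogonal idempotent relations
\[
E_{N_k}E_{N_l}=\delta_{kl}E_{N_k}, \qquad \sum_{k=1}^{\hat q}E_{N_k}=I_N,
\]
the diagonal expansion
\[
A_{x}+\sum_{z\in x'}A_{xz}z=\sum_{k=1}^{\hat q}a_{\hat H_k}(x')\,E_{N_k}
\]
coming from \eqref{Eq_A_yA}--\eqref{Eq_A_yA_2} (so that in particular $A_{x}E_{N_k}=E_{N_k}A_{x}=a_{\hat H_k}(\bm 0)E_{N_k}$ and $A_{xz}E_{N_k}=E_{N_k}A_{xz}=(a_{\hat H_k})_z E_{N_k}$), and the definition \eqref{Eq_varphi} of $\varphi$.

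For (i), since $G_x$ is block-scalar with blocks $a_{\hat H_k}(\bm 0)I_{qN}$ by \eqref{Eq_X_x}, the image $G_x\bm{v}$ has $k$-th block $a_{\hat H_k}(\bm 0)\,v_k$. Plugging into $\varphi$ and pulling the scalar inside the projector gives $\varphi(G_x\bm v)=\sum_k a_{\hat H_k}(\bm 0)E_{N_k}\sum_j A_{H_j}v_j^k$, which equals $A_x\varphi(\bm v)$ by the identity $A_xE_{N_k}=a_{\hat H_k}(\bm 0)E_{N_k}$. Relation (ii) is identical in form to (i), using \eqref{Eq_X_xz} and $A_{xz}E_{N_k}=(a_{\hat H_k})_z E_{N_k}$; the entries for $1\le k\le \hat p$ vanish on both sides, and the remaining blocks match.

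The computation for (iii) is the only one that really uses the shape of $G_{H_i}$. From \eqref{Eq_XH_i}, every block-row of $G_{H_i}$ equals $(G_1^i,\dots,G_{\hat q}^i)$, and each $G_j^i$ sends $v_j={}^t(v_1^j,\dots,v_q^j)$ to a vector in $\mathcal V^q$ whose only nonzero entry is at the $i$-th slot, equal to $E_{N_j}\sum_\ell A_{H_\ell}v_\ell^j$. Consequently, for every $k$, the $m$-th entry of the $k$-th block of $G_{H_i}\bm v$ is $\delta_{mi}\sum_{j=1}^{\hat q}E_{N_j}\sum_\ell A_{H_\ell}v_\ell^j$. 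Feeding this into \eqref{Eq_varphi} collapses the sum over $m$ to $m=i$ and gives
\[
\varphi(G_{H_i}\bm v)=\Bigl(\sum_{k=1}^{\hat q}E_{N_k}\Bigr)A_{H_i}\sum_{j=1}^{\hat q}E_{N_j}\sum_{\ell=1}^{q}A_{H_\ell}v_\ell^{j}=A_{H_i}\,\varphi(\bm v),
\]
where the completeness relation $\sum_k E_{N_k}=I_N$ is used in the final step.

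All three identities are thus purely bookkeeping. The only subtle point is the correct reading of the block pattern of $G_{H_i}$ in \eqref{Eq_XH_i}, in particular the fact that the active row inside each $G_j^i$ is always at position $i$ (determined by $H_i$), independent of $j$; this is what allows the sum over the outer index $k$ to reassemble into $\sum_k E_{N_k}=I_N$ and produce $A_{H_i}$ on the outside. There is no genuine obstacle beyond keeping track of the two independent block indices (the $\hat q$ indexing the eigenspace decomposition of $S_x$, and the $q$ indexing the hyperplanes $H_j\in\mathcal A_x$).
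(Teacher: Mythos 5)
Your computation is correct, and it coincides with what the paper intends: the paper states Proposition \ref{Prop_x_compati} with only the remark that it follows ``by a direct calculation,'' and your block-matrix verification is exactly that calculation, including the one genuinely non-obvious point (the active block-row of each $G_j^i$ sits at position $i$ independently of $j$, so the outer sum reassembles into $\sum_k E_{N_k}=I_N$ and $A_{H_i}$ factors out on the left).
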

Therefore, we see that \eqref{Eq_comm_inv} with $i=\ell$ holds.

\medskip

We proceed to consider the commutativity on other variables: 
we choose any $y \in x'$ and shall show that
	\begin{equation}
	\varphi\circ G_y(\bm{x})=A_y(\bm{x})\circ\varphi.
	\end{equation}
The expression of $G_y(\bm{x})$ is quite different whether there exists an $x_k \in x'$ such that $\mathcal{A}_x\cap\mathcal{A}_{x_k}\neq \emptyset$ or not.
Hence we divide the proof into two cases:
	\begin{enumerate}[\bf {Case} 1:]
	\item There exists an $x_k \in x'$ such that $\mathcal{A}_x\cap\mathcal{A}_{x_k}\neq \emptyset$. 
	\item $\mathcal{A}_x\cap\mathcal{A}_{x_k}= \emptyset$ for all $x_k\in x'$.
	\end{enumerate}

\subsubsection{Case 1}

In this case, from the assumption \eqref{Eq_ass_inf}, we can assume $A_{xz}=O$ for $z\in x'$ (Namely, $\hat{p}=\hat{q}$).
Then, the $y$-equation of \eqref{Eq_Pfaff_V} is given by 
	\begin{equation}\label{Eq_V_yeq}
	\frac{\p V}{\p y}=\left(
	B_y+xB_{yx}+\sum_{z\in x''} B_{yz} z+\sum_{\substack{q+1\le j\le r \\ H_j \in\mathcal{A}_y}}\frac{B_{H_j}}{y-b_{H_j}}+\sum_{\substack{1\le k<l\le q \\ H_l \in \mathcal{C}_{H_k,y}}}\frac{B_{H_{kl}}}{y-c_{H_kH_l}}
	\right)V,
	\end{equation}
where 
	\begin{align}
	B_y&=\begin{pmatrix}
	A_y & & & & & \\
	& \ddots & & & & \\
	& & A_y & & & \\
	& & & A_y+(a_{H_{p+1}})_y A_x & & \\
	& & & & \ddots & \\
	& & & & & A_y+(a_{H_{q}})_y A_x
	\end{pmatrix}, 
	\\
	B_{yx}&=-\begin{pmatrix}
	O_N & & & & & \\
	& \ddots & & & & \\
	& & O_N & & & \\
	& & & (a_{H_{p+1}})_y I_N  & & \\
	& & & & \ddots & \\
	& & & & & (a_{H_{q}})_y I_N
	\end{pmatrix}
	\\
	B_{yz}&=\begin{pmatrix}
	A_{yz} & &  \\
	& \ddots &  \\
	& & A_{yz} 
	\end{pmatrix},
	\quad 
	B_{H_j}=\begin{pmatrix}
	A_{H_j} & & \\
	& \ddots & \\
	& & A_{H_j}
	\end{pmatrix}, \label{Eq_Ghatj}
	\\
	&\begin{blockarray}{ccccccc}
	\begin{block}{c(ccccc)c}
		&   &  &  &  &  &   \\
	 &  &  &  &  &  &   \\
	&   & A_{H_l} &  & -A_{H_l} &  & (k \\
	B_{H_{kl}}= &  &  &  &  &  &   \\
	&   & -A_{H_k} &  & A_{H_k} &  &(l   \\
	 &  &  &  &  &  &   \\
	   	 &  &  &  &  &  &   \\
	\end{block}\\[-20pt]
	& & \rotatebox{90}{$)$} & &\rotatebox{90}{$)$} & & \\[-3pt]
	&  & k & &l & &
	\end{blockarray}. \label{Eq_Ghatkl}
	\end{align}
Now we set 
	\begin{align*}
	\left.\Lambda_0=
	\left\{(k,l)\,\middle \vert\,
	\begin{array}{l}
	1\le k<l\le q,~
	H_l \in \mathcal{C}_{H_k,y} \\
	c_{H_kH_l} \neq b_{H_j} \text{ for all }q+1\le j\le r \text{ such that }H_j \in\mathcal{A}_y
	\end{array}
	\right\}\middle/\approx \right.,
	\end{align*}
where 
	\[
	(k,l) \approx (k',l') \quad \overset{\mathrm{def}}{\Leftrightarrow} \quad c_{H_kH_l}=c_{H_{k'}H_{l'}}.
	\]
Then, the equation \eqref{Eq_V_yeq} can be expressed as 
	\begin{equation}\label{Eq_V_yeq_2}
	\frac{\p V}{\p y}=\left(
	B_y+xB_{yx}+\sum_{z\in x''} B_{yz} z+\sum_{\substack{q+1\le j\le r \\ H_j \in\mathcal{A}_y}}\frac{C_{H_j}}{y-b_{H_j}}
	+\sum_{[(k,l)]\in\Lambda_0}\frac{C_{[(k,l)]}}{y-c_{H_kH_l}}
	\right)V,
	\end{equation}
where 
	\[
	C_{H_j}=B_{H_j}+\sum_{(k,l)\in\Lambda_j}B_{H_{kl}}, 
	\quad \Lambda_{j}=
	\left\{(k,l)~\middle \vert~
	\begin{array}{l}
	1\le k<l\le q,~
	H_l \in \mathcal{C}_{H_k,y} \\
	c_{H_kH_l}=b_{H_j} 
	\end{array}
	\right\}
	\]
for $q+1\le j \le r$ and 
	\[
	C_{[(k,l)]}=\sum_{(k',l')\in[(k,l)]}B_{H_{k'l'}}
	\]
for $[(k,l)]\in\Lambda_0$.

By applying Proposition \ref{Prop_invLaplace}, we obtain the coefficient matrix of \eqref{Eq_LinvL_mat} in $y$-direction: 
	\[
	G_{y}(\bm{x})=G_y+\sum_{z\in x''}G_{yz}z+\sum_{\substack{p+1\le k \le q\\ H_k\in \mathcal{A}_y}}\frac{G_{H_k}}{y-b_{H_k}}+\sum_{\substack{q+1\le j\le r \\ H_j \in\mathcal{A}_y}}\frac{ F _{H_j}}{y-b_{H_j}}
	+\sum_{[(k,l)]\in\Lambda_0}\frac{ F _{[(k,l)]}}{y-c_{H_kH_l}}
	\]
where $G_{H_k}$ $(p+1\le k \le q)$ are given by \eqref{Eq_XH_i} and
	\begin{equation}
	\begin{aligned}\label{Eq_Utilde_y_coef}
	&G_y=\begin{pmatrix}
	B_y+a_{\hat{H}_1}B_{yx} & & \\
	& \ddots &  \\
	& & B_y+a_{\hat{H}_{\hat{p}}}B_{yx}  \\
	\end{pmatrix}, 
	\quad 
	G_{yz}=
	\begin{pmatrix}
		B_{yz} & &  \\
		& \ddots& \\
		& & B_{yz} \\
		\end{pmatrix},
	\\
	& F _{H_j}=\begin{pmatrix}
	C_{H_j} & & \\
	& \ddots & \\
	& & C_{H_j}
	\end{pmatrix},
	\quad 
	 F _{[(k,l)]}=\begin{pmatrix}
	C_{[(k,l)]} & & \\
	& \ddots & \\
	& & C_{[(k,l)]}
	\end{pmatrix}.
	\end{aligned}
	\end{equation}
Since we have already shown 
	\[
	\varphi\circ G_{H_k}=A_{H_k}\circ \varphi \quad (p+1\le k \le q),
	\]
it suffices to show
	\begin{align}\label{Eq_compati_1}
	&\varphi\circ G_y
	=A_y\circ\varphi,
	\\
	&\varphi\circ\left(\sum_{z\in x''}G_{yz} z\right)
	=\left(\sum_{z\in x''}A_{yz}z\right)\circ\varphi, \label{Eq_compati_2}
	\\
	&\varphi\circ F _{H_j}=A_{H_j}\circ \varphi \quad (q+1\le j \le r \text{ such that }H_j \in\mathcal{A}_y),\label{Eq_compati_3}
	\\
	&\varphi\circ F _{[(k,l)]}=0 \quad ([(k,l)]\in \Lambda_0).
	\label{Eq_compati_4}
	\end{align}
Let $\bm{v}={}^t(v_1,\ldots,v_{\hat{p}}) \in \mathcal{L}^{-x}(\mathcal{L}^x(\mathcal{V}))$ with $v_i={}^t(v_1^{i},\ldots,v_q^{i}) \in \mathcal{V}^q$ and
	\[
	 \bm{w}=\begin{pmatrix}
	w_1 \\
	\vdots \\
	w_{\hat{p}}
	\end{pmatrix}
	:=G_{y}\bm{v}=
	\begin{pmatrix}
	(B_y+a_{\hat{H}_1}B_{yx})v_1 \\
	\vdots \\
	(B_y+a_{\hat{H}_{\hat{q}}}B_{yx})v_{\hat{p}}
	\end{pmatrix}.
	\]
Then, we have
	\begin{align*}
	w_i&=(B_y+a_{\hat{H}_i}B_{yx})v_i
	 \\
	 &=\begin{pmatrix}
	A_y & & & & & \\
	& \ddots & & & & \\
	& & A_y & & &  \\
	& & & A_y+(a_{H_{p+1}})_y(A_x-a_{\hat{H}_i}) & & \\
	& & & & \ddots & \\
	& & & & & A_y+(a_{H_{q}})_y(A_x-a_{\hat{H}_i})
	\end{pmatrix}
	\begin{pmatrix}
	v_1^{i} \\
	v_2^{i} \\
	\vdots \\
	v_{q}^{i}
	\end{pmatrix}.
	\end{align*}
Therefore we have
	\begin{align*}
	(\varphi\circ G_{y})(\bm{v})&=\varphi(\bm{w})
	=\sum_{i=1}^{\hat{p}}E_{N_i}
	\left(\sum_{j=1}^pA_{H_j}A_y v_j^i
	+
	\sum_{j=p+1}^q A_{H_j}\{A_y+(a_{H_j})_y(A_x-a_{\hat{H}_i})\}v_j^i
	\right).
	\end{align*}
From the conditions \eqref{eq:cond13p} and \eqref{eq:cond9p}, we see that $[A_{H_j},A_{y}]=O$ $(j=1,\ldots,p)$ and $[A_{H_j},A_y+(a_{H_j})_yA_x]=O$ $(j=p+1,\ldots, q)$, respectively. 
Then, we have
	\[
	\begin{aligned}
(\varphi\circ G_{y})(\bm{v})
	&=\sum_{i=1}^{\hat{p}}E_{N_i}
	\left(\sum_{j=1}^pA_yA_{H_j} v_j^i
	+
	\sum_{j=p+1}^q \{A_y+(a_{H_j})_y(A_x-a_{\hat{H}_i})\}A_{H_j}v_j^i
	\right) \\
	&=\sum_{i=1}^{\hat{p}}E_{N_i}
	\left(\sum_{j=1}^qA_yA_{H_j} v_j^i
	+
	\sum_{j=p+1}^q (a_{H_j})_y(A_x-a_{\hat{H}_i})A_{H_j}v_j^i
	\right).
	\end{aligned}
	\]
The condition $[A_{x},A_{y}]=O$ coming from \eqref{eq:cond3p} and 
	\[
	A_{x}=\begin{pmatrix}
	a_{\hat{H}_1}I_{N_1} & & \\
	& \ddots & \\
	& & a_{\hat{H}_{\hat{p}}} I_{N_{\hat{p}}}
	\end{pmatrix}
	=\sum_{i=1}^{\hat{p}}a_{\hat{H}_i} E_{N_i}
	\]
imply $[E_{N_i},A_y]=O$ for $i=1,\ldots,\hat{p}$.  
Hence, we obtain 
	\[
(\varphi\circ G_{y})(\bm{v})
	=A_y\sum_{i=1}^{\hat{p}}E_{N_i}
	\sum_{j=1}^qA_{H_j} v_j^i
	+
	\sum_{i=1}^{\hat{p}}E_{N_i}\sum_{j=p+1}^q (a_{H_j})_y(A_x-a_{\hat{H}_i})A_{H_j}v_j^i.
	\]
Now we notice that the $i$-th diagonal block of $A_x-a_{\hat{H}_i}$ is $O_{N_i}$, and hence $E_{N_i}(A_x-a_{\hat{H}_i})=O$.
Therefore, we obtain 
	\[
	(\varphi\circ G_y)(\bm{v})
	=A_y\sum_{i=1}^{\hat{p}}E_{N_i}
	\sum_{j=1}^qA_{H_j} v_j^i
	=(A_y\circ \varphi)(\bm{v}),
	\]
which shows \eqref{Eq_compati_1}.

We shall show \eqref{Eq_compati_2}. 
By a direct calculation, we obtain 
	\[
	(\varphi\circ G_{yz})(\bm{v})
	=\sum_{i=1}^{\hat{p}}E_{N_i}\sum_{j=1}^q A_{H_j}A_{yz}v_j^i
	\]
for $z \neq x,y$.
From the conditions \eqref{eq:cond10p} and \eqref{eq:cond14p}, we have $[A_{H_j},A_{yz}]=O$ for $1\le j\le q$.
Furthermore, the condition $[A_x,A_{yz}]=O$ coming from \eqref{eq:cond4p} implies $[E_{N_i},A_{yz}]=O$ for $1\le i\le \hat{p}$. 
Combining them, we obtain 
	\[
	(\varphi\circ G_{yz})(\bm{v})
	=A_{yz}\sum_{i=1}^{\hat{p}}E_{N_i}\sum_{j=1}^q A_{H_j}v_j^i
	=(A_{yz}\circ\varphi)(\bm{v}),
	\]
which shows \eqref{Eq_compati_2}.	

We proceed to show \eqref{Eq_compati_3}. 
By a direct calculation, we have
	\[
	 F _{H_j}\bm{v}=
	\begin{pmatrix}
	C_{H_j}v_1 \\
	\vdots 
	\\
	C_{H_j}v_{\hat{q}}
	\end{pmatrix}
	\]
and 
	\[
	C_{H_j}v_i=
	\left(B_{H_j}+\sum_{(k,l)\in\Lambda_j}B_{H_{kl}}\right)v_i
	=
	\begin{pmatrix}
	A_{H_j}v_{1}^{i} \\
	\vdots \\
	A_{H_j}v_{q}^{i}
	\end{pmatrix}
	+\sum_{(k,l)\in\Lambda_j}
\begin{blockarray}{cc}
\begin{block}{(c)c}
	0_{N} &\\
	\vdots & \\
	A_{H_l}(v_{k}^{i}-v_{l}^{i}) & (k\\
	\vdots &\\
	A_{H_k}(-v_{k}^{i}+v_{l}^{i}) & (l\\
	\vdots &\\
	0_{N} & \\
\end{block}
\end{blockarray}
	=:\begin{pmatrix}
	w_{1}^{i} \\
	\vdots \\
	w_{q}^{i}
	\end{pmatrix}.
	\]
Then, we obtain
	\begin{equation}\label{Eq_varphi_Ftilde}
	\begin{aligned}
	(\varphi\circ F _{H_j})(\bm{v})&=
	\sum_{i=1}^{\hat{p}}E_{N_i}\sum_{m=1}^q A_{H_m}w_m^i
	\\
	&
	=\sum_{i=1}^{\hat{p}}E_{N_i}\sum_{m=1}^qA_{H_m}\left\{
	A_{H_j}v_{m}^{i}+\sum_{(m,l)\in\Lambda_j}A_{H_l}(v_m^i-v_l^i)
	+\sum_{(k,m)\in\Lambda_j}A_{H_k}(-v_k^i+v_m^i)	\right\}
	\\
	&
	=\sum_{i=1}^{\hat{p}}\sum_{m=1}^q\Big[E_{N_i}A_{H_m}
	A_{H_j}v_{m}^{i}  \\
 	&\qquad \left.+E_{N_i}\left\{\sum_{(m,l)\in\Lambda_j}A_{H_m}A_{H_l}(v_m^i-v_l^i)
	+\sum_{(k,m)\in\Lambda_j}A_{H_m}A_{H_k}(-v_k^i+v_m^i)
	\right\}\right].
	\end{aligned}
	\end{equation}
To proceed with the calculation, let us recall the Pfaffian system \eqref{Eq_Pfaff_V}. 
From the completely integrable condition for \eqref{Eq_Pfaff_V}, we have
	\[
	[B_{\hat{H}_i},C_{H_j}]=O \quad (1\le i\le \hat{p},~\, q+1\le j \le r \text{ such that }H_j\in\mathcal{A}_y).
	\]
By using \eqref{Eq_hat_G}, \eqref{Eq_Ghatj} and \eqref{Eq_Ghatkl}, this condition can be expressed as
	\begin{align*}
	&\begin{pmatrix}
	[E_{N_i}A_{H_1},A_{H_j}] & [E_{N_i}A_{H_2},A_{H_j}] & \cdots & [E_{N_i}A_{H_q},A_{H_j}] \\
	& & \vdots & \\
	[E_{N_i}A_{H_1},A_{H_j}] & [E_{N_i}A_{H_2},A_{H_j}] & \cdots & [E_{N_i}A_{H_q},A_{H_j}]
	\end{pmatrix} 
	\\
	&\begin{blockarray}{cccccccc}
	\begin{block}{c(ccccccc)}
	& O& \cdots & E_{N_i}[A_{H_k},A_{H_l}] & \cdots & E_{N_i}[A_{H_l},A_{H_k}] & \cdots & O   \\
+\ds\sum_{(k,l)\in \Lambda_j} &	 \vdots &  & \vdots  &  & \vdots  &  &  \vdots  \\[-11pt]
	&O& \cdots & E_{N_i}[A_{H_k},A_{H_l}] & \cdots & E_{N_i}[A_{H_l},A_{H_k}] & \cdots & O   \\
	\end{block}\\[-10pt]
	&& & \rotatebox{90}{$)$} & &\rotatebox{90}{$)$} & & \\[-3pt]
	&&  & k & &l & &
	\end{blockarray} 
	\\[-10pt]
	&=O.
	\end{align*}
Picking up the $m$-th column ($1\le m\le q$), we obtain
	\begin{equation}\label{Eq_EAA1}
	[E_{N_i}A_{H_m},A_{H_j}]+E_{N_i}\left(\sum_{(m,l)\in \Lambda_j}[A_{H_m},A_{H_l}]+
	\sum_{(k,m)\in\Lambda_j}[A_{H_m},A_{H_k}]\right)=O
	\end{equation}
for $1\le i\le \hat{p}$ and $q+1\le j \le r$. 
By using this, we see that \eqref{Eq_varphi_Ftilde} is reduced to
	\begin{align*}
	(\varphi\circ F _{H_j})(\bm{v})=&(A_{H_j}\circ\varphi)(v),
	\end{align*}
which shows \eqref{Eq_compati_3}. 

Lastly, we shall show \eqref{Eq_compati_4}. 
Take $[(k,l)] \in \Lambda_0$ and consider
	\[
	F _{[(k,l)]}\bm{v}=
	\begin{pmatrix}
	C_{[(k,l)]}v_1 \\
	\vdots \\
	C_{[(k,l)]}v_{\hat{p}}
	\end{pmatrix},
	\quad 
	C_{[(k,l)]}v_i=\sum_{(k',l')\in[(k,l)]}
	\begin{blockarray}{cc}
\begin{block}{(c)c}
	0_{N} &\\
	\vdots & \\
	A_{H_{l'}}(v_{k'}^{i}-v_{l'}^{i}) & (k'\\
	\vdots &\\
	A_{H_{k'}}(-v_{k'}^{i}+v_{l'}^{i}) & (l'\\
	\vdots &\\
	0_{N} & \\
\end{block}
\end{blockarray}=:
	\begin{pmatrix}
	w_1^i \\
	\vdots \\
	w_q^i
	\end{pmatrix}.
	\]
Then we have
	\begin{equation}\label{Eq_varphi_Gtilde}
	\begin{aligned}
	(\varphi\circ F _{[(k,l)]})(\bm{v})
	&=\sum_{i=1}^{\hat{p}}E_{N_i}\sum_{m=1}^q A_{H_m}w_m^i \\
	&=\sum_{i=1}^{\hat{p}}E_{N_i}\sum_{m=1}^{q}A_{H_m}\left(\sum_{(m,l')\in[(k,l)]}A_{H_{l'}}(v_m^i-v_{l'}^i)-\sum_{(k',m)\in[(k,l)]}A_{H_{k'}}(v_{k'}^i-v_{m}^i)\right).
	\end{aligned}
	\end{equation}
Here, we focus on the second term on the right-hand side:
	\[
	\begin{aligned}
	\sum_{m=1}^qA_{H_m}\sum_{(k',m)\in[(k,l)]}A_{H_{k'}}(v_{k'}^i-v_{m}^i)
	&=\sum_{\substack{1\le m\le q \\ (k',m)\in[(k,l)]}}A_{H_m}A_{H_{k'}}(v_{k'}^i-v_{m}^i)
	\\
	&=\sum_{\substack{1\le m\le q \\ 1\le k'< m \\ c_{H_{k'}H_m}=c_{H_{k}H_l}}}A_{H_m}A_{H_{k'}}(v_{k'}^i-v_{m}^i)
	\\
	&=\sum_{\substack{1\le k'\le q \\ k'<m\le q \\ c_{H_{k'}H_m}=c_{H_{k}H_l}}}A_{H_m}A_{H_{k'}}(v_{k'}^i-v_{m}^i).
	\end{aligned}
	\]
Since $c_{H_{k'}H_m}=c_{H_mH_{k'}}$ holds, we can exchange $k'\leftrightarrow m$ in this sum and obtain
	\[
	\begin{aligned}
	\sum_{m=1}^qA_{H_m}\sum_{(k',m)\in[(k,m)]}A_{H_{k'}}(v_{k'}^i-v_{m}^i)
	&=\sum_{\substack{1\le m\le q \\ m< k'\le q \\ c_{H_mH_{k'}}=c_{H_{k}H_{l}}}}A_{H_{k'}}A_{H_{m}}(v_{m}^i-v_{k'}^i)
	\\
	&=\sum_{m=1}^{q}\sum_{(m,k')\in[(k,l)]}A_{H_{k'}}A_{H_{m}}(v_{m}^i-v_{k'}^i).
	\end{aligned}	
	\]
Substituting this into \eqref{Eq_varphi_Gtilde}, we have
	\begin{equation}
	\label{Eq_varphi_Gtilde2}
	(\varphi\circ F _{[(k,l)]})(\bm{v})
	=\sum_{i=1}^{\hat{p}}E_{N_i}\sum_{m=1}^{q}\sum_{(m,l')\in[(k,l)]}[A_{H_m},A_{H_{l'}}](v_m^i-v_{l'}^i).
	\end{equation}
From the completely integrable condition of the system \eqref{Eq_Pfaff_V}, it holds that 
	\[
	\sum_{(k',l')\in[(k,l)]}[B_{\hat{H}_i},B_{H_{k' l'}}]=O
	\quad (1\le i \le \hat{p}).
	\]
From \eqref{Eq_hat_G} and \eqref{Eq_Ghatkl}, we have
	\begin{align*}
	&\begin{blockarray}{cccccccc}
	\begin{block}{c(ccccccc)}
	& O& \cdots & E_{N_i}[A_{H_{l'}},A_{H_{k'}}] & \cdots & E_{N_i}[A_{H_{k'}},A_{H_{l'}}] & \cdots & O   \\
\ds\sum_{(k',l')\in[(k,l)]}[B_{\hat{H}_i},B_{H_{k'l'}}]=\sum_{(k',l')\in[(k,l)]} &	 \vdots &  & \vdots  &  & \vdots  &  &  \vdots  \\[-11pt]
	&O& \cdots & E_{N_i}[A_{H_{l'}},A_{H_{k'}}] & \cdots & E_{N_i}[A_{H_{k'}},A_{H_{l'}}] & \cdots & O   \\
	\end{block}\\[-10pt]
	&& & \rotatebox{90}{$)$} & &\rotatebox{90}{$)$} & & \\[-3pt]
	&&  & k' & &l' & &
	\end{blockarray} 
	\\[-10pt]
	&=O.
	\end{align*}
Then, by picking up the $m$-th column, we have 
	\begin{equation}
	\label{Eq_Case1_lem}
	\sum_{(m,l')\in[(k,l)]}E_{N_i}[A_{H_{l'}},A_{H_{m}}]+\sum_{(k',m)\in[(k,l)]}E_{N_i}[A_{H_{k'}},A_{H_m}]=O
	\end{equation}
for $1\le i\le \hat{p}$ and $1\le m\le q$.
Here \eqref{Eq_varphi_Gtilde2} can be expressed as
\begin{align*}
	(\varphi\circ F _{[(k,l)]})(\bm{v})
	&=\sum_{i=1}^{\hat{p}}E_{N_i}\left\{
	\sum_{m=1}^{q} \sum_{(m,l')\in[(k,l)]}[A_{H_m},A_{H_{l'}}]v_m^i
	-\sum_{m=1}^{q}\sum_{(m,l')\in[(k,l)]}[A_{H_m},A_{H_{l'}}]v_{l'}^i
	\right\}.
	\end{align*}
By using \eqref{Eq_Case1_lem} for the first summation on $m$, we obtain
	\begin{align*}
	(\varphi\circ F _{[(k,l)]})(\bm{v})
	=\sum_{i=1}^{\hat{p}}E_{N_i}\left\{
	\sum_{m=1}^{q} \sum_{(k',m)\in[(k,l)]}[A_{H_{k'}},A_{H_m}]v_m^i
	-\sum_{m=1}^{q}\sum_{(m,l')\in[(k,l)]}[A_{H_m},A_{H_{l'}}]v_{l'}^i
	\right\}.
	\end{align*}
As in the above, we can change the order of summation and exchange the index $k'\leftrightarrow m$. 
Then we have
	\begin{align*}
	\sum_{m=1}^{q} \sum_{(k',m)\in[(k,l)]}[A_{H_{k'}},A_{H_m}]v_m^i
	&=\sum_{m=1}^q \sum_{(m,k')\in[(k,l)]}[A_{H_m},A_{H_{k'}}]v_{k'}^i.
	\end{align*}
Hence, we have
	\begin{align*}
	(\varphi\circ F _{[(k,l)]})(\bm{v})
		&=\sum_{i=1}^{\hat{p}}E_{N_i}\left\{
	\sum_{m=1}^q \sum_{(m,k')\in[(k,l)]}E_{N_i}[A_{H_m},A_{H_{k'}}]v_{k'}^i
	-\sum_{m=1}^{q}\sum_{(m,l')\in[(k,l)]}[A_{H_m},A_{H_{l'}}]v_{l'}^i
	\right\}
	\\
	&=0.
	\end{align*}

\subsubsection{Case 2}
In this case, the polynomial $f_{H}$ with $H\in\mathcal{A}_x$ does not depend on other variables. 
Namely, it holds that $a_{H}\in \mathbb{C}$ for $H\in \mathcal{A}_x$, i.e., $p=q$. 
In particular, since the polynomial $f_H$ with $H\in\mathcal{A}_y$ does not depend on $x$, it holds that $b_{H} \in \mathbb{C}[x'']$ for $H \in \mathcal{A}_{y}$.

Then, the $x$-equation \eqref{Eq_xeq_Lap_Sch2} of the Pfaffian system \eqref{Eq_Pfaff_V} is written 
	\begin{equation}\label{Eq_Pfaff_V_x2}
	\frac{\p V}{\p x}=\left(B_x+\sum_{i=1}^{\hat{q}}\frac{B_{\hat{H}_i}}{x-a_{\hat{H}_i}}\right)V
	\end{equation}
with 
	\begin{equation}
	\label{Eq_Case2_xeqcoef}
	B_x=
	-\begin{pmatrix}
	a_{H_1} & & \\
	& \ddots & \\
	& & a_{H_p}
	\end{pmatrix},
	\quad
	B_{\hat{H}_i}=
	-\begin{pmatrix}
	E_{N_i}A_{H_1} & \cdots & E_{N_i}A_{H_p} \\
	& \vdots & \\
	E_{N_i}A_{H_1} & \cdots & E_{N_i}A_{H_p}
	\end{pmatrix}
	\quad (1\le i \le \hat{q}).
	\end{equation}
Here we note that $a_{\hat{H}_1},\ldots,a_{\hat{H}_p} \in \mathbb{C}$ and $a_{\hat{H}_{\hat{p}+1}},\ldots,a_{\hat{H}_{\hat{q}}}\in\mathbb{C}[x']$.
From Proposition \ref{Prop_Laplace}, the $y$-equation of \eqref{Eq_Pfaff_V} is given by
	\begin{equation}
	\label{Eq_Pfaff_V_y2}
	\frac{\p V}{\p y}
	=\left(
	B_y+\sum_{z\in x''}B_{yz}z+\sum_{\substack{q+1\le j \le r \\ H_j \in \mathcal{A}_y}}\frac{B_{H_j}}{y-b_{H_j}}+\sum_{\substack{\hat{p}+1\le k\le \hat{q} \\ \hat{H}_k \in \mathcal{B}_y}}\frac{B_{\hat{H}_k}}{y-b_{\hat{H}_k}}
	\right)V,
	\end{equation}
where 
	\begin{align}
	&B_y=\begin{pmatrix}
	A_y+a_{H_1}A_{xy} & & \\
	&\ddots & \\
	& & A_y+a_{H_p}A_{xy}
	\end{pmatrix}, 
	\quad 
	B_{yz}=\begin{pmatrix}
	A_{yz} & & \\
	&\ddots & \\
	& & A_{yz}
	\end{pmatrix},
	\\
	&B_{H_j}=\begin{pmatrix}
	A_{H_j} & & \\
	& \ddots & \\
	& & A_{H_j}
	\end{pmatrix}
	\quad (q+1\le j \le r \text{ with } H_j \in \mathcal{A}_y). \label{Eq_Case2_yeqcoef}
	\end{align}
Let us consider the inverse Laplace transform.
Applying Proposition \ref{Prop_invLaplace} to the Pfaffian system \eqref{Eq_Pfaff_V}, we obtain the coefficient matrix of \eqref{Eq_LinvL_mat} in $y$-direction: 
	\begin{equation}\label{Eq_Ue_y2}
	G_{y}(\bm{x})=G_y+G_{yx}x+\sum_{z\in x''}G_{yz}z+\sum_{\substack{q+1\le j \le r \\ H_j \in \mathcal{A}_y}}\frac{G_{H_j}}{y-b_{H_j}}
	+\sum_{\substack{1\le k<l\le \hat{q} \\ \hat{H}_l \in \mathcal{C}_{\hat{H}_k,y}}}\frac{G_{\hat{H}_{kl}}}{y-c_{\hat{H}_k\hat{H}_l}},
	\end{equation}
where 
	\begin{align}
	&G_y=
	\begin{pmatrix}
	B_y & & & & &  \\
	& \ddots & & & &  \\
	& & B_y & & &  \\
	& & & B_y+(a_{\hat{H}_{\hat{p}+1}})_yB_x & &  \\
	& & & & \ddots & \\
	& & & & &  B_y+(a_{\hat{H}_{\hat{q}}})_yB_x
	\end{pmatrix},
	\\
	&G_{yx}=\begin{pmatrix}
	O_{pN} & & & & & \\
	& \ddots & & & & \\
	& & O_{pN} & & & \\
	& & & (a_{\hat{H}_{\hat{p}+1}})_y I_{pN}  & & \\
	& & & & \ddots & \\
	& & & & & (a_{\hat{H}_{q}})_y I_{pN}
	\end{pmatrix},
	\label{Eq_widetildeGyx2}
	\\
	&G_{yz}=
	\begin{pmatrix}
	B_{yz} & & \\
	& \ddots & \\
	& & B_{yz}
	\end{pmatrix},
	\qquad 
	G_{H_j}=
	\begin{pmatrix}
	G_{H_j} & & \\
	& \ddots  & \\
	& & G_{H_j}
	\end{pmatrix},
	\\
	&\begin{blockarray}{ccccccc}
	\begin{block}{c(ccccc)c}
		&   &  &  &  &  &   \\
	 &  &  &  &  &  &   \\
	&   & B_{\hat{H}_l} &  & -B_{\hat{H}_l} &  & (k \\
	G_{\hat{H}_{kl}}= &  &  &  &  &  &   \\
	&   & -B_{\hat{H}_k} &  & B_{\hat{H}_k} &  &(l   \\
	 &  &  &  &  &  &   \\
	   	 &  &  &  &  &  &   \\
	\end{block}\\[-20pt]
	& & \rotatebox{90}{$)$} & &\rotatebox{90}{$)$} & & \\[-3pt]
	&  & k & &l & &
	\end{blockarray}. \label{Eq_Ghatkl2}
	\end{align}
Here we set 
	\[
	\mathcal{C}_{\hat{H}_{k},y}:=\{\hat{H}\in\mathcal{B}_x\setminus\{ \hat{H}_k\} \mid (a_{\hat{H}}-a_{\hat{H}_k})_y \neq 0\}
	\]
by abuse of notation. 
We note that \eqref{Eq_widetildeGyx2} is the same as \eqref{Eq_X_xz} with $z=y$.
As in Case 1, let us rearrange the equation \eqref{Eq_Ue_y2}.
By abuse of notation, we set
	\[
	\Lambda_{j}:=\left\{(k,l)
	~\middle\vert\,
	\begin{array}{l}
	1\le k<l\le \hat{q},~
	\hat{H}_l \in \mathcal{C}_{\hat{H}_k,y} \\
	c_{\hat{H}_k\hat{H}_l}=b_{H_j} 
	\end{array}
	\right\}.
	\]
Then we have
	\begin{equation}\label{Eq_Ue_y2_2}
	G_{y}(\bm{x})=G_y+G_{yx}x+\sum_{z\in x''}G_{yz}z+\sum_{\substack{q+1\le j \le r \\ H_j \in \mathcal{A}_y}}\frac{ F _{H_j}}{y-b_{H_j}}
	+\sum_{\substack{1\le k<l\le \hat{q} \\ \hat{H}_l \in \mathcal{C}_{\hat{H}_k,y} \\ c_{\hat{H}_k\hat{H}_l}\neq b_{H_j} \text{ for all } q+1\le j\le r \\\text{ such that } H_j \in \mathcal{A}_y}}\frac{G_{\hat{H}_{kl}}}{y-c_{\hat{H}_k\hat{H}_l}},
	\end{equation}
where 
	\begin{equation}\label{Eq_Ftilde_j}
	 F _{H_j}=G_{H_j}+\sum_{(k,l)\in\Lambda_j}G_{\tilde{H}_{kl}}.
	\end{equation}
Moreover, we set
	\[\left.
	\Lambda':=\left\{(k,l)~\middle\vert ~
	\begin{array}{l}
	1\le k<l\le \hat{q},~
	\hat{H}_l \in \mathcal{C}_{\hat{H}_k,y} \\
	c_{\hat{H}_k\hat{H}_l} \neq b_{H_j} \text{ for all }q+1\le j\le r \text{ such that }H_j \in\mathcal{A}_y
	\end{array}
	\right\} \middle/ \approx \right. ,
	\]
where 
	\[
	(k,l) \approx (k',l') \quad \overset{\mathrm{def}}{\Leftrightarrow} \quad c_{\hat{H}_k\hat{H}_l}=c_{\hat{H}_k'\hat{H}_l'}.
	\]
For each $[(k,l)] \in \Lambda'$, we take a representative $(k,l)$ satisfying
	\begin{equation}
	\label{Eq_Case2_rep}
	(k,l) \approx (k,l')\quad \Rightarrow \quad l\le l'.
	\end{equation}
For such $(k,l)$, we set 
	\begin{equation}
	\label{Eq_Case2_FHkl}
	 F _{\hat{H}_{kl}}=\sum_{\substack{k<l'\le \hat{q} \\ \hat{H}_{l'}\in\mathcal{C}_{\hat{H}_k,y} \\ c_{\hat{H}_{k}\hat{H}_{l'}}=c_{\hat{H}_{k}\hat{H}_l}}}G_{\hat{H}_{kl'}}
	\end{equation}
and define 
	\[
	 F _{[(k,l)]}=\sum_{\substack{k=1\\ (k,l)\in[(k,l)] \text{ with }\eqref{Eq_Case2_rep}}}^{\hat{q}} F _{\hat{H}_{kl}}.
	\]
Then we can express
	\begin{align*}
	\sum_{\substack{1\le k<l\le \hat{q} \\ \hat{H}_l \in \mathcal{C}_{\hat{H}_k,y} \\ c_{\hat{H}_k\hat{H}_l}\neq b_{H_j} \text{ for all } q+1\le j\le r \\\text{ such that } H_j \in \mathcal{A}_y}}\frac{G_{\hat{H}_{kl}}}{y-c_{\hat{H}_k\hat{H}_l}}
	&=\sum_{[(k,l)]\in\Lambda'}\frac{ F _{[(k,l)]}}{y-c_{\hat{H}_k\hat{H}_l}}.
	\end{align*}
Then, the $y$-equation \eqref{Eq_Ue_y2_2} can be expressed as 
	\begin{equation}\label{Eq_Ue_y3}
	G_{y}(\bm{x})=G_y+G_{yx}x+\sum_{z\in x''}G_{yz}z+\sum_{\substack{q+1\le j \le r \\ H_j \in \mathcal{A}_y}}\frac{ F_{H_j}}{y-b_{H_j}}
	+\sum_{[(k,l)]\in\Lambda'}\frac{ F _{[(k,l)]}}{y-c_{\hat{H}_k\hat{H}_l}}.
	\end{equation}
Since $\varphi\circ G_{yx}
	=A_{yx}\circ\varphi$
	has been already shown in Proposition \ref{Prop_x_compati} (note that $A_{xy}=A_{yx}$), it suffices to show 
	\begin{align}
	&\varphi\circ G_y
	=A_y\circ\varphi,
	\label{Eq_compati_5}
	\\
	&\varphi\circ\left(\sum_{z\in x''}G_{yz} z\right)
	=\left(\sum_{z\in x''}A_{yz}z\right)\circ\varphi, \label{Eq_compati_6}
	\\
	&\varphi\circ F _{H_j}=A_{H_j}\circ \varphi \quad (q+1\le j \le r \text{ such that }H_j \in\mathcal{A}_y),\label{Eq_compati_7}
	\\
	&\varphi\circ F _{[(k,l)]}=0 \quad ([(k,l)]\in \Lambda').
	\label{Eq_compati_8}
	\end{align}
We shall show \eqref{Eq_compati_5}.
Let $\bm{v}={}^t(v_1,\ldots,v_{\hat{q}}) \in \mathcal{L}^{-x}(\mathcal{L}^x(\mathcal{V}))$ with $v_i={}^t(v_1^{i},\ldots,v_p^{i}) \in \mathcal{V}^p$
and 
	\[
	 \bm{w}=\begin{pmatrix}
	w_1 \\
	\vdots \\
	w_{\hat{p}}\\
	w_{\hat{p}+1}\\
	\vdots \\
	w_{\hat{q}}
	\end{pmatrix}
	:=G_{y}\bm{v}=
	\begin{pmatrix}
	B_yv_1 \\
	\vdots \\
	B_yv_{\hat{p}} \\
	\{B_y+(a_{\hat{H}_{\hat{p}+1}})_yB_{x}\}v_{\hat{p}+1}\\
	\vdots \\
	\{B_y+(a_{\hat{H}_{\hat{q}}})_yB_{x}\}v_{\hat{q}}
	\end{pmatrix}.
	\]
Then, we have
	\begin{align*}
	w_i&=B_yv_i
	=\begin{pmatrix}
	A_y+a_{H_1}A_{xy} & & \\
	&\ddots & \\
	& & A_y+a_{H_p}A_{xy}
	\end{pmatrix}
	\begin{pmatrix}
	v_1^{i} \\
	\vdots \\
	v_{p}^{i}
	\end{pmatrix}
	\end{align*}
	for $1\le i\le \hat{p}$ and
	\begin{align*}
	w_i&=\{B_y+(a_{\hat{H}_i})_yB_{x}\}v_i
	 \\
	 &
	=\begin{pmatrix}
	A_y+a_{H_1}\{A_{xy}-(a_{\hat{H}_i})_y\} & & &  \\
	&A_y+a_{H_2}\{A_{xy}-(a_{\hat{H}_i})_y\}  & &  \\
	& &\ddots &  \\
	& & & A_y+a_{H_p}\{A_{xy}-(a_{\hat{H}_i})_y\}    \\
	\end{pmatrix}
	\begin{pmatrix}
	v_1^{i} \\
	v_2^{i} \\
	\vdots \\
	v_{p}^{i}
	\end{pmatrix}
	\end{align*}
	for $\hat{p}+1\le i\le \hat{q}$.
Hence, we have 
	\begin{align*}
	\varphi(\bm{w})=\sum_{i=1}^{\hat{p}}E_{N_i}\sum_{j=1}^p A_{H_j}(A_y+a_{H_j}A_{xy})v_j^i
	+\sum_{i=\hat{p}+1}^{\hat{q}}E_{N_i}\sum_{j=1}^p A_{H_j}\{A_y+a_{H_j}(A_{xy}-(a_{\hat{H}_i})_y)\}v_j^i.
	\end{align*}
By \eqref{eq:cond13p} and $a_{H_j}\in\bbC$, it holds that $[A_{H_j},A_y+a_{H_j}A_{xy}]=O$ for $1\le j\le p$. 
Then we have
	\[
	\varphi(\bm{w})=\sum_{i=1}^{\hat{p}}E_{N_i}\sum_{j=1}^p (A_y+a_{H_j}A_{xy})A_{H_j}v_j^i
	+\sum_{i=\hat{p}+1}^{\hat{q}}E_{N_i}\sum_{j=1}^p \{A_y+a_{H_j}(A_{xy}-(a_{\hat{H}_i})_y)\}A_{H_j}v_j^i.
	\]
From
	\begin{equation}\label{eq:expressionAxy}
	A_{xy}=\begin{pmatrix}
	O_{N_1} & & & & &\\
	& \ddots & & & & \\
	& & O_{N_{\hat{p}}} & & &\\
	& & & (a_{\hat{H}_{\hat{p}+1}})_yI_{N_{\hat{p}+1}} & & \\
	& & & & \ddots & \\
	& & & & & (a_{\hat{H}_{\hat{q}}})_yI_{N_{\hat{q}}}
	\end{pmatrix},
	\end{equation}
we have $E_{N_i}A_{xy}=O$ for $1\le i\le\hat{p}$ and $E_{N_i}\{A_{xy}-(a_{\hat{H}_i})_y\}=O$ for $\hat{p}+1\le i\le \hat{q}$.
Hence, we obtain
	\[
	\varphi(\bm{w})=\sum_{i=1}^{\hat{q}}E_{N_i}\sum_{j=1}^p A_yA_{H_j}v_j^i.
	\]
Here we prepare a lemma. 
\begin{lemma}\label{lem:comAxAy}
Under the assumption \eqref{Eq_ass_diag} (hence $A_{x}$ and $A_{xy}$ are diagonalizable), it holds that
\[
[A_{y},A_{x}]=O, \quad [A_{y},A_{xz}]=O~~~~~\,(z\in x'').
\]
\end{lemma}
\begin{proof}
Firstly, we shall show $[A_{y},A_{x}]=O$. 
From \eqref{eq:cond3p}, we have
\begin{equation}\label{eq:commAxAy}
[A_y,A_x]
=\left[A_{xy},\ \sum_{H\in\mathcal A_x^{c}\cap\mathcal A_y}A_H-\sum_{H\in\mathcal A_x}A_H\right].
\end{equation}
Define the linear map $\mathrm{ad}_{A_{xy}}:\mathrm{Mat}(N,\mathbb C)\to\mathrm{Mat}(N,\mathbb C)$ by
$\mathrm{ad}_{A_{xy}}(A)=[A_{xy},A]$.
Then \eqref{eq:commAxAy} implies $[A_{y},A_{x}]\in \mathrm{Im}(\mathrm{ad}_{A_{xy}})$.
On the other hand, by the Jacobi identity,
\begin{equation}\label{eq:commAxAy2}
[A_{xy},[A_{y},A_{x}]]
=[[A_{x},A_{xy}],A_y]+[A_x,[A_{y},A_{xy}]].
\end{equation}
By \eqref{eq:cond1p}, we see that $[A_{x},A_{xy}]=O$. 
Moreover, by applying \eqref{eq:cond1p} also to the interchanged pair $(y,x)$ and using $A_{xy}=A_{yx}$, we have $[A_{y},A_{xy}]=O$. 
Then, the right-hand side of \eqref{eq:commAxAy2} vanishes and hence
$[A_{y},A_{x}]\in \Ker(\mathrm{ad}_{A_{xy}})$.
Therefore,
\[
[A_{y},A_{x}]\in \Ker(\mathrm{ad}_{A_{xy}})\cap \mathrm{Im}(\mathrm{ad}_{A_{xy}}).
\]
We claim that $\Ker(\mathrm{ad}_{A_{xy}})\cap \mathrm{Im}(\mathrm{ad}_{A_{xy}})=\{O\}$ under the assumption.
Let $E_{ij}=E_{ij}^{(N)}$ denote the standard matrix unit. 
For clearly, we denote \eqref{eq:expressionAxy} as $A_{xy}=\mathrm{diag}[\lambda_1,\ldots,\lambda_N]$ specifically for this proof.
Then we have
\[
\mathrm{ad}_{A_{xy}}(E_{ij})
=(\lambda_{i}-\lambda_{j})E_{ij}.
\]
Take $A\in \Ker(\mathrm{ad}_{A_{xy}})\cap\mathrm{Im}(\mathrm{ad}_{A_{xy}})$ and write
\[
A=\sum_{1\le i,j\le N}a_{ij}E_{ij}\quad (a_{ij}\in\mathbb{C}).
\]
From $A\in \Ker(\mathrm{ad}_{A_{xy}})$, we have
\[
\mathrm{ad}_{A_{xy}}(A)=\sum_{1\le i,j\le N}(\lambda_i-\lambda_j)a_{ij}E_{ij}=O
\]
and hence $(\lambda_{i}-\lambda_{j})a_{ij}=0$. 
Therefore we have $a_{ij}=0$ for $(i,j)$ such that $\lambda_i\neq\lambda_j$. 
On the other hand, since $A\in\mathrm{Im}(\mathrm{ad}_{A_{xy}})$, there exists $M=\sum_{1\le i,j\le N}m_{ij}E_{ij}\in\mathrm{Mat}(N,\mathbb{C})$ such that $\mathrm{ad}_{A_{xy}}(M)=A$. 
This implies
\[
\sum_{1\le i,j\le N}(\lambda_i-\lambda_j)m_{ij}E_{ij}=\sum_{1\le i,j\le N}a_{ij}E_{ij}
\] 
and hence $(\lambda_{i}-\lambda_{j})m_{ij}=a_{ij}$. 
Therefore we have $a_{ij}=0$ for $(i,j)$ such that $\lambda_i=\lambda_j$. 
Thus $a_{ij}=0$ for all $(i,j)$, i.e., $A=O$. 
This proves the claim and follows that $[A_{y},A_{x}]=O$. 

Next we shall show $[A_{y},A_{xz}]=O$. 
From \eqref{eq:cond4p}, we have
\[
[A_{y},A_{xz}]=[A_{x},A_{yz}]
\]
for any $z\in x''$. 
This means that $[A_{y},A_{xz}]\in\mathrm{Im}(\mathrm{ad}_{A_{x}})$. 
On the other hand, by the Jacobi identity,
\begin{equation}\label{eq:commAxAy3}
[A_{x},[A_{y},A_{xz}]]=[A_{xz},[A_{y},A_{x}]]+[A_{y},[A_{x},A_{xz}]].
\end{equation}
Here we recall that $[A_{y},A_{x}]=O$, which we proved above. 
Moreover, applying \eqref{eq:cond1p} to the pair $(x,z)$, we have $[A_{x},A_{xz}]=O$. 
Therefore, the right-hand side of \eqref{eq:commAxAy3} vanishes. 
This means that $[A_{y},A_{xz}]\in \Ker(\mathrm{ad}_{A_{x}})$. 
Therefore, $[A_{y},A_{xz}]\in\Ker(\mathrm{ad}_{A_{x}})\cap\mathrm{Im}(\mathrm{ad}_{A_{x}})$.
Since $A_{x}$ is diagonal, the same argument as above shows that $\Ker(\mathrm{ad}_{A_{x}})\cap\mathrm{Im}(\mathrm{ad}_{A_{x}})=\{O\}$.
\end{proof}
Lemma \ref{lem:comAxAy} and \eqref{Eq_A_yA_2} imply $[A_y,E_{N_i}]=O$ for $1\le i \le \hat{q}$. Hence we have
	\[
	(\varphi\circ G_{y})(\bm{v})=\varphi(\bm{w})=A_y\sum_{i=1}^{\hat{q}}E_{N_i}\sum_{j=1}^p A_{H_j}v_j^i=(A_y\circ \varphi)(\bm{v}),
	\]
as desired. 

Next, we shall show \eqref{Eq_compati_6}.
For $z \neq x,y$, we have
	\[
	(\varphi\circ G_{yz})(\bm{v})=\sum_{i=1}^{\hat{q}}\sum_{j=1}^p E_{N_i}A_{H_j}A_{yz}v_j^i.
	\]
Here, from the completely integrable conditions \eqref{eq:cond10p} and \eqref{eq:cond14p} for \eqref{Eq_Pfaff_V}, we have $[B_{yz},B_{\hat{H}_i}]=O$ ($1\le i\le \hat{q})$. 
This implies $[E_{N_i}A_{H_j},A_{yz}]=O$ for $1\le i \le \hat{q}$ and $1\le j \le p$. 
Therefore we have
	\[
	(\varphi\circ G_{yz})(\bm{v})=A_{yz}\sum_{i=1}^{\hat{q}}\sum_{j=1}^p E_{N_i}A_{H_j}v_j^i
	=(A_{yz}\circ\varphi)(\bm{v}),
	\]
as desired.

Next, we shall show \eqref{Eq_compati_7}. 
We write $ F _{H_j}\bm{v}={}^t(w_1,\ldots,w_{\hat{q}})$ and $w_i={}^t(w_1^{i},\ldots,w_p^{i})\in\mathcal{V}^p$. 
From \eqref{Eq_Ftilde_j}, we have 
	\[
	w_i=B_{H_j}v_i+\sum_{(i,l)\in\Lambda_j}B_{\hat{H}_l}(v_i-v_l)+\sum_{(k,i)\in\Lambda_j}B_{\hat{H}_k}(-v_k+v_i)
	\]
for $1\le i\le \hat{q}$ and hence
	\[
	w_m^i=A_{H_j}v_m^i+\sum_{(i,l)\in\Lambda_j}\sum_{n=1}^pE_{N_l}A_{H_n}(v_n^l-v_n^i)+\sum_{(k,i)\in\Lambda_j}\sum_{n=1}^p E_{N_k}A_{H_n}(v_n^k-v_n^i)
	\]
for $1\le m\le p$.
Therefore we obtain 
	\begin{equation}
	\label{Eq_varphi_widehatF}
	\begin{aligned}
	\varphi( F _{H_j}\bm{v})&=\sum_{i=1}^{\hat{q}}E_{N_i}\sum_{m=1}^p A_{H_m}w_m^i 
	\\
	&=\sum_{i=1}^{\hat{q}}\sum_{m=1}^p E_{N_i}A_{H_m}
	\left\{A_{H_j}v_m^i+\sum_{(i,l)\in\Lambda_j}\sum_{n=1}^pE_{N_l}A_{H_n}(v_n^l-v_n^i)\right.
	\\
	&\qquad\qquad 
	\left.+\sum_{(k,i)\in\Lambda_j}\sum_{n=1}^p E_{N_k}A_{H_n}(v_n^k-v_n^i)\right\}.
	\end{aligned}
	\end{equation}
To proceed with the calculation, we give a lemma which comes from the completely integrable condition for \eqref{Eq_Pfaff_V}.
\begin{lemma}
	\begin{equation}
	\label{Eq_Case2_lem1}
	[A_{H_j},E_{N_i}A_{H_m}]+\sum_{\substack{\hat{p}+1 \le l\le \hat{q}, 
	\\ c_{\hat{H}_i\hat{H}_l}=b_{H_j}}}\sum_{n=1}^p \left(E_{N_i}A_{H_n}E_{N_l}-E_{N_l}A_{H_n}E_{N_i}\right)A_{H_m}=O
	\end{equation}
for $1 \le i \le \hat{p}$, $q+1\le j \le r$ with $H_j\in\calA_{y}$, and $1\le m \le p$.
	\begin{equation}
	\label{Eq_Case2_lem2}
	[A_{H_j},E_{N_i}A_{H_m}]+\sum_{\substack{ 
	 \hat{H}_l \in \mathcal{C}_{\hat{H}_i,y}\\ c_{\hat{H}_i\hat{H}_l}=b_{H_j}}}\sum_{n=1}^p \left(E_{N_i}A_{H_n}E_{N_l}-E_{N_l}A_{H_n}E_{N_i}\right)A_{H_m}=O
	\end{equation}
for $\hat{p}+1 \le i \le \hat{q}$, $q+1\le j \le r$ with $H_j\in\calA_{y}$, and $1\le m\le p$.
\end{lemma}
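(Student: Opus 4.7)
The plan is to derive both identities as the $(r,m)$-entries of block-matrix commutator relations among the coefficient matrices of the Pfaffian system \eqref{Eq_Pfaff_V}, relations forced by the complete integrability of \eqref{Eq_Pfaff_V} (which holds because \eqref{Eq_Pfaff} is integrable). Concretely, I would apply Proposition \ref{Prop_Okubo}'s ancestor, namely Proposition 2.1, a \emph{second} time---now to the Laplace-transformed system \eqref{Eq_Pfaff_V} rather than to the original \eqref{Eq_Pfaff}---and then extract a single block of the resulting identity.

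The first step is to identify the correct clause of Proposition 2.1 in each case. For $1 \le i \le \hat{p}$ we have $a_{\hat{H}_i}\in\mathbb{C}$, so $\hat{H}_i \in \mathcal{B}_x\cap \mathcal{B}_y^c$; combined with $H_j \in \mathcal{B}_x^c \cap \mathcal{B}_y$, condition \eqref{Eq_int_cond3} applied to \eqref{Eq_Pfaff_V} yields
\[
\left[\widetilde{B}_{\hat{H}_i},\ \widetilde{B}_{H_j} + \sum_{\substack{\hat{H}_l \in \mathcal{B}_x \cap \mathcal{B}_y \\ c_{\hat{H}_i\hat{H}_l} = b_{H_j}}} \widetilde{B}_{\hat{H}_l}\right] = O.
\]
Since $(a_{\hat{H}_i})_y=0$ forces $(a_{\hat{H}_i}-a_{\hat{H}_l})_y=-(a_{\hat{H}_l})_y$, one checks that $\mathcal{B}_x\cap\mathcal{B}_y$ coincides with the index set $\{\hat{p}+1\le l\le\hat{q}:\,c_{\hat{H}_i\hat{H}_l}=b_{H_j}\}$ appearing in \eqref{Eq_Case2_lem1}. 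For $\hat{p}+1 \le i \le \hat{q}$, membership of $\hat{H}_i$ in $\mathcal{B}_y$ depends on whether $(a_{\hat{H}_i})_y \neq 0$; in the sub-case $\hat{H}_i \in \mathcal{B}_x \cap \mathcal{B}_y$ one invokes \eqref{Eq_int_cond1} instead, whose sum is already indexed by $\mathcal{C}_{\hat{H}_i,y}$, while the sub-case $\hat{H}_i \in \mathcal{B}_x \cap \mathcal{B}_y^c$ is handled exactly as above.

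The second step is a direct block computation. Using \eqref{Eq_Case2_xeqcoef} for $\widetilde{B}_{\hat{H}_i}$ together with the block-diagonal shapes of $\widetilde{B}_{H_j}$ and $\widetilde{B}_{\hat{H}_l}$, every block row of both $\widetilde{B}_{\hat{H}_i}\widetilde{B}_{H_j}$ and $\widetilde{B}_{H_j}\widetilde{B}_{\hat{H}_i}$ is the same, so the $(r,s)$-block of each commutator is independent of $r$ and equals
\[
[\widetilde{B}_{\hat{H}_i},\widetilde{B}_{H_j}]_{(r,s)} = [A_{H_j},\, E_{N_i}A_{H_s}],\qquad
[\widetilde{B}_{\hat{H}_i},\widetilde{B}_{\hat{H}_l}]_{(r,s)} = \sum_{n=1}^p \bigl(E_{N_i}A_{H_n}E_{N_l} - E_{N_l}A_{H_n}E_{N_i}\bigr)A_{H_s}.
\]
Specialising $s=m$ in the commutator identity of the first step recovers \eqref{Eq_Case2_lem1} and \eqref{Eq_Case2_lem2}.

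The main obstacle---which is really only bookkeeping---is checking that the summation ranges produced by \eqref{Eq_int_cond1} and \eqref{Eq_int_cond3} collapse onto those appearing in the lemma, and verifying that the overall minus signs in $\widetilde{B}_{\hat{H}_i}$ cancel inside every commutator (they do, since each commutator involves an even number of such factors). No new structural input is required beyond the strategy already employed for the analogous identity \eqref{Eq_EAA1} in Case 1.
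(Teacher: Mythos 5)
Your proposal is correct and follows essentially the same route as the paper: both derive the identities by writing out the complete integrability conditions of the Laplace-transformed system \eqref{Eq_Pfaff_V} via the integrability criterion of Section \ref{Sec_Setting} and then reading off a single block of the resulting commutator relations, using the block shapes in \eqref{Eq_Case2_xeqcoef}. The only discrepancy is the attribution of the two clauses: you assign the case $1\le i\le\hat{p}$ to \eqref{Eq_int_cond3} and the case $\hat{p}+1\le i\le\hat{q}$ to \eqref{Eq_int_cond1}, while the paper cites them the other way around; since for $1\le i\le\hat{p}$ one has $\hat{H}_i\in\mathcal{B}_x\cap\mathcal{B}_y^c$, your assignment is the consistent one and the paper's citation appears to be a transposition.
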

	\begin{proof}
	Using \eqref{Eq_Pfaff_V_x2}, \eqref{Eq_Pfaff_V_y2}, we compute the completely integrable condition for \eqref{Eq_Pfaff_V}. 
	Then, from the condition \eqref{eq:cond15p} we have
	\[
	\left[B_{\hat{H}_i},B_{H_j}+\sum_{\substack{\hat{p}+1\le l \le \hat{q} \\ c_{\hat{H}_i\hat{H}_l}=b_{H_j}}}B_{\hat{H}_l}
	\right]=O
	\]
	for $1 \le i \le \hat{p}$, $q+1\le j \le r$ with $H_j\in\calA_{y}$.
	Substituting \eqref{Eq_Case2_xeqcoef} and \eqref{Eq_Case2_yeqcoef} into this identity and then extracting the $m$-th block column ($1\le m \le p$), we obtain \eqref{Eq_Case2_lem1}. 
	
	Also, from the condition \eqref{eq:cond11p}, we have
	\[
	\left[
	B_{\hat{H}_i},B_{H_j}+\sum_{\substack{
	 \hat{H}_l \in \mathcal{C}_{\hat{H}_i,y}\\ c_{\hat{H}_i\hat{H}_l}=b_{H_j}}}B_{\hat{H}_l}
	 \right]=O
	\]
	for $\hat{p}+1 \le i \le \hat{q}$, $q+1\le j \le r$ with $H_j\in\calA_{y}$. 
	Substituting \eqref{Eq_Case2_xeqcoef} and \eqref{Eq_Case2_yeqcoef} into this identity and then extracting the $m$-th block column ($1\le m \le p$), we obtain \eqref{Eq_Case2_lem2}. 
	\end{proof}
To use this lemma, we divide the sum \eqref{Eq_varphi_widehatF} on $i$ into $1\le i\le \hat{p}$ and $\hat{p}+1\le i\le \hat{q}$.
Set 
	\begin{align*}
	\bm{A}&=\sum_{i=1}^{\hat{p}}\sum_{m=1}^p E_{N_i}A_{H_m}
	\left\{A_{H_j}v_m^i+\sum_{(i,l)\in\Lambda_j}\sum_{n=1}^pE_{N_l}A_{H_n}(v_n^l-v_n^i)+\sum_{(k,i)\in\Lambda_j}\sum_{n=1}^p E_{N_k}A_{H_n}(v_n^k-v_n^i)\right\},
	\\
	\bm{B}&=\sum_{i=\hat{p}+1}^{\hat{q}}\sum_{m=1}^p E_{N_i}A_{H_m}
	\left\{A_{H_j}v_m^i+\sum_{(i,l)\in\Lambda_j}\sum_{n=1}^pE_{N_l}A_{H_n}(v_n^l-v_n^i)+\sum_{(k,i)\in\Lambda_j}\sum_{n=1}^p E_{N_k}A_{H_n}(v_n^k-v_n^i)\right\}.
	\end{align*}
We focus on $\bm{A}$. 
For $1\le i\le \hat{p}$, it holds that $\hat{H}_l \in\mathcal{C}_{\hat{H}_i,y}$ if and only if $\hat{p}+1\le l\le \hat{q}$.
Therefore
	\begin{align*}
	&\{(i,l)\in\Lambda_j\mid 1\le i \le \hat{p}\}
	=\{(i,l)\mid 1\le i\le \hat{p},\,\hat{p}+1\le l\le \hat{q},\,c_{\hat{H}_i\hat{H}_l}=b_{H_j}\},
	\\
	&\{(k,i)\in\Lambda_j \mid 1\le i \le \hat{p}\}=\emptyset.
	\end{align*}
Then, we have
	\[
	\bm{A}=\sum_{i=1}^{\hat{p}}\sum_{m=1}^p E_{N_i}A_{H_m}\left\{A_{H_j}v_m^i+\sum_{\substack{\hat{p}+1\le l \le \hat{q} \\ c_{\hat{H}_i\hat{H}_l}=b_{H_j}}}\sum_{n=1}^pE_{N_l}A_{H_n}(v_n^l-v_n^i)\right\}.
	\]
Applying \eqref{Eq_Case2_lem1} to $E_{N_i}A_{H_m}A_{H_j}$, we have
	\begin{align*}
	\bm{A}&=\sum_{i=1}^{\hat{p}}\sum_{m=1}^{p}\left\{A_{H_j}E_{N_i}A_{H_m}+\sum_{\substack{\hat{p}+1\le l \le \hat{q} \\ c_{\hat{H}_i\hat{H}_l}=b_{H_j}}}\sum_{n=1}^{p}(E_{N_i}A_{H_n}E_{N_l}-E_{N_l}A_{H_n}E_{N_i})A_{H_m}\right\}v_m^i \\
	&\quad +\sum_{i=1}^{\hat{p}}\sum_{m=1}^{p}E_{N_i}A_{H_m}\sum_{\substack{\hat{p}+1\le l \le \hat{q} \\ c_{\hat{H}_i\hat{H}_l}=b_{H_j}}}\sum_{n=1}^{p}E_{N_l}A_{H_n}(v_n^l-v_n^i).
	\end{align*}
	Then we obtain
	\begin{equation}
	\label{Eq_Case2_A}
	\bm{A}= A_{H_j}\sum_{i=1}^{\hat{p}} E_{N_i}\sum_{m=1}^pE_{N_i}A_{H_m}v_m^i+ R_{\bm{A}},
	\end{equation}
where 
	\begin{align*}
	R_{\bm{A}}&=\sum_{i=1}^{\hat{p}}\sum_{m=1}^p\sum_{\substack{\hat{p}+1\le l \le \hat{q} \\ c_{\hat{H}_i\hat{H}_l}=b_{H_j}}}\sum_{n=1}^p(E_{N_i}A_{H_n}E_{N_l}-E_{N_l}A_{H_n}E_{N_i})A_{H_m}v_m^i
	\\
	&\quad +\sum_{i=1}^{\hat{p}}\sum_{m=1}^p\sum_{\substack{\hat{p}+1\le l \le \hat{q} \\ c_{\hat{H}_i\hat{H}_l}=b_{H_j}}}\sum_{n=1}^pE_{N_i}A_{H_m}E_{N_l}A_{H_n}(v_n^l-v_n^i).
	\end{align*}
Here we can exchange the running index $m \leftrightarrow n$ in the second term, which gives 
	\begin{equation}
	\label{Eq_Case2_RA}
	\begin{aligned}
		R_{\bm{A}}&=\sum_{i=1}^{\hat{p}}\sum_{m=1}^p\sum_{\substack{\hat{p}+1\le l \le \hat{q} \\ c_{\hat{H}_i\hat{H}_l}=b_{H_j}}}\sum_{n=1}^p(E_{N_i}A_{H_n}E_{N_l}-E_{N_l}A_{H_n}E_{N_i})A_{H_m}v_m^i
	\\
	&\quad +\sum_{i=1}^{\hat{p}}\sum_{n=1}^p\sum_{\substack{\hat{p}+1\le l \le \hat{q} \\ c_{\hat{H}_i\hat{H}_l}=b_{H_j}}}\sum_{m=1}^pE_{N_i}A_{H_n}E_{N_l}A_{H_m}(v_m^l-v_m^i)
	\\
	&=\sum_{i=1}^{\hat{p}}\sum_{m=1}^p\sum_{\substack{\hat{p}+1\le l \le \hat{q} \\ c_{\hat{H}_i\hat{H}_l}=b_{H_j}}}\sum_{n=1}^p(E_{N_i}A_{H_n}E_{N_l}A_{H_m}v_m^l-E_{N_l}A_{H_n}E_{N_i}A_{H_m}v_m^i).
	\end{aligned}
	\end{equation}
A similar consideration and \eqref{Eq_Case2_lem2} give
	\begin{equation}
	\label{Eq_Case2_B}
	\bm{B}=A_{H_j}\sum_{i=\hat{p}+1}^{\hat{q}}E_{N_i}\sum_{m=1}^{p}A_{H_m}v_{m}^{i}+R_{\bm{B}},
	\end{equation}
where 
	\[
	R_{\bm{B}}=\sum_{i=1}^{\hat{q}}\sum_{m=1}^p \sum_{\substack{\hat{p}+1\le l \le \hat{q} \\ c_{\hat{H}_i\hat{H}_l}=b_{H_j}}}\sum_{n=1}^p(E_{N_l}A_{H_n}E_{N_i}A_{H_m}v_m^i-E_{N_i}A_{H_n}E_{N_l}A_{H_m}v_m^l).
	\]
From \eqref{Eq_Case2_RA}, we obtain
	\begin{align*}
	R_{\bm{A}}+R_{\bm{B}}&=\sum_{i=\hat{p}+1}^{\hat{q}}\sum_{m=1}^p \sum_{\substack{\hat{p}+1\le l \le \hat{q} \\ c_{\hat{H}_i\hat{H}_l}=b_{H_j}}}\sum_{n=1}^p(E_{N_l}A_{H_n}E_{N_i}A_{H_m}v_m^i-E_{N_i}A_{H_n}E_{N_l}A_{H_m}v_m^l)
	\\
	&=\sum_{i=\hat{p}+1}^{\hat{q}}\sum_{m=1}^p \sum_{\substack{\hat{p}+1\le l \le \hat{q} \\ c_{\hat{H}_i\hat{H}_l}=b_{H_j}}}\sum_{n=1}^pE_{N_l}A_{H_n}E_{N_i}A_{H_m}v_m^i
	\\
	&\quad -\sum_{i=\hat{p}+1}^{\hat{q}}\sum_{m=1}^p \sum_{\substack{\hat{p}+1\le l \le \hat{q} \\ c_{\hat{H}_i\hat{H}_l}=b_{H_j}}}\sum_{n=1}^pE_{N_i}A_{H_n}E_{N_l}A_{H_m}v_m^l.
	\end{align*}
By exchanging the running index $i \leftrightarrow l$ in the second term, we have $R_{\bm{A}}+R_{\bm{B}}=O$. 
Then, from \eqref{Eq_Case2_A} and \eqref{Eq_Case2_B} we obtain
	\[
	(\varphi\circ F _{H_j})(\bm{v})=\bm{A}+\bm{B}=A_{H_j}\sum_{i=1}^{\hat{q}}E_{N_i}\sum_{m=1}^{p}A_{H_m}v_{m}^{i}=(A_{H_j}\circ \varphi)(\bm{v}),
	\]
which shows \eqref{Eq_compati_7}.

Lastly, we shall show \eqref{Eq_compati_8}. 
Take $[(k,l)]\in\Lambda'$ and a representative $(k,l)$ satisfying \eqref{Eq_Case2_rep}. 
Then, from \eqref{Eq_Case2_FHkl}, we have
	\begin{align*}
	 F _{\hat{H}_{kl}}\bm{v}&=\sum_{\substack{k<l'\le \hat{q} \\ \hat{H}_{l'}\in\mathcal{C}_{\hat{H}_k,y} \\ c_{\hat{H}_{k}\hat{H}_{l'}}=c_{\hat{H}_{k}\hat{H}_l}}}G_{\hat{H}_{kl'}}\bm{v} 
	%
	%
=\sum_{\substack{k<l'\le \hat{q} \\ \hat{H}_{l'}\in\mathcal{C}_{\hat{H}_k,y} \\ c_{\hat{H}_{k}\hat{H}_{l'}}=c_{\hat{H}_{k}\hat{H}_l}}}
	\begin{blockarray}{cc}
\begin{block}{(c)c}
	0_{pN} &\\
	\vdots & \\
	B_{\hat{H}_{l'}}(v_{k}-v_{l'}) & (k\\
	\vdots &\\
	B_{\hat{H}_{k}}(-v_{k}+v_{l'}) & (l'\\
	\vdots &\\
	0_{pN} & \\
\end{block}
\end{blockarray}=:
	\begin{pmatrix}
	w_1\\
	\vdots\\
	w_k \\
	\vdots \\
	w_{l'}\\
	\vdots \\
	w_{\hat{q}}
	\end{pmatrix}.
	\end{align*}
Namely,
\[
w_k=\sum_{\substack{k<l'\le \hat{q} \\ \hat{H}_{l'}\in\mathcal{C}_{\hat{H}_k,y} \\ c_{\hat{H}_{k}\hat{H}_{l'}}=c_{\hat{H}_{k}\hat{H}_l}}}B_{\hat{H}_{l'}}(v_k-v_{l'})
\]
and 
\[
w_{l'}=\begin{cases}
B_{\hat{H}_k}(v_{l'}-v_k) & k<l'\le \hat{q} \text{ with }\hat{H}_{l'}\in\mathcal{C}_{\hat{H}_k,y} \text{ and }c_{\hat{H}_{k}\hat{H}_{l'}}=c_{\hat{H}_{k}\hat{H}_l},
\\
0_{pN} & \text{otherwise}.
\end{cases}
\]
Hence, by setting $w_i={}^t(w_1^i,\ldots,w_p^i)$, we have
	\begin{align*}
	(\varphi\circ F_{\hat{H}_{kl}})(\bm{v})&=\sum_{i=1}^{\hat{q}}E_{N_i}\sum_{j=1}^{p}A_{H_j}w_{j}^{i} \\
	&=-E_{N_k}\sum_{j=1}^{p}A_{H_j}\sum_{\substack{k<l'\le \hat{q} \\ \hat{H}_{l'}\in\mathcal{C}_{\hat{H}_k,y} \\ c_{\hat{H}_{k}\hat{H}_{l'}}=c_{\hat{H}_{k}\hat{H}_l}}}\sum_{m=1}^{p}E_{N_{l'}}A_{H_m}(v_m^k-v_m^{l'})
	\\
	&\quad -\sum_{\substack{k<l'\le \hat{q} \\ \hat{H}_{l'}\in\mathcal{C}_{\hat{H}_k,y} \\ c_{\hat{H}_{k}\hat{H}_{l'}}=c_{\hat{H}_{k}\hat{H}_l}}}E_{N_{l'}}\sum_{j=1}A_{H_j}\sum_{m=1}^{p}E_{N_k}A_{H_m}(v_m^{l'}-v_m^k)
	\\
	&=\sum_{m=1}^{p}\sum_{\substack{k<l'\le \hat{q} \\ \hat{H}_{l'}\in\mathcal{C}_{\hat{H}_k,y} \\ c_{\hat{H}_{k}\hat{H}_{l'}}=c_{\hat{H}_{k}\hat{H}_l}}}\sum_{j=1}^{p}(E_{N_k}A_{H_j}E_{N_{l'}}-E_{N_{l'}}A_{H_j}E_{N_k})A_{H_m}(v_m^{l'}-v_m^k).
	\end{align*}
Then, we have
	\[
	\varphi(F_{[(k,l)]}\bm{v}):=\bm{C}-\bm{D},
	\]
where 
	\begin{align*}
	\bm{C}&=\sum_{\substack{k=1\\ (k,l)\in[(k,l)] \text{ with }\eqref{Eq_Case2_rep}}}^{\hat{q}}\sum_{m=1}^{p}\sum_{\substack{k<l'\le \hat{q} \\ \hat{H}_{l'}\in\mathcal{C}_{\hat{H}_k,y} \\ c_{\hat{H}_{k}\hat{H}_{l'}}=c_{\hat{H}_{k}\hat{H}_l}}}\sum_{j=1}^{p}(E_{N_k}A_{H_j}E_{N_{l'}}-E_{N_{l'}}A_{H_j}E_{N_k})A_{H_m}v_m^{l'}, \\
	\bm{D}&=\sum_{\substack{k=1\\ (k,l)\in[(k,l)] \text{ with }\eqref{Eq_Case2_rep}}}^{\hat{q}}\sum_{m=1}^{p}\sum_{\substack{k<l'\le \hat{q} \\ \hat{H}_{l'}\in\mathcal{C}_{\hat{H}_k,y} \\ c_{\hat{H}_{k}\hat{H}_{l'}}=c_{\hat{H}_{k}\hat{H}_l}}}\sum_{j=1}^{p}(E_{N_k}A_{H_j}E_{N_{l'}}-E_{N_{l'}}A_{H_j}E_{N_k})A_{H_m}v_m^k.
	\end{align*}
To show $\bm{C}=\bm{D}$, we prepare the following lemma.
\begin{lemma}
	\begin{equation}
	\label{Eq_Case2_lem3}
	\sum_{\substack{ \hat{H}_{l'}\in\mathcal{C}_{\hat{H}_k,y} \\ c_{\hat{H}_k\hat{H}_{l}}=c_{\hat{H}_k\hat{H}_{l'}}}}\sum_{j=1}^p(E_{N_k}A_{H_j}E_{N_{l'}}-E_{N_{l'}}A_{H_j}E_{N_k})A_{H_m}=O
		\end{equation}
for $\hat{p}+1\le k\le \hat{q}$, $1\le l\le \hat{q}$ such that $c_{\hat{H}_k\hat{H}_l}\neq b_{H_j}$ for any $q+1\le j\le r$, and $1\le m\le p$. 
	\begin{equation}
	\label{Eq_Case2_lem4}
	\sum_{\substack{\hat{p}+1\le {l'} \le \hat{q} \\ c_{\hat{H}_k\hat{H}_l}=c_{\hat{H}_k\hat{H}_{l'}}}}\sum_{j=1}^p(E_{N_k}A_{H_j}E_{N_{l'}}-E_{N_{l'}}A_{H_j}E_{N_k})A_{H_m}=O
	\end{equation}
for $1\le k\le\hat{p}$, $\hat{p}+1\le l\le\hat{q}$ such that $c_{\hat{H}_k\hat{H}_l}\neq b_{H_j}$ for any $q+1\le j\le r$, and $1\le m\le p$. 
\end{lemma}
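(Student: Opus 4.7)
The plan is to obtain both identities \eqref{Eq_Case2_lem3} and \eqref{Eq_Case2_lem4} by reading off the $(r,m)$-block of the block-matrix commutator relations that come from the completely integrable condition for the Laplace transformed system \eqref{Eq_Pfaff_V}. This is the same strategy already used in the proof of the previous lemma (for \eqref{Eq_Case2_lem1} and \eqref{Eq_Case2_lem2}), but now drawing on the integrability conditions \eqref{Eq_int_cond2} and \eqref{Eq_int_cond4} rather than \eqref{Eq_int_cond1} and \eqref{Eq_int_cond3}.

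First I would record that \eqref{Eq_Pfaff_V} is completely integrable (as remarked in Section \ref{Subsec_FL}), and apply Proposition 2.1 to the pair $(x,y)$ with arrangement $\mathcal{B}$ and coefficient matrices $\{\widetilde{B}_{\hat{H}_i}\}_{i=1}^{\hat{q}}$. For \eqref{Eq_Case2_lem3} one has $\hat{H}_k\in\mathcal{B}_x\cap\mathcal{B}_y$ (this is exactly the range $\hat{p}+1\le k\le\hat{q}$, since those are the $\hat{H}_k$ whose defining polynomial depends on $y$), together with the non-coincidence $c_{\hat{H}_k\hat{H}_l}\neq b_{H_j}$ for all hyperplanes of $\mathcal{A}_y$ not depending on $x$; this matches the hypothesis of \eqref{Eq_int_cond2} and produces
\[
\bigl[\widetilde{B}_{\hat{H}_k},\ \sum_{\substack{\hat{H}_{l'}\in\mathcal{C}_{\hat{H}_k,y}\\ c_{\hat{H}_k\hat{H}_{l'}}=c_{\hat{H}_k\hat{H}_l}}}\widetilde{B}_{\hat{H}_{l'}}\bigr]=O.
\]
For \eqref{Eq_Case2_lem4} one has instead $\hat{H}_k\in\mathcal{B}_x\cap\mathcal{B}_y^c$ (corresponding to $1\le k\le\hat{p}$) and $\hat{H}_l\in\mathcal{B}_x\cap\mathcal{B}_y$ (corresponding to $\hat{p}+1\le l\le\hat{q}$), which is the setting of \eqref{Eq_int_cond4}, and yields the analogous commutator relation where the sum now ranges over $\hat{H}_{l'}\in\mathcal{B}_x\cap\mathcal{B}_y$, i.e.\ over $\hat{p}+1\le l'\le\hat{q}$ with $c_{\hat{H}_k\hat{H}_{l'}}=c_{\hat{H}_k\hat{H}_l}$.

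Next, expanding these commutators with the explicit block form \eqref{Eq_Case2_xeqcoef} of $\widetilde{B}_{\hat{H}_i}$: each block row equals $-(E_{N_i}A_{H_1},\ldots,E_{N_i}A_{H_p})$, so a direct computation shows that the $(r,m)$-block of $\widetilde{B}_{\hat{H}_k}\widetilde{B}_{\hat{H}_{l'}}$ is $\sum_{j=1}^{p}E_{N_k}A_{H_j}E_{N_{l'}}A_{H_m}$, and consequently the $(r,m)$-block of $[\widetilde{B}_{\hat{H}_k},\widetilde{B}_{\hat{H}_{l'}}]$ is $\sum_{j=1}^{p}(E_{N_k}A_{H_j}E_{N_{l'}}-E_{N_{l'}}A_{H_j}E_{N_k})A_{H_m}$. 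Reading off this block in each of the two commutator identities above yields \eqref{Eq_Case2_lem3} and \eqref{Eq_Case2_lem4} respectively.

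No substantial obstacle is expected: the proof is essentially a matching of indexing conventions followed by a routine block expansion. The only delicate point is the bookkeeping needed to check that the ranges of $k$, $l$ and $l'$ in the statement correspond correctly to the membership conditions (elements of $\mathcal{B}_x\cap\mathcal{B}_y$ versus $\mathcal{B}_x\cap\mathcal{B}_y^c$, and elements of $\mathcal{C}_{\hat{H}_k,y}$) in the hypotheses of \eqref{Eq_int_cond2} and \eqref{Eq_int_cond4}; once this is settled, both identities follow immediately.
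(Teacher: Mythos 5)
Your proposal is correct and follows essentially the same route as the paper: the paper likewise derives both identities by applying the completely integrable condition of the transformed system \eqref{Eq_Pfaff_V} through conditions \eqref{Eq_int_cond2} and \eqref{Eq_int_cond4}, obtaining the commutator relations $[\widetilde{B}_{\hat{H}_k},\sum_{l'}\widetilde{B}_{\hat{H}_{l'}}]=O$ and reading off a block entry. Your explicit block computation of $(\widetilde{B}_{\hat{H}_k}\widetilde{B}_{\hat{H}_{l'}})_{r,m}=\sum_{j=1}^{p}E_{N_k}A_{H_j}E_{N_{l'}}A_{H_m}$ is accurate and merely spells out what the paper leaves implicit.
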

	\begin{proof}
		By using \eqref{Eq_Pfaff_V_x2} and \eqref{Eq_Pfaff_V_y2}, we compute the completely integrable condition for \eqref{Eq_Pfaff_V}. 
	Then, from the condition \eqref{eq:cond12p} we have
	\[
	\left[B_{\hat{H}_k},\sum_{\substack{ \hat{H}_{l'}\in\mathcal{C}_{\hat{H}_k,y} \\ c_{\hat{H}_k\hat{H}_{l}}=c_{\hat{H}_k\hat{H}_{l'}}}}B_{\hat{H}_{l'}}\right]=O
	\]
	for $\hat{p}+1\le k\le \hat{q}$, $1\le l\le \hat{q}$ such that $c_{\hat{H}_k\hat{H}_l}\neq b_{H_j}$ for any $q+1\le j\le r$,
which gives \eqref{Eq_Case2_lem3}.
Also, from the condition \eqref{eq:cond16p} we have
	\[
	\left[B_{\hat{H}_k},\sum_{\substack{\hat{p}+1\le {l'} \le \hat{q} \\ c_{\hat{H}_k\hat{H}_l}=c_{\hat{H}_k\hat{H}_{l'}}}}B_{\hat{H}_{l'}}\right]=O
	\]
	for $1\le k\le\hat{p}$, $\hat{p}+1\le l\le\hat{q}$ such that $c_{\hat{H}_k\hat{H}_l}\neq b_{H_j}$ for any $q+1\le j\le r$, 
	which gives \eqref{Eq_Case2_lem4}. 
	\end{proof}
With this lemma in mind, we divide the summation on $k$ in $\bm{D}$ as follows:
	\begin{align*}
	\bm{D}&=\left(\sum_{\substack{k=1\\ (k,l)\in[(k,l)]\\ \text{ with }\eqref{Eq_Case2_rep}}}^{\hat{p}}+
	\sum_{\substack{k=\hat{p}+1\\ (k,l)\in[(k,l)] \\ \text{ with }\eqref{Eq_Case2_rep}}}^{\hat{q}}\right)
	\sum_{m=1}^{p}\sum_{\substack{k<l'\le \hat{q} \\ \hat{H}_{l'}\in\mathcal{C}_{\hat{H}_k,y} \\ c_{\hat{H}_{k}\hat{H}_{l'}}=c_{\hat{H}_{k}\hat{H}_l}}}\sum_{j=1}^{p}(E_{N_k}A_{H_j}E_{N_{l'}}-E_{N_{l'}}A_{H_j}E_{N_k})A_{H_m}v_m^k 
	\\
	&=\sum_{\substack{k=1\\ (k,l)\in[(k,l)] \text{ with }\eqref{Eq_Case2_rep}}}^{\hat{p}}\sum_{m=1}^{p}\sum_{\substack{\hat{p}+1\le l'\le \hat{q} \\ \hat{H}_{l'}\in\mathcal{C}_{\hat{H}_k,y} \\ c_{\hat{H}_{k}\hat{H}_{l'}}=c_{\hat{H}_{k}\hat{H}_l}}}\sum_{j=1}^{p}(E_{N_k}A_{H_j}E_{N_{l'}}-E_{N_{l'}}A_{H_j}E_{N_k})A_{H_m}v_m^k 
	\\
	&\quad +
	\sum_{\substack{k=\hat{p}+1\\ (k,l)\in[(k,l)] \text{ with }\eqref{Eq_Case2_rep}}}^{\hat{q}}\sum_{m=1}^{p}\sum_{\substack{k<l'\le \hat{q} \\ \hat{H}_{l'}\in\mathcal{C}_{\hat{H}_k,y} \\ c_{\hat{H}_{k}\hat{H}_{l'}}=c_{\hat{H}_{k}\hat{H}_l}}}\sum_{j=1}^{p}(E_{N_k}A_{H_j}E_{N_{l'}}-E_{N_{l'}}A_{H_j}E_{N_k})A_{H_m}v_m^k.
	\end{align*}
Here we note that any representative $(k,l)\in[(k,l)]$ for $1\le k\le \hat{p}$ satisfies $\hat{p}+1\le l \le \hat{q}$. 
Hence, thanks to \eqref{Eq_Case2_lem4}, we see that the first term vanishes.
By using \eqref{Eq_Case2_lem3} for the second term, we have
	\begin{align*}
	\bm{D}&=\sum_{\substack{k=\hat{p}+1\\ (k,l)\in[(k,l)] \text{ with }\eqref{Eq_Case2_rep}}}^{\hat{q}}\sum_{m=1}^{p}\sum_{\substack{k<l'\le \hat{q} \\ \hat{H}_{l'}\in\mathcal{C}_{\hat{H}_k,y} \\ c_{\hat{H}_{k}\hat{H}_{l'}}=c_{\hat{H}_{k}\hat{H}_l}}}\sum_{j=1}^{p}(E_{N_k}A_{H_j}E_{N_{l'}}-E_{N_{l'}}A_{H_j}E_{N_k})A_{H_m}v_m^k
	\\
	&=-\sum_{\substack{k=\hat{p}+1\\ (k,l)\in[(k,l)] \text{ with }\eqref{Eq_Case2_rep}}}^{\hat{q}}\sum_{m=1}^{p}\sum_{\substack{1\le l'< k \\ \hat{H}_{l'}\in\mathcal{C}_{\hat{H}_k,y} \\ c_{\hat{H}_{k}\hat{H}_{l'}}=c_{\hat{H}_{k}\hat{H}_l}}}\sum_{j=1}^{p}(E_{N_k}A_{H_j}E_{N_{l'}}-E_{N_{l'}}A_{H_j}E_{N_k})A_{H_m}v_m^k.
	\end{align*}
Here we notice that this summation can be expressed as 
\begin{align*}
\bm{D}=-\sum_{\substack{l'=1\\ (l',k)\in[(k,l)] \text{ with }\eqref{Eq_Case2_rep}}}^{\hat{q}}\sum_{m=1}^{p}\sum_{\substack{l'< k\le q \\ \hat{H}_{k}\in\mathcal{C}_{\hat{H}_{l'},y} \\ c_{\hat{H}_{l'}\hat{H}_{k}}=c_{\hat{H}_{k}\hat{H}_l}}}\sum_{j=1}^{p}(E_{N_k}A_{H_j}E_{N_{l'}}-E_{N_{l'}}A_{H_j}E_{N_k})A_{H_m}v_m^k.
\end{align*}
Moreover, since $c_{\hat{H}_k\hat{H}_{l'}}=c_{\hat{H}_k\hat{H}_{l}}=c_{\hat{H}_{l'}\hat{H}_l}$ holds, we have
\begin{align*}
\bm{D}=-\sum_{\substack{l'=1\\ (l',k)\in[(k,l)] \text{ with }\eqref{Eq_Case2_rep}}}^{\hat{q}}\sum_{m=1}^{p}\sum_{\substack{l'< k\le q \\ \hat{H}_{k}\in\mathcal{C}_{\hat{H}_{l'},y} \\ c_{\hat{H}_{l'}\hat{H}_{k}}=c_{\hat{H}_{l'}\hat{H}_l}}}\sum_{j=1}^{p}(E_{N_k}A_{H_j}E_{N_{l'}}-E_{N_{l'}}A_{H_j}E_{N_k})A_{H_m}v_m^k.
\end{align*}
By exchanging the running index $k\leftrightarrow l'$, we have $\bm{C}=\bm{D}$, 
which shows \eqref{Eq_compati_8}.

\section{Generalized middle convolution in several variables}
\label{Sec_GMC}

In this section, we introduce two transformations on $\mathfrak{M}_{\mathcal{T}_{x_\ell},\mathcal{S}_{x_\ell}}$, equivalently, two endofunctors of $\mathscr{P}_{x_\ell}^{\text{\rm Int}}(\mathcal{T}_{x_\ell},\mathcal{S}_{x_\ell})$.

\begin{definition}
For $\lambda=(\lambda_{H})_{H \in \mathcal{T}_{x_\ell}} \in \mathbb{C}^{\# \mathcal{T}_{x_\ell}}$, we define the operation 
	\[
	\begin{array}{cccc}
	add_{\lambda}^{x_\ell}: 
	&\mathfrak{P}_{\mathcal{T}_{x_\ell},\mathcal{S}_{x_\ell}} & \to&  \mathfrak{P}_{\mathcal{T}_{x_\ell},\mathcal{S}_{x_\ell}} 
	\\
	& \rotatebox{90}{$\in$} & &  \rotatebox{90}{$\in$}
	\\
	& \Omega &\mapsto & \ds \Omega+\sum_{H\in\mathcal{T}_{x_\ell}}\lambda_{H}d\log f_{H},
	\end{array}
	\]
which we call the \emph{addition in $x_\ell$-direction} with $\lambda$.
\end{definition}
We remark the addition operator $add_{\lambda}^{x_\ell}$ can be regarded as an (invertible) map
	\[
	add_{\lambda}^{x_\ell}\,:\,\mathfrak{M}_{\mathcal{T}_{x_\ell},\mathcal{S}_{x_\ell}}
\to\mathfrak{M}_{\mathcal{T}_{x_\ell},\mathcal{S}_{x_\ell}}.
	\]
Then, we define the middle convolution in $x_\ell$-direction as follows.

\begin{definition}
For $\beta=(\beta_{H})_{H\in\mathcal{S}_{x_\ell}} \in \mathbb{C}^{\#\mathcal{S}_{x_\ell}}$, we define the map 
	\[
	mc_{\beta}^{x_\ell}:=\mathcal{ML}^{-x_\ell}\circ add_{-\beta}^{x_\ell} \circ \mathcal{ML}^{x_\ell}
	:\mathfrak{M}_{\mathcal{T}_{x_\ell},\mathcal{S}_{x_\ell}}~\to~\mathfrak{M}_{\mathcal{T}_{x_\ell},\mathcal{S}_{x_\ell}},
	\]
which we call the \emph{(generalized) middle convolution in $x_\ell$-direction} $mc_{\beta}^{x_\ell}$ with $\beta$.
\end{definition}

The map $add_{\lambda}^{x_\ell}$ (resp. $mc_{\beta}^{x_\ell}$) induces the endofunctor on $\mathscr{P}_{x_\ell}^{\text{\rm Int}}(\mathcal{T}_{x_\ell},\mathcal{S}_{x_\ell})$.
We call this functor the \emph{addition functor in $x_\ell$-direction} (resp. \emph{middle convolution functor in $x_\ell$-direction}).
By abuse of notation, we write $add_{\lambda}^{x_\ell}$ (resp. $mc_{\beta}^{x_\ell}$) for the addition functor (resp. middle convolution functor).

	As a corollary of Theorem \ref{Thm_conn}, we obtain some fundamental properties of the middle convolution functor.

\begin{corollary}
	Suppose that $(\calV,\Omega)\in\mathscr{P}_{x_\ell}^{\text{\rm Int}}(\mathcal{T}_{x_\ell},\mathcal{S}_{x_\ell})$ is irreducible and non-exceptional in $x_{\ell}$-direction. 
	Then 
	$mc_{0}^{x_{\ell}}(\calV,\Omega)\sim(\calV,\Omega)$.
	Furthermore, for any $\beta\in\bbC^{\#\calS_{x_{\ell}}}$ such that $add_{-\beta}^{x_{\ell}}\circ\mathcal{ML}^{x_{\ell}}(\calV,\Omega)$ is non-exceptional in $x_{\ell}$-direction, $mc_{\beta}^{x_{\ell}}(\calV,\Omega)$ is also irreducible and non-exceptional in $x_{\ell}$-direction.
	In this case, 
	\begin{equation}\label{eq:mccorpde}
	mc_{\gamma}^{x_{\ell}}\circ mc_{\beta}^{x_{\ell}}(\calV,\Omega)\sim mc_{\beta+\gamma}^{x_{\ell}}(\calV,\Omega)
	\end{equation}
	holds for any $\gamma\in\bbC^{\#\calS_{x_{\ell}}}$.
Analogous statements hold for the irreducible and non-exceptional system \eqref{Eq_Pfaff}, under the corresponding non-exceptionality condition on $add_{-\beta}^{x_{\ell}}\circ \mathcal{ML}^{x_{\ell}}$.\end{corollary}
This corollary is shown in the same way as Theorem \ref{thm:odemc} using Theorem \ref{Thm_conn} instead of Theorem \ref{thm:odeconn}.

\begin{remark}

As mentioned in Introduction, Haraoka \cite{Haraoka2012} introduced the middle convolution in several variables for Pfaffian systems with logarithmic singularity
	\begin{equation}\label{Eq_Pfaff_reg}
	du=\Omega u, \quad 
	\Omega=\sum_{H \in \mathcal{A}}A_H \, d\log f_H.
	\end{equation}
Our middle convolution gives a generalization of Haraoka's middle convolution. 
In fact, 
	\[
	\begin{aligned}
	\mathfrak{P}^{\mathrm{reg}}_{\mathcal{T}_{x_\ell}}:=\left\{
	\Omega=\sum_{H \in \mathcal{A}}A_H \, d\log f_H
	~\middle\vert~
	\begin{array}{l}
	A_H \in \mathrm{Mat}(N,\mathbb{C}),\,N\ge 1, \\
	\text{$\Omega$ satisfies $\Omega \wedge\Omega =0$, } \\
	\mathcal{A}_{x_\ell}=\mathcal{T}_{x_\ell}
	\end{array}
	\right\},
	\qquad 
	\mathfrak{M}^{\mathrm{reg}}_{\mathcal{T}_{x_\ell}}:=\mathfrak{P}^{\mathrm{reg}}_{\mathcal{T}_{x_\ell}}/\sim
	\end{aligned}
	\]
are the subsets of $\mathfrak{P}_{\mathcal{T}_{x_\ell},\{x_\ell=0\}}$ and $\mathfrak{M}_{\mathcal{T}_{x_\ell},\{x_\ell=0\}}$, respectively.
Then, for any $\beta\in\mathbb{C}$, our middle convolution $mc_{\beta}^{x_\ell}: \mathfrak{M}^{\mathrm{reg}}_{\mathcal{T}_{x_\ell}}\to \mathfrak{M}^{\mathrm{reg}}_{\mathcal{T}_{x_\ell}}$ coincides with the Haraoka's middle convolution in $x_\ell$-direction. 
\end{remark}

\section{Examples}
\label{sec:extwo}
We give some examples in the case of two variables. 
Let $(x_1,x_2)=(x,y)$ be a coordinate of $\mathbb{C}^2$ and $\mathcal{A}=\{H_1,H_2,H_3\}$ be the hyperplane arrangement in $\bbC^2$ consisting of 
\[
H_1=\{x=0\}, \quad
H_2=\{x=1\}, \quad
H_3=\{x=y\}.
\]
That is, 
\[
f_{H_1}=x,\quad
f_{H_2}=x-1,\quad
f_{H_3}=x-y.
\]
Then we have
\begin{equation}\label{eq:Pfaffex0}
\begin{aligned}
&\calA_{x}=\{H_1,H_2,H_3\}, \quad \calA_{y}=\{H_3\}, 
\quad 
\calC_{H_1,y}=\{H_3\}, 
\quad 
\calC_{H_2,y}=\{H_3\},
\quad 
\calC_{H_3,y}=\{H_1,H_2\}, 
\\
& c_{H_1H_3}=0, \quad c_{H_2H_3}=1. 
\end{aligned}
\end{equation}
We consider the following rank one Pfaffian system 
\begin{equation}\label{eq:Pfaffex}
du=\Omega u, \quad \Omega=\sum_{H\in\mathcal{A}}\alpha_H\,d\log f_H=\alpha_{H_1}\frac{dx}{x}+\alpha_{H_2}\frac{dx}{x-1}+\alpha_{H_3}\frac{d(x-y)}{x-y},
\end{equation}
where $\alpha_{H_1},\alpha_{H_2},\alpha_{H_3}\in\bbC\setminus\{0\}$ are constants. 
This equation can be expressed as
\[
\begin{cases}
 \ds\frac{\p u}{\p x}=\left(\frac{\alpha_{H_1}}{x}+\frac{\alpha_{H_2}}{x-1}+\frac{\alpha_{H_3}}{x-y}\right)u \\[11pt]
 \ds\frac{\p u}{\p y}=\frac{\alpha_{H_3}}{y-x}\,u.
 \end{cases}
\]
We consider applying the middle Laplace transform in $x$-direction for \eqref{eq:Pfaffex}.
For this equation, we have $\hat{H}_1=\{x=0\}$ and 
\[
\widetilde{\calA}=\calA\cup\{H_{13},H_{23}\},\quad H_{13}:=\{y=0\},~\quad H_{23}:=\{y-1=0\}.
\]
and hence obtain $\mathcal{B}=\{\hat{H}_1,\,H_{13},\,H_{23}\}$ (cf. \eqref{def:tilA} and \eqref{eq:Bdef}).
Then, by operating the middle Laplace transform, we have the rank three system
\begin{equation}\label{eq:Pfaffex1}
\begin{aligned}
dV&=\mathcal{ML}^{x}(\Omega)V\\
&=\left((B_x+yB_{xy})\,dx+xB_{xy}\,dy+B_{H_1}\frac{dx}{x}+B_{H_{13}}\frac{dy}{y}+B_{H_{23}}\frac{dy}{y-1}\right)V
\end{aligned}
\end{equation}
where 
\begin{align*}
&B_x=-\begin{pmatrix}
0 & & \\
& 1 & \\
& & 0
\end{pmatrix},
\quad 
B_{xy}=-\begin{pmatrix}
0 & & \\
& 0 & \\
& & 1
\end{pmatrix},
\quad 
B_{H_1}=-\begin{pmatrix}
\alpha_{H_1} & \alpha_{H_2} & \alpha_{H_3} \\
\alpha_{H_1} & \alpha_{H_2} & \alpha_{H_3} \\
\alpha_{H_1} & \alpha_{H_2} & \alpha_{H_3} 
\end{pmatrix}, \\
&B_{H_{13}}=\begin{pmatrix}
\alpha_{H_3} & 0 & -\alpha_{H_3} \\
0 & 0 & 0 \\
-\alpha_{H_1} & 0 &\alpha_{H_1}
\end{pmatrix},
\quad 
B_{H_{23}}=\begin{pmatrix}
0 & 0 & 0 \\
0 & \alpha_{H_3} &  -\alpha_{H_3} \\
0 & -\alpha_{H_2} &\alpha_{H_2}
\end{pmatrix}.
\end{align*}
In this case, we do not need to consider the projection to the quotient space (\textbf{Step 3}) since $\calK^x=\bigoplus_{i=1}^3\Ker\alpha_{H_i}=\{0\}$.
In other words, $\mathcal{L}^{x}(\Omega)=\mathcal{ML}^x(\Omega)$ in this case.
The equation \eqref{eq:Pfaffex1} is essentially the same as the Pfaffian system satisfied by the Humbert's confluent hypergeometric function $\Phi_1$ of two variables, which can be verified directly (for example, see \cite{Mukai}).
\begin{remark}
The function $u(x,y)=x^{\alpha _{H_1}} (x-1)^{\alpha _{H_2}} (x-y)^{\alpha _{H_3}}$ satisfies \eqref{eq:Pfaffex}. 
Therefore, by construction, we see that the equation \eqref{eq:Pfaffex1} has an integral representation of solutions 
\[
V(x,y)=\int_{\Delta}t^{\alpha_{H_1}}(t-1)^{\alpha_{H_2}}(t-y)^{\alpha_{H_3}}e^{-tx}\,\vec{\varphi}, 
\quad 
\vec{\varphi}={}^t \left(\frac{dt}{t},\frac{dt}{t-1},\frac{dt}{t-y}\right),
\]
which can be used to study the global behavior of solutions.
\end{remark}

\subsection{Appell's $F_1$}
Let $\beta\in\bbC$ be a complex number. 
By considering the addition in $x$-direction $add_{-\beta}^x$  for \eqref{eq:Pfaffex1} with $-\beta$, we have 
\begin{equation}\label{eq:Pfaffex2}
\begin{aligned}
&dW=(add_{-\beta}^{x}\circ\mathcal{ML}^{x})(\Omega)W\\
&=\left((B_x+yB_{xy})\,dx+xB_{xy}\,dy+(B_{H_1}-\beta)\frac{dx}{x}+B_{H_{13}}\frac{dy}{y}+B_{H_{23}}\frac{dy}{y-1}\right)W.
\end{aligned}
\end{equation}
We assume $\beta\neq 0,-(\alpha_{H_1}+\alpha_{H_2}+\alpha_{H_3})$.
Then, it holds that $\Ker(B_{H_1}-\beta)=\{0\}$. 
By applying the inverse middle Laplace transform $\mathcal{ML}^{-x}$, we obtain the rank three system
\begin{equation}\label{eq:PfaffF1}
\begin{aligned}
dZ&=(\mathcal{ML}^{-x}\circ add_{-\beta}^x\circ \mathcal{ML}^x)(\Omega)Z=mc_{\beta}^{x}(\Omega)Z \\
&=\left(C_{H_1}\frac{dx}{x}+C_{H_2}\frac{dx}{x-1}+C_{H_3}\frac{d(x-y)}{x-y}+C_{H_{13}}\frac{dy}{y}+C_{H_{23}}\frac{dy}{y-1}\right)Z
\end{aligned}
\end{equation}
where
\begin{align*}
&C_{H_1}=\begin{pmatrix}
\alpha_{H_1}+\beta & \alpha_{H_2} & \alpha_{H_3} \\
0 & 0 &0 \\
0 & 0 &0 
\end{pmatrix}, \quad 
C_{H_2}=\begin{pmatrix}
0 & 0 &0 \\
\alpha_{H_1} & \alpha_{H_2}+\beta & \alpha_{H_3} \\
0 & 0 &0 
\end{pmatrix}, \\ 
&C_{H_3}=\begin{pmatrix}
0 & 0 &0 \\
0 & 0 &0 \\
\alpha_{H_1} & \alpha_{H_2} & \alpha_{H_3} +\beta
\end{pmatrix}, 
\quad 
C_{H_{13}}=B_{H_{13}}, \quad C_{H_{23}}=B_{H_{23}}.
\end{align*}
We remark that, in this case, it is also unnecessary to consider the projection to the quotient space, since $\Ker(B_{H_1}-\beta)=\{0\}$.
The resulting system \eqref{eq:PfaffF1} is essentially the same as the Pfaffian system satisfied by the hypergeometric function Appell's $F_1$ of two variables (for example, see \cite{HaraokaBook}).

\subsection{Haraoka's confluent $F_4$}
We give another example. 
By applying the addition $add_{(-\alpha_{H_1}-\alpha_{H_3},-\alpha_{H_2}-\alpha_{H_3})}^y$ in $y$-direction for \eqref{eq:Pfaffex1} with $(-\alpha_{H_1}-\alpha_{H_3},-\alpha_{H_2}-\alpha_{H_3})\in\bbC^2$, we have
\begin{equation}\label{eq:Pfaffex3}
\begin{aligned}
dW&=(add_{(-\alpha_{H_1}-\alpha_{H_3},-\alpha_{H_2}-\alpha_{H_3})}^y\circ\mathcal{ML}^{x})(\Omega)W\\
 &=\left((B_x+yB_{xy})\,dx+xB_{xy}\,dy
+B_{H_1}\frac{dx}{x}+(B_{H_{13}}-\alpha_{H_1}-\alpha_{H_3})\frac{dy}{y}\right. \\
&\qquad +\left.(B_{H_{23}}-\alpha_{H_2}-\alpha_{H_3})\frac{dy}{y-1}\right)W.
\end{aligned}
\end{equation}
We then consider applying the inverse middle Laplace transform $\mathcal{ML}^{-y}$ in $y$-direction. 
Since $\dim\Ker(B_{H_{13}}-\alpha_{H_1}-\alpha_{H_3})=\dim\Ker(B_{H_{23}}-\alpha_{H_2}-\alpha_{H_3})=1$, the rank of the resulting equation is given by $3\times 2-2=4$. 
To see that, we first apply the inverse Laplace transform $\mathcal{L}^{-y}$, we have 
\begin{equation}
\begin{aligned}
dZ&=(\mathcal{L}^{-y}\circ add_{(-\alpha_{H_1}-\alpha_{H_3},-\alpha_{H_2}-\alpha_{H_3})}^y\circ\mathcal{ML}^{x})(\Omega)Z \\
&=\left(C_x\,dx+C_y\,dy+C_{H_1}\frac{dx}{x}+C_{H_{3}}\frac{d(x-y)}{x-y}+C_{H_{13}}\frac{dy}{y}\right)Z
\end{aligned}
\end{equation}
where 
\begin{align}
&C_{x}=\begin{pmatrix}
B_x & \\
& B_x+B_{xy}
\end{pmatrix}
=\begin{pmatrix}
0 & & & & & \\
& -1 & & & & \\
& & 0 & & & \\
& & & 0 & &\\
& & & & -1 & \\
& & & & & -1
\end{pmatrix}, 
\\
&C_{y}=\begin{pmatrix}
O_3 &\\
& I_{3}
\end{pmatrix}
=\begin{pmatrix}
0 & & & & & \\
& 0 & & & & \\
& & 0 & & & \\
& & & 1 & &\\
& & & & 1 & \\
& & & & & 1
\end{pmatrix}, \\
&C_{H_1}=
\begin{pmatrix}
B_{H_1} & \\
& B_{H_1}
\end{pmatrix}
=\begin{pmatrix}
-\alpha_{H_1} & -\alpha_{H_2} & -\alpha_{H_3} & 0 & 0 & 0\\
-\alpha_{H_1} & -\alpha_{H_2} & -\alpha_{H_3} & 0 & 0 & 0\\
-\alpha_{H_1} & -\alpha_{H_2} & -\alpha_{H_3} & 0 & 0 & 0\\
 0 & 0 & 0&-\alpha_{H_1} & -\alpha_{H_2} & -\alpha_{H_3} \\
 0 & 0 & 0&-\alpha_{H_1} & -\alpha_{H_2} & -\alpha_{H_3}\\
 0 & 0 & 0&-\alpha_{H_1} & -\alpha_{H_2} & -\alpha_{H_3}
\end{pmatrix},
 \\
&C_{H_3}=
-\begin{pmatrix}
E_{N_2}(B_{H_{13}}-\alpha_{H_1}-\alpha_{H_3}) & E_{N_2}(B_{H_{23}}-\alpha_{H_2}-\alpha_{H_3})\\
E_{N_2}(B_{H_{13}}-\alpha_{H_1}-\alpha_{H_3}) & E_{N_2}(B_{H_{23}}-\alpha_{H_2}-\alpha_{H_3})
\end{pmatrix} \\
&\qquad =\begin{pmatrix}
0 & 0 & 0 & 0 & 0 & 0\\
0 & 0 & 0 & 0 & 0 & 0\\
\alpha_{H_1} & 0 & \alpha_{H_3} & 0 & \alpha_{H_2} & \alpha_{H_3}\\
0 & 0 & 0 & 0 & 0 & 0\\
0 & 0 & 0 & 0 & 0 & 0\\
\alpha_{H_1} & 0 & \alpha_{H_3} & 0 & \alpha_{H_2} & \alpha_{H_3}
\end{pmatrix},
 \\
&C_{H_{13}}=
-\begin{pmatrix}
E_{N_1}(B_{H_{13}}-\alpha_{H_1}-\alpha_{H_3}) & E_{N_1}(B_{H_{23}}-\alpha_{H_2}-\alpha_{H_3})\\
E_{N_1}(B_{H_{13}}-\alpha_{H_1}-\alpha_{H_3}) & E_{N_1}(B_{H_{23}}-\alpha_{H_2}-\alpha_{H_3})
\end{pmatrix} \\
&\qquad =\begin{pmatrix}
\alpha_{H_1} & 0 & \alpha_{H_3} & \alpha_{H_2}+\alpha_{H_3} & 0 & 0\\
0 & \alpha_{H_1}+\alpha_{H_3} & 0 & 0 & \alpha_{H_2} & \alpha_{H_3}\\
0 & 0 & 0 & 0 & 0 & 0\\
\alpha_{H_1} & 0 & \alpha_{H_3} & \alpha_{H_2}+\alpha_{H_3} & 0 & 0\\
0 & \alpha_{H_1}+\alpha_{H_3} & 0 & 0 & \alpha_{H_2} & \alpha_{H_3}\\
0 & 0 & 0 & 0 & 0 & 0
\end{pmatrix}.
\end{align}
Here we used 
\[
E_{N_1}=\begin{pmatrix} 1 & & \\
& 1 & \\
& & 0
\end{pmatrix},\quad
E_{N_2}=\begin{pmatrix} 0 & & \\
& 0 & \\
& & 1
\end{pmatrix}.
\]
We find that 
\[
\Ker(B_{H_{13}}-\alpha_{H_1}-\alpha_{H_3})=\left\langle
\begin{pmatrix}
-\alpha_{H_3} \\
0 \\
\alpha_{H_1}
\end{pmatrix}
\right\rangle,
\quad 
\Ker(B_{H_{23}}-\alpha_{H_2}-\alpha_{H_3})=\left\langle
\begin{pmatrix}
0 \\
-\alpha_{H_3} \\
\alpha_{H_2}
\end{pmatrix}
\right\rangle
\]
and hence
\[
\mathcal{K}^y=\Ker(B_{H_{13}}-\alpha_{H_1}-\alpha_{H_3})\oplus\Ker(B_{H_{23}}-\alpha_{H_2}-\alpha_{H_3})
=\left\langle
 \begin{pmatrix}
-\alpha_{H_3} \\
0 \\
\alpha_{H_1} \\
0 \\ 
0 \\
0
\end{pmatrix},
\,
\begin{pmatrix}
0 \\
0 \\
0 \\
0 \\
-\alpha_{H_3} \\
\alpha_{H_2}
\end{pmatrix}
\right\rangle.
\]
By using them, we set 
\[
P=\begin{pmatrix}
-\alpha_{H_3}  & 0 & 0 & 0 & 0 & 0 \\
 0 & 0 & 0 & 0 & \alpha_{H_3}  & 0 \\
 \alpha_{H_1}  & 0 & 0 & 0 & 0 & 1 \\
 0 & 0 & 0 & \alpha_{H_3}  & 0 & 0 \\
 0 & -\alpha_{H_3}  & 0 & 0 & 0 & 0 \\
 0 & \alpha_{H_2}  & 1 & 0 & 0 & 0
\end{pmatrix}.
\]
Then, we have 
\begin{align*}
&P^{-1}C_xP=\left(\begin{array}{c|c} * & * \\ \hline \\[-10pt] O\, & \,\bar{C}_x\end{array}\right), 
\quad 
P^{-1}C_yP=\left(\begin{array}{c|c} * & * \\ \hline \\[-10pt] O\, & \,\bar{C}_y\end{array}\right), 
\quad 
P^{-1}C_{H_1}P=\left(\begin{array}{c|c} * & * \\ \hline \\[-10pt] O\, & \,\bar{C}_{H_1}\end{array}\right),\\
&P^{-1}C_{H_3}P=\left(\begin{array}{c|c} * & * \\ \hline \\[-10pt] O\, & \,\bar{C}_{H_3}\end{array}\right), 
\quad
P^{-1}C_{H_{13}}P=\left(\begin{array}{c|c} * & * \\ \hline \\[-10pt] O\, & \,\bar{C}_{H_{13}}\end{array}\right) 
\end{align*}
where
\begin{align*}
&\bar{C}_x=\begin{pmatrix}
-1 & & & \\
& 0 & & \\
& & -1 & \\
& & & 0
\end{pmatrix},
\quad 
\bar{C}_y=\begin{pmatrix}
1 & & & \\
& 1 & & \\
& & 0 & \\
& & & 0
\end{pmatrix}, \\
&
\bar{C}_{H_1}=\begin{pmatrix}
 -\alpha_{H_2}-\alpha_{H_3} & -\alpha_{H_1}(\alpha_{H_2}+\alpha_{H_3})& 0 & 0 \\
 -1& -\alpha_{H_1} & 0 & 0 \\
 0 &0 & -\alpha_{H_2} & -1 \\
 0 & 0 & -\alpha_{H_2}(\alpha_{H_1}+\alpha_{H_3}) & -\alpha_{H_1}-\alpha_{H_3}
\end{pmatrix},
\\
&
\bar{C}_{H_3}=\begin{pmatrix}
\alpha_{H_3} & 0 & 0 &\alpha_{H_3} \\
0 & 0 & 0 & 0 \\
0 & 0 & 0 & 0 \\
\alpha_{H_3} & 0 & 0 &\alpha_{H_3}
\end{pmatrix},\\
&
\bar{C}_{H_{13}}=\begin{pmatrix}
\alpha_{H_2}  & 0 & \alpha_{H_2}  (\alpha_{H_1} +\alpha_{H_3} ) & 0 \\
 0 & \alpha_{H_2} +\alpha_{H_3}  & 0 & 1 \\
 1 & 0 & \alpha_{H_1} +\alpha_{H_3}  & 0 \\
 0 & \alpha_{H_1}  (\alpha_{H_2} +\alpha_{H_3} ) & 0 & \alpha_{H_1}
\end{pmatrix}.
\end{align*}
As a result, we have the equation of the inverse middle Laplace transform $\mathcal{ML}^{-y}$ in $y$-direction for \eqref{eq:Pfaffex3}: 
\begin{equation}\label{eq:PfaffcF4}
\begin{aligned}
dz&=(\mathcal{ML}^{-y}\circ add_{(-\alpha_{H_1}-\alpha_{H_3},-\alpha_{H_2}-\alpha_{H_3})}^y\circ\mathcal{ML}^{x})(\Omega)z \\
&=\left(\bar{C}_x\,dx+\bar{C}_y\,dy+\bar{C}_{H_1}\frac{dx}{x}+\bar{C}_{H_{3}}\frac{d(x-y)}{x-y}+\bar{C}_{H_{13}}\frac{dy}{y}\right)z.
\end{aligned}
\end{equation}
We can verify that this equation is essentially the same as the Pfaffian system derived by Haraoka \cite{Haraoka2024}, who obtained it by applying middle convolution and confluence to equation \eqref{eq:Pfaffex}.
As mentioned in the introduction, in \cite{Haraoka2024}, Haraoka also studied the Stokes phenomenon around $\{x=\infty\}\times\{y=\infty\}$. 
It would be an interesting problem to understand his result from the perspective of the middle Laplace transform.

\section*{Acknowledgements}
The author is deeply grateful to Kazuki Hiroe for providing invaluable comments and suggestions during the preparation of this paper.
In particular, the content of sections \ref{sec:odefunc}, \ref{sec:odeinv}, \ref{Sec_Categorical} and \ref{Sec_Inv} owes much to discussions with Hiroe. 
Without his continued support and encouragement, this paper would not have been completed. 
The author thanks the anonymous referee for their careful reading and many valuable comments. 
The author is also grateful to Saiei-Jaeyeong Matsubara-Heo and Takafumi Matsumoto for their helpful advice on meromorphic connections. 
Finally, the author deeply respects Yoshishige Haraoka and Toshio Oshima for their inspiring work on the middle convolution for holonomic systems, and expresses his sincere appreciation to them for their many valuable comments and encouragement. 
This work was supported by JSPS KAKENHI Grant Number 24K22826.


\end{document}